\theoremstyle{plain}
\newtheorem{thm}{Theorem}
\newtheorem{lemm}[thm]{Lemma}
\newtheorem{cor}[thm]{Corollary}
\newtheorem{conj}[thm]{Conjecture}
\newtheorem{prop}[thm]{Proposition}
\theoremstyle{definition}
\newtheorem{defn}[thm]{Definition}
\newtheorem{q}[thm]{Question}
\newtheorem{rem}[thm]{Remark}
\numberwithin{equation}{section}
\numberwithin{thm}{section}
\newcommand{\s}{\mathbf{s}}
\newcommand{\Cov}{\operatorname{Cov}}
\theoremstyle{plain}
\theoremstyle{definition}
\theoremstyle{remark}
\numberwithin{equation}{section}
\numberwithin{thm}{section}
\begin{document}

\title{The directed landscape is a black noise}
\author[Himwich]{Zoe Himwich}
\author[Parekh]{Shalin Parekh}

\address{Zoe Himwich: Department of Mathematics, Columbia University, 2990 Broadway, New York, NY 10027, USA}
\email{himwich@math.columbia.edu}
\address{Shalin Parekh: Department of Mathematics, University of Maryland, College Park, 4176 Campus Drive, College Park, MD 20742 USA}
\email{parekh@umd.edu}
\date{\today}

\begin{abstract}
    We show that the directed landscape is a black noise in the sense of Tsirelson and Vershik \cite{TV}. As a corollary, we show that for any microscopic system in which the height profile converges in law to the directed landscape under 1:2:3 scaling, the driving noise becomes asymptotically independent of the height profile. This decoupling result provides one answer to the question of what happens to the driving noise in the limit under the KPZ scaling, and illustrates a type of noise sensitivity for systems in the KPZ universality class. Such decoupling and sensitivity phenomena are not present in the intermediate-disorder or weak-asymmetry regime, and this result illustrates a contrast between the ``strong KPZ" and ``weak KPZ" scaling regimes. Along the way, we prove a strong mixing property for the directed landscape on a bounded time interval under spatial shifts, with a mixing rate $\alpha(k)\leq Ce^{-dk^3}$ for some $C,d>0$. 
\end{abstract}
\maketitle
\setcounter{tocdepth}{1}{
  \hypersetup{linkcolor=black}
  \tableofcontents
}

\section{Introduction}
\subsection{Preface}
Over the past several decades, mathematicians and physicists who study interacting particle systems, integrable systems, and the Kardar-Parisi-Zhang (KPZ) equation have discovered connections between the large-scale behavior of a wide-ranging class of models. It is known that the height functions of asymmetric exclusion processes and stochastic vertex models exhibit universal scaling behavior in $1+1$ dimensions. A common feature in these models is the appearance of $1:2:3$ scaling exponents and Tracy-Widom limit laws. Similar results are expected for the Eden model and last passage percolation models with arbitrary weight distributions, as well as Burgers-type stochastic PDEs and ballistic deposition models (e.g. \textit{Tetris}). This phenomenon is called KPZ universality \cite{C12,Q11}. 

In recent years, there has been a great deal of work involved in constructing and understanding the universal limit objects in the KPZ universality class. Matetski, Quastel, and Remenik \cite{MQR} constructed the KPZ fixed point, the scale-invariant limiting Markov process (conjecturally) of all models in the KPZ universality class. Dauvergne, Ortmann and Vir\'ag \cite{dov} then constructed the directed landscape, a generalization which encodes the natural joint couplings of all KPZ fixed points started from all possible initial data simultaneously.
The directed landscape is a four-parameter field, denoted in this paper as $\mathcal{L}_{s,t}(x,y)$, see \cref{d:dl}.

It is a longstanding project of mathematical physicists to have a better understanding of this phenomenon of universality. It is conjectured that any model for which it is possible to associate a height function $h(t,x)$ with the basic properties of local dynamics, a smoothing mechanism, lateral growth, and space-time forcing with rapid correlation decay, will converge to the KPZ fixed point under $1:2:3$ scaling $\epsilon h(\epsilon^{-3}t,\epsilon^{-2}x)-C_{\epsilon}t$, see for example the introduction of \cite{MQR}. More generally, the height mechanisms started simultaneously from several distinct initial data should converge to KPZ fixed points coupled through the directed landscape. 
Models which satisfy these conditions, and which have been proved or conjectured to have the KPZ fixed point and directed landscape as a scaling limit, are said to belong to the KPZ universality class.  

The directed landscape (\cref{d:dl}), which is the main object in this paper, was constructed by Dauvergne, Ortmann, and Vir\'ag \cite{dov} as the scaling limit of the last passage metric between points on a Brownian line ensemble (Brownian LPP). It is a random four-parameter continuous function  $\mathcal{L}:\mathbb{R}^{4}_{\uparrow} \to \mathbb{R}$, with $\mathbb{R}^{4}_{\uparrow}:=\{(x,s;y,t)\in\mathbb{R}^{4} | s<t\},$ which may be viewed as a metric between space-time points $(s,x)$ and $(t,y)$ that satisfies a reverse triangle inequality, $\mathcal{L}_{s,u}(x,z)\geq \mathcal{L}_{s,t}(x,y) + \mathcal{L}_{t,u}(y,z)$ if $s<t<u$ and $x,y,z\in\Bbb R$. More precisely the following ``metric composition" law holds:
\begin{align}\label{e:metriccomp}
    \mathcal{L}_{s,u}(x,z) & = \max_{y\in\mathbb{R}}\left(\mathcal{L}_{s,t}(x,y) + \mathcal{L}_{t,u}(y,z)\right), & & \text{for all } (x,z)\in\mathbb{R}^{2}, \;\;s<t<u.
\end{align}
A series of recent works have demonstrated that the directed landscape is the scaling limit of many well-known models with simple microscopic dynamics. 
Works such as \cite{dov, DNV23,dv2} demonstrated that it is the limit of the Brownian, Poissonian, exponential, and geometric last passage percolation, and Poisson and Sepp\"al\"ainen-Johansson line ensembles. A work by Wu \cite{xuan} combined with a result of Quastel and Sarkar \cite{qs} demonstrates that the directed landscape is the scaling limit of the KPZ equation started from all possible initial data but coupled through the same realization of the driving noise. Most recently, a work by Aggarwal, Corwin, and Hegde \cite{ACH24} demonstrated a similar convergence for the asymmetric simple exclusion process (ASEP) and stochastic six vertex models. The KPZ fixed point of Matetski, Quastel, and Remenik \cite{MQR} is a marginal of the directed landscape by the variational formula
$h_{t}(y)= \sup_{x\in\mathbb{R}}\left(h_{0}(x)+\mathcal{L}_{0,t}(x,y)\right)$, see \cite{nqr}. 

There are still many basic properties of the directed landscape that have yet to be explored. A natural question to ask is whether it has some simpler Gaussian-driven structure, for example as the solution of a stochastic partial differential equation (SPDE).

\begin{q}\label{q:1} Is the directed landscape the adapted strong solution of a white-noise-driven SPDE?
\end{q}
This article was motivated by an interest in better understanding what happens to the driving noise of the KPZ equation under the directed landscape limit. Another natural question in this vein is the following. 
\begin{q}\label{q:2}
    What happens to the microscopic driving noise of systems like last passage percolation models, asymmetric exclusion processes, and the KPZ equation as we take the limit to the directed landscape? In particular, is any memory of this microscopic noise retained by the limiting directed landscape?
\end{q}
This paper aims to address both of the preceding questions. We study the directed landscape as an abstract noise in the sense of Tsirelson and Vershik \cite{TV}, demonstrating that it is a \textit{black noise} (\cref{t:1}). Using this property, we show that the directed landscape is independent of any white noise field defined on the same filtered probability space (\cref{t:2}). These results have a few corollaries. We conclude that the height function associated to any model which converges to the directed landscape will asymptotically decouple from the random environment (\cref{c:nokpz}), which answers \cref{q:2}. We can also answer \cref{q:1} in the negative (\cref{c:nospde}).

\subsection{Background and Results}\label{s:defns}

\subsubsection{Black Noise} Black noise was first introduced by Tsirelson and Vershik \cite{TV}, and subsequently explored at length in a survey by Tsirelson \cite{Tsir}. Building towards the definition of black noise, we begin by introducing the concept of a noise.

\begin{defn}[Definition 3d(1), \cite{Tsir}]\label{d:noise}
    A noise, denoted in this paper as a 4-tuple $(\Omega,(\mathcal{F}_{s,t})_{s<t},\mathbb{P},(\theta_{h})_{h})$, is a probability space $(\Omega,\mathcal{F},\mathbb{P})$ with sub-$\sigma$-fields $\mathcal{F}_{s,t}\subset\mathcal{F}$ ($-\infty<s<t<\infty$) and measurable maps $\theta_{h}:\Omega\to\Omega$ ($h\in\mathbb{R}$) such that the following properties are satisfied.
    \begin{enumerate}
        \item $\mathcal F_{s,t}$ and $\mathcal F_{t,u}$ are independent under the probability measure $\mathbb P$ if $s<t<u$.
        \item For all $s<t<u$, $\mathcal{F}_{s,t}$ and $\mathcal{F}_{t,u}$ together generate $\mathcal{F}_{s,u}$, i.e., $\mathcal{F}_{s,t}\vee \mathcal{F}_{t,u}=\mathcal{F}_{s,u}$.
        \item $\mathcal{F}$ is generated by the union of all $\mathcal{F}_{s,t}$ ($-\infty<s<t<\infty$).
        \item $\theta_0=\mathrm{Id}$, and $\theta_h \theta_k = \theta_{h+k}$ for all $h,k\in \mathbb{R}.$
        \item If $A\in \mathcal F_{s,t}$ and $h\in\Bbb R$, then $\theta_h(A)\in \mathcal F_{s+h,t+h}$, and $\mathbb P(\theta_h(A)) = \mathbb P(A).$
    \end{enumerate}
\end{defn} This definition of noise uses the language of stochastic flows, in the sense of Tsirelson \cite{Tsir}. We can relate this to other, possibly more familiar, definitions of noise. For example, if $B$ is a one-dimensional two-sided Brownian motion, the $\sigma$-algebra $\mathcal F_{s,t}$ would be generated by random variables of the form $B(u)-B(s)$ for $u\in [s,t]$, and $\theta_h B(u) := B(u+h)-B(h)$. More generally a Gaussian space-time white noise on $\mathbb{R}\times \mathbb{R}^d$, as discussed in Walsh's book on SPDEs \cite{Wal}, associates a Gaussian random variable to every Borel set, and defines a structure of a noise in a similar and straightforward fashion. These are both examples of white noise.

We can now give a definition of black noise. There are several equivalent definitions; Tsirelson \cite[Definition 7a1]{Tsir} uses the notion of a stable $\sigma$-algebra to define white noise and black noise. The definition we use in this paper is more direct. 

\begin{defn}\label{d:bn}
Let $(\Omega, (\mathcal F_{s,t})_{s<t}, \mathbb{P}, (\theta_h)_{h\in \mathbb{R}})$ be a noise. A random variable $F:\Omega\to \mathbb{R}$ is called \textit{linear} if 
\begin{align*}
    \mathbb{E}\left[F|\mathcal F_{s,t}\right] + \mathbb{E}\left[F|\mathcal F_{t,u}\right] = \mathbb{E}\left[F|\mathcal F_{s,u}\right],\;\;\;\;\;\;\;\;a.s.  \;\;\;\;\;\; \mathrm{for\;\; all\;\;\;\;} s<t<u.
\end{align*}
We say that $(\Omega, (\mathcal F_{s,t})_{s<t}, \mathbb{P}, (\theta_h)_{h\in \mathbb{R}})$ is a \textit{white noise} if $\mathcal F$ is generated by linear random variables. Likewise, we say that $(\Omega, (\mathcal F_{s,t})_{s<t}, \mathbb{P}, (\theta_h)_{h\in \mathbb{R}})$ is a \textit{black noise} 
if $0$ is the only linear random variable.
\end{defn}

In general, white noise consists of Gaussian and Poissonian components, whereas black noise can be much more complex and admits no simple description. In resources which discuss abstract noise such as \cite{Tsir}, white noise is sometimes also called ``classical'' or ``stable.'' 
White noise and black noise do not form a dichotomy: there are noises that satisfy \cref{d:noise} that have both a ``white noise part'' and a ``black noise part," see \cite[Section 6]{Tsir}. Well-known examples of black noise include sticky and coalescing Brownian flows (see \cite[Section 7]{Tsir} and the foundational works \cite{lejan, HW09}), the scaling limit of critical planar percolation \cite{SS11}, and the Brownian web \cite{EF16}. In the following subsection, we will provide further discussion and examples of black noise.

\subsubsection{The KPZ Universality Class and the directed landscape} In this section, we define the directed landscape and several other important objects in the KPZ universality class. We begin by discussing the Airy process and Airy line ensemble.

\begin{defn}\label{d:ale}
    The Airy Line Ensemble is a collection of random functions $\left\{\mathcal{A}_{i}\right\}_{i\in\mathbb{N}}$ with law determined by the Airy kernel \eqref{e:airyker}. For any finite set $I\subset\mathbb{R}$, we define the point process $\{(x,\mathcal{A}_{i}(x))|x\in I\}$ on $I\times\mathbb{R}$ as the determinantal point process 
    with kernel $K(x_{1},t_{1};x_{2},t_{2})$ given by the extended Airy kernel
    \begin{align}\label{e:airyker}
        K(x_{1},t_{1};x_{2},t_{2}) & := \begin{cases}
            \int_{0}^{\infty}e^{-r(t_{1}-t_{2})}\mathrm{Ai}(x_{1}+r)\mathrm{Ai}(x_{2}+r)dr & t_{1}\geq t_{2} \\
            - \int_{-\infty}^{0}e^{-r(t_{1}-t_{2})}\mathrm{Ai}(x_{1}+r)\mathrm{Ai}(x_{2}+r)dr & t_{1}< t_{2} .
        \end{cases}
    \end{align}
    We refer to \cite[Section 4]{HKPV09} for a discussion of how a kernel $K$ uniquely determines the $n$-point correlation functions for all $n$, which in turn uniquely characterize a the law of the determinantal point process. In this formula, $\mathrm{Ai}(\cdot)$ is the Airy function,
    \begin{align*}
        \textrm{Ai}(x) & = \frac{1}{\pi}\lim_{b\to\infty}\int_{0}^{b}\cos{\left(\frac{t^{3}}{3} + xt\right)}dt.
    \end{align*}
    The parabolic Airy line ensemble is defined by $\mathcal{P}_{i}(x)=\mathcal{A}_{i}(x)-x^{2}$. We commonly refer to the top curve $\mathcal{P}_{1}(\cdot)$ as the parabolic Airy$_2$ process. 
\end{defn} 
The parabolic Airy line ensemble can also be defined as the unique line ensemble with a Brownian Gibbs resampling property and with the parabolic Airy$_2$ process as its top line. Existence of such a line ensemble was shown by Corwin and Hammond \cite{CH} and uniqueness was shown by Dimitrov and Matetski \cite{DM}. 

For a metric space $X$, we let $C(X)$ denote the space of continuous functions $X\to\mathbb R$, equipped with the topology of uniform convergence on compacts. For the choices of $X$ used in this paper, the space $C(X)$ is always separable and metrizable, hence Polish. We can now introduce the Airy sheet, a two-variable random function that can be viewed as the universal limit of the last passage times for pairs of points of models in the KPZ class.

\begin{defn} The Airy sheet is a random continuous function $\mathcal{S}:\mathbb{R}^{2}\to \mathbb{R}$, i.e., a $C(\mathbb{R}^{2})$-valued random variable, such that the following properties hold: 
\begin{enumerate}
    \item $\mathcal{S}(\cdot,\cdot)$ has the same law as $\mathcal{S}(\cdot +t,\cdot +t)$ for all $t\in\mathbb{R}$, as $C(\mathbb R^2)$-valued random variables. 
    \item $\mathcal{S}$ can be coupled with a parabolic Airy line ensemble so that $\mathcal{S}(0,\cdot)=\mathcal{P}_{1}(\cdot)$ and for all $(x,y,z)\in\mathbb{R}^{+}\times\mathbb{R}^{2}$, 
    \begin{align*}
        \lim_{k\to\infty} \mathcal{P}\left[\left(-\sqrt{\frac{k}{2x}},k\right)\to \left(z,1\right)\right]-\mathcal{P}\left[\left(-\sqrt{\frac{k}{2x}},k\right)\to\left(y,1\right)\right] = \mathcal{S}(x,z)-\mathcal{S}(x,y),
    \end{align*} 
    where $\mathcal{P}\left[\left(a,k\right)\to \left(b,1\right)\right]$ is the last passage time between points $(a,k)$ and $(b,1)$ on the Airy line ensemble,
    \begin{align*}
        \mathcal{P}[(a,k)\to(b,1)] :=\sup_{a=t_{k}<...<t_{0}=b}\sum_{i=1}^{k}\left(\mathcal{P}_{i}(t_{i-1})-\mathcal{P}_{i}(t_{i})\right).
    \end{align*} 
\end{enumerate}
\end{defn}
Dauvergne, Ortmann, and Vir\'ag \cite{dov} proved that the Airy sheet exists and that it is unique in law. In fact, it was later shown to be a deterministic functional of the Airy line ensemble \cite{dv2}. With these definitions in hand, we can discuss the directed landscape. As explained in \cite[Example 1.6]{dv2}, the directed landscape can be interpreted as a random directed spacetime metric of negative sign. We can think of the directed landscape as a universal limit of the metric defined by last passage times on a last passage model. We begin by defining the canonical probability space for the directed landscape. 

\begin{defn}\label{d:dirlprob}
We define the canonical probability space of the directed landscape $(\Omega^{\mathrm{DL}},\mathcal{F}^\mathrm{DL})$. Let $\mathbb{R}^{4}_{\uparrow}:=\{(x,s;y,t)\in\mathbb{R}^{4} | s<t\}$. Define the underlying probability space $\Omega^{\mathrm{DL}}=C(\mathbb{R}^4_\uparrow)$, equipped with its Borel sets from the topology of uniform convergence on compact sets. The canonical process on this space is the identity map $\mathcal L:C(\mathbb{R}^4_\uparrow)\to C(\mathbb{R}^4_\uparrow)$. The evaluation map at a point $(x,s,y,t)$ applied to $\mathcal{L}$ will be denoted in this paper as $\mathcal{L}_{s,t}(x,y)$. In this notation, we can define $\mathcal{F}^\mathrm{DL}$ as the $\sigma$-algebra generated by $$\mathcal F^\mathrm{DL}_{s,t}:=\sigma\left(\{\mathcal{L}_{a,b}(x,y):  s\le a<b\le t, (x,y)\in \mathbb{R}^2\}\right),\;\;\;\;\;\;\;\;-\infty<s<t<\infty.$$ 
\end{defn}
\begin{defn}\label{d:dl} 
    The directed landscape is a $C(\Bbb R^4_\uparrow)$-valued random variable $\mathcal L$ such that
\begin{enumerate}
    \item $\mathcal L_{s_j,t_j}$ are independent if $(s_j,t_j]$ are disjoint intervals.

    \item $\mathcal L_{s,t} \circ \mathcal L_{t,u} = \mathcal L_{s,u} $ for all $s<t<u$ where the ``metric composition" is defined for two functions $f,g:\mathbb{R}^2\to \mathbb{R}$ as $$(f\circ g)(x,z) = \sup_{y\in \mathbb{R}} f(x,y)+g(y,z).$$

    \item For all $s<t$, $\mathcal L_{s,t}$ has the same law as $R_{t-s}\mathcal L_{0,1}$ where $R_\epsilon f(x,y) := \epsilon^{1/3} f(\epsilon^{-2/3}x,\epsilon^{-2/3} y).$

    \item $\mathcal L_{0,1}$ is distributed as the Airy Sheet.
\end{enumerate}
\end{defn}
As an immediate consequence of this definition, 
the directed landscape defines a noise. 
\begin{lemm}\label{c:dlnoise} The directed landscape defines a noise $(\Omega^{\mathrm{DL}},\left(\mathcal{F}^{\mathrm{DL}}_{s,t}\right)_{s<t},\mathbb{P},\left(\theta_{h}^{\mathrm{DL}}\right)_{h})$ in the sense of \cref{d:noise}, where $(\theta_{h}^{\mathrm{DL}} \mathcal L)_{s,t}(x,y):= \mathcal L_{s+h,t+h}(x,y)$.
\end{lemm}
\begin{proof} Let $\Omega^{\mathrm{DL}}$ and $\mathcal{F}^{\mathrm{DL}}_{s,t}$ be as in \cref{d:dirlprob}. The measurable maps $\theta_{h}^{\mathrm{DL}}:\Omega^{\mathrm{DL}}\to\Omega^{\mathrm{DL}}$ clearly satisfy the group property $\theta_{h}^{\mathrm{DL}}\theta_{k}^{\mathrm{DL}} = \theta_{h+k}^{\mathrm{DL}}$. 
Properties $(1)$ and $(2)$ of \cref{d:dl} verify properties $(1)$ and $(2)$ of \cref{d:noise}, respectively. Properties $(4)$ and $(5)$ of \cref{d:noise} are satisfied directly by the definition of the map $\theta_{h}^{\mathrm{DL}}$. Finally, we note that $\mathcal{F}^{\mathrm{DL}}=\bigvee_{-\infty<s<t<\infty}\mathcal{F}^{\mathrm{DL}}_{s,t}$ by definition.
\end{proof}

We can now state our main theorem.

\begin{thm}\label{t:1} The noise associated to the directed landscape is a black noise in the sense of \cref{d:bn}. 
\end{thm}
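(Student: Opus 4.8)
The plan is to show that any linear random variable $F$ (in the sense of \cref{d:bn}) for the directed landscape noise must be almost surely constant, hence zero once we subtract its mean. The strategy exploits two features already visible in the setup: the scaling invariance from property (3) of \cref{d:dl}, and a mixing property under spatial shifts (the abstract already advertises a rate $\alpha(N)\le Ce^{-dN^3}$). The key conceptual point is that a linear functional $F$ is determined by a consistent family of ``increments'' over disjoint time intervals, and one should be able to represent it as a stochastic-integral-type object against the noise. For genuine white noise this produces a nonzero Gaussian; for the directed landscape we want to rule this out.

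Here is the sequence of steps I would carry out. First, I would reduce to the time interval $[0,1]$: by the telescoping/linearity identity, controlling linear functionals supported on $\mathcal F^{\mathcal L}_{0,1}$ suffices (a general linear $F$ restricts to one on each dyadic subinterval, and the martingale $\mathbb{E}[F\mid \mathcal F^{\mathcal L}_{-N,N}]$ converges to $F$). Second, I would use the metric composition law \eqref{e:metriccomp} together with property (3) to derive a self-similarity for the conditional expectation operators: decomposing $[0,1]$ into $n$ equal subintervals and using that $\mathcal L_{0,1}$ is recovered from the $n$ independent pieces $\mathcal L_{(k-1)/n,k/n}$ by iterated metric composition, a linear $F$ on $[0,1]$ decomposes as a sum $\sum_{k=1}^n F_k$ where $F_k=\mathbb E[F\mid \mathcal F^{\mathcal L}_{(k-1)/n,k/n}]$, the $F_k$ are independent, mean zero, and each $F_k$ has the law of a rescaled linear functional on a short interval. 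Third — and this is where the real work lies — I would show that the ``short-time'' linear functionals carry vanishing $L^2$ mass. The idea is that over a short time interval $[0,\epsilon]$, the rescaled directed landscape $R_\epsilon \mathcal L_{0,1}$, when probed on a fixed spatial window, spreads its spatial decorrelation over a region of width $\epsilon^{-2/3}$; combined with the $e^{-dN^3}$ spatial mixing, any linear functional must be approximable by functionals of far-separated, asymptotically independent spatial blocks, and a variance computation (using that a sum of $M$ nearly-independent mean-zero pieces has variance $\asymp M\cdot(\text{per-block variance})$, while the total is $O(1)$) forces the per-block contribution, and ultimately $\mathrm{Var}(F_k)$, to be $o(1/n)$. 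Summing the independent pieces then gives $\mathrm{Var}(F)=\sum_k \mathrm{Var}(F_k)=o(1)$ for every $n$, hence $\mathrm{Var}(F)=0$.

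The technical heart — and the step I expect to be the main obstacle — is making the spatial-mixing argument quantitative enough to beat the factor of $n$ (equivalently, $\epsilon^{-1}$ for $\epsilon=1/n$). A naive use of strong mixing only shows that spatially separated blocks are \emph{approximately} independent, which is not by itself enough; one needs a genuine \emph{super-additivity-of-variance lower bound} forcing a linear functional that ``feels'' a width-$\epsilon^{-2/3}$ region to have variance growing like the number of independent blocks unless it is degenerate, and then match this against the scaling-enforced upper bound. The cleanest route is probably to run the argument the way Tsirelson treats scaling limits of spin systems: show that for a linear $F$, the quantity $\mathrm{Var}(\mathbb E[F\mid\mathcal F^{\mathcal L}_{0,1},\ \text{spatial window }[-R,R]])$ stabilizes as $R\to\infty$ at rate controlled by $\alpha(\cdot)$, and then use that the same $F$ viewed at scale $\epsilon$ occupies window $\sim\epsilon^{-2/3}R_0\to\infty$, so the short-interval variance is essentially the full variance; feeding this into the $n$-fold decomposition yields $\mathrm{Var}(F)=n\cdot\mathrm{Var}(F_1)$ with $\mathrm{Var}(F_1)\to\mathrm{Var}(F)$, which is only consistent if $\mathrm{Var}(F)=0$. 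A subtlety to handle carefully is that metric composition is a $\max$/$\sup$ operation rather than addition, so one cannot literally add noise increments; the linearity hypothesis on $F$ (an additivity of conditional expectations, not of the field) is exactly what lets the argument go through, and I would be careful to use it in the form $\mathbb E[F\mid \mathcal F^{\mathcal L}_{s,u}]=\mathbb E[F\mid\mathcal F^{\mathcal L}_{s,t}]+\mathbb E[F\mid\mathcal F^{\mathcal L}_{t,u}]$ at each stage rather than trying to manipulate $\mathcal L$ itself.
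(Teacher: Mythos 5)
There is a genuine gap, and it sits exactly where you flagged ``the main obstacle.'' Two structural problems first. (i) For a linear random variable $F$, the decomposition $\mathbb E[F\mid\mathcal F^{\mathcal L}_{0,1}]=\sum_{k}F_k$ with $F_k=\mathbb E[F\mid\mathcal F^{\mathcal L}_{(k-1)2^{-n},k2^{-n}}]$ independent and mean zero gives $\sum_k\mathrm{Var}(F_k)=\mathrm{Var}(\mathbb E[F\mid\mathcal F^{\mathcal L}_{0,1}])$ \emph{exactly, for every} $n$; refining the partition cannot by itself produce decay, and scale invariance is no help either, since each $F_k$ is again a linear variable of a rescaled copy of the same noise (white noise is scale invariant too, and is not black). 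So any proof must inject a quantitative estimate for a \emph{dense class of non-linear observables} and then pass to linear variables via Tsirelson's criterion (\cref{p:bncondition}, \cref{c:bncondition}); this is what the paper does with the polynomial functionals $\prod_i\mathcal L_{0,1}(x_i,y_i)$ and the target bound $\mathrm{Var}(\mathbb E[F\mid\mathcal F^{\mathcal L}_{a,b}])\le C(b-a)^{1+\varrho}$ of \cref{c:airycondition}. (ii) The mechanisms you propose to beat the factor $n$ do not work as stated: a linear random variable has additivity over \emph{time} intervals only, so there is no decomposition into spatially localized, nearly independent blocks to which a super-additivity-of-variance argument could apply (the spatial direction carries no noise structure here -- that is precisely the open two-dimensional question in \cref{d:2dbl}); and the claim that ``the short-interval variance is essentially the full variance'' because the rescaled landscape occupies a window of width $\epsilon^{-2/3}$ conflates the spatial extent of the slice with how much of $F$ the slice determines -- in fact $\mathrm{Var}(\mathbb E[F\mid\mathcal F^{\mathcal L}_{0,\epsilon}])$ must be shown to be \emph{small} (of order $\epsilon^{1+\varrho}$), not comparable to $\mathrm{Var}(F)$.

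The missing quantitative input is the heart of the paper's argument. Writing $\mathbb E[\prod_i\mathcal L_{0,1}(x_i,y_i)\mid\mathcal F^{\mathcal L}_{a,b}]$ via metric composition as an integral over independent boundary profiles $f,g$ (sums of parabolic Airy$_2$ processes) of $f\circ\mathcal L_{a,b}\circ g$, the easy concentration bound $f\circ\mathcal L_{0,\epsilon}\circ g=\max(f+g)+O(\epsilon^{1/3})$ (\cref{varbound}) only yields $\mathrm{Var}\le C\epsilon^{2/3}$ per interval, which sums to $O(1)$ -- not enough. The gain comes from showing that for two \emph{independent} boundary pairs the near-argmax sets $\mathscr S(f+g,\epsilon^{1/3-\delta})$ and $\mathscr S(u+v,\epsilon^{1/3-\delta})$ are separated by at least $\epsilon^{2/3-\delta}$ except on an event of probability $C\epsilon^{2/5-4\delta}$ (\cref{crucial}, proved via Brownian-meander/Bessel comparisons and \cref{5.6}), on which separated event the strong spatial mixing of \cref{mixing} (rate $e^{-dT^{-2}D^3}$, itself proved by a geodesic-coupling construction, not merely asserted) kills the covariance. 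Multiplying $\epsilon^{2/3}\cdot\epsilon^{2/5}$ gives $(b-a)^{16/15-\delta}$, which beats the needed exponent $1$. Your proposal correctly anticipates the roles of scaling and cubic-exponential spatial mixing, but without an analogue of this argmax-separation estimate (or some other mechanism producing a strictly supercritical exponent for a dense class of observables), the argument does not close.
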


There are a few equivalent conditions for demonstrating that a noise is a black noise. We will use the one stated in \cref{c:bncondition}, which involves a variance bound that resembles a noise sensitivity condition with respect to all functions in $L^{2}(\Omega)$. Noise sensitivity was first studied in the context of Boolean functions in the foundational work of \cite{BKS99}, which in turn built earlier papers such as \cite{KKL88}. This theory of noise sensitivity was then applied to problems in physics and computer science, in works such as \cite{ST99,SS10,MOO10}. Noise sensitivity is closely related to black noise. In fact, another way to understand the concept of black noise is to consider it as a system with respect to which all observables (functionals) are noise sensitive. The latter definition of black noise can be made precise through the spectral sample as defined in \cite[Definition XII.61]{GS}.

It would be interesting to explore these noise sensitivity properties further in the context of KPZ-related models. Ganguly and Hammond \cite{GH20}, for instance, study chaos and stability of Brownian last passage percolation (LPP), using the notion of the overlap of perturbed geodesics. The works of \cite{Ahlberg2023, Ahlberg2024, Ahlberg3} then obtain some chaos-and-stability results for the geodesic paths in LPP models with more general weights. However, despite this progress, it remains an open problem to prove quantitative noise sensitivity results for the last passage value itself (as opposed to the geodesic) in LPP models. 

While we do not study more specific noise sensitivity phenomena in this paper, we do obtain a few corollaries of \cref{t:1}. The next result, \cref{t:2}, demonstrates the independence of the directed landscape from any white noise defined on the same probability space. For the purpose of this text, a Gaussian white noise on an open subset $U\subset \mathbb{R}^d$ is a family of mean-zero jointly Gaussian random variables $\{\xi(f)\}_{f\in L^2(U)} $ all defined on the same probability space, satisfying $\mathbb E[\xi(f)\xi(g)] = (f,g)_{L^2(U)}.$ We often write $\xi(f):= \int_U f(x)\xi(dx)$ or similar. A Gaussian space-time white noise is just a special case $U=\mathbb R^2,$ where the first coordinate is interpreted as ``time" and the second as ``space."

\begin{thm}\label{t:2}
    Let $(\Omega, (\mathcal F_{s,t})_{s<t},\mathbb P, (\theta_h)_h)$ be a noise in the sense of \cref{d:noise}. Let $\mathcal L$ and $\xi$ respectively be a copy of a directed landscape and a Gaussian space-time white noise, defined on this same probability space, which together generate the entire $\sigma$-algebra $\mathcal F:=\bigvee_{s<t} \mathcal F_{s,t}$, and which are both adapted to the filtration $(\mathcal F_{s,t})_{s<t}$, in the sense that
    \begin{itemize}
        \item $\int_{\mathbb{R}^2} \phi(u,x) \xi(du,dx)$ is $\mathcal F_{s,t}$-measurable if $\phi$ is supported on $[s,t]\times\mathbb{R}.$
    
        \item $\mathcal L_{a,b}(x,y)$ is $\mathcal F_{s,t}$-measurable for all $s\le a<b\le t$ and $x,y\in\mathbb{R}.$
    \end{itemize} Then $\mathcal{L}$ and $\xi$ are independent under $\mathbb P$.
\end{thm}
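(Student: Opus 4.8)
The plan is to reduce everything to two uses of the black-noise property of $\mathcal L$: the bare definition (which handles the independence) and the quantitative (variance/stability) form of \cref{c:bncondition} (which handles the claim about linear random variables). Write $N^{\mathcal L}=(\mathcal F^{\mathcal L}_{s,t})$ for the sub-noise generated by $\mathcal L$ and $N^{\xi}=(\mathcal F^{\xi}_{s,t})$ for that generated by $\xi$, where $\mathcal F^{\xi}_{s,t}:=\sigma(\xi(\phi):\phi\text{ supported on }[s,t]\times\R)$; by the adaptedness hypotheses these are sub-$\sigma$-fields of $\mathcal F_{s,t}$, $N^{\mathcal L}$ is a copy of the directed-landscape noise (hence black, by \cref{t:1}) and $N^{\xi}$ is a Gaussian white noise. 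Two preliminary facts: first, $\mathcal F_{s,t}=\mathcal F^{\mathcal L}_{s,t}\vee\mathcal F^{\xi}_{s,t}$ for every $s<t$ — this follows from the general principle that a sub-noise of $(\mathcal F_{s,t})$ which globally generates $\mathcal F$ must coincide with $(\mathcal F_{s,t})$ at every level, which in turn follows from the Hilbert-space fact ``$\bigotimes_iV_i=\bigotimes_iW_i$ with $V_i\subseteq W_i$ $\Rightarrow$ $V_i=W_i$'' applied to the two temporal tensor factorisations of $L^2(\mathcal F)$ along $\{(-\infty,s),(s,t),(t,\infty)\}$. Second, a direct computation with the adaptedness and independence axioms gives $\Ex[\xi(f)\mid\mathcal F_{s,t}]=\xi(f\ind_{[s,t]\times\R})$, so every element of the first Wiener chaos of $\xi$ is a linear random variable of $\mathcal F$.

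Next I would prove the independence statement. The key step is: for any linear random variable $F$ of $\mathcal F$, $\Ex[F\mid\mathcal F^{\mathcal L}]$ is a linear random variable of $N^{\mathcal L}$; indeed, writing $\psi_{s,t}:=\Ex[F\mid\mathcal F^{\mathcal L}_{s,t}]$ one checks $\psi_{s,t}+\psi_{t,u}=\psi_{s,u}$ from linearity of $F$ and joint independence of $\mathcal F_{-\infty,s},\mathcal F_{s,t},\mathcal F_{t,\infty}$. Since $N^{\mathcal L}$ is black, \cref{t:1} forces $\Ex[F\mid\mathcal F^{\mathcal L}]=\Ex[F]$; taking $F=\xi(f)$ gives $\Ex[\xi(f)\mid\mathcal F^{\mathcal L}]=0$, hence $\Ex[\xi(f)\mid\mathcal F^{\mathcal L}_I]=0$ for every interval $I$. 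Fixing $s<t$ and taking $g_1,\dots,g_n$ with pairwise disjoint time-supports $I_1,\dots,I_n\subseteq(s,t)$, factorisation of conditional expectation over independent cells gives $\Ex[\xi(g_1)\cdots\xi(g_n)\mid\mathcal F^{\mathcal L}_{s,t}]=\prod_i\Ex[\xi(g_i)\mid\mathcal F^{\mathcal L}_{I_i}]=0$; since such off-diagonal products span $L^2(\mathcal F^{\xi}_{s,t})$ modulo constants, $\mathcal F^{\xi}_{s,t}\perp\mathcal F^{\mathcal L}_{s,t}$. Globalising via a bi-infinite partition of $\R$, using joint independence of the blocks and the within-block independence just proved, yields $\mathcal F^{\xi}\perp\mathcal F^{\mathcal L}$: this is the assertion that $\mathcal L$ and $\xi$ are independent.

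Finally, for the claim that every linear random variable is $\xi$-measurable: by the two facts above $\mathcal F$ is the product noise $N^{\mathcal L}\otimes N^{\xi}$, so $\Ex[\cdot\mid\mathcal F_{s,t}]=\Ex[\cdot\mid\mathcal F^{\mathcal L}_{s,t}]\otimes\Ex[\cdot\mid\mathcal F^{\xi}_{s,t}]$ and $L^2(\mathcal F)=\bigoplus_{n\ge0}L^2(\mathcal F^{\mathcal L})\otimes\mathcal H_n(\xi)$ via the Wiener--It\^o decomposition of $L^2(\mathcal F^{\xi})$. Given a linear $F$ of mean zero, put $\tilde F:=F-\Ex[F\mid\mathcal F^{\xi}]$; by the same computation as before $\Ex[F\mid\mathcal F^{\xi}]$ is linear, so $\tilde F$ is linear with $\Ex[\tilde F\mid\mathcal F^{\mathcal L}]=\Ex[\tilde F\mid\mathcal F^{\xi}]=0$, and it suffices to show $\tilde F=0$. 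Decompose $\tilde F=\sum_{n\ge1}\tilde F^{(n)}$ with $\tilde F^{(n)}\in L^2_0(\mathcal F^{\mathcal L})\otimes\mathcal H_n(\xi)$; since $\Ex[\cdot\mid\mathcal F_{s,t}]$ preserves the $\xi$-chaos degree (here is where $\mathcal F_{s,t}=\mathcal F^{\mathcal L}_{s,t}\vee\mathcal F^{\xi}_{s,t}$ is used), each $\tilde F^{(n)}$ is itself linear. For $n\ge2$, matching the locations of the $\xi$-particles in $\Ex[\tilde F^{(n)}\mid\mathcal F_{s,u}]=\Ex[\tilde F^{(n)}\mid\mathcal F_{s,t}]+\Ex[\tilde F^{(n)}\mid\mathcal F_{t,u}]$ forces the kernel of $\tilde F^{(n)}$ to be supported on the time-diagonal, so $\tilde F^{(n)}=0$. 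For $n=1$, represent $\tilde F^{(1)}$ by a Hilbert--Schmidt map $\gamma\colon L^2(\R^2)\to L^2_0(\mathcal F^{\mathcal L})$ (``$\tilde F^{(1)}=\int\gamma(d\eta)\,\xi(d\eta)$''); linearity, the Pythagorean additivity of $\|\Ex[\tilde F^{(1)}\mid\mathcal F_I]\|^2$, and contractivity of conditional expectation force $\gamma(f)$ to be $\mathcal F^{\mathcal L}_I$-measurable whenever the time-support of $f$ lies in $I$. Then for $\phi\in L^2(\R)$ and any interval $(a,b)$, the sum $\sum_k\|\Ex[\gamma(\ind_{(a,b)}\otimes\phi)\mid\mathcal F^{\mathcal L}_{(t_{k-1},t_k)}]\|^2$ equals $\|\gamma(\ind_{(a,b)}\otimes\phi)\|^2$ for every partition (the summands are the localised increments $\gamma(\ind_{(t_{k-1},t_k)}\otimes\phi)$), so the black-noise criterion \cref{c:bncondition} forces $\gamma(\ind_{(a,b)}\otimes\phi)=0$, whence $\gamma\equiv0$ and $\tilde F=0$. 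Since all linear random variables then lie in $\bigcup_n\mathcal H_n(\xi)\subseteq L^2(\mathcal F^{\xi})$, they are $\xi$-measurable.

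The main obstacle is the $n=1$ analysis of the last paragraph — excluding a ``genuinely mixed'' linear random variable — because this is the only place where the quantitative form of blackness (rather than its bare definition) is needed; the $n\ge2$ cases and the whole independence argument are comparatively soft. A secondary, purely technical point is making the identification $\tilde F^{(1)}=\int\gamma\,d\xi$ and the ``matching of $\xi$-particle locations'' rigorous, which requires some standard but careful bookkeeping with multiple Wiener integrals.
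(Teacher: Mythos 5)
Your argument for the independence assertion is essentially the paper's own: you show $\mathbb{E}[\xi(f)\mid\mathcal F_{s,t}]=\xi(f\mathbf 1_{[s,t]\times\mathbb{R}})$, deduce that $\mathbb{E}[\xi(f)\mid\mathcal F^{\mathcal L}]$ is a linear random variable of the landscape sub-noise and hence vanishes by \cref{t:1}, and then conclude via products over disjoint time intervals plus a density statement. The one ingredient you assert rather than prove — that constants together with off-diagonal products $\xi(g_1)\cdots\xi(g_n)$ span $L^2(\sigma(\xi))$ — is precisely what the paper spends its Step 4 on (via discrete approximation of iterated integrals and Hermite polynomials); your implicit Wiener--It\^o justification (off-diagonal tensor kernels are dense because the time-diagonal is Lebesgue-null) is a correct and arguably cleaner alternative, but it should be recorded. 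Where you genuinely diverge is the second assertion, that every ($L^2$) linear random variable is $\xi$-measurable: the paper's written proof stops at independence, while you give an argument. Your route is sound: the levelwise identity $\mathcal F_{s,t}=\mathcal F^{\mathcal L}_{s,t}\vee\mathcal F^{\xi}_{s,t}$ does follow by your sub-factorization argument (equivalently, $\mathbb{E}[f_1f_2f_3\mid\mathcal F_{s,t}]=\mathbb{E}[f_1]\,f_2\,\mathbb{E}[f_3]$ for bounded functions of the three sub-blocks, plus density), conditioning then preserves the $\xi$-chaos grading, the mixed components with $n\ge 2$ die by the particle-location argument (after letting $s\to-\infty$, $u\to\infty$ to remove the residual conditioning $\mathbb{E}[\,\cdot\mid\mathcal F^{\mathcal L}_{s,u}]$ on the landscape factor), and the $n=1$ component dies as you say. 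In fact that last step is simpler than your version: once linearity forces $\gamma(h)$ to be $\mathcal F^{\mathcal L}_I$-measurable whenever $h$ is time-supported in $I$, one has $\mathbb{E}[\gamma(\mathbf 1_{(a,b)}\otimes\phi)\mid\mathcal F^{\mathcal L}_{s,t}]=\gamma(\mathbf 1_{(a,b)\cap(s,t)}\otimes\phi)$ (the off-interval pieces are mean-zero and independent), so $\gamma(\mathbf 1_{(a,b)}\otimes\phi)$ is itself a linear random variable of the landscape noise and vanishes directly by \cref{t:1}, with no need for the quantitative criterion \cref{c:bncondition}. The remaining items you flag (faithful kernel representation, grading preservation, diagonal bookkeeping) are indeed routine, so I see no gap — just make the asserted density and the limiting step in the $n\ge2$ case explicit.
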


The directed landscape is not unique in this respect. The proof will show that one could replace $\mathcal L$ by any space-time field that is known to be a black noise. Likewise, the space-time white noise can be replaced by any Gaussian noise that is white in time, with arbitrary spatial covariance structure. 

This theorem may be surprising, since it illustrates a contrast with many prelimiting models in the KPZ class, in which case one often has $\mathcal L_N=F_N(\xi_N)$ for some $(\mathcal F_{s,t})$-adapted path functionals $F_N$ of the discrete driving noise $\xi_N$. As we take the limit $N\to \infty$, \cref{t:2} shows that this dependence no longer holds. In fact, the two fields $\mathcal L_N$ and $\xi_N$ become independent. The following corollary illustrates this phenomenon in the context of the KPZ equation.

\begin{cor}[The environment decouples from the height process when scaling to the landscape] \label{c:nokpz} Let $\xi$ be a Gaussian space-time white noise on $\mathbb R_+\times \mathbb R$, and let $H^\varepsilon(t,x)$ denote the Hopf-Cole solution of the KPZ equation $\partial_t H^\varepsilon = \frac14(\partial_xH^\varepsilon)^2 + \frac{\varepsilon}4 \partial_x^2 H^\varepsilon +\varepsilon^{1/2} \xi + \frac1{12}$, started from any deterministic sequence of upper semicontinuous initial data $H^\varepsilon(0,x)$ satisfying $H^\varepsilon(0,x) \leq C(1+|x|)$ for some $C>0$, and converging as $\varepsilon\to 0$ to some upper semicontinuous profile $H_0(x)$ in the sense of local hypograph convergence \cite[Section 3.1]{MQR}. The pair $(H^\varepsilon,\xi)$ converges in law as $\varepsilon\to 0$, in the topology of $C((0,\infty)\times \mathbb{R})\times \mathcal S'(\mathbb{R}^2).$ The limit is given by the law of $(\mathfrak h,\xi)$, where  $\mathfrak h$ is distributed as the KPZ fixed point started from $H_0(x)$, and $\xi$ is an \textbf{independent} space-time white noise.
\end{cor}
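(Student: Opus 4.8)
\emph{Proof idea.} The plan is to deduce \cref{c:nokpz} from \cref{t:2} by placing the limiting pair $(\mathfrak h,\xi)$ inside an ambient noise that also carries a directed landscape. The first step is to introduce the \emph{prelimiting landscape}: for $\varepsilon>0$ and $0\le s<t$, $x,y\in\mathbb R$, let $\mathcal L^{(\varepsilon)}_{s,t}(x,y)$ be the centered and $1\!:\!2\!:\!3$-rescaled Hopf--Cole solution of the $\varepsilon^{1/2}\xi$-driven KPZ equation, started at time $s$ from a narrow wedge at $x$ and evaluated at $(t,y)$. By construction via the stochastic heat equation, $\mathcal L^{(\varepsilon)}_{a,b}(x,y)$ is $\sigma(\xi|_{[a,b]\times\mathbb R})$-measurable; and by Quastel--Sarkar \cite{qs} together with Wu \cite{xuan}, $\mathcal L^{(\varepsilon)}\to\mathcal L$ in law in $C(\mathbb R^4_\uparrow)$ as $\varepsilon\to0$ for a directed landscape $\mathcal L$. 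Since the stochastic heat equation is linear in its initial data, $H^\varepsilon$ is recovered from $H^\varepsilon(0,\cdot)$ and $\mathcal L^{(\varepsilon)}$ through a Laplace-type integral whose $1\!:\!2\!:\!3$ asymptotics, under the stated growth and hypograph hypotheses, converge to the variational formula $\sup_x\big(H_0(x)+\mathcal L_{0,t}(x,y)\big)$; this is exactly the mechanism by which the KPZ equation from general initial data converges to the KPZ fixed point in \cite{qs}.

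Next I would take a subsequence $\varepsilon_n\downarrow0$ realizing $(H^{\varepsilon_n},\xi)\to(\mathfrak h,\xi)$; since $\{\mathcal L^{(\varepsilon_n)}\}$ is tight in $C(\mathbb R^4_\uparrow)$, I can refine to a further subsequence along which $(\mathcal L^{(\varepsilon_n)},H^{\varepsilon_n},\xi)\to(\mathcal L,\mathfrak h,\xi)$ jointly, with $\mathcal L$ a directed landscape. Passing the Laplace representation to the limit --- using $H^{\varepsilon_n}(0,x)\le C(1+|x|)$ and the parabolic decay of $\mathcal L$ to confine the maximizer to a bounded set and invoke the estimates of \cite{qs} --- yields $\mathfrak h(t,y)=\sup_x\big(H_0(x)+\mathcal L_{0,t}(x,y)\big)$ almost surely. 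In particular $\mathfrak h$ is a deterministic measurable functional of $\mathcal L$, and by \cite{qs,nqr} it has the law of the KPZ fixed point started from $H_0$.

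The third step builds the ambient noise. After adjoining to $\xi$ an independent white noise on $\mathbb R_-\times\mathbb R$ (so the time parameter ranges over all of $\mathbb R$, using that both $\mathcal L$ and white noise are stationary under time shifts), I would set
\[
\mathcal F_{s,t}:=\sigma\big(\mathcal L_{a,b}(x,y):\ s\le a<b\le t,\ x,y\in\mathbb R\big)\ \vee\ \sigma\big(\xi|_{[s,t]\times\mathbb R}\big)
\]
for $s<t$, with $\theta_h$ the common time shift. Axioms (3)--(5) of \cref{d:noise} are immediate; axiom (2), $\mathcal F_{r,s}\vee\mathcal F_{s,t}=\mathcal F_{r,t}$, follows from the metric composition law $\mathcal L_{a,b}=\mathcal L_{a,s}\circ\mathcal L_{s,b}$ for $a<s<b$ together with additivity of the white noise; and axiom (1) is inherited from the prelimit, where the restriction of $(\mathcal L^{(\varepsilon_n)},\xi)$ to $[s,t]$ is a function of $\xi|_{[s,t]\times\mathbb R}$, hence independent of its restriction to $[t,u]$, with independence surviving the joint weak limit (it suffices to test finite-dimensional marginals of $\mathcal L$ against finitely many Gaussian functionals of $\xi$). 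Then $(\Omega,(\mathcal F_{s,t})_{s<t},\mathbb P,(\theta_h)_h)$ is a noise to which $\mathcal L$ and $\xi$ are adapted and which they generate, so \cref{t:2} gives $\mathcal L\perp\xi$; since $\mathfrak h$ is a functional of $\mathcal L$, also $\mathfrak h\perp\xi$, which is the claim.

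The main obstacle is passing the noise structure through the weak limit: verifying increment-independence (axiom (1)) and the generation/splitting property (axiom (2)) for the limiting pair $(\mathcal L,\xi)$, and --- on the prelimit side --- showing that the Hopf--Cole (Laplace-type) representation of $H^\varepsilon$ in terms of $\mathcal L^{(\varepsilon)}$ converges jointly with $\mathcal L$ to the KPZ-fixed-point variational formula. Both require quantitative input from \cite{qs,xuan} to localize maximizers and to upgrade one-marginal convergence to the joint statements used above.
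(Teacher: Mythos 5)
Your proposal is correct and follows essentially the same route as the paper: you introduce the narrow-wedge four-parameter KPZ field (the paper's $\mathscr H^\varepsilon$), pass to a joint limit point $(\mathcal L,\mathfrak h,\xi)$, use the variational formula from \cite{qs,nqr,xuan} to make $\mathfrak h$ a measurable functional of $\mathcal L$, and invoke \cref{t:2} after noting that the noise axioms survive the weak limit. Your extra care about extending the noise to negative times and re-deriving the variational formula via Laplace asymptotics are refinements of details the paper treats by citation, not a different argument.
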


Before proving the \cref{c:nokpz}, we note that 
the local hypograph convergence for the sequence of initial data is a weaker assumption than uniform convergence on compacts if all profiles are continuous. We also remark that the scaling of $H^\varepsilon$ is equivalent (in distribution, but not pathwise) to the 1:2:3 scaling of the height field given by $(t,x)\mapsto \varepsilon^{1/2} H(\varepsilon^{-3/2}t, \varepsilon^{-1}x)$, as noted in \cite{qs}. 

\begin{proof}
    We define a four-parameter field $\mathscr H^\varepsilon_{s,t}(x,y)$ for each $\varepsilon>0$ such that $(t,y)\mapsto \mathscr H^\varepsilon_{s,t}(x,y)$ is the Hopf-Cole solution of the same equation solved by $H^\varepsilon$, with the same realization of the noise $\xi$, started at time $s$ from initial condition given by a narrow wedge at position $x$ (meaning that the exponential is started from a Dirac mass centered at location $x$). The main results of \cite{qs,xuan} imply that this four-parameter field $\mathscr H^\varepsilon$ converges in law (in the topology of $C(\mathbb{R}^4_\uparrow)$) to the directed landscape. 
    
    We note that joint limit points of the pair $(\mathscr H^\varepsilon,\xi)$ do exist on $\mathcal S'(\Bbb R^2) \times C(\Bbb R^4_\uparrow)$, because each coordinate is tight, which implies that the pair is tight. If we can verify the conditions of \cref{t:2}, it would imply that any joint limit point of $(\mathscr H^\varepsilon,\xi)$ must be given by the law of $(\mathcal L,\xi)$ where $\mathcal L$ is a directed landscape \textit{independent} of $\xi$. To verify those conditions, one can define the $\sigma$-algebras $\mathcal F_{s,t}$ on the canonical space $\mathcal S'(\Bbb R^2) \times C(\Bbb R^4_\uparrow)$ to be precisely the ones generated by random variables of the two forms given by the two bullet points in \cref{t:2}, namely $\mathcal F_{s,t} = \sigma( \{ \xi(\phi)| \mathrm{supp}(\phi) \subset [s,t]\times \Bbb R\} \cup \{\mathcal L_{a,b} | s\le a<b \le t\})$. We need to verify that the noise property holds with respect to the limit point, more precisely that $\mathcal F_{s,t}$ and $\mathcal F_{t,u}$ are independent with respect to the limit point for $s<t<u$. This property holds in the $\varepsilon\to 0$ limit simply because it holds trivially in the prelimit. 
    Thus, the conditions of \cref{t:2} have been verified.
    
    Now we consider any joint limit point $(\mathfrak h,\mathcal L,\xi)$ of the triple $(H^\varepsilon,\mathscr H^\varepsilon,\xi)$. On one hand, we already know that $\mathcal L$ and $\xi$ must be independent under this limit point. On the other hand, the results of \cite{nqr,xuan} imply that $\mathfrak h(t,y) = \sup_{x\in\mathbb{R}} H_0(x) + \mathcal L_{0,t}(x,y) .$ Therefore $\mathfrak h$ is $\mathcal L$-measurable, which implies that it is independent of $\xi.$ Projecting onto the first and third marginals $\mathfrak h$ and $\xi$ gives the result.
\end{proof}
While \cref{c:nokpz} was formulated purely in terms of the KPZ equation, the result of \cref{t:2} implies something much stronger: in any system for which there is a height process which converges to the directed landscape, the underlying field of environment variables necessarily decouples from the height process under that limit. In the case of the ASEP, the environment consists of the Poisson clocks that result in executed jumps for the system. In the case of last passage percolation or directed polymers, the underlying environment consists of the independent and identically distributed (IID) weights through which the last passage paths travel. \cref{t:2} implies that for such systems, the environment and the height process asymptotically decouple as they respectively scale to the white noise and to the landscape \cite{ACH24, dv2}. 

These decoupling and sensitivity phenomena illustrate a contrast between the intermediate-disorder regime and the strong-disorder regime for $(1+1)$-dimensional systems in the KPZ universality class. For example, there are many ``weak KPZ" scaling limit results where particle systems, SPDEs, or directed polymers are known to converge to the KPZ equation \cite{BG97,AKQ14, DT16, DGP, CST18, GJ14, HQ, Yang} as opposed to the KPZ fixed point. In these results, it is straightforward to show from the methods of those papers that the field of environment variables and the height process converge \textit{jointly} in law to the driving noise of the limiting KPZ equation. This is always the case in the presence of intermediate disorder scaling or weakly asymmetric scaling of models in the KPZ university class. There are a few intuitive reasons why the same phenomenon does \textit{not} happen when scaling to the directed landscape, i.e., in the \textit{strong} disorder or \textit{fixed} asymmetry regime. At the level of last passage percolation or directed polymers, the height process under strong disorder scaling only sees the largest variables in the environment field, whereas most of the smaller weights are ignored. The height process thus ignores ``most" of the environment under the strong disorder scaling, with only a few special exceptions. In contrast, all of the noise variables contribute to some extent for the intermediate disorder scaling, as individual large contributions have a less dramatic effect on the overall height process.

We discuss another, perhaps more precise, conjecture about why the strong disorder scaling is so different from the intermediate disorder regime with regard to retaining the memory of the microscopic environment. We work through the lens of \cref{c:nokpz}. Recall that the noise $\xi$ generates a Fock space structure for $L^2$ of the underlying probability space, and the layers are called the \textit{homogeneous chaoses} over $\xi$ \cite{nualart}. 

\begin{conj}
    With $H^\varepsilon$ as in \cref{c:nokpz}, we conjecture that the $L^2$-mass of $H^\varepsilon(1,0)$ is concentrated in the homogeneous chaos of order $\varepsilon^{-1/2}$ over $\xi$. 
\end{conj}

And in fact, we conjecture that this ``mass escape'' phenomenon will not just occur for the height process itself, but also for every bounded measurable functional of the height process. The ``mass escape'' phenomenon is another way to formulate noise sensitivity of a sequence of observables \cite[Proposition 2.2]{Gar}. Very precise bounds for the rate of escape of the Fourier-Walsh mass have been obtained in the case of a different model given by the crossing probabilities of a box in Boolean percolation, see \cite{BKS99, SS10, GPS, TV1} or the textbook \cite{GS}. 

As another corollary of \cref{t:2}, we show that the directed landscape is not an SPDE driven by space-time Gaussian white noise (in fact, any Gaussian noise that is white in time would not be sufficient to generate the landscape). To formalize the notion of being an SPDE, we note that any probabilistic strong solution of an SPDE defines an adapted path functional on the probability space of the white noise. We therefore show that no such adapted functional can exist.

\begin{cor}[The directed landscape is not an SPDE]\label{c:nospde} Let $\xi$ be a Gaussian space-time white noise on $\mathbb{R}^2,$ defined on some probability space $(\Omega, \mathcal F^\xi,{\mathbb{P}})$. For $s<t$ let $\mathcal F_{s,t}^\xi$ denote the $\sigma$-algebra generated by the random variables $\int_{\mathbb{R}^2} \phi(u,y)\xi(du,dy)$ as $\phi$ ranges over all smooth functions of support contained in $[s,t]\times \mathbb{R}.$ Then there does not exist any random variable $\mathcal L$ taking values in $C(\mathbb{R}^4_\uparrow)$, defined on the same probability space $\Omega$, satisfying the following properties:
\begin{itemize}
    \item $\mathcal L$ is distributed as the directed landscape.
    \item $\mathcal L_{a,b}(x,y)$ is $\mathcal F^\xi_{s,t}$-measurable whenever $s\leq a<b\le t$ and $x,y\in \mathbb{R}.$
\end{itemize}
\end{cor}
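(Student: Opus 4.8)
\textbf{Proof proposal for \cref{c:nospde}.}

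The plan is to derive \cref{c:nospde} as a direct consequence of \cref{t:2} by a proof by contradiction. Suppose such a random variable $\mathcal L$ does exist on the probability space $(\Omega,\mathcal F^\xi,\mathbb P)$ of the white noise $\xi$. The first step is to enlarge the picture slightly so that the hypotheses of \cref{t:2} are met: we need a noise $(\Omega,(\mathcal F_{s,t})_{s<t},\mathbb P,(\theta_h)_h)$ into which both $\mathcal L$ and $\xi$ fit as adapted fields generating the full $\sigma$-algebra. The natural choice is simply $\mathcal F_{s,t}:=\mathcal F^\xi_{s,t}$, the filtration generated by $\xi$ itself, with $\theta_h$ the usual time-shift of the white noise. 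One must check that this is a noise in the sense of \cref{d:noise}: independence of $\mathcal F^\xi_{s,t}$ and $\mathcal F^\xi_{t,u}$, the factorization $\mathcal F^\xi_{r,s}\vee\mathcal F^\xi_{s,t}=\mathcal F^\xi_{r,t}$, and the covariance/shift properties are all standard and well known for Gaussian white noise (this is exactly the ``white noise'' example discussed after \cref{d:noise}). With this structure in place, the hypothesis that $\mathcal L_{a,b}(x,y)$ is $\mathcal F^\xi_{s,t}$-measurable whenever $s\le a<b\le t$ is precisely the adaptedness of $\mathcal L$ required by the second bullet of \cref{t:2}, and the adaptedness of $\xi$ to its own filtration is automatic. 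The only remaining point is that $\mathcal L$ and $\xi$ together generate $\mathcal F=\bigvee_{s<t}\mathcal F_{s,t}$ — but here $\xi$ alone already generates it, so this is immediate.

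The second step is to invoke \cref{t:2}, which now applies verbatim and yields that $\mathcal L$ and $\xi$ are independent under $\mathbb P$. The third and final step is to extract the contradiction. Since by hypothesis $\mathcal L$ is $\mathcal F^\xi$-measurable (each $\mathcal L_{a,b}(x,y)$ is $\mathcal F^\xi_{s,t}$-measurable for suitable $s,t$, and $C(\mathbb R^4_\uparrow)$ is Polish so $\mathcal L$ is measurable with respect to the $\sigma$-algebra generated by these evaluations, which is contained in $\mathcal F^\xi$), the field $\mathcal L$ is at once independent of $\xi$ and a measurable function of $\xi$. A random variable that is independent of the $\sigma$-algebra it is measurable with respect to must be almost surely constant; hence $\mathcal L$ would be a deterministic element of $C(\mathbb R^4_\uparrow)$. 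But the directed landscape is genuinely random — for instance $\mathcal L_{0,1}(0,0)$ has the (nondegenerate) Tracy--Widom GUE distribution up to the standard affine normalization, so it is not a.s.\ constant. This contradiction shows that no such $\mathcal L$ exists.

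I do not expect any serious obstacle here: the entire content has been front-loaded into \cref{t:1} and \cref{t:2}, and \cref{c:nospde} is a formal corollary. The only points requiring a sentence of care are (i) verifying that the white-noise filtration $(\mathcal F^\xi_{s,t})$ genuinely satisfies all five axioms of \cref{d:noise} — in particular the generation axiom $\mathcal F^\xi_{r,s}\vee\mathcal F^\xi_{s,t}=\mathcal F^\xi_{r,t}$, which follows because $\xi$ restricted to $[r,t]\times\mathbb R$ decomposes as an orthogonal sum of its restrictions to $[r,s]\times\mathbb R$ and $[s,t]\times\mathbb R$ — and (ii) confirming that $\mathcal L$ being $\mathcal F^\xi$-measurable as a $C(\mathbb R^4_\uparrow)$-valued random variable really does follow from measurability of all its point evaluations, which holds since the Borel $\sigma$-algebra of $C(\mathbb R^4_\uparrow)$ under uniform-on-compacts convergence is generated by the coordinate maps (the space being Polish). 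Neither of these is deep; they are the kind of routine measure-theoretic bookkeeping one can dispatch in a few lines.
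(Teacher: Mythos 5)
Your proposal is correct and is essentially the paper's own argument: the paper likewise deduces \cref{c:nospde} immediately from \cref{t:2}, noting that $\mathcal L$ cannot be both adapted to the white-noise filtration and independent of $\xi$, since that would make $\mathcal L$ independent of itself and hence deterministic. The extra bookkeeping you supply (the white-noise filtration being a noise, and measurability of the $C(\mathbb{R}^4_\uparrow)$-valued variable from its point evaluations) is routine and consistent with what the paper leaves implicit.
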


The proof is immediate from \cref{t:2}, as $\mathcal L$ cannot be simultaneously independent of $\xi$ and adapted to its filtration: this would imply $\mathcal L$ is independent of itself and thus deterministic. We remark that this corollary is false without the adaptedness assumptions. Indeed a recent work of \cite{dv3} shows that the landscape can be written as some \textit{non-adapted} functional of white noise, in fact through a measurable bijection.

Next, we discuss a few open questions that are related to the content of this paper. The two-dimensional critical stochastic heat flow is a continuum model recently constructed by \cite{CSZ23}. It is a flow of random measures on $\mathbb R^2$, written as $Z_{s,t}(\vec x,d\vec y)$ indexed by $-\infty<s<t<\infty$ and $\vec x,\vec y\in\mathbb R^2$. We can think of it as the random field obtained from the universal scaling limit of partition functions of intermediate-disorder directed polymers in a space-time random environment in spatial dimension $d=2$. This is another example of a random field which should define a noise in the sense of \cref{d:noise}, and it exhibits certain properties which strongly suggest that it will not be a white noise \cite{CSZ23b}. 

\begin{conj} The two-dimensional critical stochastic heat flow constructed in \cite{CSZ23} is a black noise.
\end{conj}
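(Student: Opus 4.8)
The plan is to transport the proof of \cref{t:1} to the two-dimensional critical setting, so below I list the ingredients one would need to assemble and indicate where the real work lies.

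First one records that $u_{s,t}(\vec x,d\vec y)$ defines a noise in the sense of \cref{d:noise}, with $\mathcal F_{s,t}:=\sigma(u_{a,b}(\vec x,\cdot):\ s\le a<b\le t,\ \vec x\in\mathbb{R}^2)$ and $(\theta_h u)_{s,t}:=u_{s+h,t+h}$: independence over disjoint time intervals and spatial–temporal stationarity are built into the construction of \cite{CSZ23}, while the Chapman--Kolmogorov identity $u_{s,r}(\vec x,d\vec z)=\int_{\mathbb{R}^2}u_{s,t}(\vec x,d\vec y)\,u_{t,r}(\vec y,d\vec z)$ supplies property (2) of \cref{d:noise} and plays the structural role that the metric composition \eqref{e:metriccomp} plays for the directed landscape. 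One then applies the abstract black-noise criterion \cref{c:bncondition}, which reduces the assertion that $0$ is the only linear random variable to a quantitative noise-sensitivity estimate for a generating family of $L^2$-functionals of $u$.

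Exactly as for the directed landscape, I would derive that estimate from a \emph{spatial strong-mixing bound} for $u$ on a bounded time window: there are constants $C,d>0$ so that the $\alpha$-mixing coefficient between the $\sigma$-algebras generated by $u\big|_{[0,1]}$ over the half-planes $\{x_1\le 0\}$ and $\{x_1\ge N\}$ is at most some $\alpha(N)\to0$. Because the underlying polymer paths are diffusive rather than KPZ-superdiffusive, I expect a Gaussian-type rate $\alpha(N)\le Ce^{-dN^2}$ in place of the $e^{-dN^3}$ of \cref{t:1} (the logarithmic corrections intrinsic to $d=2$ may distort the exponent), which should be comfortably enough to feed into the criterion. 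Granting such a bound, the elimination of nonzero linear functionals follows the landscape argument: a hypothetical nonzero first-chaos element, localized in a bounded space-time box by an approximation step, would have to remain correlated with its large spatial translates --- linearity together with the scale-covariant, spatially stationary structure of the CSHF forces this --- while the mixing bound forces asymptotic independence, a contradiction.

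The main obstacle is the mixing bound itself. The CSHF has no integrable (determinantal or last-passage) description, so unlike the inputs available in \cite{dov} it cannot be read off from an exactly solvable model; instead one must work with the approximating directed-polymer partition functions in an i.i.d.\ space-time environment and show, uniformly in the mesh and stably under the scaling limit, that partition functions rooted in well-separated spatial regions decorrelate. The tool at hand is the polynomial-chaos (replica) expansion of \cite{CSZ23} combined with sharp estimates on the two-dimensional random-walk Green's function in the critical window, but criticality makes every chaos order contribute at the same logarithmic scale, so controlling the cancellations that produce decay of \emph{all} moments (not merely the second) between separated regions --- and then upgrading such moment bounds to a genuine $\alpha$-mixing statement --- is the delicate point. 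A further technical wrinkle is that $d=2$ criticality breaks exact scale invariance, since diffusive rescaling shifts the parameter $\vartheta$ of the CSHF, so the scaling step of the criterion must be run inside the one-parameter family of CSHF laws rather than for a single fixed law. Heuristically the conjecture should hold because at criticality the $L^2$-mass of the chaos expansion of the polymer partition function escapes to infinite order in the limit, so the first-chaos part vanishes; establishing that directly is essentially the ``mass escape'' problem discussed above, which seems out of reach, which is why one routes through spatial mixing instead.
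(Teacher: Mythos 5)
This statement is a \emph{conjecture} in the paper: no proof is given, and your proposal does not close it either. What you have written is a research program whose pivotal ingredient --- a quantitative spatial strong-mixing bound for the critical stochastic heat flow on a bounded time window, uniform enough to feed into \cref{c:bncondition} --- is itself an open problem, as you acknowledge. A plan that says ``granting such a bound, the rest follows the landscape argument'' is not a proof, and the granting is precisely where all the difficulty sits, since the CSHF has no integrable or last-passage description from which geodesic-coupling arguments like \cref{l:mixing} and \cref{mixing} could be extracted.

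Beyond the missing mixing estimate, the transfer of the landscape argument is not as routine as you suggest. In the paper, spatial mixing is only one of three inputs: the heart of the proof is the zero-temperature localization mechanism, whereby conditioning on a short time interval reduces the functional to $f\circ\mathcal L_{a,b}\circ g$ and the supremum concentrates on the small random set $\mathscr S(f+g,\epsilon^{1/3-\delta})$, whose size and separation are controlled via Airy-sheet regularity and Bessel-process estimates (\cref{opt}, \cref{opt2}, \cref{crucial}, \cref{numberneeded2}); only then does \cref{mixing} decorrelate the two localized functionals. For the CSHF the composition is an integral, not a supremum, so there is no argmax to localize --- the paper itself warns that its estimates ``would be weakened or fail for positive temperature analogues'' precisely because the maximum is replaced by a convolution that is not local. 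Your sketch of the elimination step (a nonzero first-chaos element ``would have to remain correlated with its large spatial translates'') also misrepresents the mechanism: the criterion \cref{c:bncondition} requires bounding the sum of variances of conditional expectations over dyadic \emph{time} intervals, and spatial mixing enters only after those conditional expectations have been rewritten as spatially localized functionals; linearity by itself forces no correlation with spatial translates. So the conjecture remains open, and even conditionally on a mixing bound your outline omits the analogue of the localization step that the landscape proof cannot do without.
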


By the remark after \cref{t:2}, if this conjecture is true, it would imply a decoupling theorem analogous to \cref{c:nokpz} for two-dimensional polymers. Our next conjecture is about the directed landscape as a noise in its space variables as well as in its time variables. To formulate this precisely, we need to define the notion of a two-dimensional noise. Our definition is adapted from \cite{Tsir} or Ellis and Feldheim \cite{EF16}. 
\begin{defn}\label{d:2dbl} A two-dimensional noise is a probability space $(\Omega,\mathcal{F},\mathbb{P})$ with a collection of sub-$\sigma$-algebras $\mathcal{F}_{\vec{x},\vec{y}}\subset\mathcal{F}$ associated to all open two-dimensional rectangles in $\mathbb{R}^{2}$, and a collection of measurable maps $\theta_{\vec{h}}:\Omega\to \Omega $ indexed by $\vec h \in \mathbb{R}^{2}$, such that the following properties hold.
\begin{enumerate}
    \item $\mathcal{F}_{\vec{x}_{1},\vec{y}_{1}}$ and $\mathcal{F}_{\vec{x}_{2},\vec{y}_{2}}$ are independent under $\mathbb{P}$ when $R_{1}:=(x_{1}^{(1)},x_{1}^{(2)})\times (y_{1}^{(1)},y_{1}^{(2)})$ and $R_{2}:=(x_{2}^{(1)},x_{2}^{(2)})\times (y_{2}^{(1)},y_{2}^{(2)})$ satisfy $R_{1}\cap R_{2}=\emptyset$.
    \item $\mathcal{F}_{\vec{x}_{1},\vec{y}_{1}}\vee\mathcal{F}_{\vec{x}_{2},\vec{y}_{2}}=\mathcal{F}_{\vec{x}_{3},\vec{y}_{3}}$ whenever the rectangles $R_{1}$ and $R_{2}$ satisfy $R_{1}\cap R_{2}=\emptyset$ and $\overline{R_{1}\cup R_{2}}=\overline{R_{3}}.$
    \item $\mathcal{F}$ is generated by the union of all of the $\mathcal{F}_{\vec{x},\vec{y}}$.
    \item If $A\in\mathcal{F}_{\vec{x},\vec{y}}$, then $\theta_{\vec{h}}(A)\in \mathcal{F}_{\vec{x}+(h_1,h_1),\vec{y}+(h_2,h_2)}$ and $\mathbb{P}(\theta_{\vec{h}}(A))=\mathbb{P}(A)$.
    \item $\theta_{\vec{h}_{1}}\theta_{\vec{h}_{2}}=\theta_{\vec{h}_{1}+\vec{h}_{2}}$ for all $\vec{h}_{1},\vec{h}_{2}\in\mathbb{R}^{2}$, and $\theta_{\vec{0}}=\text{Id}.$ 
\end{enumerate}
We say this two-dimensional noise is a black noise when the only linear random variable is $0$, i.e., any $F\in L^2(\Omega)$ satisfying $\mathbb E[ F| \mathcal F_{R_1\cup R_2}] = \mathbb E[ F| \mathcal F_{R_1} ] + \mathbb E[ F| \mathcal F_{R_2} ]$ for all rectangles $R_1,R_2$ as in Item (2), implies that $F=0$.
\end{defn}
There are two well-known examples of two-dimensional black noises. 
The first is the scaling limit of critical planar percolation, which was conjectured to be black noise by Tsirelson \cite{Tsir} and later proved by Schramm and Smirnov \cite{SS11}. The second is the Brownian web, which was proved to be a two-dimensional black noise by Ellis and Feldheim \cite{EF16}.
In the context of this definition, we can state the following conjecture, due to B\'alint Vir\'ag (in conversation). Define the $\sigma$-algebras $\mathcal F_{(s,t),(x,y)}$ to be the ones generated by the random variables $\int_a^b d\mathcal L\circ \pi$ (see \cite[Eq. (5)]{dov}) as we vary over all $s\le a<b\le t$ and all continuous paths $\pi: [a,b]\to [x,y].$ Then we have the following conjecture.

\begin{conj} The directed landscape is a two-dimensional black noise. 
\end{conj}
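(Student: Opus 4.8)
The plan is to reduce the conjecture to \cref{t:1} once two structural facts are in place: (I) the directed landscape, equipped with the $\sigma$-algebras $\mathcal F_{(s,t),(x,y)}$ generated by the confined path lengths $\int_a^b d\mathcal L\circ\pi$ (over $s\le a<b\le t$ and continuous $\pi\colon[a,b]\to[x,y]$), really is a two-dimensional noise in the sense of \cref{d:2dbl}; and (II) for each $s<t$ one has $\mathcal F_{(s,t),(-M,M)}\uparrow\mathcal F^{\mathcal L}_{s,t}$ as $M\to\infty$, with $\mathcal F^{\mathcal L}_{s,t}$ as in \cref{d:dirlprob}. Granting these, let $F\in L^2(\Omega)$ be a linear random variable of the two-dimensional noise, and apply the defining additivity to the rectangles $R_1=(s,r)\times(-M,M)$, $R_2=(r,t)\times(-M,M)$, $R_3=(s,t)\times(-M,M)$, which are disjoint with $\overline{R_1\cup R_2}=\overline{R_3}$:
\[ \mathbb E\big[F\mid\mathcal F_{(s,t),(-M,M)}\big] = \mathbb E\big[F\mid\mathcal F_{(s,r),(-M,M)}\big] + \mathbb E\big[F\mid\mathcal F_{(r,t),(-M,M)}\big]. \]
Sending $M\to\infty$, using (II) and L\'evy's upward convergence theorem in $L^2$, I would obtain
\[ \mathbb E\big[F\mid\mathcal F^{\mathcal L}_{s,t}\big] = \mathbb E\big[F\mid\mathcal F^{\mathcal L}_{s,r}\big] + \mathbb E\big[F\mid\mathcal F^{\mathcal L}_{r,t}\big]\qquad\text{for all } s<r<t, \]
which is exactly the condition of \cref{d:bn} identifying $F$ as a linear random variable of the one-dimensional noise of \cref{c:dlnoise}. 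By \cref{t:1} that noise is black, so $F=0$, and the conjecture follows. Note that the exponential mixing estimate behind \cref{t:1} would be used here only as a black box.

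The ingredients (I)--(II) should involve no new hard analysis beyond the one point discussed below. Well-definedness of $\int_a^b d\mathcal L\circ\pi$ and its $\mathcal F^{\mathcal L}_{s,t}$-measurability (for $s\le a<b\le t$) come from its construction in \cite{dov} as a limit of sums $\sum_i\mathcal L_{t_{i-1},t_i}(\pi(t_{i-1}),\pi(t_i))$. Since $\mathcal L_{a,b}(x,y)$ is the length of the geodesic from $(a,x)$ to $(b,y)$, it is the $M\to\infty$ limit of the supremum of $\int_a^b d\mathcal L\circ\pi$ over continuous $\pi$ with those endpoints and range in $[-M,M]$ (the geodesic being a.s. eventually contained in any such box); this gives both (II) and Item (3) of \cref{d:2dbl}. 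Items (4)--(5) would follow from the space-time stationarity of $\mathcal L$ in \cref{d:dl}(3) together with the diagonal spatial stationarity of the Airy sheet, since shifting a rectangle amounts to the shift $\pi\mapsto\pi(\cdot+h_1)+h_2$ of paths. Item (2) would follow from additivity of the path integral under concatenation --- $\int_a^b=\int_a^r+\int_r^b$ for a split in time, and a decomposition of $[a,b]$ along the level set $\{u:\pi(u)=y\}$ for a split in space --- with the usual care at the shared boundary of the two rectangles and for the regularity of admissible paths.

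The genuinely hard part will be Item (1) of \cref{d:2dbl}: that $\mathcal F_{(s,t),(x_1,y_1)}$ and $\mathcal F_{(s,t),(x_2,y_2)}$ --- and, more generally, the $\sigma$-algebras of any finite family of pairwise disjoint rectangles --- are jointly independent when the spatial intervals are disjoint and the time intervals coincide. (The disjoint-in-time case is already contained in \cref{d:dl}(1) via $\mathcal F_{(s,t),(x,y)}\subseteq\mathcal F^{\mathcal L}_{s,t}$, and the general disjoint case follows by combining the two after subdividing along time, with standard but fiddly care about joint independence.) I would attack this by a prelimit argument: realizing $\mathcal L$ as the scaling limit of Brownian (or exponential) last passage percolation \cite{dov,dv2}, a continuum path confined to a spatial strip $[x_i,y_i]$ corresponds to a prelimiting last passage path confined to a space-time ``tube'' whose spatial window at each time is an affine image of $[x_i,y_i]$; disjoint continuum strips produce tubes built from disjoint families of the underlying i.i.d.\ weights (equivalently, disjoint families of Brownian increments), so the prelimiting confined path lengths are \emph{exactly} independent, and one wants this to persist in the limit. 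The obstacle is that the confined path-length functionals are not continuous on $C(\mathbb R^4_\uparrow)$ in the uniform-on-compacts topology, so one cannot invoke an off-the-shelf continuous-mapping statement; instead one needs tightness and identification arguments tailored to these functionals, with control uniform in the strip geometry, and then a careful matching of the resulting limiting $\sigma$-algebras with the $\mathcal F_{(s,t),(x,y)}$. An intrinsic alternative would be a spatial Markov or independence property of the Airy sheet, but no such property is presently known, so the prelimit route seems the most promising. Essentially all the difficulty of the conjecture would sit in this spatial-independence step; the passage to blackness via \cref{t:1} is soft.
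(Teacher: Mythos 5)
The statement you are addressing is left as an open conjecture in the paper; no proof is given there, and the authors explicitly locate the obstruction: ``The difficulty of the two-dimensional problem is showing that the noise property holds in the first place, in particular Item (2) in \cref{d:2dbl}.'' Your proposal does not close this. Its genuinely sound part is the soft reduction: \emph{granting} that the $\sigma$-algebras $\mathcal F_{(s,t),(x,y)}$ form a two-dimensional noise and that $\mathcal F_{(s,t),(-M,M)}\uparrow\mathcal F^{\mathcal L}_{s,t}$, applying two-dimensional linearity to vertical strips and passing $M\to\infty$ by martingale convergence does show that any $2$d-linear $F$ is linear for the one-dimensional noise of \cref{c:dlnoise}, hence $F=0$ by \cref{t:1}. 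This is consistent with the paper's own framing that once the noise structure is in place, blackness should follow from \cref{t:1}. But everything that makes the conjecture a conjecture is left unproved in your write-up: you dismiss Item (2) of \cref{d:2dbl} in one sentence (``additivity of the path integral under concatenation \dots with the usual care at the shared boundary''), whereas for a \emph{spatial} split this is precisely the hard point the paper flags --- a path confined to $[x,y]$ can cross the dividing level $m$ on a complicated (even positive-measure) set, and reconstructing $\int_a^b d\mathcal L\circ\pi$, and more importantly the full $\sigma$-algebra $\mathcal F_{(s,t),(x,y)}$, from the two half-strip $\sigma$-algebras is not a routine concatenation argument. Likewise your Item (1) (joint independence over spatially disjoint strips) is only a program: you correctly note that the confined path-length functionals are not continuous on $C(\mathbb R^4_\uparrow)$, so the prelimit independence does not transfer by any off-the-shelf limit theorem, and you do not supply the tightness/identification argument that would be needed. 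Your ingredient (II) also needs more than the a.s.\ containment of a single geodesic in a box: one must show that the confined path integrals generate (up to null sets, with the attendant completion issues) every $\mathcal L_{a,b}(x,y)$ with $s\le a<b\le t$, uniformly enough to justify the L\'evy upward limit you invoke.

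In short, what you have is a conditional statement --- ``if the directed landscape is a two-dimensional noise in the sense of \cref{d:2dbl} and the strip $\sigma$-algebras exhaust $\mathcal F^{\mathcal L}_{s,t}$, then it is a two-dimensional black noise'' --- plus a sketch of how one might attack the hypotheses. That conditional step is essentially the easy direction already implicit in the paper; the substance of the conjecture (the noise property itself, especially the splicing property under spatial subdivision and the joint independence across disjoint rectangles) remains open, so the proposal cannot be counted as a proof of the statement.
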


The difficulty of the two-dimensional problem is showing that the noise property holds in the first place, in particular Item (2) in \cref{d:2dbl}. Once Item (2) is proved, it is automatically a two-dimensional black noise by our one-dimensional result. If true, this would address a problem posed by Ellis and Feldheim \cite{EF16}. Their work demonstrates that the Brownian web is an example of a two-dimensional black noise, and they ask for further relevant examples of two-dimensional black noise arising from simple probabilistic systems.

Finally, let us briefly discuss the proof of Theorem \ref{t:1}. There are several key properties of the directed landscape that are crucial to the proof that it is a black noise, and which distinguish it from many prelimiting models in the KPZ universality class that are inherently white noise.

\begin{enumerate} 
\item  Scale invariance: Obtaining a rescaled version of the same space-time process at every temporal scale is a crucial ingredient in many parts of the proof. For example, in \cref{p:alphamixingestimate} we get a spatial mixing rate for the directed landscape of order $e^{-d k^3}$ on a time interval $[0,1]$. This mixing rate immediately implies a mixing rate of $e^{-d \epsilon^{-2} k^3}$ for the landscape on a shorter time interval $[0,\epsilon]$ by using the 1:2:3 scale invariance of the landscape.

\item  Zero Temperature: Many of our estimates would be weakened or fail for positive-temperature analogues like the KPZ equation, as the maximum in the definition of the directed landscape would get replaced by a convolution operation which is not as local. And indeed, the KPZ equation is not a black noise since it is measurable with respect to the driving noise.

\item  Local Brownianity: Even for many zero-temperature 
conjectural prelimits (such as inviscid stochastic Burgers equations driven by spatially smoothed white noise) the local Brownianity of the spatial process fails. Improved spatial regularity of the solution actually hurts many of the estimates one needs to prove the black noise property, as the better regularity “widens” the set of $x$-values where the spatial height process is close to the argmax. Controlling the size of the latter set of $x$-values is really the heart of the proof, see \cref{crucial}.
\end{enumerate} 

\begin{rem} The black noise property seems intimately related to the degeneracy and fractal structure of the landscape, for instance the tendency of geodesics to overlap. The papers \cite{heg,ganguly1,duncan1, dasghosal} study this fractal structure in some depth. It remains an open problem to precisely and quantitatively relate the black noise property of the landscape to its fractal structure. For instance, the optimal exponents in the variance bounds needed to prove the black noise property should be related to the Hausdorff dimension of the endpoints of exceptional geodesics. 
\end{rem}

\subsection{Notation}\label{s:notation}
Throughout the paper, we use the font $\mathfrak B, \mathfrak{C}$ as a way to denote random constants. The letters $C,d$ are used, respectively, to denote deterministic constants that may get larger or smaller from line to line when performing an estimate. Typically, such $C,d$ will be (respectively) a multiplying constant and an exponential decay rate of some quantity. For a metric space $X$ we will let $C(X)$ denote the space of continuous functions $X\to\mathbb R$, equipped with the topology of uniform convergence on compacts. We often deal with random variables defined on the underlying probability space $C(\mathbb R)$ or $C(\mathbb R^2)$ or $C(\mathbb R^4_\uparrow)$, and these random variables should be understood as being real-valued Borel-measurable functions with respect to the topology of uniform convergence on compact sets (which is always separable and metrizable, hence Polish).

\subsection{Structure of the Paper}\label{s:structure} In \cref{s:noisetoairy}, we reduce the problem of demonstrating that the directed landscape is a black noise to demonstrating a specific variance estimate about the Airy sheet (\cref{c:airycondition}). \cref{s:spatialmixing} through \cref{s:prooft1} are dedicated to obtaining this estimate, which relies on a mixing property of the directed landscape (\cref{s:spatialmixing}), as well as estimates on argmaxes of Airy sheets (\cref{s:airy}) and estimates on Bessel processes (\cref{s:bessel}). Then in \cref{s:prooft1}, we conclude that the directed landscape is a black noise. Finally in \cref{s:proofadditional}, we prove \cref{t:2} from \cref{t:1}. 

\subsection{Acknowledgements} SP thanks Duncan Dauvergne, Jeremy Quastel, and B\'alint Vir\'ag for discussions and context about this problem during a visit to Toronto. SP and ZH thank Ivan Corwin and Yu Gu for comments on an earlier draft of the paper. SP acknowledges support by the NSF MSPRF (DMS-2401884). ZH was supported by the Fernholz Foundation's Minerva Fellowship Program and Ivan Corwin's grant, NSF DMS-1811143. 

\section{Reducing the black noise problem to an estimate on the Airy sheet}\label{s:noisetoairy}
In this section, we reduce the problem of showing that the directed landscape is a black noise to a 
specific variance bound for the Airy sheet. This is the foundation for the rest of our argument. 

In his survey, Tsirelson \cite{Tsir} gives a few examples in which it is possible to prove something is a black noise, and provides a necessary and sufficient condition for a given noise to be black. We quote the following result from that survey, see \cite[Proposition 7a(3) Item (b)]{Tsir}.

\begin{prop}\label{p:bncondition} Let $(\Omega, (\mathcal F_{s,t})_{s<t}, \mathbb{P}, \theta_{h})$ be a noise in the sense of \cref{d:noise}. The space $M$ of $\mathcal F_{0,1}$-measurable linear random variables (\cref{d:bn}) is a closed linear subspace of $L^2(\Omega)$, and moreover the orthogonal projection of any $F\in L^2(\Omega)$ onto $M$ is given by $$ \lim_{k\to\infty} \sum_{i=1}^{2^k}\big(\mathbb{E}\big[ F\big| \mathcal F_{(i-1)2^{-k},i2^{-k}}\big] - \mathbb E[F]\big) ,$$ where the limit is understood in $L^2(\Omega).$
\end{prop}

\begin{cor}\label{c:bncondition} A noise $(\Omega, (\mathcal F_{s,t})_{s<t}, \mathbb{P}, \theta_{h})$ is a black noise 
if and only if the following condition holds on a dense linear subspace $D\subset L^{2}(\Omega)$: as $k\to\infty$ we have that
$$\sum_{i=1}^{2^k} \big(\mathbb{E}\big[ F\big| \mathcal F_{(i-1)2^{-k},i2^{-k}}\big]-\mathbb E[F]\big) \stackrel{L^2(\Omega)}{\to} 0,\;\;\;\;\;\;\;\;\mathrm{for\;\;all \;\;\;} F\in D.$$ Equivalently, 
\begin{align}\label{e:densecondition}\lim_{k\to\infty} \sum_{i=1}^{2^k} \mathrm{Var}\left(\mathbb{E}\big[ F\big| \mathcal F_{(i-1)2^{-k},i2^{-k}}\big] \right)= 0, \;\;\;\;\;\;\;\;\mathrm{for\;\;all \;\;\;} F\in D.
\end{align}
\end{cor}
We recall the definition of the directed landscape and the associated noise (\cref{d:dl}) and use $\Omega^{\mathrm{DL}}$ to denote the underlying probability space of $\mathcal{L}$. There is a particular class $D$ of functions that will be most useful to us in the context of the directed landscape. 
We define $D\subset L^2(\Omega)$ to be those functionals of the landscape that are finite linear combinations of ``polynomial functions" of the form \begin{equation}\label{e:dfunctions}F=\prod_{j=1}^m \prod_{i=1}^{n_j} \mathcal L_{s_j,t_j}(x_{ij},y_{ij}),
\end{equation} 
where $(s_j,t_j)$ are disjoint intervals of $\mathbb{R}$ whose endpoints $s_j,t_j$ are both dyadic numbers (of the form $p2^{-q}$ with $p\in\mathbb{Z}$ and $q\in \mathbb{N}),$ furthermore $x_{ij},y_{ij}\in \mathbb R$ are all distinct points in $\mathbb{R}$, and $m,n_j\in\mathbb{N}$. To prove that linear combinations of such polynomials are dense, we first prove the following purely measure-theoretic lemma, which will also be important in \cref{s:proofadditional}.


\begin{lemm}\label{new}
    Let $(\Omega, \mathcal G,\mathbb P)$ be a standard probability space, and suppose that $\mathcal G$ is generated by some countably infinite collection of random variables $\{X_n\}_{n=1}^\infty$ taking values in some Polish space $\mathcal X$. Consider any bounded measurable $F:\Omega\to \mathbb R$. Then for each $m\in\mathbb N$ there exists a Borel measurable map $\Psi_m:\mathcal X^m\to\mathbb R $ such that $\sup_{x_1,...,x_m\in \mathcal X} |\Psi_m(x_1,...,x_m)| \leq \|F\|_{L^\infty(\Omega)}$, and moreover the random variables $F_m:= \Psi_m(X_1,...,X_m) $ satisfy $F=\lim_{m\to\infty} F_m$ $\mathbb P$-almost surely as well as $\lim_{m\to\infty} \|F_m-F\|_{L^p(\Omega)}= 0$ for $1\le p<\infty$.
\end{lemm}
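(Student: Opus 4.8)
The plan is to realize the approximants $F_m$ as conditional expectations onto the finite sub-$\sigma$-algebras $\mathcal G_m := \sigma(X_1,\dots,X_m)$, and then to express these conditional expectations as Borel functions of $(X_1,\dots,X_m)$ via the Doob--Dynkin factorization lemma. First I would reduce to the case where $F$ is literally $\sigma(X_n:n\ge 1)$-measurable: since $\mathcal G$ is generated by $\{X_n\}$ and $(\Omega,\mathcal G,\mathbb P)$ is standard, $F$ agrees $\mathbb P$-a.s.\ with such a function, and modifying $F$ on a null set changes none of the assertions. Then $\mathcal G_m$ is an increasing sequence with $\bigvee_{m\ge 1}\mathcal G_m = \sigma(X_n:n\ge 1)$. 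Put $\widehat F_m := \mathbb E[F\mid \mathcal G_m]$, choosing a genuinely $\mathcal G_m$-measurable version. Because $F$ is bounded, $|\widehat F_m| \le \mathbb E[|F|\mid \mathcal G_m]\le \|F\|_{L^\infty(\Omega)}$ $\mathbb P$-a.s., so the whole martingale is bounded in $L^\infty$ by $\|F\|_{L^\infty(\Omega)}$.

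The convergence is then Lévy's upward martingale convergence theorem applied to $(\widehat F_m,\mathcal G_m)_{m\ge 1}$: it gives $\widehat F_m\to \mathbb E[F\mid\bigvee_m\mathcal G_m]=F$ both $\mathbb P$-a.s.\ and in $L^1(\Omega)$, and the uniform bound $\|\widehat F_m\|_{L^\infty}\le \|F\|_{L^\infty}$ upgrades this to $L^p(\Omega)$ convergence for every $1\le p<\infty$ by dominated convergence.

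To extract the functions $\Psi_m$, I would apply Doob--Dynkin to the $\mathcal G_m=\sigma\bigl((X_1,\dots,X_m)\bigr)$-measurable, real-valued random variable $\widehat F_m$: this produces a Borel map $\widetilde\Psi_m:\mathcal X^m\to\mathbb R$ (here $\mathcal X^m$ carries its Borel, equivalently product, $\sigma$-algebra, $\mathcal X$ being Polish) with $\widehat F_m=\widetilde\Psi_m(X_1,\dots,X_m)$ $\mathbb P$-a.s. Finally I would truncate, setting $\Psi_m:=\bigl(\widetilde\Psi_m\vee(-\|F\|_{L^\infty})\bigr)\wedge\|F\|_{L^\infty}$; this is still Borel, satisfies $\sup_{\mathcal X^m}|\Psi_m|\le\|F\|_{L^\infty(\Omega)}$ everywhere, and still obeys $\Psi_m(X_1,\dots,X_m)=\widehat F_m$ $\mathbb P$-a.s., since the truncation alters $\widetilde\Psi_m$ only on $\{|\widetilde\Psi_m|>\|F\|_{L^\infty}\}$, a Borel set not charged by the law of $(X_1,\dots,X_m)$. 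Setting $F_m:=\Psi_m(X_1,\dots,X_m)$, which equals $\widehat F_m$ a.s., the a.s.\ and $L^p$ convergences of $\widehat F_m$ transfer to $F_m$ after discarding the countable union of the relevant null sets.

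The only genuinely delicate point is this last passage between almost-sure and everywhere statements: Doob--Dynkin only pins down $\widetilde\Psi_m$ up to modification off the support of $\mathrm{Law}(X_1,\dots,X_m)$, so the required \emph{deterministic} bound $\sup_{\mathcal X^m}|\Psi_m|\le\|F\|_{L^\infty(\Omega)}$ cannot be inherited from the a.s.\ bound on $\widehat F_m$ and must be imposed by the truncation, which one then verifies does not disturb either the identity $F_m=\widehat F_m$ or the convergence. Everything else (Lévy's theorem, the factorization lemma, bounded convergence) is standard, and the ``standard probability space'' hypothesis enters only in the initial reduction to $\sigma(X_n:n\ge 1)$-measurability of $F$.
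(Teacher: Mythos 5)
Your proposal is correct and follows essentially the same route as the paper: conditioning on $\mathcal G_m=\sigma(X_1,\dots,X_m)$, factorizing the conditional expectation as a Borel function of $(X_1,\dots,X_m)$ (the paper does this via approximation by simple functions, which is the same Doob--Dynkin argument), truncating to get the uniform bound, and concluding by martingale convergence. Your extra care about the truncation only altering $\Psi_m$ off the support of the law, and about the identification $\bigvee_m\mathcal G_m=\sigma(X_n:n\ge 1)$, makes explicit points the paper leaves implicit but introduces nothing new in substance.
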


\begin{proof}
    We define $\mathcal G_m:=\sigma(X_1,...,X_m)$ and $F_m:= \mathbb E[F|\mathcal G_m].$ Using approximation by $\mathcal G_m$-measurable simple functions, we see that $F_m$ are of the form $\Psi_m(X_1,...,X_m)$ for some Borel measurable $\Psi_m:\mathcal X^m\to\mathbb R $. If needed, we replace $\Psi_m$ by $(-\|F\|_{L^\infty(\Omega)}) \vee \Psi_m \wedge \|F\|_{L^\infty(\Omega)}$, and this still satisfies the same relation $F_m=\Psi_m (X_1,...,X_m)$ on a set of measure $1$. By Doob's martingale convergence theorem, $F_m\to F$ almost surely and in every $L^p(\Omega)$.
\end{proof}

With \cref{new} in hand, we can prove density of the space $D$ from above. 

\begin{lemm}\label{lem2}
     Let $\Omega^{\mathrm{DL}}$ be as in \cref{d:dirlprob}. The set $D\subset L^2(\Omega^{\mathrm{DL}})$, given by finite linear combinations of products of the form \eqref{e:dfunctions}, is a dense linear subspace.
\end{lemm}

\begin{proof} We begin by noting the following \textit{tensor product property} of $L^{2}$-spaces. The space $L^2(\gamma_1)\otimes \cdots \otimes L^2(\gamma_m)$ is isometrically isomorphic as a Hilbert space to $L^2(\gamma_1\otimes \cdots \otimes \gamma_m)$ for any Borel probability measures $\gamma_i$ on a Polish space, under the canonical isomorphism $(F_1\otimes \cdots \otimes F_m) (x_1,...,x_m) \cong F_1(x_1)\cdots F_m(x_m)$. 

By this tensor product property of $L^2$ spaces, in order to show that linear combinations of functions of the form \eqref{e:dfunctions} are dense in $L^{2}(\Omega^{\mathrm{DL}})$, it suffices to show that any function of the form \begin{equation}\label{phij}\prod_{j=1}^m \varphi_j(\mathcal L_{s_j,t_j}(x_1,y_1),...,\mathcal L_{s_j,t_j}(x_{n},y_{n})),\end{equation}
is in the closure of $D$ in $L^2(\Omega^{\mathrm{DL}})$, where $m,n\in\mathbb N$, where $(s_j,t_j)$ are disjoint intervals of $\Bbb R$, and where $\varphi_j$ are bounded measurable functions on $\mathbb{R}^n$. This is because linear combinations of functionals of this latter form are dense in $L^2(\Omega^{\mathrm{DL}})$. 
To see this, we note that by \cref{new} we can take $n\uparrow\infty$ to approximate any functional of the form $\prod_{j=1}^m \Phi_j (\mathcal L_{s_j,t_j})$ where $m\in\mathbb N$, where $(s_j,t_j)$ are disjoint intervals of $\Bbb R$, and where $\Phi_j:C(\mathbb R^2)\to\mathbb R$ are bounded measurable, and $\mathcal L_{s,t}:\mathbb R^2\to\mathbb R$ denotes the random function given by $(x,y)\mapsto \mathcal L_{s,t}(x,y)$. Then, by taking linear combinations and using the tensor product property, we can approximate any functional of the form $\Psi(\mathcal L_{s_1,t_1},...,\mathcal L_{s_m,t_m}) $ where $m\in\mathbb N$, where $(s_j,t_j)$ are disjoint intervals of $\Bbb R$, and where $\Psi: C(\mathbb R^2)^m\to\mathbb R$ is bounded measurable. Finally by \cref{new} we can take $m\uparrow\infty$ and the latter types of functionals can then approximate any general functional $F(\mathcal L).$ 

Therefore, we just need to show that any function of the form \eqref{phij} is in the $L^2(\Omega^{\mathrm{DL}})$-closure of $D$. The terms of this product are independent, so it suffices to prove that for $m=1$, the expression in \eqref{phij} may be approximated in $L^2(\Omega^{\mathrm{DL}})$ by linear combinations of functions of the form \eqref{e:dfunctions} with $m=1$. Showing this is equivalent to showing that polynomial functions on $\mathbb{R}^n$ are dense in $L^2(\mu)$ where $\mu$ is the probability measure on $\mathbb{R}^n$ given by the joint law of $(\mathcal L_{s,t}(x_1,y_1),...,\mathcal L_{s,t}(x_n,y_n))$, where $s,t,x_j,y_j$ are fixed real numbers. The marginals of this joint law are deterministically shifted Tracy-Widom GUE laws, which implies that they have a super-exponential tail decay. Therefore, we conclude that $\mu$ must have a finite moment generating function everywhere. 

Assume that $f\in L^2(\mu)$ is orthogonal to all polynomials, then consider the Fourier transform of the measure $f d\mu$, given by $(\lambda_1, ..., \lambda_n) \mapsto \int_{\mathbb{R}^n} f(x) e^{ i\sum_1^n \lambda_j x_j} \mu(dx).$ Taylor expand the exponential as an infinite series. Applying Fubini's theorem (which is justified because of the super-exponential tail decay on $\mu$), we can interchange the infinite sum with the integral over $\mathbb{R}^n$ to conclude that  $\int_{\mathbb{R}^n} f(x) e^{ i\sum_1^n \lambda_j x_j} \mu(dx) = 0$ for all $\lambda = (\lambda_1,...,\lambda_n)\in \mathbb{R}^n$. By inverting the Fourier transform (which is a linear isomorphism on $\mathcal S'(\mathbb R^n))$, this implies that the measure $f\;d\mu \in \mathcal S'(\mathbb R^n)$ must be the zero measure, i.e., $f=0$ $\mu$-almost everywhere. 
\end{proof}

Due to the independence of the $\sigma$-algebras on disjoint intervals (property $(1)$ in \cref{d:dl}), we can ignore the product over $j$ in \eqref{e:dfunctions}. Thus, without loss of generality, we may assume that $m=1$ in \eqref{e:dfunctions}. We label the single interval by $(s_1,t_1)=(s,t)$. By shifting and rescaling the landscape, we can assume without loss of generality that $(s,t)=(0,1).$ Therefore, we have reduced the black noise problem to demonstrating that 
\begin{align}\label{e:conditionspec}
\lim_{k\to\infty} \sum_{i=1}^{2^k} \mathrm{Var}\left(\mathbb{E}\bigg[ \prod_{j=1}^{n} \mathcal L_{0,1}(x_j,y_j) \bigg| \mathcal F^\mathrm{DL}_{(i-1)2^{-k},i2^{-k}}\bigg]\right)= 0, & & \text{for all } \vec x,\vec y\in \mathbb{R}^n, \text{ and } n\in \mathbb{N}.
\end{align}
The entire problem is now formulated in terms of the Airy sheet $\mathcal S:=\mathcal L_{0,1}$. In fact, we can simplify things further. 
\begin{prop}
    In order to show that the directed landscape is a black noise, it suffices to show that for any $\eta\in (0,1/2)$ and $n\in \mathbb{N}$ and $\vec x,\vec y\in \mathbb{R}^n$,
    \begin{equation}\label{frrr}\lim_{k\to\infty} \sum_{\eta 2^{k}\le i \le (1-\eta)2^k} \mathrm{Var}\left(\mathbb{E}\bigg[ \prod_{j=1}^{n} \mathcal L_{0,1}(x_j,y_j) \bigg| \mathcal F^\mathrm{DL}_{(i-1)2^{-k},i2^{-k}}\bigg]\right)= 0,
    \end{equation}
\end{prop}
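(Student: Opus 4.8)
The plan is to deduce the full-sum statement \eqref{e:conditionspec} from the restricted statement \eqref{frrr} by showing that the ``edge'' indices omitted in \eqref{frrr} contribute a quantity which, uniformly in the depth $n$ of the dyadic partition, is small once $\eta$ is small. By the reduction already carried out it suffices to establish \eqref{e:conditionspec}. Fix $n$ and $\vec x,\vec y\in\mathbb R^n$, set $F:=\prod_{i=1}^n\mathcal L_{0,1}(x_i,y_i)\in L^2(\Omega)$, write $J_k^n:=[(k-1)2^{-n},k2^{-n}]$, and for an interval $J=[a,b]$ abbreviate $\mathcal F^\mathcal L_J:=\mathcal F^\mathcal L_{a,b}$. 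For a fixed $\eta\in(0,1/2)$ I would split the sum in \eqref{e:conditionspec} into the bulk sum of \eqref{frrr} (which tends to $0$ as $n\to\infty$ by hypothesis) plus the two edge sums, over $k<\eta 2^n$ and over $k>(1-\eta)2^n$. The goal is then: (i) bound each edge sum, uniformly in $n$, by $\operatorname{Var}(\mathbb E[F|\mathcal F^\mathcal L_{0,\eta}])$ and $\operatorname{Var}(\mathbb E[F|\mathcal F^\mathcal L_{1-2\eta,1}])$ respectively; and (ii) show these two variances tend to $0$ as $\eta\to 0^+$. Taking $\limsup_{n\to\infty}$ and then $\eta\to 0$ yields \eqref{e:conditionspec}, since the left-hand side of \eqref{e:conditionspec} does not depend on $\eta$.

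For (i), the crucial observation is an orthogonality of conditional expectations onto the $\sigma$-fields of disjoint intervals. Let $\{J_k\}$ be pairwise disjoint dyadic intervals and $T_k:=\mathbb E[\,\cdot\,|\mathcal F^\mathcal L_{J_k}]$. Since $T_kG$ is $\mathcal F^\mathcal L_{J_k}$-measurable, it is independent of $\mathcal F^\mathcal L_{J_j}$ for $j\ne k$ (property (1) of \cref{d:noise}, valid for the directed landscape by \cref{c:dlnoise}), so $T_jT_kG=\mathbb E[T_kG]=\mathbb E[G]$; hence on the mean-zero subspace of $L^2(\Omega)$ the $T_k$ are self-adjoint idempotents with mutually orthogonal ranges, which forces $\sum_k\operatorname{Var}(\mathbb E[G|\mathcal F^\mathcal L_{J_k}])\le\operatorname{Var}(G)$ for every $G\in L^2(\Omega)$ — this is the uniform-in-$n$ estimate we need. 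Now when $k<\eta 2^n$ one has $J_k^n\subseteq[0,\eta]$, so $\mathcal F^\mathcal L_{J_k^n}\subseteq\mathcal F^\mathcal L_{0,\eta}$ and the tower property gives $\mathbb E[F|\mathcal F^\mathcal L_{J_k^n}]=\mathbb E[\mathbb E[F|\mathcal F^\mathcal L_{0,\eta}]|\mathcal F^\mathcal L_{J_k^n}]$; applying the estimate above with $G=\mathbb E[F|\mathcal F^\mathcal L_{0,\eta}]$ controls the left edge sum by $\operatorname{Var}(\mathbb E[F|\mathcal F^\mathcal L_{0,\eta}])$, for all $n$. The right edge sum is handled in exactly the same way once $n$ is large enough ($2^{-n}<\eta$) that each relevant $J_k^n$ lies in $[1-2\eta,1]$.

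For (ii), I would use that the directed landscape is a continuous function on $\mathbb R^4_\uparrow$ (by construction, \cref{d:dirlprob}) with moments of all orders (the one-point and continuity estimates of \cite{dov}). Put $F_\eta:=\prod_{i=1}^n\mathcal L_{\eta,1}(x_i,y_i)$. Continuity at the points $(x_i,0,y_i,1)$ gives $F_\eta\to F$ almost surely, and since each factor $|\mathcal L_{\eta,1}(x_i,y_i)|$ is dominated, uniformly for $\eta\in(0,1/2]$, by $\sup_{0\le b\le 1/2}|\mathcal L_{b,1}(x_i,y_i)|$, which has finite moments of all orders, dominated convergence upgrades this to $F_\eta\to F$ in $L^2(\Omega)$. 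As $F_\eta$ is $\mathcal F^\mathcal L_{\eta,1}$-measurable, hence independent of $\mathcal F^\mathcal L_{0,\eta}$, we have $\mathbb E[F_\eta|\mathcal F^\mathcal L_{0,\eta}]=\mathbb E[F_\eta]$, so $\|\mathbb E[F|\mathcal F^\mathcal L_{0,\eta}]-\mathbb E[F_\eta]\|_{L^2}\le\|F-F_\eta\|_{L^2}\to 0$; combined with $\mathbb E[F_\eta]\to\mathbb E[F]$ this gives $\operatorname{Var}(\mathbb E[F|\mathcal F^\mathcal L_{0,\eta}])\to 0$. Symmetrically, using $\prod_{i=1}^n\mathcal L_{0,1-\eta}(x_i,y_i)$ in place of $F_\eta$, one gets $\operatorname{Var}(\mathbb E[F|\mathcal F^\mathcal L_{1-\eta,1}])\to 0$, so both variances appearing in (i) tend to $0$. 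The whole construction is soft; the one point requiring care is the uniformity in $n$ in step (i), which is precisely what the orthogonality of the operators $T_k$ provides. (As a byproduct, since $D$ is dense in $L^2(\Omega)$ by \cref{lem2}, this shows the germ $\sigma$-fields $\bigcap_{\eta>0}\mathcal F^\mathcal L_{0,\eta}$ and $\bigcap_{\eta>0}\mathcal F^\mathcal L_{1-\eta,1}$ are $\mathbb P$-trivial.)
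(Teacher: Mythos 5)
Your proposal is correct, but it proves the proposition by a genuinely different mechanism than the paper does. The paper's proof is soft and structural: it takes $G$ to be the orthogonal projection of $F$ onto the linear $\mathcal F^\mathcal L_{0,1}$-measurable random variables, identifies $G$ via \cref{p:bncondition} as the $L^2$-limit of the full dyadic sums, uses \eqref{frrr} to conclude that $G$ is $\mathcal F^\mathcal L_{0,\eta}\vee\mathcal F^\mathcal L_{1-\eta,1}$-measurable for every $\eta$, and then kills $G$ by showing the germ $\sigma$-field $\bigcap_{\eta}\big(\mathcal F^\mathcal L_{0,\eta}\vee\mathcal F^\mathcal L_{1-\eta,1}\big)$ is trivial, since it is contained in $\mathcal F^\mathcal L_{0,1}$ yet independent of $\bigvee_\eta\mathcal F^\mathcal L_{\eta,1-\eta}=\mathcal F^\mathcal L_{0,1}$ by continuity of $\mathcal L$. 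You instead control the omitted edge terms directly and quantitatively: the Bessel-type bound $\sum_k\Var\big(\mathbb E[G\,|\,\mathcal F^\mathcal L_{J_k}]\big)\le\Var(G)$ for disjoint intervals (valid exactly as you argue, since the mean-zero parts of the ranges of these conditional expectations are pairwise orthogonal by the noise independence), together with the tower property, bounds the two edge sums uniformly in the dyadic depth by $\Var\big(\mathbb E[F\,|\,\mathcal F^\mathcal L_{0,\eta}]\big)+\Var\big(\mathbb E[F\,|\,\mathcal F^\mathcal L_{1-2\eta,1}]\big)$, and you then send these to zero as $\eta\to0$ by approximating $F$ in $L^2$ by the $\mathcal F^\mathcal L_{\eta,1}$-measurable variable $F_\eta=\prod_i\mathcal L_{\eta,1}(x_i,y_i)$, which is independent of $\mathcal F^\mathcal L_{0,\eta}$. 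Note that this uses the same continuity-of-$\mathcal L$ input as the paper's triviality argument, but deployed as an explicit estimate rather than through germ fields. The trade-off: the paper's route needs nothing beyond $F\in L^2$ and re-uses \cref{p:bncondition}, which is already in play, and it makes transparent the structural fact that the linear part of $F$ would have to be germ-measurable; your route avoids any further appeal to \cref{p:bncondition} and to $\sigma$-field triviality, relying only on elementary Hilbert-space orthogonality, but it requires an integrability input to upgrade $F_\eta\to F$ from almost sure to $L^2$ convergence — your domination by $\sup_{b\in[0,1/2]}|\mathcal L_{b,1}(x_i,y_i)|$ is fine (this supremum indeed has all moments by the uniform estimates of \cite{dov}), and even more cheaply one could use uniform integrability from the one-point Tracy--Widom marginals, whose scales are bounded for $\eta\in(0,1/2]$. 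Your closing parenthetical (triviality of the germ $\sigma$-fields as a byproduct) recovers exactly the paper's key lemma, so the two arguments are close in spirit even though your decomposition and your uniformity-in-$n$ mechanism are different.
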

\begin{proof} Let $F:= \prod_{j=1}^{n} \mathcal L_{0,1}(x_j,y_j),$ and let $G$ denote the linear random variable obtained by applying the orthogonal projection of $F$ onto the closed linear subspace of $L^2(\Omega)$ consisting of linear $\mathcal F^\mathrm{DL}_{0,1}$-measurable random variables. 

Applying \cref{p:bncondition}, we see from \eqref{frrr} that for any $\eta\in (0,1/2)$, the random variable $G$ must be measurable with respect to $\mathcal F^\mathrm{DL}_{0,\eta}\vee \mathcal F^\mathrm{DL}_{1-\eta,1}$. In other words, all dependency on $\mathcal F^\mathrm{DL}_{\eta,1-\eta}$ vanishes. This is true for all $\eta\in (0,1/2)$. Therefore, $G$ is measurable with respect to $\mathcal H:=\bigcap_{\eta\in (0,1/2)} \mathcal F^\mathrm{DL}_{0,\eta}\vee \mathcal F^\mathrm{DL}_{1-\eta,1}.$ We claim that $\mathcal H$ is a trivial $\sigma$-algebra, only containing sets of measure $0$ or $1$. We note that $\mathcal H$ is contained in $\mathcal F^\mathrm{DL}_{0,1}.$ However, $\mathcal{H}$ is also independent of the $\sigma$-algebra generated by $\bigcup_{\eta\in (0,1/2)} \mathcal F^\mathrm{DL}_{\eta,1-\eta}.$ In particular, we claim that the $\sigma$-algebra generated by $\bigcup_{\eta\in (0,1/2)} \mathcal F^\mathrm{DL}_{\eta,1-\eta}$ is all of $\mathcal F^\mathrm{DL}_{0,1}.$ To see this, note by the continuity of the directed landscape in all parameters that the random variable $\mathcal L_{0,1}(x,y)$ can be written as the almost sure limit as $\eta\to 0$ of $\mathcal L_{\eta,1-\eta}(x,y).$ Thus $\mathcal H\subset \mathcal F^\mathrm{DL}_{0,1}$ is independent of $\mathcal F^\mathrm{DL}_{0,1}$, proving that it is a trivial $\sigma$-algebra.

Since the random variable $G$ is $\mathcal H$-measurable, we conclude that it is almost surely constant. But $G$ is an $L^2$-limit of the sequence of random variables as in \cref{p:bncondition}, and therefore \eqref{e:conditionspec} holds.
\end{proof}

The following corollary is an immediate consequence.

\begin{cor}\label{c:airycondition}
    Fix some $\varrho>0$. In order to show that the directed landscape is a black noise, it suffices to show that for any $\eta\in (0,1/2)$,  $n\in \mathbb{N}$ and $\vec x,\vec y\in \mathbb{R}^n$,  there exists some $C = C(n,\eta,\vec x,\vec y)>0$ such that for all $a,b\in [\eta,1-\eta]$,
    \begin{equation}\label{e:bncondfinal}\mathrm{Var}\left(\mathbb{E}\bigg[ \prod_{j=1}^{n} \mathcal L_{0,1}(x_j,y_j) \bigg| \mathcal F^\mathrm{DL}_{a,b}\bigg]\right) \leq C (b-a)^{1+\varrho},
    \end{equation}
\end{cor}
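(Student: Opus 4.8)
The plan is to deduce \cref{c:airycondition} from the proposition established just above it by a trivial counting step; the corollary is genuinely immediate once the variance bound \eqref{e:bncondfinal} is in hand, so the write-up is short, and essentially all of the difficulty is displaced into verifying that hypothesis, which is the content of \cref{s:spatialmixing} through \cref{s:prooft1}.

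Here is the argument I would write. Fix $\eta\in(0,1/2)$, a positive integer $n$, points $\vec x,\vec y\in\mathbb{R}^n$, and set $F:=\prod_{i=1}^{n}\mathcal{L}_{0,1}(x_i,y_i)$. By the preceding proposition, in order to conclude that the directed landscape is a black noise it suffices to establish \eqref{frrr} for every choice of $\eta,n,\vec x,\vec y$; reindexing the limiting variable there as $N$ to keep it distinct from the number of points $n$, this says that
\begin{equation*}
\Sigma_N:=\sum_{\eta 2^{N}\le k\le (1-\eta)2^{N}}\mathrm{Var}\!\left(\mathbb{E}\!\left[\,F\,\Big|\,\mathcal{F}^{\mathcal{L}}_{(k-1)2^{-N},\,k2^{-N}}\right]\right)\xrightarrow[N\to\infty]{}0.
\end{equation*}
I would then take $N$ large enough that $2^{-N}<\eta/2$; for each $k$ in the summation range the dyadic interval $[(k-1)2^{-N},\,k2^{-N}]$ then has both endpoints in $[\eta-2^{-N},\,1-\eta+2^{-N}]\subseteq[\tfrac{\eta}{2},\,1-\tfrac{\eta}{2}]$ and has length $2^{-N}$. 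Applying the assumed estimate \eqref{e:bncondfinal} with $\eta$ replaced by $\eta/2$ (which is legitimate since $\eta/2\in(0,1/2)$) and with $(a,b)=((k-1)2^{-N},\,k2^{-N})$, and using that the constant $C=C(n,\eta/2,\vec x,\vec y)$ supplied there does not depend on the particular pair $(a,b)\in[\eta/2,1-\eta/2]^2$ — hence is uniform in $k$ and $N$ — yields
\begin{equation*}
\mathrm{Var}\!\left(\mathbb{E}\!\left[\,F\,\Big|\,\mathcal{F}^{\mathcal{L}}_{(k-1)2^{-N},\,k2^{-N}}\right]\right)\le C\,(2^{-N})^{1+\varrho}.
\end{equation*}
Since there are at most $2^{N}$ values of $k$ in the sum, I would conclude $\Sigma_N\le C\,2^{N}\cdot 2^{-N(1+\varrho)}=C\,2^{-N\varrho}$, which tends to $0$ as $N\to\infty$ because $\varrho>0$. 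This is exactly \eqref{frrr}, and the proposition then gives the black noise property.

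The main obstacle is not in this corollary — it is pure bookkeeping — but in producing its input \eqref{e:bncondfinal}: the uniform $(b-a)^{1+\varrho}$ bound for the variance of conditional expectations of polynomial observables of the Airy sheet, which is the heart of the paper and which I expect to require the spatial mixing estimate for the directed landscape together with the Airy argmax and Bessel process estimates. The one point about the reduction itself worth recording is that the exponent must be strictly larger than $1$: the surplus $\varrho>0$ is exactly what absorbs the factor $2^{N}$ counting the dyadic subintervals at scale $N$, so a bound of order $(b-a)^{1}$ on its own would leave $\Sigma_N$ merely bounded rather than vanishing.
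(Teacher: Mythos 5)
Your proposal is correct and follows exactly the route the paper intends: the paper states the corollary as an immediate consequence of the preceding proposition, and your argument (apply \eqref{e:bncondfinal} with $\eta/2$ to each dyadic interval at scale $2^{-N}$, sum over at most $2^{N}$ terms, and use $\varrho>0$ to get $\Sigma_N\le C2^{-N\varrho}\to 0$, verifying \eqref{frrr}) is precisely that immediate deduction. Your closing remark that the surplus $\varrho>0$ is what absorbs the count of dyadic subintervals is also the right observation about why the exponent must exceed $1$.
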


The rest of the paper will focus on proving \eqref{e:bncondfinal}, for a value of $\varrho$ that is slightly smaller than $\frac1{15}$. This bound may not be sharp (and we suspect $1/3$ will be the optimal value). Even for $n=1$, proving this bound is nontrivial. Let us briefly discuss the ideas for $n=1$ and $x=y=0$ and $a=1-b$ before moving on. Write $f\circ L\circ g:= \max_{x,y\in \Bbb R^2} f(x)+L(x,y)+g(y)$. In this simpler case, we can write
    \begin{align*}
        \mathrm{Var}\left(\mathbb{E}\left[  \mathcal L_{0,1}(0,0) \big| \mathcal F^\mathrm{DL}_{a,b}\right]\right) &= \mathrm{Var} \bigg( \int_{C(\Bbb R)^2} f\circ \mathcal L_{a,b}\circ g \; \mathbb P_{\mathrm{Airy}_2}^{\otimes 2}(df,dg) \bigg) \\ &= \int_{C(\Bbb R)^4}  \mathrm{Cov}(f\circ \mathcal L_{a,b}\circ g, u \circ \mathcal L_{a,b}\circ v)\mathbb P_{\mathrm{Airy}_2}^{\otimes 4}(df,dg,du,dv),
    \end{align*}
    where $\mathbb P_{\mathrm{Airy}_2}$ is the law of the parabolic Airy$_2$ process $\mathcal P_1$ from \cref{d:ale} rescaled by some factor depending on $a$. To analyze such covariances, it will be important to prove a strong mixing property for the landscape, which is done in \cref{s:spatialmixing}. 

    
    In the analysis, we will also need precise estimates for \textit{where} the maximum is achieved for $f\circ \mathcal L_{0,\epsilon} \circ g$. We will also need upper bounds for $p^{th}$ moments of these quantities, both of which we prove in \cref{s:airy}. We obtain fairly sharp bounds on $\mathrm{Cov}(f\circ \mathcal L_{0,\epsilon}\circ g, u \circ \mathcal L_{0,\epsilon}\circ v)$ that are uniform over all \textit{deterministic} $f,g,u,v$ lying in some class of functions (chosen so that the sample paths of the parabolic Airy$_2$ process lie in this class of functions almost surely). Since the Airy$_2$ process is locally Brownian, we will also need precise estimates on Bessel processes, which we prove in \cref{s:bessel}. Finally in \cref{s:prooft1} we prove \eqref{e:bncondfinal}, using the inputs from \cref{s:spatialmixing}, \cref{s:airy}, and \cref{s:bessel}.  

\section{A strong mixing property for the landscape under spatial shifts}\label{s:spatialmixing}

In this section, we prove a strong mixing property for the directed landscape under spatial shifts (\cref{c:diagonalmixing}) that is crucial to the arguments in \cref{s:airy}. First, we state a theorem from \cite{dov} that is used in the proof. 
\begin{thm}[Theorem 1.7 \cite{dov}] \label{dov1.7} Fix $u=(x,t;y,s)\in\mathbb{R}^{4}_{\uparrow}$. There is almost surely a unique directed geodesic $\Pi_{u}$ from $(x,s)$ to $(y,t)$, where a geodesic is defined to be any path $\pi:[s,t]\to\mathbb R$ such that $\pi(s) = x , \pi(t)=y$ and for all $k\in \mathbb N$ and all $s=s_0<...<s_k=t$, $$\mathcal L_{s,t}(x,y) = \sum_{j=1}^k \mathcal L_{t_{k-1},t_k} (\pi(t_{k-1}),\pi(t_k)).$$ Its distribution only depends on $u$ through scaling: as random continuous functions from $[0,1]\to\mathbb{R}$, 
 \begin{align*}
     \Pi_{(x,t;y,s)}(s+(t-s)r)\overset{d}{=}\Pi_{(0,0;0,1)}(r)+x+(y-x)r.
 \end{align*} Moreover, for $u=(0,0;0,1)$, there is a random constant $\mathfrak{C}$ such that, for all $s,t\in [0,1]$  with $s\ne t$,
 \begin{align*}
     \big|\Pi_{u}(s)-\Pi_{u}(t)\big| \leq \mathfrak{C}|t-s|^{\frac{2}{3}}\log^{\frac{1}{3}}{\left(\frac{2}{t-s}\right)}.
 \end{align*} The random constant satisfies $\mathbb{E}\big[a^{\mathfrak{C}^{3}}\big]<\infty$ for some $a>1$. 
    
 \end{thm}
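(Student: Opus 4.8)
The plan is to establish the four assertions in turn — existence of the directed geodesic, its uniqueness, the distributional scaling identity, and the modulus-of-continuity bound — the last being the substantive analytic step. \textbf{Existence and uniqueness.} Fix $u=(x,t;y,s)$ with $s<t$. Applying the metric composition law \eqref{e:metriccomp} recursively along a dyadic partition of $[s,t]$ produces, for each $n$, a path $\pi_n$ with $\pi_n(s)=x,\pi_n(t)=y$ that realizes the maxima in \eqref{e:metriccomp} at dyadic times; these maxima are attained because $\mathcal L$ is continuous and, by the parabolic decay of the Airy sheet at spatial infinity, the relevant suprema are effectively over compact sets. Once a quantitative transversal-fluctuation bound is available (so in practice this step is carried out together with the modulus estimate below), the family $\{\pi_n\}$ is tight in $C([s,t])$, and any subsequential limit is a geodesic by continuity of $\mathcal L$ and an interchange of limits in \eqref{e:metriccomp}. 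For uniqueness it suffices to show that for each intermediate time $m\in(s,t)$ the maximizer in $z\mapsto \mathcal L_{s,m}(x,z)+\mathcal L_{m,t}(z,y)$ is almost surely unique, and then to intersect over dyadic $m$ and use continuity to pin down the whole path. That a.s.\ uniqueness follows from the absolute continuity of the spatial marginals of the Airy sheet with respect to Brownian motion on compacts — a consequence of the Brownian Gibbs property of the parabolic Airy line ensemble (\cref{d:ale}) — since the sum of two independent processes, each locally absolutely continuous with respect to Brownian motion minus a parabola, has an a.s.\ unique maximizer over any compact interval.

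\textbf{Scaling.} The identity $\Pi_{(x,t;y,s)}(s+(t-s)r)\overset{d}{=}\Pi_{(0,0;0,1)}(r)+x+(y-x)r$ follows from the scaling and shift covariance of $\mathcal L$ in property (3) of \cref{d:dl}, combined with the covariance of $\mathcal L_{s,t}(\cdot,\cdot)$ under adding an affine function to the spatial coordinates: since the geodesic — or, before the deterministic-functional result of \cite{dv2}, at least its conditional law — is a measurable functional of $\mathcal L$, covariance of $\mathcal L$ transfers directly to covariance of $\Pi_u$.

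\textbf{Modulus of continuity — the main obstacle.} For $u=(0,0;0,1)$ I would run a multiscale argument. The key input is a one-point tail bound for the landscape increments: by property (3) of \cref{d:dl}, $\mathcal L_{t,t+s}(a,a+v)$ is schematically $-v^2/s + s^{1/3}\mathcal L^{\sharp}$, where $\mathcal L^{\sharp}$ has Tracy--Widom-type tails (upper tail $\exp(-cr^{3/2})$, lower tail $\exp(-cr^{3})$), uniformly in the relevant parameters. If the geodesic $\Pi_u$ moves by more than $\delta$ over a window $[t,t+s]$, then the reverse triangle inequality forces the value of $\mathcal L$ along that window to exceed that of a straight interpolating path, which by the parabolic term costs order $\delta^2/s$ and hence forces $\mathcal L^{\sharp}$ to be atypically large, of order $\delta^2/s^{4/3}$. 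Choosing $\delta\asymp s^{2/3}\log^{1/3}(2/s)$ makes this deviation of order $\log(2/s)$, so that after paying the union-bound factor $2^n$ over windows $[k2^{-n},(k+1)2^{-n}]$ the total probability is summable in $n$; Borel--Cantelli then yields a random $\mathfrak C$ with $|\Pi_u(t+s)-\Pi_u(t)|\le \mathfrak C s^{2/3}\log^{1/3}(2/s)$ along dyadic $s$, upgraded to all $s$ by continuity and from a net of times to all times using the ordering/monotonicity of geodesics. Tracking the exponents in the lower-tail bound $\exp(-cr^3)$ shows the tail of $\mathfrak C$ decays like $\exp(-c'\mathfrak C^3)$, giving $\mathbb E[a^{\mathfrak C^3}]<\infty$ for some $a>1$. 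The delicate points are making the constants line up so that the exponent $2/3$ and the logarithmic power $1/3$ come out exactly, and arranging the union bound simultaneously over dyadic scales and over the spatial location of the geodesic at the start of each window.
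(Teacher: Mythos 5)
First, a point of comparison: the paper does not prove this statement at all — it is quoted verbatim from \cite{dov} (their Theorem 1.7), and is used here as an external input. So there is no in-paper argument to match; the question is whether your blind sketch would actually reprove it. Your treatment of existence, uniqueness and the scaling identity is broadly the standard route (unique maximizer of $z\mapsto\mathcal L_{s,m}(x,z)+\mathcal L_{m,t}(z,y)$ at each fixed intermediate time via local absolute continuity with respect to Brownian motion, intersection over a countable dense set of times, plus $1{:}2{:}3$ scaling and shear covariance — though note the affine/shear covariance you invoke is not among the defining properties in \cref{d:dl} and itself comes from \cite{dov}).

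The genuine gap is in the modulus-of-continuity step, which is the heart of the theorem. Your claim that an increment $|\Pi_u(t+s)-\Pi_u(t)|>\delta$ over a window of length $s$ ``forces $\mathcal L^\sharp$ to be atypically large, of order $\delta^2/s^{4/3}$'' by comparison with a straight interpolating path is not correct: any competitor segment on $[t,t+s]$ has the same endpoints $\pi(t),\pi(t+s)$, hence the same parabolic cost $(\pi(t+s)-\pi(t))^2/s$, so the increment alone forces no anomalous fluctuation. If instead you compare globally, writing $\mathcal L_{0,1}(0,0)$ as the sum of the three segment values and using the uniform two-sided bound $|\mathcal L_{a,b}(x,y)+(x-y)^2/(b-a)|\lesssim (b-a)^{1/3}(\text{log factors})$ (the estimate quoted in \cref{6.3}), the slack contributed by the two outer segments is of order $1$, and you only obtain $(\pi(t+s)-\pi(t))^2/s\lesssim 1$, i.e.\ H\"older exponent $1/2$, not $2/3$. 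The exponent $2/3$ and the $\log^{1/3}$ correction come from a different mechanism: geodesic optimality at the intermediate time of a window $[t,t+2s]$ compared against the midpoint of $\pi(t)$ and $\pi(t+2s)$ controls the \emph{second} difference, $\bigl|\pi(t+s)-\tfrac12\bigl(\pi(t)+\pi(t+2s)\bigr)\bigr|\lesssim s^{2/3}(\text{log factor})$, at every dyadic scale, and one then chains these bounds over scales; the tail $\mathbb E[a^{\mathfrak C^3}]<\infty$ must be tracked through that chaining rather than through a single-window Borel--Cantelli. This is how \cite{dov} argues, and without some version of this second-difference (or multi-scale comparison) idea your union bound does not close: with $\delta\asymp s^{2/3}\log^{1/3}(2/s)$ the ``forced deviation'' in your scheme is swallowed by the $O(1)$ slack of the outer segments, so the events you are bounding need not be rare at all.
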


In other words, the geodesic path has a H\"older norm that has tails which decay like $e^{-u^{3}}$, which will be very useful in deriving mixing estimates. 

With \cref{dov1.7} in hand, we prove that there exists a probability space on which we can couple three copies of the directed landscape in such a way that two of them are independent, and the third agrees with the other two sufficiently far from the origin. 
\begin{lemm}\label{l:mixing} There exist three copies of the directed landscape $\mathcal L^i=(\mathcal L^i_{s,t})_{0\le s<t\le 1}$ with $i=0,1,2$, all coupled onto the same probability space $(\Omega,\mathcal F,\mathbb{P}_{\mathrm{triple}})$ so that:
\begin{enumerate}
    \item $\mathcal L^1, \mathcal L^2$ are independent under $\mathbb{P}_{\mathrm{triple}}$. 
    \item There exists a positive random variable $X:\Omega\to (0,\infty)$ such that $\mathcal L_{s,t}^0(x,y) = \mathcal L_{s,t}^1(x,y)$ for all $x,y>X$ and $0\le s<t\le 1$, and furthermore $\mathcal L_{s,t}^0(x,y) = \mathcal L_{s,t}^2(x,y)$ for all $x,y<-X$ and $0\le s<t\le 1$.
    \item $\mathbb{P}_{\mathrm{triple}}(X>u)\leq Ce^{-du^3}$ where $C,d>0$ do not depend on $u>0$.
\end{enumerate}
\end{lemm}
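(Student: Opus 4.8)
The plan is to transfer the statement to a discrete pre-limit in which the driving randomness is manifestly a product over the spatial axis, build the coupling there by permuting independent blocks of the environment, and then pass to the continuum limit. Realize the directed landscape on $[0,1]$ as the $N\to\infty$ scaling limit of Brownian last passage percolation, as in \cite{dov}; the $N$-th environment is a finite family of independent two-sided Brownian motions, and, regarded through its increments, it decomposes into three \emph{independent} blocks $\mathsf E^L_N,\mathsf E^M_N,\mathsf E^R_N$ associated to the three spatial regions $\{x\le -1\}$, $\{-1\le x\le 1\}$, $\{x\ge 1\}$ (which under the scaling correspond to splitting the increments of each Brownian motion at a pair of line-dependent locations). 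Writing $\mathcal L_N=\Phi_N(\mathsf E^L_N,\mathsf E^M_N,\mathsf E^R_N)$ for the deterministic last-passage functional (which sees the increments only), take mutually independent fresh copies of the blocks and set $\mathcal L^0_N:=\mathcal L_N$, $\mathcal L^1_N:=\Phi_N(\widetilde{\mathsf E}^{1,L}_N,\widetilde{\mathsf E}^{1,M}_N,\mathsf E^R_N)$, and $\mathcal L^2_N:=\Phi_N(\mathsf E^L_N,\widetilde{\mathsf E}^{2,M}_N,\widetilde{\mathsf E}^{2,R}_N)$. Each of the three is built from a bona fide i.i.d.\ environment, hence is a rescaled Brownian LPP and converges in law to the directed landscape; moreover $\mathcal L^1_N$ is a function of $\mathsf E^R_N$ and fresh blocks while $\mathcal L^2_N$ is a function of $\mathsf E^L_N$ and (other) fresh blocks, so $\mathcal L^1_N\perp\mathcal L^2_N$ because $\mathsf E^L_N\perp\mathsf E^R_N$. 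This is Item (1) in the pre-limit.

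Next I would produce the spatial threshold. A last-passage staircase between points with both spatial coordinates on one side of a cut stays on that side, so the only reason $\mathcal L^0_N$ and $\mathcal L^1_N$ could disagree at $(x,s;y,t)$ with $x,y$ large positive is that the maximizing path (geodesic) of one of them crosses into the region of the differing blocks. Using monotonicity of geodesics in their endpoints together with a uniform transversal-fluctuation bound for Brownian LPP geodesics — the uniform counterpart of the modulus-of-continuity estimate in \cref{dov1.7}, retaining its cubic-exponential upper tail — define $X_N$ to be the least value such that for all $0\le s<t\le 1$ and all $x,y\ge X_N$ the geodesics of \emph{both} $\mathcal L^0_N$ and $\mathcal L^1_N$ from $(x,s)$ to $(y,t)$ stay at spatial positions $\ge 1$, and symmetrically the geodesics of both $\mathcal L^0_N$ and $\mathcal L^2_N$ between points with both endpoints $\le -X_N$ stay at spatial positions $\le -1$. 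For $x,y\ge X_N$ the last-passage value then equals the best value over paths confined to spatial positions $\ge 1$, which is a function of the environment there alone, and for both $\mathcal L^0_N$ and $\mathcal L^1_N$ that restricted environment is the common block $\mathsf E^R_N$; hence $\mathcal L^0_N=\mathcal L^1_N$ on $\{x,y>X_N\}$, and likewise $\mathcal L^0_N=\mathcal L^2_N$ on the corresponding left region, which is Item (2). The same uniform geodesic estimate gives $\mathbb P(X_N>u)\le Ce^{-du^3}$ with $C,d$ independent of $N$ and $u$ (indeed $X_N$ is bounded by a constant plus a fixed multiple of finitely many of these uniform fluctuation constants), which is Item (3).

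It then remains to take $N\to\infty$. The quadruple $(\mathcal L^0_N,\mathcal L^1_N,\mathcal L^2_N,X_N)$ is tight — the three landscape coordinates converge and the $X_N$ are tight by their uniform tail — so extract a subsequential limit $(\mathcal L^0,\mathcal L^1,\mathcal L^2,X)$. Each $\mathcal L^i$ is a directed landscape, $\mathcal L^1\perp\mathcal L^2$ since independence is preserved under weak limits, and $\mathbb P(X>u)\le Ce^{-du^3}$ by the portmanteau theorem. To transfer Item (2), first show $\mathcal L^0=\mathcal L^1$ on the shrunk random region $\{x,y\ge X+\delta\}$ for each fixed $\delta>0$: on an event whose probability tends to one along the subsequence one has $X_N\le X+\delta$, and there the pre-limit identity on $\{x,y\ge X_N\}$ combines with the uniform-on-compacts convergence of $(\mathcal L^0_N,\mathcal L^1_N)$ to yield the identity in the limit; then let $\delta\downarrow 0$, and handle the left region symmetrically.

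The hard part is the uniform geodesic fluctuation estimate underlying $X_N$: one needs a single cubic-exponential tail bound controlling, simultaneously, the worst transversal fluctuation over \emph{all} pairs of spatial endpoints on a given side of the origin and over \emph{all} sub-intervals $[s,t]\subseteq[0,1]$. Monotonicity of geodesics collapses the supremum over endpoints onto the extremal geodesics (from $(X_N,s)$ to $(X_N,t)$), but a crude union bound over the sub-intervals $[s,t]$ — finitely many in the pre-limit, yet growing unboundedly with $N$ — does not close; one must instead invoke a genuinely uniform transversal-fluctuation bound for Brownian LPP geodesics, which is available in the literature, or derive it from the single-geodesic bound of \cref{dov1.7} by a chaining argument exploiting the scale invariance of the landscape.
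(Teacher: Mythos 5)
Your construction is the same strategy as the paper's proof: resample the prelimiting LPP environment across a spatial cut to produce three coupled copies, two of which are manifestly independent, localize geodesics to get agreement away from the cut, and extract the cubic-exponential tail from transversal-fluctuation estimates for geodesics; the differences (Brownian LPP with a three-block increment decomposition and cuts at $\pm1$, versus exponential LPP split at the origin) are cosmetic. Items (1) and (2) in the prelimit are handled correctly, and your definition of $X_N$ --- requiring the geodesics of \emph{both} coupled copies, over all sub-intervals $[s,t]\subseteq[0,1]$ and all endpoints to the right of $X_N$, to stay right of the cut --- is precisely the event that makes Item (2) immediate once you invoke ordering of geodesics with a common time horizon and ordered endpoints.

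The genuine gap is Item (3), and it is exactly where you flag it. The whole quantitative content of \cref{l:mixing} is the bound $\mathbb P(X>u)\le Ce^{-du^3}$ with constants independent of the approximation parameter, and your argument reduces it to a transversal-fluctuation estimate for prelimit geodesics that must be simultaneously uniform in $N$, uniform over the continuum of sub-intervals $[s,t]$, and uniform over unboundedly many starting positions; you then observe that the naive union bound over sub-intervals does not close and defer the needed estimate to ``the literature'' or an unexecuted chaining argument. As written, the key estimate is asserted rather than proved. Note how the paper's proof avoids needing any new prelimit input: it takes a joint limit of the three rescaled passage-time fields \emph{together with their geodesics} (using the joint convergence of passage times and geodesics from \cite{dv2}), defines $X$ directly through the limiting geodesics of $\mathcal L^0$ started and ended at integer spatial locations over the full interval $[0,1]$, and then applies the already-available single-geodesic landscape bound of \cref{dov1.7} to countably many geodesics, so a union bound with cubic-exponential summands closes immediately. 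If you wish to keep your (stronger, and in fact cleaner for deducing Item (2)) definition of the threshold, the fix is to define it in the limit and invoke a modulus-of-continuity bound that is uniform over all landscape geodesics with endpoints in compact sets (such uniform geodesic estimates appear in \cite{dov}), combined with a union bound over integer spatial blocks; otherwise you must state and prove the uniform-in-$N$ prelimit bound you are relying on. A smaller point: your transfer of Item (2) to the limit compares $X_N$ with the limiting $X$ across a weak limit, which should be made literal via a Skorokhod representation of the subsequential convergence.
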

This lemma is inspired by \cite[Proposition 2.6]{duncan1}, which proves a coupling result for copies of the directed landscape via approximation by a last passage model. 
\begin{proof}[Proof of \cref{l:mixing}] 
    We consider two fields of IID rate-one exponential random variables $\omega^i = \{\omega^i_{t,x}\}_{t\ge 0,x\in \mathbb{Z}}$ for $i=1,2$, independent of one another. We define a third field of IID variables $\omega^0_{t,x}$ by $\omega^0_{t,x} := \omega^1_{t,x}$ if $x\ge 0$ and $\omega^0_{t,x} := \omega^2_{t,x}$ if $x< 0$. 

    For $i=0,1,2$ and $x,y\in \mathbb{Z}$ and $0\le m<n\in \mathbb{Z}$, we consider the fields of last passage times $$L_{m,n}^{i}(x,y):= \max\{ W^i(\pi)| \pi:\{m,...,n\}\to \mathbb{Z} ; \pi(m)=x,\pi(n)=y\},$$ where $\pi$ denotes a nearest neighbor path $\pi:\{m,...,n\}\to \mathbb{Z}$ and its length is defined by $$W^i(\pi):=  \sum_{\ell=m}^n \omega^i_{\ell,\pi(\ell)}.$$
    For $x,y\in \varepsilon^{2/3}\mathbb{Z}$ and $s,t\in \varepsilon\mathbb{Z}$, we then define a rescaled last passage time $$\mathcal L^{i,\varepsilon}_{s,t} (x,y):= \varepsilon^{1/3} \big(L_{\varepsilon^{-1}s,\varepsilon^{-1}t}^{i}(\varepsilon^{-2/3}x,\varepsilon^{-2/3}y) - \varepsilon^{-1}(t-s)\big).$$
    \cite[Theorem 1.20]{dv2} shows that the marginal laws of each $\mathcal L^{i,\varepsilon}_{s,t} (x,y)$ \textit{individually} converge in law as $\varepsilon\to 0$ to the directed landscape in the uniform-on-compact topology. Here, the compact set can be taken to be any compact subset of $\{(s,t,x,y): 0\le s<t\le 1, (x,y)\in \mathbb{R}^2\},$ see \cref{d:dirlprob}.
    
    Furthermore, if $\pi_{\mathrm{opt}}^{i,m,n,x,y} $ denotes the
    maximizing path, i.e., satisfying $L^{i}_{m,n}(x,y) = W^i(\pi_{\mathrm{opt}}^{i,m,n,x,y})$, then 
    \cite[Theorem 8.7]{dv2} implies that the paths $$\Pi^{i,s,t,x,y;\varepsilon}(r):= \varepsilon^{2/3} \pi_{\mathrm{opt}}^{i,\varepsilon^{-1}s,\varepsilon^{-1}t,\varepsilon^{-2/3}x,\varepsilon^{-2/3}y}(\varepsilon^{-1} r),$$ converge in law as $\varepsilon\to 0$, jointly in all variables $(s,t,x,y,r)$, with respect to the uniform-on-compact topology on $\{ (s,t,x,y,r):0\le s<t\le 1, s\le r\le t, (x,y)\in\mathbb{Z}^2 \}$, to geodesic paths $\Pi^{i,s,t,x,y}(r)$ of the corresponding directed landscapes $\mathcal L^i$, for $i=0,1,2$. This convergence occurs jointly with the height profiles $\mathcal L^{i,\varepsilon}_{s,t} (x,y)$.

    Now let $(\mathcal L^0,\mathcal L^1,\mathcal L^2)$ be any joint limit point of $(\mathcal L^{0,\varepsilon},\mathcal L^{1,\varepsilon},\mathcal L^{2,\varepsilon})$ as $\varepsilon\to 0.$ We define the random variable $$X:= \inf\big\{x\in \mathbb{N}\big| \inf_{r\in [0,1]} \Pi^{0,0,1,x,x}(r)>0 , \sup_{r\in [0,1]} \Pi^{0,0,1,-x,-x} (r)<0\big\}, $$ which tracks the smallest positive integer value of $x$ for which both of the directed geodesic paths from $x\to x$ and from $-x\to-x$ do not cross $0$ in $\mathcal L^0$. We verify that $\mathcal L^1,\mathcal L^2$ are independent and that $\mathcal L_{s,t}^0(x,y) = \mathcal L_{s,t}^1(x,y)$ for all $x,y>X$ and $0\le s<t\le 1$, by noting that the analogous facts hold trivially in the prelimit. Furthermore, $\mathcal L_{s,t}^0(x,y) = \mathcal L_{s,t}^2(x,y)$ for all $x,y<-X$ and $0\le s<t\le 1$, again because the analogous fact holds in the prelimit. This follows from the fact that two geodesics will never cross one another (though they may possibly stick together for some time). 

    It remains to verify the tail bound in Item (3) of the lemma statement. The moment bound of \cref{dov1.7} implies that for any fixed $x\in \mathbb{R}$ the probability of any geodesic path from $x\to x$ varying more than a distance $u>0$ from its starting point decays like $Ce^{-du^3}$ for some $C,d>0$. This directly implies Item $(3)$ through a union bound over the two sets corresponding to positive $x$ and negative $x$ in the definition of $X$ above. 
\end{proof}

We use \cref{l:mixing} to prove a \textit{strong mixing property} for the landscape under spatial shifts. 

\begin{prop}[Strong mixing property]\label{p:alphamixingestimate} The directed landscape $\mathcal L=(\mathcal L_{s,t})_{0\le s<t\le 1}$ is $\alpha$-mixing under spatial shifts, with mixing rate $\alpha(k)\leq Ce^{-dk^3}$ for some universal constants $C,d>0$. More precisely, we have the following stronger estimate. For $-\infty\leq a<b\leq \infty$ let $\mathcal G_{a,b}$ denote the $\sigma$-algebra generated by the random variables $\mathcal L_{s,t}(x,y)$ where $0\leq s<t\le 1$ and $x,y \in (a,b)$. Then $$|\Cov(F,G)| \leq  Ce^{-dk^3}\mathbb{E}[F^4]^{1/4}\mathbb{E}[G^4]^{1/4},$$ where $C,d>0$ are independent of $k$ and $F,G \in L^4(\Omega)$ that are respectively measurable with respect to $\mathcal{G}_{-\infty,0}$ and $\mathcal{G}_{k,\infty}.$ 
\end{prop}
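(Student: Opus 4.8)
The plan is to deduce \cref{p:alphamixingestimate} from the three-copy coupling in \cref{l:mixing} by a standard ``replacement'' argument. Work on the probability space $(\Omega,\mathcal F,\mathbb P_{\mathrm{triple}})$ carrying $\mathcal L^0,\mathcal L^1,\mathcal L^2$ with the properties (1)--(3) of \cref{l:mixing}. The point is that $\mathcal L^0$ is a genuine copy of the directed landscape, so it suffices to prove the covariance bound with $\mathcal L$ replaced by $\mathcal L^0$; but $\mathcal L^0$ agrees with the \emph{independent} fields $\mathcal L^1$ (to the right of $X$) and $\mathcal L^2$ (to the left of $-X$), and the event $\{X>k\}$ has probability at most $Ce^{-dk^3}$. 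So given $F$ measurable w.r.t.\ $\mathcal G^0_{-\infty,0}$ and $G$ measurable w.r.t.\ $\mathcal G^0_{k,\infty}$, I would introduce the random variables $\tilde F$ and $\tilde G$ obtained by the same measurable functionals applied to $\mathcal L^2$ and $\mathcal L^1$ respectively (using that $\mathcal L^2$ determines $\mathcal L^0$ on $\{x,y<-X\}$ and $\mathcal L^1$ determines it on $\{x,y>X\}$). On the event $\{X<k\}$ we have, crucially, $\mathbf 1_{\{X<k\}}(F-\tilde F)=0$ is \emph{not} quite right, so instead one writes $\mathbf 1_{\{X<k\}} F = \mathbf 1_{\{X<k\}}\tilde F$ fails unless the functionals see only the region $\{x,y<-k\}$; hence I would first reduce to $F,G$ each depending only on finitely many values $\mathcal L_{s,t}(x,y)$ with $x,y$ in a fixed compact set, and then further reduce, by an approximation/truncation, to the case where $F$ depends only on values with $x,y<-k$ (using spatial continuity of $\mathcal L$ to control the error).

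More precisely, I would split $\Cov(F,G) = \Cov(F\mathbf 1_{X<k}, G\mathbf 1_{X<k}) + (\text{error terms involving } \mathbf 1_{X\ge k})$. The error terms are handled by Cauchy--Schwarz together with $\mathbb P(X\ge k)^{1/2}\le C^{1/2}e^{-dk^3/2}$ and the $L^4$ bounds on $F,G$: for instance $|\mathbb E[F G\mathbf 1_{X\ge k}]|\le \mathbb E[F^2 G^2]^{1/2}\mathbb P(X\ge k)^{1/2}\le \mathbb E[F^4]^{1/4}\mathbb E[G^4]^{1/4}(Ce^{-dk^3})^{1/2}$, and similarly $|\mathbb E[F]\,\mathbb E[G\mathbf 1_{X\ge k}]|$, etc. For the main term, on $\{X<k\}$ one has $\mathcal L^0 = \mathcal L^1$ on $\{x,y>k\}\supset\{x,y>X\}$... wait, one needs $\{x,y>X\}$ and on $\{X<k\}$ the set $\{x,y>k\}$ is contained in it; dually $\mathcal L^0=\mathcal L^2$ on $\{x,y<-k\}$. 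So if $F$ is built from values with $x,y\le -k$ and $G$ from values with $x,y\ge k$, then $F\mathbf 1_{X<k} = \tilde F^{(2)}\mathbf 1_{X<k}$ and $G\mathbf 1_{X<k}=\tilde G^{(1)}\mathbf 1_{X<k}$ where $\tilde F^{(2)},\tilde G^{(1)}$ are the corresponding functionals of $\mathcal L^2,\mathcal L^1$; since $\mathcal L^1\perp\mathcal L^2$, the variables $\tilde F^{(2)}$ and $\tilde G^{(1)}$ are independent, and one again pays only $\mathbb P(X\ge k)$ to replace the indicators back by $1$ and decouple.

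The reduction to functionals supported on $x,y\le -k$ (resp.\ $\ge k$) rather than merely $\le 0$ (resp.\ $\ge 0$) is the one genuinely fiddly point, and it is where I expect the bookkeeping to concentrate: one replaces $\mathcal G_{-\infty,0}$ by $\mathcal G_{-\infty,-k}$ at the cost of a shift, or, more cleanly, one simply proves the statement for $F\in L^4(\mathcal G_{-\infty,0})$ and $G\in L^4(\mathcal G_{k,\infty})$ by noting that $\mathcal L^0$ restricted to $\{x,y<0\}$ is determined by $\mathcal L^2$ on the event $\{X<0\}$ — but $X>0$ always, so this fails, confirming that one must absorb a window of width $k$ on one side. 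The honest route is: by \cref{l:mixing} applied after translating by $k/2$, or by a trivial monotonicity in $k$, reduce to showing $|\Cov(F,G)|\le Ce^{-d(k/3)^3}(\cdots)$ for $F\in L^4(\mathcal G_{-\infty,-k/3})$, $G\in L^4(\mathcal G_{2k/3,\infty})$, which is implied by the above with $X$ replaced by a translated variable still satisfying the same tail bound. Aside from this translation bookkeeping, all steps are routine: the substance is entirely contained in \cref{l:mixing}, and in particular in the cubic-exponential tail of $X$, which is what produces the $e^{-dk^3}$ rate; the $L^4$ moments (rather than $L^2$) are needed precisely because Cauchy--Schwarz is applied twice against the small-probability events.
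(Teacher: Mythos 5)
Your proposal is correct and is essentially the paper's own argument: the paper likewise reduces by spatial translation invariance to $F$ measurable with respect to $\mathcal G_{-\infty,-k}$ (absorbing the buffer into the constants, exactly the bookkeeping you flag), then uses the triple coupling of \cref{l:mixing}, writes $\Cov(F,G)=\mathbb E_{\mathrm{triple}}[F_0G_0-F_2G_1]$ with $F_i=F(\mathcal L^i)$, $G_i=G(\mathcal L^i)$, and controls the replacement errors by Cauchy--Schwarz applied twice (hence the $L^4$ moments) against $\mathbb P(X>k)^{1/2}\le Ce^{-dk^3/2}$. Your indicator-splitting of the covariance is only a cosmetic variant of the paper's expansion, so there is nothing substantive to change.
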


The $\alpha$-mixing property is a very strong form of mixing that is uniform over all observables, see 
the survey by Bradley \cite{bradley} for additional discussion. Roughly speaking, the result says that under translation of the landscaped by $k$ spatial units, the decay of correlation is bounded above by $e^{-d k^3}$, with $d$ being uniform over all observables and $k>0$. Moreover, the remarkably fast cubic-exponential rate of decay  means that we can treat the field as virtually independent on well-separated scales. 

\begin{proof} By translation invariance, it suffices to prove the claim with the assumption ``$F$ measurable with respect to $\mathcal G_{-\infty,0}$" replaced by the statement ``$F$ measurable with respect to $\mathcal G_{-\infty,-k}$" (this replacement is just for notational convenience). Therefore, we will consider $F,G$ that are respectively measurable with respect to $\mathcal G_{-\infty,-k}$ and $\mathcal G_{k,\infty}.$ 

We assume that $\mathcal L^0,\mathcal L^1,\mathcal L^2$ are the same couplings and $X$ is the same random variable as in \cref{l:mixing}. We will use $\mathbb{E}_{\mathrm{triple}}[\cdot]$ to denote the expectation with respect to the measure on the coupled space from \cref{l:mixing}. Consider the measurable functions $F,G: C(\Bbb R^4_\uparrow)\to \Bbb R$ as above, and write $F_i:=F(\mathcal L^i),G_i:=G(\mathcal L^i)$ for $i=0,1,2$. Then by independence, $\mathbb{E}_{\text{triple}}[F_2G_1] =\mathbb{E}_{\text{triple}}[F_2]\mathbb{E}_{\text{triple}}[G_1],$ thus 
\begin{align}
    |\Cov(F,G)| &= \notag |\mathbb{E}_{\text{triple}}[ F_0G_0-F_2G_1]| \\ &\leq \notag |\mathbb{E}_{\text{triple}}[ (F_0-F_2+F_2) (G_0-G_1+G_1) -F_2G_1 ]| \\ \notag &= |\mathbb{E}_{\text{triple}}[(F_0-F_2)(G_0-G_1)] +\mathbb{E}_{\text{triple}}[(F_0-F_2)G_1] + \mathbb{E}_{\text{triple}}[F_2(G_0-G_1)]| \\ \notag &\leq \mathbb{E}_{\text{triple}}[(F_0-F_2)^2]^{1/2} \mathbb{E}_{\text{triple}}[(G_0-G_1)^2]^{1/2} + \mathbb{E}_{\text{triple}}[(F_0-F_2)^2]^{1/2} \mathbb{E}_{\text{triple}}[G_1^2]^{1/2} 
    \\  & \;\;\;\;\;\;+ \mathbb{E}_{\text{triple}}[F_2^2]^{1/2} \mathbb{E}_{\text{triple}}[(G_0-G_1)^2]^{1/2}. \label{e:triple}
\end{align}
Now recall that $F$ and $G$ are measurable with respect to $\mathcal G_{-\infty,-k}$ and $\mathcal G_{k,\infty}$, respectively. Therefore, $F_0=F_2$ and $G_0=G_1$ on the event that $X \le k$. By \cref{l:mixing} we thus have that \begin{align*}\mathbb{E}_{\text{triple}}[(F_0-F_2)^2]^{1/2}&=  \mathbb{E}_{\text{triple}}[(F_0-F_2)^2\mathbf 1_{\{X>k\}}]^{1/2}\\&\leq 2 \mathbb{E}[F^4]^{1/4} \mathbb{P}_{\text{triple}}(X>k)^{1/2} \leq 2\mathbb{E}[F^4]^{1/4}Ce^{-\frac12 d k^3}.
\end{align*} 
We applied the bound $(F_0-F_2)^2 \leq 2(F_0^2+F_2^2)$ and then Cauchy-Schwarz to obtain the first inequality. Likewise, we may prove that $\mathbb{E}_{\text{triple}}[(G_0-G_1)^2]^{1/2} \leq 2\mathbb{E}[G^4]^{1/4} Ce^{-\frac12 dk^3}$. Plugging these bounds back into \eqref{e:triple}, and using Jensen to say that $\mathbb{E}_{\text{triple}}[F_2^2]^{1/2} \leq \mathbb{E}_{\text{triple}}[F_2^4]^{1/4} = \mathbb{E}[F^4]^{1/4}$ and likewise $\mathbb{E}_{\text{triple}}[G_1^2]^{1/2} \leq \mathbb{E}_{\text{triple}}[G_1^4]^{1/4} = \mathbb{E}[G^4]^{1/4}$, we will obtain the stated bound. 
\end{proof}

The appearance of $L^4$ norms on the right side is equivalent to $\alpha$-mixing, see \cite{Peli}. It is natural to ask what the strongest or optimal form of mixing is for spatial shifts in the landscape, such as $\beta$-mixing or $\phi$-mixing \cite{bradley}. We did not pursue this question. 

Although \cref{l:mixing} and \cref{p:alphamixingestimate} provide a flavor of how to prove mixing estimates, we will actually need a stronger version of these results. 

\begin{thm}\label{mixing}
    Fix $T>0$, and consider any compact disjoint intervals $I_1,J_1,...,I_N,J_N\subset \mathbb{R}$. Define the $\sigma$-fields $$\mathcal F_I:= \sigma( \{\mathcal L_{s,t}(x,y): 0\leq s<t\le T, x\text{ and }y \text{ are in the same interval } I_r \text{ for some } r\le N\},$$
    $$\mathcal F_J:= \sigma( \{\mathcal L_{s,t}(x,y): 0\leq s<t\le T,  x\text{ and }y \text{ are in the same interval } J_r \text{ for some } r\le N\}.$$
    Define the positive real number $$D:= \inf\bigg\{|x-y| \bigg| x\in \bigcup_1^N I_r,\;\; y\in \bigcup_1^N J_r\bigg\}.$$
    Then we have the bound $$|\Cov(F,G)| \leq  CNe^{-dT^{-2}D^3} \mathbb{E}[F^4]^{1/4} \mathbb{E}[G^4]^{1/4}.$$
    Here $C,d>0$ are universal constants that are independent of $N,T,\{I_r,J_r\}_{r=1}^N,D$ and $F,G \in L^4(\Omega)$ that are respectively measurable with respect to $\mathcal F_I$ and $\mathcal F_J.$
\end{thm}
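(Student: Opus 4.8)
The plan is to run the three-copy strategy used for \cref{p:alphamixingestimate}, but now with one resampled copy of the environment tailored to the $I$-intervals and another tailored to the $J$-intervals, and to control the exceptional event by a union bound over $O(N)$ geodesics.

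First I would build the coupling. Enlarge each interval by setting $U:=\{x:\operatorname{dist}(x,\bigcup_r I_r)<D/2\}$ and $V:=\{x:\operatorname{dist}(x,\bigcup_r J_r)<D/2\}$; these are disjoint open sets, each a finite union of intervals, since a point of $U\cap V$ would witness $\operatorname{dist}(\bigcup_r I_r,\bigcup_r J_r)<D$. Working in the discrete last-passage approximation of \cref{l:mixing}, but on the time window $[0,T]$ (the construction is identical), let $\omega^0$ be the i.i.d.\ exponential field, build $\omega^I$ by keeping $\omega^0_{t,x}$ whenever $x$ lies in the discretization of $U$ and replacing every other entry by an independent fresh i.i.d.\ field, and build $\omega^J$ the same way using $V$, with its fresh field independent of everything. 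Each of $\omega^0,\omega^I,\omega^J$ is i.i.d.\ exponential, and because $U\cap V=\emptyset$ the field $\omega^I$ is a function of $(\omega^0|_U,\text{fresh}_I)$ while $\omega^J$ is a function of $(\omega^0|_V,\text{fresh}_J)$, so $\omega^I$ and $\omega^J$ are independent. Passing to a joint subsequential limit (tight because each marginal is, by \cite{dv2}) gives three directed landscapes $\mathcal L^0,\mathcal L^I,\mathcal L^J$ on $[0,T]$ with $\mathcal L^I\perp\mathcal L^J$, coupled jointly with their geodesics exactly as in the proof of \cref{l:mixing}.

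Next I would define the good event. By the geodesic regularity of \cref{dov1.7} together with the scale invariance of the landscape, a geodesic joining two points of a common interval over a time window of length at most $T$ deviates from the straight segment between its endpoints by at most $c\,T^{2/3}\mathfrak C$, where $\mathfrak C$ is a random constant with $\mathbb E[a^{\mathfrak C^3}]<\infty$ for some $a>1$; by the monotonicity of geodesics it is enough (as in the proof of \cref{l:mixing}) to track the finitely many geodesics joining the two endpoints of each of the $2N$ intervals to themselves over the full window $[0,T]$. Let $E$ be the event that all of these geodesics, in all three copies, deviate by less than $D/2$ from their intervals; a union bound over the $O(N)$ geodesics together with Markov's inequality applied to $a^{\mathfrak C^3}$ gives $\mathbb P(E^c)\le C N e^{-dT^{-2}D^3}$, the $T^{-2}$ arising precisely from the $T^{2/3}$ scaling of the geodesic fluctuations. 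On $E$, every optimal last-passage path between points of a common $I_r$ stays inside $U$, where $\omega^0=\omega^I$, and every optimal path between points of a common $J_r$ stays inside $V$, where $\omega^0=\omega^J$; passing to the limit this forces $\mathcal L^0_{s,t}(x,y)=\mathcal L^I_{s,t}(x,y)$ on $E$ whenever $x,y$ lie in a common $I_r$ and $0\le s<t\le T$, and similarly $\mathcal L^0_{s,t}(x,y)=\mathcal L^J_{s,t}(x,y)$ on the $J$-side. Hence $F(\mathcal L^0)=F(\mathcal L^I)$ and $G(\mathcal L^0)=G(\mathcal L^J)$ on $E$.

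Finally I would conclude as in \cref{p:alphamixingestimate}. With $F_0:=F(\mathcal L^0),F_I:=F(\mathcal L^I),G_0:=G(\mathcal L^0),G_J:=G(\mathcal L^J)$, the independence $\mathcal L^I\perp\mathcal L^J$ gives $\mathbb E[F_IG_J]=\mathbb E[F_I]\mathbb E[G_J]=\mathbb E[F_0]\mathbb E[G_0]$, so $\operatorname{Cov}(F,G)=\mathbb E[F_0G_0-F_IG_J]=\mathbb E[(F_0G_0-F_IG_J)\mathbf 1_{E^c}]$ because the integrand vanishes on $E$; applying H\"older with exponents $4,4,2$ and the bound on $\mathbb P(E^c)$ yields $|\operatorname{Cov}(F,G)|\le 2\,\mathbb E[F^4]^{1/4}\mathbb E[G^4]^{1/4}(CNe^{-dT^{-2}D^3})^{1/2}$, which is of the claimed form after relabeling constants and using $N^{1/2}\le N$. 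The main obstacle is the last-passage bookkeeping behind the claim that, on the good event, the three limiting landscapes genuinely coincide on the relevant diagonal regions for all time subintervals at once: one must combine the prelimit identities (valid because optimal paths stay where the environments agree), the monotonicity that collapses the uncountable family of geodesic constraints to finitely many, and the scaling of transversal fluctuations that converts the separation $D$ into the factor $T^{-2}D^3$ in the exponent.
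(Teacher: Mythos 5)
Your overall strategy is the same as the paper's: a three-copy coupling built from discrete LPP with a hybrid environment, a good event controlled by finitely many "fence" geodesics with cubic-exponential tails from \cref{dov1.7}, and then exactly the covariance computation of \cref{p:alphamixingestimate}. The coupling step, the disjointness of $U$ and $V$, the independence $\mathcal L^I\perp\mathcal L^J$, the $T^{-2}D^3$ exponent, and the final H\"older bookkeeping are all fine, and including the fence geodesics of all three copies in $E$ is in fact necessary (one direction of the equality $L^0=L^I$ needs copy-$0$'s optimal path confined, the other needs copy-$I$'s), so that part of your event is well chosen.

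The genuine gap is in the reduction "by the monotonicity of geodesics it is enough to track the geodesics joining the two endpoints of each interval to themselves." A fence geodesic based \emph{at} an endpoint of $I_r$ fluctuates on both sides of its basepoint, and your event $E$ only forces it to stay within $D/2$ of $I_r$; it will typically dip strictly inside $I_r$ at some times. For $x,y\in I_r$ within that dip, the ordering of endpoints needed for geodesic monotonicity fails at the relevant times $s,t$: the path from $(x,s)$ to $(y,t)$ starts and ends to the \emph{right} of the right-endpoint fence, so nothing prevents it from wandering to the right of $\sup I_r+D/2$, out of $U$, without ever crossing the fence, and the exchange argument gives no control. Hence the deterministic implication "$F(\mathcal L^0)=F(\mathcal L^I)$ and $G(\mathcal L^0)=G(\mathcal L^J)$ on $E$" is not justified for observables in $\mathcal F_I,\mathcal F_J$, which do involve such pairs $(x,y)$ near the interval endpoints; this is precisely the uncountable family you were trying to collapse. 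The paper avoids this by placing the fences strictly in the gaps (at the one-third and two-thirds points $b_r,c_r$ of each gap), requiring on the good event that they cross neither the interval endpoints nor the mid-gap points $a_r$, and taking the agreement regions $\bar I_r,\bar J_r$ to extend to the mid-gap points; then every point of every interval lies strictly inside its corridor at all times, the crossing/exchange argument applies to both relevant copies, and the same union bound over $O(N)$ geodesics gives $CNe^{-dT^{-2}D^3}$. Your proof can be repaired by adopting this fence placement (and enlarging $U,V$ to reach the mid-gap lines, or shrinking the fences' allowed excursion accordingly); as written, the key confinement step does not go through.
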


This proof will follow the framework of the proof of \cref{l:mixing}. We will define three fields of random variables, $\omega^1,\omega^2,\omega^0$. Here, 
$\omega^{0}$ will agree with $\omega^{1}$ on some enlargement of $\bigcup_{r=1}^{N} I_{r}$, and with with $\omega^{2}$ on an enlargement of $\bigcup_{r=1}^{N} J_{r}$. A stronger mixing lemma of this form is essential to the arguments in \cref{s:prooft1} because sets of this form appear when we consider the sets where $f\circ\mathcal{L}_{a,b}\circ g$ and $u\circ \mathcal{L}_{a,b}\circ v$ are jointly close their maxima. We remark that not all $I_r$ need to be strictly to the left of all $J_r$ or vice versa: they may very well be mixed amongst each other in a nontrivial way.

\begin{proof}
Without loss of generality, we may assume that $T=1$. This is because by scale invariance of the directed landscape \cref{d:dl} (4), all of the intervals could be replaced by their $T^{-2/3}$-rescalings. changes the value of $D$ to $T^{-2/3}D$, which respects the estimate.

In the case that $T=1$, the proof uses an explicit coupling construction that is very similar to the proofs of \cref{l:mixing} and \cref{p:alphamixingestimate}. We now sketch the details of this coupling construction. Without loss of generality, we assume that the intervals are ordered $I_1,J_1,I_2,J_2,...,I_N,J_N$ so that the supremum of $I_1$ is less than or equal to the infimum of $J_1$, the supremum of $J_1$ is less than the infimum of $I_2$, and so on. This does not lose any generality because we can always combine consecutive instances of the $I_r$ or $J_r$ without changing the value of $D$.

We relabel $I_1,J_1,I_2,J_2,...,I_N,J_N$ as $U_1,...,U_{2N}$ and define $a_r$ to be the midpoint of the segment in the complement of $\bigcup_{i\le 2N} U_i$ that is between $U_r$ and $U_{r+1}$. We define a fattening of $I_{r}$ and $J_{r},$ by setting $\bar I_r:= (a_{2r-1},a_{2r})$ and $\bar J_r:= (a_{2r},a_{2r+1})$ for $r=1,...,N$. In this notation, we say that $a_{2N+1}:=\infty$. We also define  $b_r,c_r$ to be those points in the segment in the complement of $\bigcup_{i\le 2N} U_i$ that is between $U_r$ and $U_{r+1}$ that are (respectively) exactly one-third and two-thirds of the way across this segment. 

If $L^1,L^2\in C(\mathbb{R}_\uparrow^4)$ and $I\subset \mathbb{R}$, 
then we will say that $L^1,L^2$ agree on $I\times[0,T]$ if $L^1_{a,b}(x,y)=L^2_{a,b}(x,y)$ for all $0\le a<b\le T$ and all $a,b\in I.$ Just as we did in \cref{l:mixing}, we construct three copies $\mathcal L^0,\mathcal L^1,\mathcal L^2$ of the directed landscape, so that $\mathcal L^1, \mathcal L^2$ are independent, and furthermore, with high probability, $\mathcal L^0$ agrees with $\mathcal L^1$ on $ \bigcup_{r=1}^N I_r \times [0,T]$ and $\mathcal L^0$ agrees with $\mathcal L^2$ on $ \bigcup_{r=1}^N J_r \times [0,T]$.

Consider fields of IID exponential random variables $\omega^i = \{\omega^i(t,x)\}_{t\ge 0,x\in \mathbb{Z}}$ for $i=1,2$, independent of one another. Define a third field of IID variables $\omega_\varepsilon^0=\{\omega_\varepsilon^0(t,x)\}_{t\ge 0,x\in \mathbb{Z}}$ which agrees with $\omega^1$ on $\mathbb{Z}_{\ge 0} \times \bigcup ((\varepsilon^{-2/3} \bar I_r)\cap \mathbb{Z})$ and agrees with $\omega^2$ on $\mathbb{Z}_{\ge 0} \times \bigcup ((\varepsilon^{-2/3} \bar J_r)\cap \mathbb{Z})$. Just as we did in \cref{l:mixing}, we take a joint limit point of the last passage times for all three fields (together with all of the geodesic paths associated to the finite collection of endpoints given by $b_1,c_1,...,b_N,c_N$) as $\varepsilon\to 0$. We will obtain three copies of the directed landscape $\mathcal L^0,\mathcal L^1, \mathcal L^2$ coupled onto the same space. It is important to note that the geodesics of the prelimit converge jointly with each of the three copies of the landscapes to the geodesics of the limiting landscape, just as in the proof of \cref{l:mixing}. 

\begin{figure}[t]
{\centering{\includegraphics[scale=0.57]{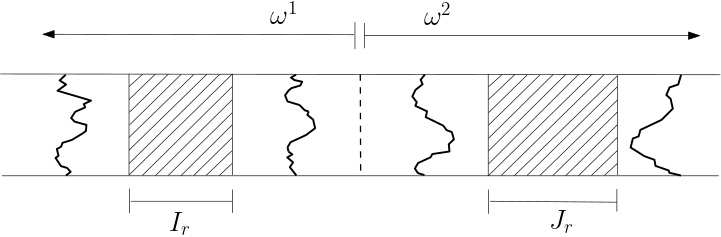}}
\caption{\label{f:1} A visual depiction of the ``geodesic separation" argument in the proof of \cref{mixing}. We view the horizontal axis as a spatial axis, and likewise view the vertical axis as a temporal one. If all of the prelimiting geodesic paths from $b_r$ to $b_r$ and from $c_r$ to $c_r$ do not intersect the dotted lines or the endpoints of the intervals, then the non-crossing property of the geodesics in the prelimiting model implies that all last passage values between two space-time points inside the left shaded region will agree with those determined by $\omega^1$, while all last passage values between two space-time points inside the right shaded region will agree with those determined by $\omega^2$.}}
\end{figure}

Consider the event $E$ in which all of the $4N$ geodesic paths from the points $b_r$ to $b_r$ and $c_r$ to $c_r$  in $\mathcal L^0$ do not cross any of the values $a_r$, and also do not cross the endpoints of the intervals $I_r,J_r$ (which in particular implies that they do not intersect). By the same argument as in \cref{l:mixing}, the values $\mathcal L^0_{s,t}(x,y)$ will agree with either of $\mathcal L^1$ or $\mathcal L^2$ as long as $x,y$ lie strictly in between any two given consecutive geodesics. See \cref{f:1} for a geometric depiction of this event. 

To bound the event that at least one of the $4N$ geodesics traverses a distance of more than $D$, we take a union bound over this event for all $4N$ geodesics. We then apply \cref{dov1.7} to obtain a total bound of $(4N) \cdot Ce^{-dD^3}$. This means that the event $E$ occurs with probability greater than $1-C(4N)e^{-dD^3}$, where $C,d$ do not depend on $N,D$ or the choice of intervals. Consequently, by repeating the exact argument as that in \cref{p:alphamixingestimate} (replacing the event $\{X>k\}$ appearing there with the event $E$ here) we obtain the required bound. 
\end{proof}

Specializing \cref{mixing} to the case $T=1$ and projecting the result to the two-time marginal $(s,t)=(0,1)$, we obtain the following corollary about strong mixing of the Airy sheet under diagonal shifts.

\begin{cor}[Generalized strong mixing property] \label{c:diagonalmixing}
    Consider the Airy sheet $\mathcal S = \mathcal L_{0,1}$. Fix $T>0$, and consider any compact disjoint intervals $I_1,...,I_N,J_1,...,J_N\subset \mathbb{R}^2$. Define the $\sigma$-fields 
    \begin{align*}\mathcal G_I & := \sigma( \{\mathcal S(x,y) |  (x,y)\in I_r\times I_r \text{ for some } r\le N\}),
    \\ \mathcal G_J & := \sigma( \{\mathcal S(x,y) |   (x,y)\in J_r\times J_r \text{ for some } r\le N\}).
    \end{align*}
    Define $$D:= \inf\bigg\{|x-y| \bigg| x\in \bigcup_1^N I_r,\;\; y\in \bigcup_1^N J_r\bigg\}.$$
    Then we have the bound $$|\Cov(F,G)| \leq  CNe^{-dD^3} \mathbb{E}[F^4]^{1/4} \mathbb{E}[G^4]^{1/4}.$$
    Here $C,d>0$ are universal constants that are independent of $N,\{I_r,J_r\}_{r=1}^N,D$ and $F,G \in L^4(\Omega)$ that are respectively measurable with respect to $\mathcal G_I$ and $\mathcal G_J.$
\end{cor}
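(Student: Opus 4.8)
The plan is to obtain this as a direct specialization of \cref{mixing}; no new work beyond a $\sigma$-algebra comparison is needed. First I would set $T=1$ in \cref{mixing}, so that the bound there reads $|\Cov(F,G)|\le CN e^{-dD^3}\mathbb E[F^4]^{1/4}\mathbb E[G^4]^{1/4}$, with $C,d$ the universal constants from that theorem. The disjoint compact intervals $I_1,\dots,I_N,J_1,\dots,J_N\subset\mathbb R$ appearing in the corollary are exactly the data required by \cref{mixing}, and the quantity $D$ is defined identically in both statements, so there is nothing to reconcile on that front.

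The one point to record is the inclusion of the relevant $\sigma$-algebras. With $T=1$, the field $\mathcal F_I$ appearing in \cref{mixing} is generated by all random variables $\mathcal L_{s,t}(x,y)$ with $0\le s<t\le 1$ and $x,y$ lying in a common interval $I_r$. Taking the particular choice $(s,t)=(0,1)$ shows that each generator $\mathcal S(x,y)=\mathcal L_{0,1}(x,y)$ of $\mathcal G_I$ is one of the generators of $\mathcal F_I$; hence $\mathcal G_I\subseteq\mathcal F_I$, and by the identical reasoning $\mathcal G_J\subseteq\mathcal F_J$. Consequently, any $F\in L^4(\Omega)$ measurable with respect to $\mathcal G_I$ is a fortiori measurable with respect to $\mathcal F_I$, and likewise $G$ with respect to $\mathcal F_J$.

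Given these inclusions, I would then simply apply \cref{mixing} (with $T=1$) to the pair $(F,G)$, which immediately yields the claimed estimate with the same universal constants $C,d$. I do not anticipate any real obstacle: the content of the corollary is entirely contained in \cref{mixing}, and the only thing to check is the harmless containment of $\sigma$-algebras above. The parameter $T$ in the corollary's statement plays no role in the conclusion, since $\mathcal S$ is by definition $\mathcal L_{0,1}$; alternatively one could invoke the scale invariance of the directed landscape exactly as in the proof of \cref{mixing}, but fixing $T=1$ from the outset is the cleanest route.
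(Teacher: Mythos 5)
Your proposal is correct and matches the paper's argument: the corollary is obtained precisely by specializing \cref{mixing} to $T=1$ and projecting to the two-time marginal $(s,t)=(0,1)$, which is exactly your observation that each generator $\mathcal S(x,y)=\mathcal L_{0,1}(x,y)$ of $\mathcal G_I$ (resp.\ $\mathcal G_J$) lies among the generators of $\mathcal F_I$ (resp.\ $\mathcal F_J$), so the bound transfers verbatim. Your remark that the parameter $T$ in the corollary's statement plays no role is also consistent with the paper's treatment.
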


%

\section{Useful estimates on Airy sheets and argmaxes}\label{s:airy}

We begin by establishing some useful notation. 

\begin{defn}
For $f,g\in C(\mathbb R)$ and $L\in C(\mathbb R^2)$ we define 
$$f\circ L\circ g:= \sup_{x,y\in\mathbb{R}} f(x)+L(x,y)+g(y).$$ 
\end{defn}
In this section, we prove bounds 
(Theorems \ref{tbound1} $-$ \ref{tbound3}) on the covariance of $f\circ \mathcal{L}_{0,\epsilon}\circ g$ and $u\circ\mathcal{L}_{0,\epsilon}\circ v$ for deterministic functions $f,g,u,v$ all lying within a certain restricted domain, more precisely those functions $f,g,u,v$ lying in between the graphs of the functions $-2 x^2 -B$ and $-\frac12 x^2+B$ for some $B>0.$ {If $B$ is large, then with high probability, this domain includes the sample paths of parabolic Airy$_2$ processes, and intuitively we should think of the results of this section as estimates that will eventually be applied to those Airy$_2$ sample paths.} The reason for this was explained at the end of \cref{s:noisetoairy}: {our desired variance bound, \eqref{e:bncondfinal}, depends only on the part of $\mathcal{L}_{0,1}(x,y)$ which is measurable with respect to $\mathcal{F}_{a,b}^{\mathrm{DL}}$. Therefore by breaking $\mathcal{L}_{0,1}(x,y)$ into three components to isolate the part which lies on $\mathcal{F}_{a,b}^{\mathrm{DL}}$,} we obtain an expression in terms of integrals with respect to independent Airy$_2$ paths. Some of the bounds proved here may be of independent interest, since they give fairly sharp regularity estimates on the landscape, and on where the argmax is achieved in a given metric composition of the form $f\circ \mathcal L_{0,\epsilon}\circ g$. 



We introduce several quantities which will be important to our argument. 
\begin{defn} Let $f,g: \mathbb{R}\to\mathbb{R}$ be continuous functions decaying parabolically at infinity, that is, $-Ax^2 -B \leq f(x),g(x) \leq -Cx^2-D$ for some $A>C>0$ and $B,D\in\mathbb{R}.$ Assume that $f,g$ achieve their maximum values at unique input values $z_{\max}^f, z_{\max}^g$. 
For $\epsilon,M>0$ and $\alpha \in (0,1)$, we define
\begin{align}
\mathscr S(f;\epsilon)&:= \{z\in \mathbb{R}: f(z) \geq f(z_{\max}^f)-\epsilon\}, \label{sfe} \\
\label{gamm}
\Gamma(f,g;\epsilon)&:= \inf\{ |z-z'|: |f(z)-f(z_{\max}^f)|<\epsilon,\;\; |g(z')-g(z_{\max}^g)|<\epsilon\},\\
\mathfrak{Hol}(g,\alpha;M)&:= \sup_{\substack{s,t\in [-M,M]\\s\neq t}} \frac{|g(t)-g(s)|}{|t-s|^\alpha}. \nonumber
\end{align}
We define $N_\epsilon(f)$ to be the smallest number of intervals of length $\epsilon^2$ required to cover $\mathscr S(f;\epsilon).$ 
\end{defn}

In other words, $\mathscr{S}(f;\epsilon)$ is the set where $f(z)$ lies within $\epsilon$ of its maximum. $\Gamma(f,g;\epsilon)$ is the minimum distance between $\mathscr{S}(f;\epsilon)$ and $\mathscr{S}(g;\epsilon)$ and $\mathfrak{Hol}(g,\alpha;M)$ is the $\alpha$-H\"older seminorm of $g$ on the interval $[-M,M]$. 

The following tail bound on the Airy sheet from \cite{dsv} is useful in our arguments.

\begin{lemm}[Lemma 5.3, \cite{dsv}] \label{l:airyabsbound} There exists a deterministic $c>0$ and a random $\mathfrak{C}>0$ such that, uniformly over all $(x,y)\in\Bbb R^2$, one has the bound $$|\mathcal S(x,y)+(x-y)^2| \leq \mathfrak C+c\log^{2/3} (2+|x|+|y|).$$
Furthermore $\mathbb{E}[a^{\mathfrak C^{3/2}}]<\infty$ for some $a>1.$
\end{lemm}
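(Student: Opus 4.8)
The statement to prove is the bound on $|\mathcal S(x,y)+(x-y)^2|$ with a random constant $\mathfrak C$ having a stretched-exponential moment bound $\mathbb E[a^{\mathfrak C^{3/2}}]<\infty$. Since this is quoted verbatim from \cite{dsv}, I expect the "proof" in the paper to be essentially a citation. But let me sketch how one would actually prove it, since the underlying argument is instructive and standard in the directed-landscape literature.

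\medskip

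The plan is to exploit the relation between the Airy sheet and the parabolic Airy line ensemble, together with the known one-point tail bounds for the Airy sheet (equivalently, the Tracy–Widom GUE tails, which decay like $e^{-cr^{3/2}}$ on the upper side and $e^{-cr^3}$ on the lower side). First I would recall that $\mathcal S(x,y)+(x-y)^2$ is a stationary-in-the-diagonal field (by the shift-invariance property (1) of the Airy sheet): $\mathcal S(x+t,y+t)\overset d= \mathcal S(x,y)$, and moreover for fixed $x$ the function $y\mapsto \mathcal S(x,y)+(x-y)^2$ is locally Brownian and has parabolic decay. The key pointwise input is a one-point estimate: there is $c>0$ with $\mathbb P\big(|\mathcal S(x,y)+(x-y)^2|>r\big)\le C e^{-c r^{3/2}}$ uniformly in $x,y$ (the weaker $3/2$-power reflecting the worse upper tail of Tracy–Widom, which is why the final moment bound is on $\mathfrak C^{3/2}$ rather than $\mathfrak C^3$).

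\medskip

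Next I would upgrade this pointwise bound to a modulus-of-continuity statement over a dyadic mesh. The standard route: cover $\mathbb R^2$ by unit boxes $Q_{j,k}=[j,j+1]\times[k,k+1]$, and on each box use the local Brownian/Hölder regularity of $\mathcal S$ (from \cite{dov}, or from the Brownian Gibbs property of the Airy line ensemble) to control $\sup_{Q_{j,k}}|\mathcal S(x,y)+(x-y)^2|$ in terms of its value at the corner plus a fluctuation term, each obeying a tail of the form $e^{-c r^{3/2}}$ after absorbing polynomial corrections. Then set
\[
\mathfrak C := \sup_{j,k\in\mathbb Z} \Big( \sup_{(x,y)\in Q_{j,k}} \big|\mathcal S(x,y)+(x-y)^2\big| - c\log^{2/3}(2+|j|+|k|)\Big)_+ ,
\]
choosing the deterministic constant $c$ large enough that the subtracted log term dominates the expected size of the box-supremum at distance $|j|+|k|$ from the origin. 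A Borel–Cantelli / union-bound argument over the countably many boxes, using the $e^{-cr^{3/2}}$ tails against the $\log^{2/3}$ penalty, shows $\mathfrak C<\infty$ almost surely, and a more careful accounting of the union bound gives $\mathbb P(\mathfrak C>r)\le C e^{-c' r^{3/2}}$, hence $\mathbb E[a^{\mathfrak C^{3/2}}]<\infty$ for $a>1$ sufficiently close to $1$. Finally, passing from box-corners $(j,k)$ to general $(x,y)$ only changes $\log^{2/3}(2+|j|+|k|)$ to $\log^{2/3}(2+|x|+|y|)$ up to a bounded additive correction absorbed into $\mathfrak C$.

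\medskip

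The main obstacle is getting the tail exponent exactly right: one must track that the upper tail of $\mathcal S(x,y)+(x-y)^2$ is only $e^{-cr^{3/2}}$ (not $e^{-cr^3}$), carry this through the box-supremum bound without degrading the power, and verify that summing $\sum_{j,k} \exp(-c(r+\log^{2/3}(2+|j|+|k|))^{3/2})$ over $\mathbb Z^2$ still yields a tail of order $e^{-c'r^{3/2}}$ in $r$. This requires the $\log^{2/3}$ growth rate in the statement to be calibrated precisely against the $3/2$-power tail — a smaller power of $\log$ would not be summable, a larger one would be wasteful. Since all of the ingredients (one-point tails, local Brownianity, parabolic curvature) are already available from \cite{dov,dsv}, in the present paper I would simply cite \cite[Lemma 5.3]{dsv} for this statement rather than reproduce the argument.
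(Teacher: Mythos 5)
Your proposal is correct in its essential observation: the paper gives no proof of this statement, quoting it directly as \cite[Lemma 5.3]{dsv}, exactly as you anticipated. Your sketched argument (one-point Tracy--Widom-type tails of order $e^{-cr^{3/2}}$, a box-wise modulus-of-continuity bound, and a union bound calibrating the $\log^{2/3}$ growth against the $3/2$-power tails) is the standard route and consistent with how the cited reference proceeds, so there is nothing in the paper to compare it against beyond the citation itself.
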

\cref{l:airyabsbound} shows that the probability that $\mathcal S$ will make huge excursions away from the deterministic function $(x,y)\mapsto -(x-y)^2$ is very small. Since the parabolic Airy$_{2}$ process embeds into the Airy sheet by setting $y=0$, we can also apply this lemma to obtain bounds on the Airy$_2$ process. This will be very useful at many points in \cref{s:prooft1}. In this section, we use this lemma to prove a helpful property (\cref{apriori}) of the point $(x,y)$ which maximizes $f\circ\mathcal{L}_{0,\epsilon}\circ g$.  
\begin{prop} \label{apriori} Let $f,g: \mathbb{R}\to \mathbb{R}$ be deterministic continuous functions satisfying $-2 x^2 -B \leq f(x),g(x) \leq -\frac12 x^2+B$ for some $B>0.$ For $\epsilon \in (0,1)$, we define
the random variable 
\begin{align*}(X_{\epsilon},Y_\epsilon) := \mathrm{argmax}_{(x,y)\in \mathbb{R}^2} \;\;f(x) + \mathcal L_{0,\epsilon}(x,y)+g(y).
\end{align*}
We define $z_{\max}^{f+g}:= \mathrm{argmax}_{z\in \mathbb{R}} f(z)+g(z),$ and assume that this point is unique. Then for $M^2>1+100B$, $$\mathbb{P}( X_\epsilon^2+Y_\epsilon^2> M^2) \leq C e^{-d\epsilon^{-1/2}M^3},$$ where $C,d>0$ are universal constants not depending on $f,g,M,\epsilon, B$. 
\end{prop}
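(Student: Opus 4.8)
The plan is a deterministic comparison argument. The value of $(x,y)\mapsto f(x)+\mathcal L_{0,\epsilon}(x,y)+g(y)$ at its maximizer is at least its value at the fixed reference point $(0,0)$, while for $(x,y)$ far from the origin the parabolic decay of $f$ and $g$, combined with the a priori upper bound on the Airy sheet from \cref{l:airyabsbound}, forces that value to be extremely negative — so the maximizer cannot lie far out except on an event controlled by the tail of the Airy-sheet fluctuation constant. Since the statement only concerns the law of $(X_\epsilon,Y_\epsilon)$, I would first use scale invariance \cref{d:dl}(3) to work with the concrete representation $\mathcal L_{0,\epsilon}(x,y)=\epsilon^{1/3}\mathcal S(\epsilon^{-2/3}x,\epsilon^{-2/3}y)$ for an Airy sheet $\mathcal S$; this makes \cref{l:airyabsbound} directly applicable, and it is precisely the rescaling exponent $\epsilon^{-2/3}$ that will produce the $\epsilon^{-1/2}$ in the final bound. (Existence of the argmax is automatic: the objective tends to $-\infty$ as $x^2+y^2\to\infty$ by the same two inputs; the point $z_{\max}$ and its uniqueness are only needed to make the maximizer well defined, not for the estimate itself.)

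Concretely, on the event $\{X_\epsilon^2+Y_\epsilon^2>M^2\}$ write $\rho^2:=X_\epsilon^2+Y_\epsilon^2$. The lower bound is $f(0)+\mathcal L_{0,\epsilon}(0,0)+g(0)\ge -2B-\epsilon^{1/3}(\mathfrak C+c\log^{2/3}2)$, using $f(0)+g(0)\ge-2B$ and $|\mathcal S(0,0)|\le\mathfrak C+c\log^{2/3}2$. The upper bound at the maximizer comes from $f(x)\le-\tfrac12x^2+B$, $g(y)\le-\tfrac12y^2+B$ and
$$\mathcal L_{0,\epsilon}(x,y)\le -\epsilon^{-1}(x-y)^2+\epsilon^{1/3}\mathfrak C+\epsilon^{1/3}c\log^{2/3}\!\big(2+\epsilon^{-2/3}(|x|+|y|)\big);$$
discarding the harmless $-\epsilon^{-1}(X_\epsilon-Y_\epsilon)^2$ and using $|X_\epsilon|+|Y_\epsilon|\le2\rho$, the value at $(X_\epsilon,Y_\epsilon)$ is $\le-\tfrac12\rho^2+2B+\epsilon^{1/3}\mathfrak C+\epsilon^{1/3}c\log^{2/3}(2+2\epsilon^{-2/3}\rho)$. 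Chaining the two bounds gives, on $\{\rho>M\}$,
$$\tfrac12\rho^2\le 4B+2\epsilon^{1/3}\mathfrak C+c\log^{2/3}2+\epsilon^{1/3}c\log^{2/3}(2+2\epsilon^{-2/3}\rho).$$

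Next I would absorb the logarithm. Using $\epsilon^{-2/3}\ge1$ (so $2+2\epsilon^{-2/3}\rho\le\epsilon^{-2/3}(2+2\rho)$), subadditivity of $t\mapsto t^{2/3}$, the finiteness of $\sup_{\epsilon\in(0,1]}\epsilon^{1/3}(\log(1/\epsilon))^{2/3}$, and the elementary bound $c\log^{2/3}(2+2\rho)\le\tfrac14\rho^2+C'$ with $C'$ depending only on the universal constant $c$, one obtains $\epsilon^{1/3}c\log^{2/3}(2+2\epsilon^{-2/3}\rho)\le\tfrac14\rho^2+C_0$ for a universal $C_0$; it is essential to keep the $\epsilon^{1/3}$ in front of $\mathfrak C$ rather than bounding it by $1$. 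Substituting and rearranging yields $\tfrac14\rho^2\le 4B+C_0+2\epsilon^{1/3}\mathfrak C$ on $\{\rho>M\}$, and since $M^2>1+100B$ forces $4B<M^2/25$, this gives $\epsilon^{1/3}\mathfrak C\ge c_1 M^2$ for a universal $c_1>0$ once $M$ exceeds a universal threshold $M_0$ (so that $M^2$ dominates $C_0$). Then
$$\mathbb P(X_\epsilon^2+Y_\epsilon^2>M^2)\le\mathbb P\big(\mathfrak C\ge c_1\epsilon^{-1/3}M^2\big)\le \mathbb E\big[a^{\mathfrak C^{3/2}}\big]\,a^{-(c_1\epsilon^{-1/3}M^2)^{3/2}}=Ce^{-d\epsilon^{-1/2}M^3}$$
by Markov's inequality, since $(\epsilon^{-1/3}M^2)^{3/2}=\epsilon^{-1/2}M^3$. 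The regime $M\le M_0$ — which, by $M^2>1+100B$, can occur only when $B$ is itself bounded by a universal constant — is handled by the same chain of inequalities: being far from the origin still forces a deviation of $\mathfrak C$ of the required order once $\epsilon$ is small, while for $\epsilon$ bounded away from $0$ the target bound is bounded below by a positive universal constant, so both cases follow after possibly enlarging $C$.

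The step I expect to be the main obstacle is bookkeeping: ensuring every constant — especially the logarithm-absorption constant $C_0$ — is genuinely independent of $f,g,B,M,\epsilon$, and cleanly dispatching the corner regime of simultaneously small $M$ and small $B$, where the hypothesis $M^2>1+100B$ provides essentially no slack. Everything else is a routine marriage of the deterministic parabolic envelope with the super-exponential $3/2$-tail of the Airy-sheet fluctuation constant $\mathfrak C$.
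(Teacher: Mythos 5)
Your overall strategy is the same as the paper's: compare the value at the maximizer to the value at a fixed reference point, use the parabolic envelope together with the global bound of \cref{l:airyabsbound} on the rescaled Airy sheet to force a large deviation of the constant $\mathfrak C$, and finish with the stretched-exponential tail $\mathbb E[a^{\mathfrak C^{3/2}}]<\infty$, which is exactly where the exponent $\epsilon^{-1/2}M^3$ comes from. The differences are that you anchor at $(0,0)$ rather than at $(z_{\max},z_{\max})$ (immaterial), and that you absorb the term $c\epsilon^{1/3}\log^{2/3}\big(2+\epsilon^{-2/3}(|X_\epsilon|+|Y_\epsilon|)\big)$ directly into the quadratic $\tfrac12\rho^2$, which lets you avoid the paper's dyadic-annulus decomposition $\{4^kM^2<\rho^2\le 4^{k+1}M^2\}$ and the subsequent geometric summation; for $M$ above a universal threshold $M_0$ your chain is complete and arguably cleaner than the paper's.

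The one genuine gap is the corner regime $1<M\le M_0$ (equivalently $B$ bounded by a universal constant), which you yourself flag. As written, your absorption bounds $\epsilon^{1/3}c\log^{2/3}(2+2\rho)\le\tfrac14\rho^2+C'$ with $C'$ carrying no factor of $\epsilon$, so the conclusion of your chain on $\{\rho>M\}$ is $2\epsilon^{1/3}\mathfrak C\ge \tfrac{21}{100}M^2-C_0$; when $M\le M_0$ the right-hand side is negative (that is how $M_0$ was defined), so the inequality is vacuous for \emph{every} $\epsilon$, and your claim that ``being far from the origin still forces a deviation of $\mathfrak C$ of the required order once $\epsilon$ is small'' does not follow from the chain you wrote. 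The fix is the same discipline you already applied to $\mathfrak C$: keep the $\epsilon^{1/3}$ in front of the absorbed constant as well, e.g. $\epsilon^{1/3}c\log^{2/3}(2+2\rho)\le\tfrac18\rho^2+C'\epsilon^{1/3}$ and $\epsilon^{1/3}c\,(\tfrac23\log(1/\epsilon))^{2/3}\le C''\epsilon^{1/6}$, so the additive error is $O(\epsilon^{1/6})$ and is dominated by $\tfrac{21}{100}M^2\ge\tfrac{21}{100}$ (using $M>1$) once $\epsilon$ is below a universal threshold; this restores $\mathfrak C\gtrsim\epsilon^{-1/3}M^2$ there. The remaining region ($\epsilon$ and $M$ both bounded above and below by universal constants) is compact, and there the target bound is bounded below by a positive universal constant, so enlarging $C$ suffices — that part of your argument is fine. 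With that one-line repair your proof is correct and uniform in $f,g,B,M,\epsilon$, and comparable in rigor to the paper's own treatment.
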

We will later build on this bound to estimate the probability that $X_{\epsilon}$ and $Y_{\epsilon}$ are far apart. 
In particular, we will be able to show that these values will, with high probability, lie within $\epsilon^{2/3 -\delta}$ of one another (with some further assumptions like H\"older regularity of $f,g)$. 
\begin{proof}
    By the scale invariance of the directed landscape, we can replace $\mathcal L_{0,\epsilon}(x,y)$ by the process $$\epsilon^{1/3} \mathcal L_{0,1} (\epsilon^{-2/3} x,\epsilon^{-2/3}y) = \epsilon^{1/3} \mathcal S (\epsilon^{-2/3} x,\epsilon^{-2/3}y).$$ Applying \cref{l:airyabsbound}, we have 
    \begin{equation}|\epsilon^{1/3} \mathcal S (\epsilon^{-2/3} x,\epsilon^{-2/3}y)+ \epsilon^{-1} (x-y)^2|\leq  \mathfrak C \epsilon^{1/3}+ c \epsilon^{1/3} \log^{2/3}(2+\epsilon^{-2/3}(|x|+|y|)), \label{e:1}
    \end{equation}where $c$ is deterministic and $\mathfrak C$ is a non-negative random variable satisfying $\mathbb{P}( \mathfrak C>u) \leq Ce^{-du^{3/2}}$ for $u \ge 0$. From now on, we use the notation $\mathcal S^\epsilon(x,y):=\epsilon^{1/3} \mathcal S (\epsilon^{-2/3} x,\epsilon^{-2/3}y)$. By \eqref{e:1},
    \begin{align*}\begin{split}(f+g)(z_{\max}^{f+g}) - \mathfrak C \epsilon^{1/3} - c\epsilon^{1/3}\log^{2/3}(2+2\epsilon^{-2/3}|z_{\max}^{f+g}|) & \leq f(z_{\max}^{f+g}) + \mathcal S^\epsilon(z_{\max}^{f+g},z_{\max}^{f+g}) +g(z_{\max}^{f+g}) \\& \leq f(X_\epsilon) + \mathcal S^\epsilon(X_\epsilon,Y_\epsilon) +g(Y_\epsilon).\end{split}
    \end{align*}

    By the assumptions of the proposition, we conclude that on the event $X_\epsilon^2+Y_\epsilon^2>M^2$,
    $$f(X_\epsilon)+g(Y_\epsilon) \leq 2B - \frac12(X_\epsilon^2+Y_\epsilon^2)\leq 2B-\frac12 M^2. $$ 
    Combining the previous two bounds results in
    \begin{align}\label{e:expres1}
        \mathcal{S}^{\epsilon}(X_{\epsilon},Y_{\epsilon}) \geq (f+g)(z_{\max}^{f+g}) - \mathfrak{C}\epsilon^{1/3} - 2B+\frac{1}{2}M^{2} - c\epsilon^{1/3}\log^{2/3}(2+2\epsilon^{-2/3}|z_{\max}^{f+g}|).
    \end{align}
    Likewise, \cref{l:airyabsbound} implies 
    \begin{align}\label{ee:expres2}\mathcal S^\epsilon(X_\epsilon,Y_\epsilon) \leq \mathfrak C \epsilon^{1/3} + c\epsilon^{1/3}\log^{2/3} (\epsilon^{-2/3}(2+ |X_\epsilon|+|Y_\epsilon|)).
    \end{align}
     From the conditions imposed on $f,g$ by the statement of the proposition, we note that $z_{\max}^{f+g}\in[-2\sqrt{B},2\sqrt{B}]$. Combining this with \eqref{e:expres1} and \eqref{ee:expres2} we obtain a bound on $\mathfrak{C}$, 
     \begin{align*}2\mathfrak C \epsilon^{1/3} &\geq (f+g)(z_{\max}^{f+g}) - 2B+\frac12 M^2 +c\epsilon^{1/3}\bigg(- \log^{2/3} (2+ \epsilon^{-2/3}(|X_\epsilon|+|Y_\epsilon|)) - \log^{2/3} (2+2\epsilon^{-2/3}|z_{\max}^{f+g}|)\bigg) \\ &\geq -(z_{\max}^{f+g})^2 -4B +\frac12 M^2 +c\epsilon^{1/3}\bigg(- \log^{2/3} (2+ \epsilon^{-2/3}(|X_\epsilon|+|Y_\epsilon|)) - \log^{2/3} (2+2\epsilon^{-2/3}|z_{\max}^{f+g}|)\bigg) \\ &\geq -8B +\frac12M^2 +c\epsilon^{1/3}\bigg(- \log^{2/3} (2+ \epsilon^{-2/3}(|X_\epsilon|+|Y_\epsilon|)) - \log^{2/3} (2+2\epsilon^{-2/3}B)\bigg).
     \end{align*}
     The deterministic constant $c$ may grow larger from line to line. Therefore, for sufficiently small $\epsilon>0$ (below some threshold that is deterministic and independent of $B\ge 2$), $c \epsilon^{1/3}\log^{2/3} (2+2\epsilon^{-2/3}B) \leq B$. 
     
     Under the additional assumption that $X_\epsilon^2+Y_\epsilon^2 \leq (2M)^2 $, we find that $c \epsilon^{1/3}\log^{2/3} (2+ \epsilon^{-2/3}(|X_\epsilon|+|Y_\epsilon|)) \leq \frac14 M^2$ for sufficiently small $\epsilon>0$ (below some threshold that is deterministic and independent of $M\ge 2$, for instance, any $\epsilon$ such that $c\epsilon^{2/3} \log^{2/3} (6\epsilon^{-2/3}) \leq \frac14 $). Therefore, the last expression admits a lower bound by $-9B +\frac14 M^2.$ We have shown that $$\mathbb{P}( M^2 \leq X_\epsilon^2 +Y_\epsilon^2 \leq (2M)^2) \leq \mathbb{P}( 2\mathfrak C \epsilon^{1/3} \geq -9B +\frac14M^2),$$ for $\epsilon\leq \epsilon_0$ with $\epsilon_0$ deterministic and independent of $B,M.$ Summing over dyadically sized concentric annuli, we obtain 
     \begin{align*}
         \mathbb{P}(X_\epsilon^2 +Y_\epsilon^2 >M^2) &= \sum_{k=0}^\infty \mathbb{P}( 4^k M^2 < X_\epsilon^2 +Y_\epsilon^2 \leq 4^{k+1}M^2 ) \\ &\leq \sum_{k=0}^\infty \mathbb{P}( 2\mathfrak C \epsilon^{1/3} \geq -9B + 4^{k-1} M^2) \\ &\leq \sum_{k=0}^\infty Ce^{-d\epsilon^{-1/2}(4^{k-1}M^2 - 9B)^{3/2} } \\ &\leq \sum_{k=0}^\infty Ce^{-d\epsilon^{-1/2} 8^{k-2} (M^2-36B)^{3/2} }.
         \end{align*}
      For the final inequality, we used the bound $9B\leq 9\cdot 4^k B$. We can apply the bound $ e^{-u} \leq 2u^{-1} e^{-u/2} \leq 2u^{-1} e^{-u/8^k}$ to obtain a factor of $8^{-k}$ in front of the exponential terms: $e^{-d\epsilon^{-1/2} 8^{k-2} (M^2-36B)^{3/2}} \leq 2 \cdot 8^{-k}e^{-\frac12 d \epsilon^{-1/2} (M^2 -36B)^{3/2} /64} $. We note that this infinite sum is finite, and is bounded above by 
      $$C\epsilon^{1/2}(M^2-36B)^{-3/2} e^{-d \epsilon^{-1/2}(M^2-36B)^{3/2}},$$ where the value of $d$ may be smaller than in the previous display. If $M^2$ is larger than $1+100B$, then $(M^2-36B)^{-3/2}$ can be bounded above by $1$. Furthermore, $M^2>1+100B>72B$ implies that $M^2-36B >\frac12 M^2,$ and therefore $e^{-d \epsilon^{-1/2}(M^2-36B)^{3/2}} \leq e^{- d \epsilon^{-1/2} M^3},$ where once again, we can make $c$ larger and $d$ smaller if desired. 
\end{proof}
We use \cref{apriori} to obtain the following bound on $|X_{\epsilon}-Y_{\epsilon}|$.
\begin{prop}\label{4.4}
    Let $f,g: \mathbb{R}\to \mathbb{R}$ be deterministic continuous functions satisfying $-2 x^2 -B \leq f(x),g(x) \leq -\frac12 x^2+B$ for some $B>0.$ Define
the random variable 
\begin{align*}(X_{\epsilon},Y_\epsilon) := \mathrm{argmax}_{(x,y)\in \mathbb{R}^2} \;\;f(x) + \mathcal L_{0,\epsilon}(x,y)+g(y).
\end{align*}
We define $z_{\max}^{f+g}:= \mathrm{argmax}_{z\in \mathbb{R}} f(z)+g(z),$ and assume that this point is unique. Then $$\mathbb{P}(|X_\epsilon-Y_\epsilon|>\epsilon^{\frac12-\delta})\leq Ce^{-d \epsilon^{-1/2}(\epsilon^{-2\delta} - 108 B)^{3/2}} + Ce^{-d  \epsilon^{-1/2}},$$
where $C,d>0$ are universal constants independent of $f,g,B, \epsilon\in (0,1].$
\end{prop}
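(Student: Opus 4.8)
The idea is to run the comparison argument from the proof of \cref{apriori}, but now to extract information about $|X_\epsilon-Y_\epsilon|$ from the fact that, on a short time window, the Airy sheet equals $-\epsilon^{-1}(x-y)^2$ up to a logarithmically corrected random constant; consequently any macroscopic separation of $X_\epsilon$ and $Y_\epsilon$ forces that random constant to be atypically large. First I would use scale invariance of the directed landscape (\cref{d:dl}) to replace $\mathcal{L}_{0,\epsilon}(x,y)$ by $\mathcal{S}^\epsilon(x,y):=\epsilon^{1/3}\mathcal{S}(\epsilon^{-2/3}x,\epsilon^{-2/3}y)$, and then invoke \cref{l:airyabsbound} in the rescaled form $|\mathcal{S}^\epsilon(x,y)+\epsilon^{-1}(x-y)^2|\le \mathfrak{C}\epsilon^{1/3}+c\epsilon^{1/3}\log^{2/3}(2+\epsilon^{-2/3}(|x|+|y|))$, where $\mathbb{P}(\mathfrak{C}>u)\le Ce^{-du^{3/2}}$. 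Testing the maximizer $(X_\epsilon,Y_\epsilon)$ of $f(\cdot)+\mathcal{S}^\epsilon(\cdot,\cdot)+g(\cdot)$ against the diagonal point $(z_{\max},z_{\max})$ exactly as in \cref{apriori} --- using $\mathcal{S}^\epsilon(z_{\max},z_{\max})\ge -\mathfrak{C}\epsilon^{1/3}-c\epsilon^{1/3}\log^{2/3}(2+2\epsilon^{-2/3}|z_{\max}|)$ together with the displayed upper bound on $\mathcal{S}^\epsilon(X_\epsilon,Y_\epsilon)$ --- and rearranging yields
$$\epsilon^{-1}(X_\epsilon-Y_\epsilon)^2\le \big[f(X_\epsilon)+g(Y_\epsilon)-(f+g)(z_{\max})\big]+2\mathfrak{C}\epsilon^{1/3}+c\epsilon^{1/3}\log^{2/3}\big(2+\epsilon^{-2/3}(|X_\epsilon|+|Y_\epsilon|)\big)+c\epsilon^{1/3}\log^{2/3}\big(2+2\epsilon^{-2/3}|z_{\max}|\big).$$

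Next I would localize the maximizer by applying \cref{apriori} with the convenient choice $M^2:=2+100B+\log^{4/5}(1/\epsilon)$. Since $M^2>1+100B$ and $M^3\ge \log^{6/5}(1/\epsilon)$ (and $\epsilon^{-1/2}\ge 1$), \cref{apriori} gives $\mathbb{P}(X_\epsilon^2+Y_\epsilon^2>M^2)\le Ce^{-d\epsilon^{-1/2}M^3}\le Ce^{-d\log^{6/5}(1/\epsilon)}$, which is exactly the second term of the claimed bound. On the complementary event $\{X_\epsilon^2+Y_\epsilon^2\le M^2\}$ I would bound $|X_\epsilon|+|Y_\epsilon|\le \sqrt{2}\,M$, use the deterministic bound $|z_{\max}|\le 2\sqrt{B}$ forced by the hypotheses on $f,g$ (as noted in \cref{apriori}), estimate the bracket crudely via $f(X_\epsilon)+g(Y_\epsilon)-(f+g)(z_{\max})\le 2B-(f+g)(0)\le 4B$, and observe that the two logarithmic error terms, after multiplication by $\epsilon^{1/3}$ and the restriction to radius $M$, are each bounded by a universal constant times $(1+B)$ uniformly over $\epsilon\in(0,1]$ --- here one uses that $\epsilon\mapsto \epsilon^{1/3}\log^{2/3}(1/\epsilon)$ is bounded on $(0,1]$, that $\log^{2/5}(1/\epsilon)$-sized radii only contribute $\log\log(1/\epsilon)$ inside the outer logarithm, and that $\log^{2/3}(1+B)\le 1+B$. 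Collecting all constants, the previous display collapses on this event to $\epsilon^{-1}(X_\epsilon-Y_\epsilon)^2\le 108B+2\mathfrak{C}\epsilon^{1/3}$.

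Finally, on the event $\{|X_\epsilon-Y_\epsilon|>\epsilon^{1/2-\delta}\}\cap\{X_\epsilon^2+Y_\epsilon^2\le M^2\}$ the last inequality gives $\epsilon^{-2\delta}<108B+2\mathfrak{C}\epsilon^{1/3}$, i.e.\ $\mathfrak{C}>\tfrac12\epsilon^{-1/3}(\epsilon^{-2\delta}-108B)$, so by the tail bound $\mathbb{P}(\mathfrak{C}>u)\le Ce^{-du^{3/2}}$ this event has probability at most $Ce^{-d\epsilon^{-1/2}(\epsilon^{-2\delta}-108B)^{3/2}}$ (interpreted as vacuous when $\epsilon^{-2\delta}\le 108B$), the first term. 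A union bound over $\{X_\epsilon^2+Y_\epsilon^2>M^2\}$ and its complement then completes the argument. The deterministic smallness-of-$\epsilon$ requirement inherited from \cref{apriori} is harmless, since for $\epsilon$ above any fixed universal constant the quantity $Ce^{-d\log^{6/5}(1/\epsilon)}$ is itself bounded below by a universal constant and may be made $\ge 1$ by enlarging $C$, trivializing the bound there.

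I expect the only genuine difficulty to be the bookkeeping in the localization step: one must verify that although the localization radius $M$ is allowed to grow like $\log^{2/5}(1/\epsilon)$, all the logarithmic errors supplied by \cref{l:airyabsbound} truly collapse into a single additive $O(B)$ term uniformly over $\epsilon\in(0,1]$ and over all admissible $f,g$, so that the exponent in the first term retains the clean form $\epsilon^{-1/2}(\epsilon^{-2\delta}-O(B))^{3/2}$ without picking up spurious $\epsilon$- or $B$-dependence. The conceptual content is entirely contained in \cref{l:airyabsbound}, \cref{apriori}, and the moment bound of \cref{dov1.7}; the remainder is assembly and the tracking of constants.
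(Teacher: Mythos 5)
Your proposal is correct and follows essentially the same route as the paper's proof: rescale via scale invariance, apply \cref{l:airyabsbound}, compare the maximizer $(X_\epsilon,Y_\epsilon)$ against the diagonal point $(z_{\max},z_{\max})$, localize using \cref{apriori} at radius of order $\log^{2/5}(1/\epsilon)$ (the paper's event $H_\epsilon$), and conclude that on the bad event $\mathfrak C$ must exceed $\tfrac12\epsilon^{-1/3}(\epsilon^{-2\delta}-108B)$, finishing with the tail bound on $\mathfrak C$. The only difference is bookkeeping: you bound the logarithmic errors by $C(1+B)$ uniformly in $\epsilon$, whereas the paper bounds them by $2B$ and $100B$ for $\epsilon$ below a deterministic threshold (with the same implicit looseness for very small $B$), so the hard-coded constant $108$ is reproduced only up to this immaterial adjustment.
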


Intuitively, the bound $\epsilon^{\frac12-\delta}$ should not be optimal, as one would expect based on the scaling properties of the landscape that the typical value of $|X_\epsilon-Y_\epsilon|$ should be of order $\epsilon^{2/3}$. Without additional regularity assumptions on $f$ or $g$, this is simply not true. However, with additional H\"older regularity assumptions we will later show that this is indeed true, see \cref{opt} below.

\begin{proof}
    As in previous arguments, we replace $\mathcal L_{0,\epsilon}(x,y)$ by $\epsilon^{1/3} \mathcal L_{0,1} (\epsilon^{-2/3} x,\epsilon^{-2/3}y) = \epsilon^{1/3} \mathcal S (\epsilon^{-2/3} x,\epsilon^{-2/3}y)$. From \cref{l:airyabsbound}, we have the bound 
    \begin{equation}\label{e:2}|\epsilon^{1/3} \mathcal S (\epsilon^{-2/3} x,\epsilon^{-2/3}y)+ \epsilon^{-1} (x-y)^2|\leq  \mathfrak C \epsilon^{1/3}+ c \epsilon^{1/3} \log^{2/3}(2+\epsilon^{-2/3}(|x|+|y|)), 
    \end{equation}
    where $c$ is deterministic and $\mathfrak C$ is a non-negative random variable satisfying $\mathbb{P}( \mathfrak C>u) \leq Ce^{-du^{3/2}}$ for $u \ge 0$. 
    
    We estimate the probability of the event $K_\epsilon:=\{|X_\epsilon-Y_\epsilon|>\epsilon^{\frac12-\delta}\}$. On one hand, on the event $K_\epsilon$, $\epsilon^{-1}(X_\epsilon - Y_\epsilon)^2 > \epsilon^{-2\delta}$. From the conditions on $f,g$ imposed by the statement of the proposition, it follows that $|z_{\max}^{f+g}| \leq \sqrt 2 B.$ Therefore, on $K_\epsilon$ we find that \begin{align*}-\epsilon^{-2\delta}&+2B \geq -\epsilon^{-2\delta} +\sup_x f(x) + \sup_y g(y)\\&\geq f(X_\epsilon) -\epsilon^{-1}(X_\epsilon - Y_\epsilon)^2+g(Y_\epsilon) \\&\geq \big[f(X_\epsilon) +\epsilon^{1/3}\mathcal S (\epsilon^{-2/3} X_\epsilon,\epsilon^{-2/3}Y_\epsilon)+g(Y_\epsilon)\big] - \mathfrak C \epsilon^{1/3}- c \epsilon^{1/3} \log^{2/3}(2+\epsilon^{-2/3}(|X_\epsilon|+|Y_\epsilon|))\\&\geq \big[f(z_{\max}^{f+g})+\epsilon^{1/3} \mathcal S(\epsilon^{-2/3}z_{\max}^{f+g},\epsilon^{-2/3} z_{\max}^{f+g})+g(z_{\max}^{f+g})\big] - \mathfrak C \epsilon^{1/3}- c \epsilon^{1/3} \log^{2/3}(2+\epsilon^{-2/3}(|X_\epsilon|+|Y_\epsilon|)) \\&\geq f(z_{\max}^{f+g})+g(z_{\max}^{f+g}) -\bigg(\mathfrak C\epsilon^{1/3} + c\epsilon^{1/3} \log^{2/3} (2 +2\epsilon^{-2/3} |z_{\max}^{f+g}|)\bigg) 
    \\ & \;\;\;\;\;- \mathfrak C \epsilon^{1/3}- c \epsilon^{1/3} \log^{2/3}(2+\epsilon^{-2/3}(|X_\epsilon|+|Y_\epsilon|)) \\&\ge (-B - \frac12(z_{\max}^{f+g})^2)+(-B - \frac12(z_{\max}^{f+g})^2) -2\mathfrak C\epsilon^{\frac13} 
    \\ & \;\;\;\;\;- c\epsilon^{1/3} \bigg( \log^{2/3} (2+2\epsilon^{2/3}|z_{\max}^{f+g}|) + \log^{2/3} ( 2+\epsilon^{-2/3}(|X_\epsilon|+|Y_\epsilon|) ) \bigg) \\ &\geq -4B -2\mathfrak C \epsilon^{1/3}  -c\epsilon^{1/3} \bigg( \log^{2/3} (2+2\epsilon^{-2/3}B)  + \log^{2/3} ( 2+\epsilon^{-2/3}(|X_\epsilon|+|Y_\epsilon|) )\bigg).
    \end{align*} The first inequality is true by assumptions on $f,g$. The second line was explained in the text preceding this chain of inequalities. The third line follows from \eqref{e:2}. The fourth line is true since $(X_\epsilon,Y_\epsilon)$ is the argmax of $(x,y)\mapsto f(x)+\epsilon^{1/3} \mathcal S(\epsilon^{-2/3} x,\epsilon^{-2/3} y) + g(y)$. The fifth inequality used \eqref{e:2} again. The next-to-last inequality uses the bounds which we have assumed on $f,g$ in the statement of the proposition. The last line follows from  $|z_{\max}^{f+g}| \leq \sqrt 2 B,$ which was explained above.
    
    Since $c$ and $B$ are deterministic, for sufficiently small deterministic $\epsilon>0$ (below some threshold independent of $B\ge 2$ say) we have an elementary bound $c\epsilon^{1/3} \log^{2/3} (2+2\epsilon^{-2/3}B) \leq 2B$. We define the event $$H_{\epsilon}:=\{|X_\epsilon|+|Y_\epsilon| \leq 100B\}.$$ Applying \cref{apriori} with $M=100B >(1+100B)^{1/2}$, we find that $\left(1-\mathbb{P}(H_\epsilon)\right) \leq Ce^{-d \epsilon^{-1/2}}.$ On the event $H_\epsilon$ we likewise have that $c\epsilon^{1/3}\log^{2/3} ( 2+\epsilon^{-2/3}(|X_\epsilon|+|Y_\epsilon|) )\leq 100B$, again assuming that $\epsilon$ is below some sufficiently small positive deterministic threshold. On $K_\epsilon\cap H_\epsilon$ we find that $2\mathfrak C \epsilon^{1/3} \geq \epsilon^{-2\delta} -108B .$ Therefore, we obtain 
    \begin{align*}\mathbb{P}(K_\epsilon) &\leq \mathbb{P}(H_\epsilon\cap K_\epsilon)+\mathbb{P}(H_\epsilon^c) \\&\leq \mathbb{P}( 2\mathfrak C \epsilon^{1/3}\geq \epsilon^{-2\delta} -108B) + Ce^{-d \epsilon^{-1/2}} \\ &\leq Ce^{-d \epsilon^{-1/2}(\epsilon^{-2\delta} - 108 B)^{3/2}} + Ce^{-d  \epsilon^{-1/2}}.
    \end{align*}
    This completes the proof. 
\end{proof}

%
As mentioned previously, we can actually obtain a much better estimate on $|X_{\epsilon}-Y_{\epsilon}|$ if we impose some additional H\"older regularity assumptions on $f,g$. Since we ultimately plan to apply these results to sample paths of Airy$_2$ processes, these H\"older regularity assumptions will be guaranteed in the cases where we will apply this lemma in \cref{s:prooft1}. We will rely heavily on this near-optimal bound, in order to prove an estimate on the $L^{p}$ norm of $|f\circ \mathcal{L}_{0,\epsilon}\circ g -\max{(f+g)}|$, which in turn will be needed in the proofs of the crucual results \cref{tbound1}, \cref{tbound2}, and \cref{tbound3}.

\begin{prop}\label{opt} Let $f,g: \mathbb{R}\to \mathbb{R}$ be deterministic functions satisfying $-2x^2 -B \leq f(x),g(x) \leq -\frac12x^2+B$ for some $B\ge 1.$ Define
the random variable 
\begin{align*}(X_{\epsilon},Y_\epsilon) := \mathrm{argmax}_{(x,y)\in \mathbb{R}^2} \;\;f(x) + \mathcal L_{0,\epsilon}(x,y)+g(y).
\end{align*}
We also denote $z_{\max}^{f+g}:= \mathrm{argmax}_{z\in \mathbb{R}} f(z)+g(z),$ which we assume is unique. Fix $\delta\in (0,1/3]$. Then 
\begin{align*}
\mathbb{P}\big(|X_\epsilon-Y_\epsilon|>\epsilon^{\frac23-\delta}\big)& \leq Ce^{-d  \epsilon^{-9\delta/10}}+\mathbf 1_{\{ B\geq d\epsilon^{-1/3} \}} +\mathbf 1_{\big\{\mathfrak{Hol}\big(g,\;\frac12(1-\frac3{10}\delta),\; 100B\big) >\epsilon^{-\delta/10}\big\}},
\end{align*}
where $C,d>0$ may depend on $ \delta$ but are independent of $f,g,B$ and $\epsilon\in (0,1]$.
\end{prop}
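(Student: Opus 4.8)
The plan is to follow the template of \cref{apriori} and \cref{4.4}. First dispose of the trivial cases: if either indicator on the right equals $1$ the bound is at least $1$, so we may assume $B<d\epsilon^{-1/3}$ and $\mathfrak{Hol}\big(g,\alpha,M\big)\le\epsilon^{-\delta/10}$, where $\alpha:=\tfrac12(1-\tfrac3{10}\delta)\in[\tfrac9{20},\tfrac12)$ and $M:=1+100B+\log^{2/5}(1/\epsilon)$; similarly we may restrict to $\epsilon$ below a $\delta$-dependent threshold, since for larger $\epsilon$ one takes $C=C(\delta)$ large enough that $Ce^{-d\log^{6/5}(1/\epsilon)}\ge 1$. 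As in those proofs, rescale so that $\mathcal L_{0,\epsilon}(x,y)$ is replaced by $\mathcal S^{\epsilon}(x,y):=\epsilon^{1/3}\mathcal S(\epsilon^{-2/3}x,\epsilon^{-2/3}y)$ (this does not change the law of $(X_\epsilon,Y_\epsilon)$) and work on the good event $H_\epsilon\cap G_\epsilon$ where $|X_\epsilon|+|Y_\epsilon|\le M$ and the Airy constant $\mathfrak C$ of \cref{l:airyabsbound} satisfies $\mathfrak C\le\log^{4/5}(1/\epsilon)$. By \cref{apriori} (exactly as in the proof of \cref{4.4}) $\mathbb P(H_\epsilon^c)\le Ce^{-d\log^{6/5}(1/\epsilon)}$, and by the tail bound $\mathbb P(\mathfrak C>u)\le Ce^{-du^{3/2}}$ we get $\mathbb P(G_\epsilon^c)\le Ce^{-d(\log^{4/5}(1/\epsilon))^{3/2}}=Ce^{-d\log^{6/5}(1/\epsilon)}$ --- this is precisely where the exponent $6/5=\tfrac45\cdot\tfrac32$ enters. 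It then suffices to show that $H_\epsilon\cap G_\epsilon\subset\{|X_\epsilon-Y_\epsilon|\le\epsilon^{2/3-\delta}\}$ for $\epsilon$ below a $\delta$-dependent threshold.

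The key new idea --- and the step I expect to be the crux --- is to compare the maximizing value not against the diagonal point $(z_{\max},z_{\max})$, as in \cref{4.4}, but against the test configuration $(X_\epsilon,X_\epsilon)$. Since $(X_\epsilon,Y_\epsilon)$ maximizes $f(x)+\mathcal S^\epsilon(x,y)+g(y)$, testing at $(X_\epsilon,X_\epsilon)$ and cancelling $f(X_\epsilon)$ gives $g(Y_\epsilon)-g(X_\epsilon)\ge\mathcal S^{\epsilon}(X_\epsilon,X_\epsilon)-\mathcal S^{\epsilon}(X_\epsilon,Y_\epsilon)$. Inserting the bound of \cref{l:airyabsbound} into both terms on the right (the first loses $\mathfrak C\epsilon^{1/3}$ plus a log, the second is $\le-\epsilon^{-1}(X_\epsilon-Y_\epsilon)^2$ plus $\mathfrak C\epsilon^{1/3}$ plus a log) yields, with $u:=|X_\epsilon-Y_\epsilon|$,
\[\epsilon^{-1}u^{2}\ \le\ \big(g(Y_\epsilon)-g(X_\epsilon)\big)+2\mathfrak C\epsilon^{1/3}+2c\epsilon^{1/3}\log^{2/3}\!\big(2+\epsilon^{-2/3}(|X_\epsilon|+|Y_\epsilon|)\big).\]
On $H_\epsilon$ both $X_\epsilon,Y_\epsilon\in[-M,M]$, so $g(Y_\epsilon)-g(X_\epsilon)\le\mathfrak{Hol}(g,\alpha,M)\,u^{\alpha}\le\epsilon^{-\delta/10}u^{\alpha}$; and on $H_\epsilon\cap G_\epsilon$, using $B<d\epsilon^{-1/3}$ one checks $\epsilon^{-2/3}(|X_\epsilon|+|Y_\epsilon|)\le C\epsilon^{-1}$, so the error term is at most $C\epsilon^{1/3}\log^{4/5}(1/\epsilon)$. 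Hence on $H_\epsilon\cap G_\epsilon$ one arrives at the self-improving inequality
\[\epsilon^{-1}u^{2}\ \le\ \epsilon^{-\delta/10}u^{\alpha}+C\epsilon^{1/3}\log^{4/5}(1/\epsilon).\]
The point is that the quadratic left side cannot be dominated by the $u^{\alpha}$ term ($\alpha<2$) unless $u$ is already small --- so the inequality forces $u$ to be small without any circular assumption that $X_\epsilon$ and $Y_\epsilon$ are close.

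To finish, split into the two cases according to which term on the right dominates. If $\epsilon^{-\delta/10}u^{\alpha}\ge C\epsilon^{1/3}\log^{4/5}(1/\epsilon)$, then $\epsilon^{-1}u^{2}\le 2\epsilon^{-\delta/10}u^{\alpha}$, so $u^{2-\alpha}\le 2\epsilon^{1-\delta/10}$; since $2-\alpha=\tfrac32(1+\tfrac{\delta}{10})$, the resulting exponent is $\tfrac{1-\delta/10}{2-\alpha}=\tfrac23\cdot\tfrac{1-\delta/10}{1+\delta/10}\ge\tfrac23(1-\tfrac{\delta}{5})=\tfrac23-\tfrac{2\delta}{15}>\tfrac23-\delta$ (using $\tfrac{1-x}{1+x}\ge 1-2x$), so $u\le C(\delta)\epsilon^{2/3-2\delta/15}\le\epsilon^{2/3-\delta}$ once $\epsilon$ is small. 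If instead the error term dominates, then $\epsilon^{-1}u^{2}\le 2C\epsilon^{1/3}\log^{4/5}(1/\epsilon)$, giving $u\le C\epsilon^{2/3}\log^{2/5}(1/\epsilon)\le\epsilon^{2/3-\delta}$ for small $\epsilon$. In both cases $H_\epsilon\cap G_\epsilon\subset\{u\le\epsilon^{2/3-\delta}\}$ for small $\epsilon$, and combining with $\mathbb P(H_\epsilon^c)+\mathbb P(G_\epsilon^c)\le Ce^{-d\log^{6/5}(1/\epsilon)}$ gives the claim. The genuinely new content is entirely in the choice of the test point $(X_\epsilon,X_\epsilon)$, which localizes the comparison at $X_\epsilon$ and $Y_\epsilon$ and thereby makes the Hölder regularity of $g$ usable; everything else is bookkeeping with the uniform a priori bounds, and the specific exponents $\tfrac12(1-\tfrac3{10}\delta)$ and $\epsilon^{-\delta/10}$ and the window $M$ are tuned so that the arithmetic in the last step yields an exponent strictly above $\tfrac23-\delta$.
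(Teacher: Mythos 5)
Your proposal is correct, and it takes a genuinely different route from the paper. The paper's proof of this proposition is an iterative bootstrap: it starts from the a priori bound $|X_\epsilon-Y_\epsilon|\le\epsilon^{1/2-\delta}$ of \cref{4.4}, defines a sequence of exponents $\kappa_0=\tfrac13$, $\kappa_{n+1}=\tfrac{1-v}2+\tfrac12(\tfrac12-v)\kappa_n\uparrow\tfrac23-\tfrac\delta2$ (with $v$ slightly above $\tfrac3{10}\delta$), and at each stage tests the maximizer against the diagonal point $(z_{\max},z_{\max})$, inserting the previous iterate's bound $\epsilon^{\kappa_{n-1}}$ into the H\"older estimate for $|g(X_\epsilon)-g(Y_\epsilon)|$ and summing the failure probabilities over finitely many steps. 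Your comparison at $(X_\epsilon,X_\epsilon)$ instead cancels $f$ outright and keeps $u=|X_\epsilon-Y_\epsilon|$ as a free variable, yielding the closed inequality $\epsilon^{-1}u^2\le\epsilon^{-\delta/10}u^\alpha+C\epsilon^{1/3}\log^{4/5}(1/\epsilon)$ with $\alpha=\tfrac12(1-\tfrac3{10}\delta)<2$, which is solved in one step; your exponent arithmetic ($\tfrac23\cdot\tfrac{1-\delta/10}{1+\delta/10}\ge\tfrac23-\tfrac{2\delta}{15}>\tfrac23-\delta$) is right, and the handling of the good events via \cref{apriori} and the tail of $\mathfrak C$ from \cref{l:airyabsbound} matches the paper's. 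What your route buys: it needs neither \cref{4.4} as a base case nor the optimality (or uniqueness) of $z_{\max}$, it avoids carrying the $B^v$ factors and the recursion bookkeeping, and it uses exactly the H\"older exponent and threshold appearing in the statement rather than the slightly shifted exponent $\tfrac12-\tfrac v2$ used inside the paper's induction. What the paper's route buys is mainly structural uniformity with the neighboring propositions (the same events $H_\epsilon$, $K_\epsilon^{(n)}$ and the same test point $(z_{\max},z_{\max})$ recur in \cref{opt2} and \cref{varbound}), but for this particular statement your one-shot argument is simpler and fully sufficient.
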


The exponent of $\frac23-\delta$ on the left side is optimal. However the various constants on the right side, such as $3/10,2/5,6/5,100$, are not optimized. Here and in later propositions, we do not attempt to optimize such constants. The current values suffice for arguments in \cref{s:prooft1}.

\begin{proof}
    By scale invariance of the directed landscape we can replace $\mathcal L_{0,\epsilon}(x,y)$ by $\epsilon^{1/3} \mathcal L_{0,1} (\epsilon^{-2/3} x,\epsilon^{-2/3}y) = \epsilon^{1/3} \mathcal S (\epsilon^{-2/3} x,\epsilon^{-2/3}y)$, where $\mathcal S$ is the Airy sheet as usual. Applying \cref{l:airyabsbound}, we have 
    \begin{equation}\label{e:3}|\epsilon^{1/3} \mathcal S (\epsilon^{-2/3} x,\epsilon^{-2/3}y)+ \epsilon^{-1} (x-y)^2|\leq  \mathfrak C \epsilon^{1/3}+ c \epsilon^{1/3} \log^{2/3}(2+\epsilon^{-2/3}(|x|+|y|)), \end{equation} where $c$ is deterministic and $\mathfrak C$ is a non-negative random variable satisfying $\mathbb{P}( \mathfrak C>u) \leq Ce^{-du^{3/2}}$ for $u \ge 0$. 
    We choose $v:= \frac{3\delta}{10-6\delta}> \frac3{10}\delta$ and define an inductive sequence of exponents $\kappa_n$ for $n=0,1,2,3,...$ as follows: $$\kappa_0=\frac13\;\; \text{ and }\;\; \kappa_{n+1} := \frac{1-v}2 +\frac12 \bigg(\frac12 -v\bigg)\kappa_{n}.$$
    As $n\to\infty$, $\kappa_n\uparrow \frac23-\frac12 \delta$. This convergence occurs exponentially quickly, by standard iteration theory for linear recurrences. 
    For $n=0,1,2,3,...$ define the events $$K^{(n)}_\epsilon:= \{ |X_\epsilon-Y_\epsilon| \le \epsilon^{\kappa_n}\}.$$
    Fix some large $N\in \Bbb N$. We will now prove by induction on $n$ that for all $n\ge 1$ and $\epsilon\in (0,1]$, 
    \begin{equation}
        \label{e:induction} 
        \mathbb{P}(K_{\epsilon}^{(n-1)}) \leq \mathbf 1_{\{ B\geq d_n\epsilon^{-1} \}} + \mathbf 1_{\{\mathfrak{Hol}(g;\frac12-\frac{v}2; 100B)> \epsilon^{-\delta\kappa_{n-1}/2}\}} + C_ne^{-d_n\epsilon^{-3v}}.
    \end{equation} Here $C_n,d_n$ may depend on $n$ and on $\delta$ but not on $\epsilon,f,g,B$. If we can prove this, then the proposition will be proved simply by taking $n$ large enough.
    
    Our base case is $n=0$. By using \cref{4.4} and the fact that $\kappa_0$ is chosen to be 1/3, $$\mathbb{P}( K^{(0)}_\epsilon) \leq Ce^{-d \epsilon^{-1/2}(\epsilon^{-1/3} - 108 B)^{3/2}} \leq Ce^{-d\epsilon^{-5/6}} + \mathbf 1_{\{ 200B\geq \epsilon^{-1/3}\}}.$$
    The right side is bounded by an expression of the form of the right side of \eqref{e:induction}. 
    
    Now we assume such a bound as in \eqref{e:induction} holds for all steps up to $n-1.$ We note from \eqref{e:3} that
    \begin{align*}
    f(X_\epsilon) & -\epsilon^{-1}(X_\epsilon - Y_\epsilon)^2+g(Y_\epsilon) \\&\geq 
    f(X_\epsilon) +\epsilon^{1/3}\mathcal S (\epsilon^{-2/3} X_\epsilon,\epsilon^{-2/3}Y_\epsilon)+g(Y_\epsilon) - \mathfrak C \epsilon^{1/3}- c \epsilon^{1/3} \log^{2/3}(2+\epsilon^{-2/3}(|X_\epsilon|+|Y_\epsilon|))\\&\geq f(z_{\max}^{f+g})+\epsilon^{1/3} \mathcal S(\epsilon^{-2/3}z_{\max}^{f+g},\epsilon^{-2/3} z_{\max}^{f+g})+g(z_{\max}^{f+g}) - \mathfrak C \epsilon^{1/3}- c \epsilon^{1/3} \log^{2/3}(2+\epsilon^{-2/3}(|X_\epsilon|+|Y_\epsilon|)) \\&\geq f(z_{\max}^{f+g})+g(z_{\max}^{f+g}) -\bigg(\mathfrak C\epsilon^{1/3} + c\epsilon^{1/3} \log^{2/3} (2 +2\epsilon^{-2/3} |z_{\max}^{f+g}|)\bigg) 
    \\ & \;\;\;\;\;\;- \mathfrak C \epsilon^{1/3}- c \epsilon^{1/3} \log^{2/3}(2+\epsilon^{-2/3}(|X_\epsilon|+|Y_\epsilon|)) .
    \end{align*}  From the conditions on $f,g$ imposed by the statement of the proposition, it follows that $|z_{\max}^{f+g}| \leq \sqrt 2 B.$ Since $c$ is deterministic, for sufficiently small deterministic $\epsilon>0$, which does not depend on the choice of $B\ge 1$, we have an elementary bound $c\epsilon^{1/3} \log^{2/3} (2+2\epsilon^{-2/3}B) \leq C\epsilon^{\frac13 -\frac23v }B^v.$ This follows from the fact that $\log^{2/3}(2+u) \leq Cu^{v}$ for all $u\ge 1$ where $C=C(v)$ does not depend on $u.$

    We define $H_\epsilon$ to be the event $$H_{\epsilon}:=\{|X_\epsilon|+|Y_\epsilon| \leq 100B\}.$$ By \cref{apriori}, we conclude 
    \begin{equation}\label{e:hepsilon}(1-\mathbb{P}(H_\epsilon) )\leq Ce^{-d \epsilon^{-1/2}}.
    \end{equation}
    Now we consider what happens when we restrict to the event $H_\epsilon$. In this case, we have $c\epsilon^{1/3}\log^{2/3} ( 2+\epsilon^{-2/3}(|X_\epsilon|+|Y_\epsilon|) )\leq C \epsilon^{\frac13-\frac23v} B^v$. Here we have assumed again that $\epsilon$ is below some sufficiently small positive deterministic threshold independent of $B,\epsilon$. Therefore, still only considering the case that we are on the event $H_\epsilon$, we obtain the following bound
    \begin{equation}\label{fchain}f(X_\epsilon) -\epsilon^{-1}(X_\epsilon - Y_\epsilon)^2+g(Y_\epsilon) \geq f(z_{\max}^{f+g})+g(z_{\max}^{f+g}) -2\mathfrak C\epsilon^{1/3} -C\epsilon^{\frac13-\frac23v} B^v.\end{equation}

    On the event $K^{(n-1)}_\epsilon \cap H_\epsilon$, when we have $\mathfrak{Hol}(g;\frac12-\frac v2; 100B)\leq \epsilon^{-v\kappa_{n-1}/2}$ 
    then $|g(X_\epsilon)-g(Y_\epsilon)| \leq C \epsilon^{-v \kappa_{n-1} /2} |X_\epsilon-Y_\epsilon|^{\frac12-\frac v2} \leq \epsilon^{(\frac12 - v)\kappa_{n-1}} $  When we combine this bound with \eqref{fchain}, on the event $H_\epsilon\cap K^{(n-1)}_\epsilon$, we obtain the bound
    \begin{align*}f(z_{\max}^{f+g}) -\epsilon^{-1}(X_\epsilon-Y_\epsilon)^2 +g(z_{\max}^{f+g}) &\ge f(X_\epsilon) -\epsilon^{-1}(X_\epsilon-Y_\epsilon)^2 +g(X_\epsilon) \\&\geq f(X_\epsilon) -\epsilon^{-1}(X_\epsilon-Y_\epsilon)^2 +g(Y_\epsilon) - |g(X_\epsilon)-g(Y_\epsilon)| \\ &\geq f(z_{\max}^{f+g})+g(z_{\max}^{f+g}) -2\mathfrak C\epsilon^{1/3} -C\epsilon^{\frac13-\frac23v} B^v -  \epsilon^{(\frac12 -v)\kappa_{n-1}} \\ &\geq f(z_{\max}^{f+g})+g(z_{\max}^{f+g}) -2\mathfrak C\epsilon^{1/3} -C\epsilon^{(\frac12 - v)\kappa_{n-1}}B^v.
    \end{align*}
    In the final line, we use the fact that $B\ge 1$ and that $(\frac12-v)\kappa_n$ is always less than $\frac13-\frac23v.$ 
    
    As long as $\mathfrak{Hol}(g;\frac12-\frac v2; 100B)\leq \epsilon^{-v\kappa_{n-1}/2}$, we can apply the recursive definition of the $\kappa_n$ that $1+(\frac12 - v)\kappa_{n-1} = 2\kappa_n+v$ to the expression above to obtain  $$(X_\epsilon-Y_\epsilon)^2 \leq 2\mathfrak C \epsilon^{4/3} +C\epsilon^{1+(\frac12 - v)\kappa_{n-1}} B^v=2\mathfrak C \epsilon^{4/3} +C\epsilon^{2\kappa_n+v} B^v,$$ for some large enough deterministic constant $C=C(v)$ that does not depend on $B,f,g,\epsilon.$ When $\mathfrak C \leq \frac14 \epsilon^{-2v}$ 
    and $B\leq \frac12\epsilon^{-1} C^{-\frac1{v}},$ the right-hand side of the previous expression is less than $\epsilon^{2\kappa_n}$. All of this means that $\Bbb P( K_\epsilon^{(n)}) \leq \Bbb P( K_\epsilon^{(n-1)}\cap H_\epsilon) + \mathbb{P}(H_\epsilon^c)+\mathbb{P}(\mathfrak C > \frac12 \epsilon^{-2v})$. 
    
    Finally, we can combine our bounds to prove \eqref{e:induction}. To do so, we apply \eqref{e:hepsilon} to see that $\mathbb{P}(H_\epsilon^c) \leq Ce^{-d\epsilon^{-1/2}}$, and we apply \cref{l:airyabsbound} to see that $\mathbb{P}(\mathfrak C > \frac12 \epsilon^{-2v}) \leq Ce^{-d\epsilon^{-3v}}$.
\end{proof}

The next proposition tells us that the value of $f+g$ at each of the two values $X_{\epsilon}$ and $Y_{\epsilon}$ is very unlikely to be far from their joint maximum value. This proposition will be helpful in the proofs of \cref{varbound} and \cref{tbound1}, which will prove important bounds on the $L^{p}$ norm of $f\circ\mathcal{L}_{0,\epsilon}\circ g - \max{(f+g)}$ and on $\mathrm{Cov}(f\circ\mathcal{L}_{0,\epsilon}\circ g, u\circ \mathcal{L}_{0,\epsilon}\circ v)$, respectively.

\begin{prop}\label{opt2} Let $f,g: \mathbb{R}\to \mathbb{R}$ be deterministic functions satisfying $-2x^2 -B \leq f(x),g(x) \leq -\frac12x^2+B$ for some $B\ge 1.$ Define the random variable 
\begin{align*}(X_{\epsilon},Y_\epsilon) := \mathrm{argmax}_{(x,y)\in \mathbb{R}^2} \;\;f(x) + \mathcal L_{0,\epsilon}(x,y)+g(y).
\end{align*}
We also denote $z_{\max}^{f+g}:= \mathrm{argmax}_{z\in \mathbb{ R}} f(z)+g(z)$, which we assume is unique. Fix $\delta\in (0,1/3]$. Then
\begin{align*}\mathbb{P}\big(X_\epsilon \notin \mathscr S(f+g,\epsilon^{\frac13- \delta})\big) &\leq  Ce^{-d  \epsilon^{-9\delta/10}} +\mathbf 1_{\{ B\geq d\epsilon^{-1/3} \}} +\mathbf 1_{\big\{\mathfrak{Hol}\big(g,\;\frac12(1-\frac3{20}\delta),\;\; 100B\big) >\epsilon^{-\delta/20}\big\}},
\\ \mathbb{P}\big(Y_\epsilon \notin \mathscr S(f+g,\epsilon^{\frac13- \delta})\big) &\leq  Ce^{-d  \epsilon^{-9\delta/10}} +\mathbf 1_{\{ B\geq d\epsilon^{-1/3} \}} +\mathbf 1_{\big\{\mathfrak{Hol}\big(f,\;\frac12(1-\frac3{20}\delta),\;\; 100B\big) >\epsilon^{-\delta/20}\big\}},
\end{align*}
where $C,d>0$ may depend on $\delta$ but are independent of $f,g,B$ and $\epsilon\in (0,1]$. 
\end{prop}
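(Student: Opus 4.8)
The plan is to deduce both inequalities from \cref{l:airyabsbound}, \cref{apriori}, and \cref{opt}. We treat the bound for $X_\epsilon$; the one for $Y_\epsilon$ follows by the symmetric argument with the roles of $f$ and $g$ exchanged, since $\mathcal S(x,y)$ and $\mathcal S(y,x)$ have the same law (time-reversal symmetry of the directed landscape). As in the earlier proofs we replace $\mathcal L_{0,\epsilon}(x,y)$ by $\epsilon^{1/3}\mathcal S(\epsilon^{-2/3}x,\epsilon^{-2/3}y)$ using scale invariance \cref{d:dl} (3), and we use the bound $|\epsilon^{1/3}\mathcal S(\epsilon^{-2/3}x,\epsilon^{-2/3}y)+\epsilon^{-1}(x-y)^2|\le \mathfrak C\epsilon^{1/3}+c\epsilon^{1/3}\log^{2/3}(2+\epsilon^{-2/3}(|x|+|y|))$ from \cref{l:airyabsbound}, with $\mathbb P(\mathfrak C>u)\le Ce^{-du^{3/2}}$. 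The point is that $X_\epsilon\in\mathscr S(f+g,\epsilon^{1/3-\delta})$ is precisely the statement $f(X_\epsilon)+g(X_\epsilon)\ge (f+g)(z_{\max})-\epsilon^{1/3-\delta}$, and we will obtain it by first controlling $f(X_\epsilon)+g(Y_\epsilon)$ via the argmax property, then transferring $g(Y_\epsilon)$ to $g(X_\epsilon)$ using the diagonal-concentration estimate of \cref{opt} together with the H\"older hypothesis on $g$.

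For the first step, comparing the value of $f(x)+\mathcal L_{0,\epsilon}(x,y)+g(y)$ at $(X_\epsilon,Y_\epsilon)$ with its value at $(z_{\max},z_{\max})$, bounding $\mathcal L_{0,\epsilon}(X_\epsilon,Y_\epsilon)$ from above (discarding the nonpositive term $-\epsilon^{-1}(X_\epsilon-Y_\epsilon)^2$) and $\mathcal L_{0,\epsilon}(z_{\max},z_{\max})$ from below via \cref{l:airyabsbound}, and using $|z_{\max}|\le\sqrt2 B$, one gets
\begin{align*} f(X_\epsilon)+g(Y_\epsilon)\ \ge\ (f+g)(z_{\max})\ -\ 2\mathfrak C\epsilon^{1/3}\ -\ c\epsilon^{1/3}\Big[\log^{2/3}\!\big(2+\epsilon^{-2/3}(|X_\epsilon|+|Y_\epsilon|)\big)+\log^{2/3}\!\big(2+2\epsilon^{-2/3}|z_{\max}|\big)\Big]. \end{align*}
We then work on the intersection of three favorable events: $H_\epsilon:=\{|X_\epsilon|+|Y_\epsilon|\le 1+100B+\log^{2/5}(1/\epsilon)\}$, with $\mathbb P(H_\epsilon^c)\le Ce^{-d\log^{6/5}(1/\epsilon)}$ by \cref{apriori}; the event $\{\mathfrak C\le \epsilon^{-\delta/10}\}$, whose complement has probability $\le Ce^{-d\epsilon^{-3\delta/20}}\le Ce^{-d\log^{6/5}(1/\epsilon)}$; and $D_\epsilon:=\{|X_\epsilon-Y_\epsilon|\le \epsilon^{2/3-\delta/2}\}$. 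The complement of $D_\epsilon$ is bounded by \cref{opt} applied with parameter $\delta/2$ in place of $\delta$, which yields exactly the error $Ce^{-d\log^{6/5}(1/\epsilon)}+\mathbf 1_{\{B\ge d\epsilon^{-1/3}\}}+\mathbf 1_{\{\mathfrak{Hol}(g,\frac12(1-\frac3{20}\delta),1+100B+\log^{2/5}(1/\epsilon))>\epsilon^{-\delta/20}\}}$, since $\frac12(1-\frac3{10}\cdot\frac\delta2)=\frac12(1-\frac3{20}\delta)$ and $\epsilon^{-(\delta/2)/10}=\epsilon^{-\delta/20}$, matching the claimed right-hand side.

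On $D_\epsilon\cap H_\epsilon$ and off the H\"older indicator event, $|g(X_\epsilon)-g(Y_\epsilon)|\le \epsilon^{-\delta/20}\,|X_\epsilon-Y_\epsilon|^{\frac12(1-\frac3{20}\delta)}\le \epsilon^{\frac13-\frac7{20}\delta+\frac3{80}\delta^2}$, and combining with the display above gives $f(X_\epsilon)+g(X_\epsilon)\ge (f+g)(z_{\max})-(\text{errors})$. It remains to check that the errors sum to at most $\epsilon^{1/3-\delta}$ once $\epsilon$ is below a threshold independent of $B,f,g$: off the event $\{B\ge d\epsilon^{-1/3}\}$ we have $|z_{\max}|\lesssim\epsilon^{-1/3}$ and, on $H_\epsilon$, $|X_\epsilon|+|Y_\epsilon|\lesssim\epsilon^{-1/3}$, so each logarithmic term is $\le C\epsilon^{1/3}\log^{2/3}(1/\epsilon)\le\epsilon^{1/3-\delta/20}$; the $\mathfrak C$ term is $\le 2\epsilon^{1/3-\delta/10}$; and since $\tfrac13-\tfrac7{20}\delta+\tfrac3{80}\delta^2>\tfrac13-\delta$ for all $\delta\in(0,1/3]$, the H\"older term is also $o(\epsilon^{1/3-\delta})$. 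A union bound over the complements of the three favorable events gives the claimed bound for $X_\epsilon$; the bound for $Y_\epsilon$ is identical, transferring $f(X_\epsilon)$ to $f(Y_\epsilon)$ via $\mathfrak{Hol}(f,\cdot,\cdot)$ and using the $f\leftrightarrow g$ version of \cref{opt}.

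The main obstacle is the exponent accounting in the last step: one must verify that the error from replacing $g(Y_\epsilon)$ by $g(X_\epsilon)$ decays strictly faster than $\epsilon^{1/3-\delta}$. This is exactly what pins down the relationship among the H\"older exponent $\frac12(1-\frac3{20}\delta)$, the tolerated H\"older-norm growth $\epsilon^{-\delta/20}$, and the diagonal gap exponent $\frac23-\frac\delta2$ supplied by \cref{opt}; there is a comfortable margin, which is why the constants are not optimized.
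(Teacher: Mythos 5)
Your proposal is correct and follows essentially the same route as the paper's proof: scale invariance plus \cref{l:airyabsbound}, the argmax comparison at $(z_{\max},z_{\max})$, the localization event from \cref{apriori}, the diagonal event from \cref{opt} applied with parameter $\delta/2$ (the paper's choice $\delta_1=\tfrac12\delta$, giving the same H\"older exponent $\tfrac12(1-\tfrac3{20}\delta)$ and threshold $\epsilon^{-\delta/20}$), the H\"older transfer of $g(Y_\epsilon)$ to $g(X_\epsilon)$, and a union bound, with the $Y_\epsilon$ case handled by the same reflection symmetry the paper invokes. The only differences are cosmetic bookkeeping (restricting to $\{B<d\epsilon^{-1/3}\}$ and $\{\mathfrak C\le\epsilon^{-\delta/10}\}$ up front rather than tracking $B^{\delta_1}$ factors), and your exponent check $\tfrac13-\tfrac7{20}\delta+\tfrac3{80}\delta^2>\tfrac13-\delta$ is the same margin the paper exploits.
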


\begin{proof}
The bound on $Y_\epsilon$ follows from the bound on $X_\epsilon$ by a symmetry argument. Therefore, we focus on the first bound.
By the scale invariance of the directed landscape, we replace $\mathcal L_{0,\epsilon}(x,y)$ by $\epsilon^{1/3} \mathcal L_{0,1} (\epsilon^{-2/3} x,\epsilon^{-2/3}y) = \epsilon^{1/3} \mathcal S (\epsilon^{-2/3} x,\epsilon^{-2/3}y)$. Applying \cref{l:airyabsbound}, we have $$|\epsilon^{1/3} \mathcal S (\epsilon^{-2/3} x,\epsilon^{-2/3}y)+ \epsilon^{-1} (x-y)^2|\leq  \mathfrak C \epsilon^{1/3}+ c \epsilon^{1/3} \log^{2/3}(2+\epsilon^{-2/3}(|x|+|y|)), $$ where $c$ is deterministic and $\mathfrak C$ is a non-negative random variable satisfying $\mathbb{P}( \mathfrak C>u) \leq Ce^{-du^{3/2}}$ for $u \ge 0$. 
We fix an arbitrary $\delta_{1}>0$ (the value of which will be specified at a later point). We define the event $$H_{\epsilon}:=\{|X_\epsilon|+|Y_\epsilon| \leq 100B\}.$$ We recall from \eqref{fchain} that on $H_\epsilon$, for $\epsilon$ below a sufficiently small positive deterministic threshold, 
    $$f(X_\epsilon) -\epsilon^{-1}(X_\epsilon - Y_\epsilon)^2+g(Y_\epsilon) \geq f(z_{\max}^{f+g})+g(z_{\max}^{f+g}) -2\mathfrak C\epsilon^{1/3} -C\epsilon^{\frac13-\frac23 \delta_{1}} B^{\delta_{1}}.$$
    We also define the event $$F_\epsilon:= \{ |X_\epsilon - Y_\epsilon| \leq \epsilon^{\frac23-\delta_{1}}\}.$$ By \cref{opt}, $F_{\epsilon}$ has probability greater than $1-Ce^{-d\epsilon^{-1/2}}$ as long as $B\leq d\epsilon^{-1/3}$ and $$\mathfrak{Hol}\big(g,\frac12(1-\delta_{2}),100B\big) \le \epsilon^{- \delta_{2}/3},$$ where $\delta_{2}:=\frac3{10}\delta_1$. In other words,
    \begin{equation}\label{e:fepsilon}
        \Bbb P(F_\epsilon^c) \leq Ce^{-d\epsilon^{-1/2}} + \mathbf 1_{\{ B>d\epsilon^{-1/3}\} } + \mathbf 1_{\{\mathfrak{Hol}\big(g,\frac12(1-\delta_{2}),100B\big) > \epsilon^{- \delta_{2}/3} \}}.
    \end{equation}
    From the expression above, if $\mathfrak{Hol}\big(g,\frac12(1-\delta_{2}), 100B\big) \le \epsilon^{- \delta_{2}/3}$, then on the event $F_\epsilon\cap H_\epsilon$ we have the bound
    \begin{align*}
        f(X_\epsilon) + g(X_\epsilon) &> f(X_\epsilon) -\epsilon^{-1}(X_\epsilon - Y_\epsilon)^2+g(X_\epsilon) \\&\ge f(X_\epsilon) -\epsilon^{-1}(X_\epsilon - Y_\epsilon)^2+g(Y_\epsilon) - |g(Y_\epsilon)-g(X_\epsilon)| \\&\geq f(X_\epsilon) -\epsilon^{-1}(X_\epsilon - Y_\epsilon)^2+g(Y_\epsilon) - \epsilon^{-\delta_{2}/3} |X_\epsilon-Y_\epsilon|^{\frac12-\delta_{2}} \\&\geq f(z_{\max}^{f+g})+g(z_{\max}^{f+g}) -2\mathfrak C\epsilon^{1/3} -C\epsilon^{\frac13-\frac23 \delta_{1}} B^{\delta_{1}} - \epsilon^{\frac13 -\delta_{2} - \frac12\delta_{1} +\delta_{1}\delta_{2}}.
    \end{align*}
    With $\delta$ as in the proposition statement, we make a specific choice of $\delta_{1}$, for instance, $\delta_{1}=\frac{1}{2}\delta$, so that $\frac23\delta_{1}<\frac12\delta$ and $\delta_{2}+\frac12\delta_{1} -\delta_{1}\delta_{2}<\frac12\delta.$ In order to have $X_\epsilon \notin \mathscr S(f+g,\epsilon^{\frac13- \delta})$ for $\epsilon<3^{-2/\delta}$, the previous expression implies that either $\mathfrak C> \tfrac16\epsilon^{-\delta}$ or $B>\tfrac16\epsilon^{-\frac{2}{3}} C^{-\frac{1}{\delta_1}}. $ Therefore, $$\mathbb{P}(X_\epsilon \notin \mathscr S(f+g,\epsilon^{\frac13- \delta}) \leq \mathbb{P}\big(\mathfrak C> \tfrac16\epsilon^{-\delta}\big) + \mathbf 1_{\{B>\tfrac16\epsilon^{-\frac23} C^{-\frac1{\delta_1}}\}} + \mathbb{P}(H_\epsilon^c) + \mathbb{P}(F_\epsilon^c). $$ To finish the proof, we apply the tail bounds on $\mathfrak C$ and the upper bounds on the latter two terms, all of which have already been discussed in \eqref{e:fepsilon} and \eqref{e:hepsilon}.
    \end{proof}

The following propositions will be one of the more important inputs to ultimately proving that the directed landscape is a black noise. It may also be of some independent interest, as it gives fairly sharp temporal regularity estimate on the landscape started from deterministic initial data. 
\begin{prop}\label{varbound} Let $f,g: \mathbb{R}\to \mathbb{R}$ be deterministic functions satisfying $-2x^2 -B \leq f(x),g(x) \leq -\frac12x^2+B$ for some $B\ge 1.$ Then for any $\delta>0$ and $p>1$,
\begin{multline*}\mathbb{E}[ |f\circ \mathcal L_{0,\epsilon} \circ g - \max(f+g)|^p]^{1/p} \\  \leq CB^{2 \delta}\epsilon^{\frac13-\delta} +CB\bigg(Ce^{-d  \epsilon^{-9\delta/10}} +\mathbf 1_{\{ B\geq d\epsilon^{-1/3} \}} +\mathbf 1_{\big\{\mathfrak{Hol}\big(g,\;\frac12(1-\frac3{40} \delta),\;\; 100B\big) >\epsilon^{-\delta/40}\big\}}\bigg).
\end{multline*} Here $C,d$ may depend on $p$ but are uniform over all $f,g,B,$ and $\epsilon\in(0,1]$.
\end{prop}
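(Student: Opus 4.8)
The plan is to compare $V:=f\circ\mathcal L_{0,\epsilon}\circ g$ two-sidedly with $V_{\max}:=\max(f+g)=(f+g)(z_{\max})$, using the argmax estimates of the previous subsections together with the local behaviour of the Airy sheet from \cref{l:airyabsbound}. Write $(X_\epsilon,Y_\epsilon)$ for the maximizer of $(x,y)\mapsto f(x)+\mathcal L_{0,\epsilon}(x,y)+g(y)$ and $z_{\max}$ for a maximizer of $f+g$; the parabolic bounds force $|z_{\max}|\le 2\sqrt B$. (We may assume $z_{\max}$ is unique, e.g.\ after an arbitrarily small bounded smooth perturbation of $f$, which changes neither $\mathfrak{Hol}(g,\cdot,\cdot)$ nor, in the limit, $B$ or $V_{\max}$, while $V$ depends continuously on $f$ in sup norm; similarly we may assume $\delta<\tfrac12$, the claim being trivial otherwise given the a priori bound below, and $\epsilon$ below a $\delta$-dependent deterministic threshold, the range $\epsilon\in[\epsilon_0,1]$ being absorbed into that a priori bound by enlarging $C$.) As before I would rescale $\mathcal L_{0,\epsilon}(x,y)=\epsilon^{1/3}\mathcal S(\epsilon^{-2/3}x,\epsilon^{-2/3}y)=:\mathcal S^\epsilon(x,y)$ and use \cref{l:airyabsbound} in the form $|\mathcal S^\epsilon(x,y)+\epsilon^{-1}(x-y)^2|\le\mathfrak C\epsilon^{1/3}+c\epsilon^{1/3}\log^{2/3}(2+\epsilon^{-2/3}(|x|+|y|))$ with $\mathbb P(\mathfrak C>u)\le Ce^{-du^{3/2}}$, so that every moment of $\mathfrak C$ is finite.

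The two-sided comparison is the engine. Evaluating at the diagonal point $(z_{\max},z_{\max})$ and bounding $\log^{2/3}(2+u)\le C_v u^{v}$ for $u\ge1$ with $v:=\tfrac32\delta$ gives $V\ge V_{\max}+\mathcal S^\epsilon(z_{\max},z_{\max})\ge V_{\max}-\mathfrak C\epsilon^{1/3}-CB^{2\delta}\epsilon^{1/3-\delta}$ (using $B\ge1$). Evaluating at $(X_\epsilon,Y_\epsilon)$, writing $f(X_\epsilon)+g(Y_\epsilon)=(f+g)(X_\epsilon)+(g(Y_\epsilon)-g(X_\epsilon))\le V_{\max}+|g(X_\epsilon)-g(Y_\epsilon)|$, and bounding $\mathcal S^\epsilon(X_\epsilon,Y_\epsilon)\le\mathfrak C\epsilon^{1/3}+c\epsilon^{1/3}\log^{2/3}(2+\epsilon^{-2/3}(|X_\epsilon|+|Y_\epsilon|))$ gives $V\le V_{\max}+|g(X_\epsilon)-g(Y_\epsilon)|+\mathfrak C\epsilon^{1/3}+c\epsilon^{1/3}\log^{2/3}(2+\epsilon^{-2/3}(|X_\epsilon|+|Y_\epsilon|))$. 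The same two inequalities, after completing the square in $x,y$ against the parabolic decay to absorb the logarithm, also yield the crude a priori estimate $-\mathfrak C\epsilon^{1/3}-CB\le V-V_{\max}\le CB+\mathfrak C\epsilon^{1/3}+C$, hence $\mathbb E[|V-V_{\max}|^q]^{1/q}\le C_qB$ for every $q\ge1$; this controls all ``bad-event'' contributions.

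Next I would introduce the good event. Put $\alpha_0:=\tfrac12(1-\tfrac3{40}\delta)$ and $M:=1+100B+\log^{2/5}(1/\epsilon)$, and let $G_\epsilon:=H_\epsilon\cap F_\epsilon$ with $H_\epsilon:=\{|X_\epsilon|+|Y_\epsilon|\le M\}$ and $F_\epsilon:=\{|X_\epsilon-Y_\epsilon|\le\epsilon^{2/3-\delta/4}\}$. By \cref{apriori}, $\mathbb P(H_\epsilon^c)\le Ce^{-d\log^{6/5}(1/\epsilon)}$; and \cref{opt} applied with $\delta/4$ in place of $\delta$ (legitimate as $\delta/4\le\tfrac13$; note $\tfrac12(1-\tfrac3{10}\cdot\tfrac\delta4)=\alpha_0$ and $\epsilon^{-(\delta/4)/10}=\epsilon^{-\delta/40}$) gives $\mathbb P(F_\epsilon^c)\le Ce^{-d\log^{6/5}(1/\epsilon)}+1_{\{B\ge d\epsilon^{-1/3}\}}+1_{\{\mathfrak{Hol}(g,\alpha_0,M)>\epsilon^{-\delta/40}\}}$. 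If either of these last two indicators equals $1$, the right-hand side of the claimed inequality is at least $CB$ and we conclude from $\mathbb E[|V-V_{\max}|^p]^{1/p}\le C_pB$; so assume both vanish, whence $\mathbb P(G_\epsilon^c)\le Ce^{-d\log^{6/5}(1/\epsilon)}$. On $G_\epsilon$: since $B<d\epsilon^{-1/3}$ we have $M\le C\epsilon^{-1/3}$, so $\epsilon^{1/3}\log^{2/3}(2+\epsilon^{-2/3}(|X_\epsilon|+|Y_\epsilon|))\le C\epsilon^{1/3}\log^{2/3}(1/\epsilon)\le C_\delta\epsilon^{1/3-\delta}$, and Hölder continuity of $g$ on $[-M,M]$ gives $|g(X_\epsilon)-g(Y_\epsilon)|\le\mathfrak{Hol}(g,\alpha_0,M)\,|X_\epsilon-Y_\epsilon|^{\alpha_0}\le\epsilon^{-\delta/40}\epsilon^{(2/3-\delta/4)\alpha_0}\le\epsilon^{1/3-\delta}$, the last step being the elementary exponent inequality $(2/3-\delta/4)\alpha_0-\delta/40\ge\tfrac13-\delta$, valid for all $\delta>0$. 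Plugging these into the two-sided comparison, $|V-V_{\max}|\,1_{G_\epsilon}\le CB^{2\delta}\epsilon^{1/3-\delta}+\mathfrak C\epsilon^{1/3}$. Splitting $\mathbb E[|V-V_{\max}|^p]^{1/p}$ over $G_\epsilon$ and $G_\epsilon^c$: on $G_\epsilon$, Minkowski and finiteness of $\mathbb E[\mathfrak C^p]$ bound the contribution by $CB^{2\delta}\epsilon^{1/3-\delta}+C_p\epsilon^{1/3}\le C_pB^{2\delta}\epsilon^{1/3-\delta}$; on $G_\epsilon^c$, Cauchy–Schwarz together with the a priori bound $\mathbb E[|V-V_{\max}|^{2p}]^{1/(2p)}\le C_pB$ bounds the contribution by $C_pB\,\mathbb P(G_\epsilon^c)^{1/(2p)}\le C_pB\,e^{-d'\log^{6/5}(1/\epsilon)}$. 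Adding these is exactly the asserted estimate (with the two statement-indicators present in the remaining case, and $d$ in $1_{\{B\ge d\epsilon^{-1/3}\}}$ taken as the minimum of the relevant constants).

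I expect the only genuinely delicate point to be the exponent bookkeeping, not any new difficulty: one must choose the parameter $\delta/4$ fed to \cref{opt} (and $v=\tfrac32\delta$ in the logarithm bound) so that, after the Hölder estimate and the parabolic/logarithmic estimates, every error term lands precisely on $\epsilon^{1/3-\delta}$ with prefactor $B^{2\delta}$, and one must organize the good/bad-event split so that the indicator terms thrown off by \cref{opt} coincide with those in the statement. The substantive input is \cref{opt} itself: the near-optimal separation $|X_\epsilon-Y_\epsilon|\lesssim\epsilon^{2/3-\delta/4}$ is exactly what forces the Hölder modulus of $g$ into the final bound, and the weaker separation $|X_\epsilon-Y_\epsilon|\lesssim\epsilon^{1/2-\delta}$ of \cref{4.4} would here produce only $\epsilon^{1/4-\delta}$, which is not enough.
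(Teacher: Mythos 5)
Your proof is correct and, at the level of inputs, runs parallel to the paper's: rescale by scale invariance, apply \cref{l:airyabsbound}, localize $(X_\epsilon,Y_\epsilon)$ via \cref{apriori}, control $|X_\epsilon-Y_\epsilon|$ via \cref{opt} (your choice of parameter $\delta/4$ does reproduce exactly the H\"older indicator $\mathfrak{Hol}\big(g,\tfrac12(1-\tfrac3{40}\delta),1+100B+\log^{2/5}(1/\epsilon)\big)>\epsilon^{-\delta/40}$ appearing in the statement), and dispose of the bad event by Cauchy--Schwarz against an a priori $L^{2p}$ bound of order $B$, which you justify the same way the paper does around \eqref{akn}. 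The genuine difference is the decomposition on the good event: the paper bounds $|f(X_\epsilon)+\mathcal S^\epsilon(X_\epsilon,Y_\epsilon)+g(Y_\epsilon)-\max(f+g)|$ by a three-term triangle inequality whose first term $|f(X_\epsilon)+g(X_\epsilon)-\max(f+g)|$ is controlled by the event $X_\epsilon\in\mathscr S(f+g,\epsilon^{\frac13-\delta})$, i.e.\ by \cref{opt2}; you instead use a two-sided comparison (upper bound from the trivial inequality $f(X_\epsilon)+g(X_\epsilon)\le\max(f+g)$ together with $\mathcal S^\epsilon(x,y)\le-\epsilon^{-1}(x-y)^2+\mathfrak C\epsilon^{1/3}+\dots$, lower bound from evaluating at $(z_{\max},z_{\max})$), which dispenses with \cref{opt2} entirely and shortens the bookkeeping, whereas the paper's route packages the same information into the event $J_\epsilon$ that is then reused verbatim in \cref{tbound1}. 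One small caveat: in your reduction to a unique $z_{\max}$, perturbing $f$ by a bump forces $B\to B+\eta$, and since $\mathfrak{Hol}(g,\alpha,M)$ is monotone but not right-continuous in $M$, the indicator over the interval $1+100(B+\eta)+\log^{2/5}(1/\epsilon)$ need not converge to the stated one as $\eta\to0$; this is cosmetic rather than a gap (the proofs of \cref{apriori} and \cref{opt} never actually use uniqueness, and the paper's own proof ignores the issue), but as written that limiting step should either invoke this observation or accept a marginally enlarged interval in the indicator.
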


\begin{proof} As before, by scale invariance of the directed landscape, we use the substitution, $\mathcal L_{0,\epsilon}(x,y)$ by $\epsilon^{1/3} \mathcal L_{0,1} (\epsilon^{-2/3} x,\epsilon^{-2/3}y) = \epsilon^{1/3} \mathcal S (\epsilon^{-2/3} x,\epsilon^{-2/3}y)$. Applying \cref{l:airyabsbound}, we have 
\begin{equation}\label{ref}|\epsilon^{1/3} \mathcal S (\epsilon^{-2/3} x,\epsilon^{-2/3}y)+ \epsilon^{-1} (x-y)^2|\leq  \mathfrak C \epsilon^{1/3}+ c \epsilon^{1/3} \log^{2/3}(2+\epsilon^{-2/3}(|x|+|y|)), \end{equation} where $c$ is deterministic and $\mathfrak C$ is a non-negative random variable satisfying $\mathbb{P}( \mathfrak C>u) \leq Ce^{-du^{3/2}}$ for $u \ge 0$. Let $X_\epsilon,Y_\epsilon,z_{\max}$ be as in \cref{opt} and \cref{opt2}. We note that  
    \begin{align*}|f(X_\epsilon) &+\epsilon^{1/3} \mathcal S (\epsilon^{-2/3} x,\epsilon^{-2/3}y)+g(Y_\epsilon) - (f(z_{\max}^{f+g})+g(z_{\max}^{f+g}))| \\&\le |f(X_\epsilon) + g(X_\epsilon) - (f(z_{\max}^{f+g})+g(z_{\max}^{f+g}))| + |\epsilon^{1/3} \mathcal S (\epsilon^{-2/3} X_\epsilon,\epsilon^{-2/3}Y_\epsilon)+ \epsilon^{-1} (X_\epsilon-Y_\epsilon)^2| \\ &\;\;\;\;\;+ |g(Y_\epsilon)-g(X_\epsilon)| +\epsilon^{-1} (X_\epsilon-Y_\epsilon)^2.
    \end{align*} We can now define the event $$J_\epsilon:= \{ X_\epsilon \in \mathscr S(f+g,\epsilon^{\frac13- \delta})\} \cap\{|X_\epsilon-Y_\epsilon| \leq \epsilon^{\frac23-\delta}\}\cap \{|X_\epsilon|+|Y_\epsilon| \le 100B\}.$$ 
    On $J_\epsilon$, under the assumption that $\mathfrak{Hol}(g,\frac12-\delta, 100B)<{\epsilon^{-\delta/3}},$ the preceding inequality has an upper bound (term-by-term, for each of the four terms) by
    \begin{multline*}\epsilon^{\frac13-\delta} +\big( \mathfrak C \epsilon^{1/3}+ c \epsilon^{1/3} \log^{2/3}(2+\epsilon^{-2/3}(|X_\epsilon|+|Y_\epsilon|))\big) + \epsilon^{-\delta/3}|X_\epsilon-Y_\epsilon|^{\frac12-\delta} +\epsilon^{-1} (X_\epsilon-Y_\epsilon)^2 \\ \leq \epsilon^{\frac13-\delta} +(\mathfrak C\epsilon^{1/3} +\epsilon^{\frac13-\frac23\delta}B^\delta) + \epsilon^{(\frac23-\delta)(\frac12-\delta)-\frac13\delta} + \epsilon^{\frac13-2\delta}.
    \end{multline*}
    For the second term we used the following argument: since $c$ is deterministic, for sufficiently small deterministic $\epsilon>0$ which does not depend on the choice of $B\ge 1$, we have an elementary bound $c\epsilon^{1/3} \log^{2/3} (2+2\epsilon^{-2/3}B) \leq C\epsilon^{\frac13 -\frac23\delta }B^\delta.$ This bound follows immediately from the fact that $\log^{2/3}(2+u) \leq Cu^{\delta}$ for all $u\ge 1$, where $C=C(\delta)$ does not depend on $u.$ Note that all powers of $\epsilon$ in the last expression are at least $\frac13-2\delta$. Since $\mathfrak C$ is the only random term in the last expression, these bounds imply that 
    $$\mathbb{E}[ |f\circ \mathcal L_{0,\epsilon} \circ g - \max(f+g)|^p\cdot \mathbf 1_{J_\epsilon}]^{1/p} \leq CB^{\delta}\epsilon^{\frac13-2\delta}.$$ 
    Replacing $\delta$ by $2\delta$ we obtain a bound of the same form as in the proposition statement. It remains to bound what occurs outside $J_\epsilon.$ Using \cref{apriori} and \cref{opt} and \cref{opt2}, and the elementary inequality $(a+b)^{1/2}\leq a^{1/2}+b^{1/2}$, we obtain \begin{equation}\label{e:je} \mathbb{P}(J_\epsilon^c)^{1/2} \leq Ce^{-d  \epsilon^{-1/2}} +\mathbf 1_{\{ B\geq d\epsilon^{-1/3} \}} +\mathbf 1_{\big\{\mathfrak{Hol}\big(g,\;\frac12(1-\frac3{20}\delta),\;\; 100B\big) >\epsilon^{-\delta/20}\big\}}.\end{equation} By the Cauchy-Schwarz and Minkowski inequalities, we have 
    \begin{equation}\label{e:lastone}\mathbb{E}[ |f\circ \mathcal L_{0,\epsilon} \circ g - \max(f+g)|^p\cdot \mathbf 1_{J_\epsilon^c}]^{1/p} \leq \bigg( \mathbb{E}[|f\circ \mathcal L_{0,\epsilon} \circ g|^{2p}]^{1/2p} + \max(f+g)\bigg) \cdot \mathbb{P}(J_\epsilon^c)^{1/2}. 
    \end{equation} We note that $\max(f+g)\leq 2B.$ We will also show that 
    \begin{equation}\label{flg}\mathbb{E}[|f\circ \mathcal L_{0,\epsilon} \circ g|^{2p}]^{1/2p} \leq CB
    \end{equation}
    for some absolute constant $C>0$ not depending on $B,\epsilon,f,g$. As before, we write $\mathcal L_{0,\epsilon}$ as $\mathcal S^\epsilon(x,y):=\epsilon^{1/3} \mathcal S (\epsilon^{-2/3} x,\epsilon^{-2/3}y).$ We apply \eqref{ref} to obtain \begin{align*}f(x)+\mathcal S^\epsilon(x,y)+g(y) &\leq 2B-\frac12(x^2+y^2) +\mathfrak C \epsilon^{1/3}+ c \epsilon^{1/3} \log^{2/3}(2+\epsilon^{-2/3}(|x|+|y|)) \\&\leq 2B - \frac12(x^2+y^2) +\mathfrak C \epsilon^{1/3} +2+|x|+|y|,
    \end{align*}
    where the last bound is valid for sufficiently small $\epsilon>0$ below some deterministic threshold. Let $m$ denote the maximum value achieved by the function $(x,y)\mapsto 2+|x|+|y|- \frac12(x^2+y^2)$ over all of $\mathbb{R}^{2}$. If the final expression were to exceed $2B+m+u$, it would be necessary for the constant $\mathfrak C$ to exceed $u$. Therefore,
    \begin{equation}\label{akn}\mathbb{P}( f\circ \mathcal L_{0,\epsilon}\circ g> 2B+m+u) \leq Ce^{-du^{3/2}},\end{equation} which implies that  $\mathbb{E}[(f\circ \mathcal L_{0,\epsilon} \circ g)_+^{2p}]^{1/2p} $ (where $u_+:=\max\{u,0\}$) is bounded above by $B$ multiplied by an absolute constant. To obtain a bound on the negative part, note that the event $f\circ \mathcal{L}_{0,\epsilon}\circ g < -2B-u$ would require a large value for $\mathfrak{C}$
    , by considering what happens at $(z_{\max},z_{\max})$ for instance, and then using very similar arguments. This yields an analogous bound on $\mathbb{E}[(f\circ \mathcal L_{0,\epsilon} \circ g)_-^{2p}]^{1/2p} $ (where $u_- = \max\{0,-u\}$). 
    
    To finish the proof of the proposition, we plug this upper bound \eqref{flg}, and the one on $\mathbb{P}(J_{\epsilon}^{c})^{1/2}$ from \eqref{e:je} into the expression \eqref{e:lastone}. 
\end{proof}

We now come to the main result of this section, which we obtain by applying the mixing results of \cref{s:spatialmixing} with the estimates proved in this section. This covariance bound is crucial to the proof of \cref{t:1}. 

\begin{thm}[The key estimate]\label{tbound1}
    Fix $\delta\in [0,1/3].$ Consider functions $f,g,u,v$ each lying in between $-B-2x^2$ and $B-\frac12x^2$. Recall $\Gamma$ and $N_\epsilon$ as defined in \eqref{gamm}. Then there exists $C=C(\delta)>0$ \begin{align*}|\Cov(f\circ \mathcal L_{0,\epsilon} \circ g, u \circ \mathcal L_{0,\epsilon}\circ v)|  & \leq CB^{2\delta}\min\big\{ \epsilon^{\frac23 - \delta }, \big(N_{\epsilon^{\frac13-\delta}}(f+g)+N_{\epsilon^{\frac13-\delta}}(u+v)\big)e^{-d \epsilon^{-2}\Gamma\big(f+g,u+v;\epsilon^{\frac13-\delta}\big)^3}\big\} \\& \;\;\;\;\;+CB^2 \cdot( \mathscr E_\epsilon(f;B)+\mathscr E_\epsilon(g;B)+\mathscr E_\epsilon(u;B)+\mathscr E_\epsilon(v;B)). 
    \end{align*}
    Here $C,d$ are universal constants uniform over all $f,g,u,v$, $\epsilon,B$ and the ``error term" is given by $$\mathscr E_\epsilon(f,B):= e^{-d  \epsilon^{-9\delta/10}} +\mathbf 1_{\{ B\geq d\epsilon^{-1/3} \}} + \mathbf 1_{\big\{\mathfrak{Hol}\big(f,\;\frac12(1-\frac3{40} \delta),\;\; 100B\big)>\epsilon^{-\delta/40}\big\}}. $$
\end{thm}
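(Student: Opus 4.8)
The plan is to establish the two alternatives inside the $\min$ separately, after disposing of the degenerate regimes. Whenever one of the four indicators defining $\mathfrak E_\epsilon(f;B),\dots,\mathfrak E_\epsilon(v;B)$ equals $1$ (i.e.\ $B\ge d\epsilon^{-1/3}$ or one of the Hölder conditions fails), or when $\epsilon$ exceeds a fixed threshold, the summand $CB^2\big(\mathfrak E_\epsilon(f;B)+\cdots+\mathfrak E_\epsilon(v;B)\big)$ already dominates the crude bound $|\Cov|\le \|f\circ\mathcal L_{0,\epsilon}\circ g\|_{L^2}\,\|u\circ\mathcal L_{0,\epsilon}\circ v\|_{L^2}\le CB^2$, the latter coming from the tail bound \eqref{akn}. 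So I may assume all four Hölder conditions hold, $B\le d\epsilon^{-1/3}$, and $\epsilon$ is small; in that regime each $\mathfrak E_\epsilon(\cdot;B)$ is at most $Ce^{-d\log^{6/5}(1/\epsilon)}$, which beats every power of $\epsilon$ even after multiplication by $B\le d\epsilon^{-1/3}$, and the Hölder requirements of \cref{opt}, \cref{opt2}, \cref{varbound} (applied with suitable multiples of $\delta$) are all implied by those encoded in $\mathfrak E_\epsilon$, since lowering a Hölder exponent on the window $[-M,M]$, $M=1+100B+\log^{2/5}(1/\epsilon)$, costs only a factor polynomial in $M$, which $\epsilon^{-\delta/40}$ absorbs. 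I use this silently. For the first alternative, \cref{varbound} with $p=2$ gives $\|f\circ\mathcal L_{0,\epsilon}\circ g-\max(f+g)\|_{L^2}\le CB^{2\delta}\epsilon^{1/3-\delta}$ and likewise for $u\circ\mathcal L_{0,\epsilon}\circ v$; since covariance is unchanged by subtracting constants and the mean is $L^2$-optimal, Cauchy--Schwarz yields $|\Cov(f\circ\mathcal L_{0,\epsilon}\circ g,\,u\circ\mathcal L_{0,\epsilon}\circ v)|\le CB^{2\delta}\epsilon^{2/3-\delta}$.

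For the second (mixing) alternative, write $\rho:=\epsilon^{1/3-\delta}$ and let $(X_\epsilon,Y_\epsilon)$ be the argmax defining $f\circ\mathcal L_{0,\epsilon}\circ g$. By \cref{opt2} with exponent $\tfrac13-\delta$ (so $\mathscr S(f+g;\epsilon^{1/3-\delta})=\mathscr S(f+g;\rho)$) and \cref{opt}, there is an event $A$ with $\mathbb P(A^c)\le C(\mathfrak E_\epsilon(f;B)+\mathfrak E_\epsilon(g;B))=:p_1$ on which $X_\epsilon,Y_\epsilon\in\mathscr S(f+g;\rho)$ and $|X_\epsilon-Y_\epsilon|\le\epsilon^{2/3-\delta}$. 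The sup/inf bounds force $\mathscr S(f+g;\rho)\subset[-C\sqrt B,C\sqrt B]$, so I cover it by finitely many intervals, thicken each to contain all points within $\epsilon^{2/3-\delta}$ of it, and merge those that overlap, obtaining compact pairwise-disjoint blocks $\hat I_1,\dots,\hat I_{N'}$ with consecutive blocks more than $\epsilon^{2/3-\delta}$ apart and $\bigcup_r\hat I_r$ inside a small neighbourhood of $\mathscr S(f+g;\rho)$; the covering scale is chosen adaptively — length $\rho^2$ when $\Gamma(f+g,u+v;\rho)\gtrsim\rho^2$ and finer (length $\sim\Gamma$) otherwise — so that $N'\le N_\rho(f+g)$ times a factor polynomial in $1/\epsilon$, while the blocks of $f+g$ stay separated from those built the same way for $u+v$ by $D\ge c\,\Gamma(f+g,u+v;\rho)$. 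Set $\tilde F:=\max_r\sup_{x,y\in\hat I_r}\big(f(x)+\mathcal L_{0,\epsilon}(x,y)+g(y)\big)$ and $\tilde G$ the analogue for $u,v$; these are measurable for the diagonal-block $\sigma$-fields of \cref{mixing} with $T=\epsilon$, satisfy $\tilde F\le f\circ\mathcal L_{0,\epsilon}\circ g$, $\tilde G\le u\circ\mathcal L_{0,\epsilon}\circ v$, and on $A$ the pair $X_\epsilon,Y_\epsilon$ lies in a common block, forcing $\tilde F=f\circ\mathcal L_{0,\epsilon}\circ g$ there; similarly $\tilde G=u\circ\mathcal L_{0,\epsilon}\circ v$ off an event of probability $\le p_2:=C(\mathfrak E_\epsilon(u;B)+\mathfrak E_\epsilon(v;B))$.

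Now split $\Cov(f\circ\mathcal L_{0,\epsilon}\circ g,u\circ\mathcal L_{0,\epsilon}\circ v)=\Cov(\tilde F,\tilde G)+\big(\Cov(f\circ\mathcal L_{0,\epsilon}\circ g,u\circ\mathcal L_{0,\epsilon}\circ v)-\Cov(\tilde F,\tilde G)\big)$. For the first summand, center by $\max(f+g)$ and $\max(u+v)$ and apply \cref{mixing} with $T=\epsilon$: one has $\|\tilde F-\max(f+g)\|_{L^4}\le CB^{2\delta}\epsilon^{1/3-\delta}$ (split over $A$: the on-$A$ part is handled by \cref{varbound} with $p=4$, the off-$A$ part has $L^8$-norm $\lesssim B$ via $\tilde F\ge f(z_{\max})+\mathcal L_{0,\epsilon}(z_{\max},z_{\max})+g(z_{\max})$, \cref{l:airyabsbound} and \eqref{akn}, times the negligible $p_1^{1/8}$), and likewise for $\tilde G$; hence $|\Cov(\tilde F,\tilde G)|\le CB^{2\delta}\big(N_\rho(f+g)+N_\rho(u+v)\big)e^{-c\epsilon^{-2}\Gamma(f+g,u+v;\rho)^3}$, the subpolynomial factor and the factor $B^{2\delta}\epsilon^{2/3-2\delta}\le1$ being absorbed into the exponential and into $B^{2\delta}$. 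In the residual regime where $\Gamma$ is so small that this decoupling is unavailable, $e^{-c\epsilon^{-2}\Gamma^3}$ is bounded below by a positive constant, so it exceeds $\epsilon^{2/3-\delta}$ for small $\epsilon$ and the $\min$ is realized by the first alternative, leaving nothing to prove. For the difference summand, $f\circ\mathcal L_{0,\epsilon}\circ g\cdot u\circ\mathcal L_{0,\epsilon}\circ v=\tilde F\tilde G$ off an event of probability $\le p_1+p_2$, and all the products have $L^2$-norm $\lesssim B^2$, so this summand is $\le CB^2(p_1^{1/2}+p_2^{1/2})\le CB^2\big(\mathfrak E_\epsilon(f;B)+\mathfrak E_\epsilon(g;B)+\mathfrak E_\epsilon(u;B)+\mathfrak E_\epsilon(v;B)\big)$, and crucially it does not pass through \cref{mixing}, so no factor $N$ appears. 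Adding the two summands gives the second alternative, and taking the minimum with the first proves the theorem.

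The main obstacle is this geometric block construction together with the case analysis on the size of $\Gamma$: one must arrange simultaneously that (a) the near-argmax coordinates $X_\epsilon,Y_\epsilon$ lie in a common block — so that $\tilde F$ agrees with $f\circ\mathcal L_{0,\epsilon}\circ g$ off an event governed only by $\mathfrak E_\epsilon$, with no factor $N$; (b) the number of blocks is at most $N_\rho$ up to a subpolynomial factor; and (c) the blocks of $f\circ\mathcal L_{0,\epsilon}\circ g$ stay separated from those of $u\circ\mathcal L_{0,\epsilon}\circ v$ by a constant multiple of $\Gamma$, so that \cref{mixing} on $[0,\epsilon]$ produces exactly the rate $e^{-c\epsilon^{-2}\Gamma^3}$. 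This localization of the set of near-optimal spatial positions of the height profile is precisely the point the paper flags as the heart of the matter. The remaining ingredient is routine bookkeeping — checking that the single Hölder condition in $\mathfrak E_\epsilon$ dominates the (slightly different) requirements of \cref{opt}, \cref{opt2}, \cref{varbound}, and keeping the powers of $\epsilon$ and $B$ straight — all of which the smallness of $\epsilon$ absorbs.
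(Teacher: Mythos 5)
Your overall strategy is the same as the paper's: the first branch of the minimum is exactly the paper's Cauchy--Schwarz argument powered by \cref{varbound}, and the second branch is the paper's localization-plus-mixing argument (use \cref{apriori}, \cref{opt}, \cref{opt2} to place the argmax pair inside a covered neighborhood of $\mathscr S(\cdot;\epsilon^{\frac13-\delta})$, then invoke \cref{mixing} with $T=\epsilon$, with the complementary event and the Hölder/size conditions absorbed into the $\mathfrak E_\epsilon$ terms via Cauchy--Schwarz). Your device of working with the block-restricted maxima $\tilde F,\tilde G$, which really are measurable with respect to the $\sigma$-fields of \cref{mixing}, is a cleaner rendering of what the paper does by multiplying by $1_{J_\epsilon^{f,g,u,v}}$ (that indicator is not itself $\mathcal F_I$-measurable), and your bookkeeping that the single Hölder indicator in $\mathfrak E_\epsilon$ implies the variants needed in \cref{opt}, \cref{opt2}, \cref{varbound} at the cost of a power of $M\lesssim\epsilon^{-1/3}$ is correct.

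The step that does not hold as written is your dismissal of the small-$\Gamma$ regime. Since $|X_\epsilon-Y_\epsilon|$ is only controlled at scale $\epsilon^{\frac23-\delta}$ and your covering intervals have length $\epsilon^{\frac23-2\delta}$, the separation $D\geq c\,\Gamma$ required to feed \cref{mixing} is available only once $\Gamma\gtrsim\epsilon^{\frac23-2\delta}$; but $e^{-c\epsilon^{-2}\Gamma^3}$ is bounded below by a constant only when $\Gamma\lesssim\epsilon^{2/3}$, and the first alternative dominates the second only when $\Gamma\lesssim\epsilon^{2/3}\log^{1/3}(1/\epsilon)$. In the intermediate window $\epsilon^{2/3}\log^{1/3}(1/\epsilon)\lesssim\Gamma\lesssim\epsilon^{\frac23-2\delta}$ the exponential branch is super-polynomially small (as small as $e^{-c\epsilon^{-6\delta}}$), so ``the min is realized by the first alternative, leaving nothing to prove'' is false there, and your adaptive ``length $\sim\Gamma$'' covering conflicts in that window with your own requirement (a) that $X_\epsilon,Y_\epsilon$ share a block. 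To be fair, the paper's proof is equally terse at this point: it simply asserts that the $D$ of \cref{mixing} ``corresponds to'' $\Gamma$, ignoring both the width of the covering intervals and the same-block issue; and in the paper's downstream use only the regime $\Gamma\geq\epsilon^{\frac23-\delta}$ of the exponential branch is exploited, which can be repaired along your lines by covering at scale $\min\{\epsilon^{\frac23-2\delta},c\Gamma\}$ (the count grows only by $\epsilon^{-O(\delta)}$, absorbable into the exponential) and running \cref{opt} at a slightly better exponent, say $\epsilon^{\frac23-\delta/2}$, so the argmax pair fits in one small block. Handling the full window down to $\Gamma\asymp\epsilon^{2/3}$ would additionally require a tail bound on $|X_\epsilon-Y_\epsilon|$ at scale $\Gamma$ itself, which neither you nor the paper provides.
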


Note that a naive (1/3)-H\"older regularity estimate of the directed landscape would only give a bound of $\epsilon^{\frac23-\delta}$. \cref{tbound1} gives a better bound. The term with a factor of $e^{-d \epsilon^{-2}\Gamma\big(f+g,u+v;\epsilon^{\frac13-\delta}\big)^3}$ is fairly sharp. We get a term of this form as a consequence of the mixing property proved in \cref{s:spatialmixing}.

\begin{proof}By Cauchy-Schwartz, and the fact that $f,g,u,v$ are deterministic,
\begin{align*}
    |\Cov(f\circ \mathcal L_{0,\epsilon} \circ g, u \circ \mathcal L_{0,\epsilon}\circ v)|&= \big|\Cov\big(f\circ \mathcal L_{0,\epsilon} \circ g - \max(f+g), u \circ \mathcal L_{0,\epsilon}\circ v - \max(u+v)\big)\big| \\&\leq \mathbb{E}[(f\circ \mathcal L_{0,\epsilon} \circ g - \max(f+g))^2 ]^{1/2} \mathbb{E}[(u\circ \mathcal L_{0,\epsilon} \circ v - \max(u+v))^2 ]^{1/2} .
\end{align*}
    Using \cref{varbound}, the last expression is bounded above by $CB^{2 \delta}\epsilon^{\frac23-\delta} +CBe^{-d\epsilon^{-1/2}}$ whenever (1) $B\leq d\epsilon^{-1/3}$, (2) $\mathfrak{Hol}\big(g,\;\frac12(1-\frac3{40} \delta),\;\; 100B\big)\le\epsilon^{-\delta/40}$, and (3) $\mathfrak{Hol}\big(v,\;\frac12(1-\frac3{40} \delta),\;\; 100B\big)\le\epsilon^{-\delta/40}.$
    
    We define the event $J_\epsilon^{f,g}:= \{ X_\epsilon^{f,g},Y_\epsilon^{f,g} \in \mathscr S(f+g,\epsilon^{\frac13- \delta})\} \cap\{|X_\epsilon^{f,g}-Y_\epsilon^{f,g}| \leq \epsilon^{\frac23-\delta}\}\cap \{|X_\epsilon^{f,g}|+|Y_\epsilon^{f,g}| \le 100B\}$, where $X_\epsilon^{f,g},Y_\epsilon^{f,g}$ are the argmaxes as defined in \cref{opt} and \cref{opt2} respectively. 
    We define the event $J^{u,v}_\epsilon$ in the same way, but with $f,g$ replaced by $u,v$. Finally, we define $J_\epsilon^{f,g,u,v}:= J^{f,g}_\epsilon\cap J^{u,v}_\epsilon$. 
    
    We condition on the event $J_\epsilon^{f,g,u,v}$, and this allows us to use our mixing property \cref{mixing} to obtain a bound of the form $\big(N_{\epsilon^{\frac13-\delta}}(f+g)+N_{\epsilon^{\frac13-\delta}}(u+v)\big)e^{-c \epsilon^{-2}\Gamma\big(f+g,u+v;\epsilon^{\frac13-\delta}\big)^3}.$ To elaborate on exactly how this works, we note that by setting $T:=\epsilon$ in \cref{mixing}, the value of $D$ in that theorem is exactly $\Gamma(f+g,u+v,\epsilon^{\frac13-\delta}).$ Furthermore, $N_{\epsilon^{\frac13-\delta}}(f+g)+N_{\epsilon^{\frac13-\delta}}(u+v)$ is an upper bound for the value of $N$ which appears in \cref{mixing}. 
    Consequently, we can apply \cref{mixing} to obtain the bound \begin{multline*}\bigg|\mathrm{Cov}\bigg(\big(f\circ \mathcal L_{0,\epsilon} \circ g\big)\cdot \mathbf 1_{J_\epsilon^{f,g,u,v}}  , \big(u \circ \mathcal L_{0,\epsilon}\circ v\big)\cdot \mathbf 1_{J_\epsilon^{f,g,u,v}}\bigg)\bigg|  \leq \big(N_{\epsilon^{\frac13-\delta}}(f+g)+N_{\epsilon^{\frac13-\delta}}(u+v)\big)e^{-c \epsilon^{-2}\Gamma\big(f+g,u+v;\epsilon^{\frac13-\delta}\big)^3},
    \end{multline*}
    because on $J_\epsilon^{f,g,u,v}$ the maximum on all of $\mathbb R^2$ in the definitions of $f\circ \mathcal L_{0,\epsilon} \circ g,u\circ \mathcal L_{0,\epsilon} \circ v$ can be replaced by a maximum on some set of the form which appears in \cref{mixing}. 
    
    Now we prove the bound in the proposition. By writing $1 = \mathbf 1_{J_\epsilon^{f,g,u,v}} + \mathbf 1_{(J_\epsilon^{f,g,u,v})^c}$ and applying Cauchy-Schwarz twice we have 
    \begin{align*}
        \bigg|\mathrm{Cov}&\big(f\circ \mathcal L_{0,\epsilon} \circ g , u \circ \mathcal L_{0,\epsilon}\circ v\big)\bigg| \leq \bigg|\mathrm{Cov}\bigg(\big(f\circ \mathcal L_{0,\epsilon} \circ g\big)\cdot \mathbf 1_{J_\epsilon^{f,g,u,v}}  , \big(u \circ \mathcal L_{0,\epsilon}\circ v\big)\cdot \mathbf 1_{J_\epsilon^{f,g,u,v}}\bigg)\bigg|  \\&\;\;\;\; + 2\mathbb{E}[(f\circ \mathcal L_{0,\epsilon} \circ g - \max(f+g))^4 ]^{1/4} \mathbb{E}[(u\circ \mathcal L_{0,\epsilon} \circ v - \max(u+v))^4 ]^{1/4} \cdot (1-\mathbb P(J_\epsilon^{f,g,u,v}))^{\frac{1}{2}}.
    \end{align*} Using arguments similar to \eqref{e:je}, it follows that $(1-\mathbb P(J_\epsilon^{f,g,u,v}))^{\frac{1}{2}}$ yields an upper bound consisting of terms of the form appearing in $\mathscr E_\epsilon$. The upper bounds for all other terms have already been discussed in \cref{varbound}.
\end{proof}

The next theorem will provide a bound on terms that appear while proving \eqref{e:bncondfinal} when $n=2.$

\begin{thm}\label{tbound2}
    Fix $\delta\in (0,1/3]$. Consider functions $f,g,u,v,\bar u, \bar v$, each of which lie between $-B-2x^2$ and $B-\frac12x^2$. Define $\Gamma(\epsilon):=\min\{\Gamma\big(f+g,u+v;\epsilon^{\frac13-\delta_1}\big),\Gamma\big(f+g,\bar u+\bar v;\epsilon^{\frac13-\delta_1}\big)\}.$ Then there exists $C=C(\delta)>0$ such that 
    \begin{align*}\bigg|\mathrm{Cov}\bigg(f\circ &\mathcal L_{0,\epsilon} \circ g\;\;,\;\; (u \circ \mathcal L_{0,\epsilon}\circ v-\max(u+v)) ((\bar u \circ \mathcal L_{0,\epsilon}\circ \bar v-\max(\bar u+\bar v))\bigg)\bigg| \\ & \leq  CB^{2\delta} \min\big\{ \epsilon ^{1 - \delta }, \big(N_{\epsilon^{\frac13-\delta}}(f+g)+N_{\epsilon^{\frac13-\delta}}(u+v)+N_{\epsilon^{\frac13-\delta}}(\bar u+\bar v)\big)e^{-c \epsilon^{-2}\Gamma(\epsilon)^3}\big\} \\& \;\;\;\;\;\;\;\;\;\;+CB^3 \cdot( \mathscr E_\epsilon(f;B)+\mathscr E_\epsilon(g;B)+\mathscr E_\epsilon(u;B)+\mathscr E_\epsilon(v;B) + \mathscr E_\epsilon(\bar u;B)+\mathscr E_\epsilon(\bar v;B)). 
    \end{align*} 
    Here $C,d$ are universal constants uniform over all $f,g,u,v$, $\epsilon,B$, and $\mathscr E_\epsilon$ is the error term as defined in \cref{tbound1}. 
\end{thm}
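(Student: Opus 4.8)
The plan is to establish two separate upper bounds on the covariance and then take their minimum, using the fact that both bounds leave behind the same additive family of error terms $CB^{3}\big(\mathfrak E_\epsilon(f;B)+\dots+\mathfrak E_\epsilon(\bar v;B)\big)$, which can therefore be pulled outside the minimum (if $\mathrm{Cov}\le A_1+E$ and $\mathrm{Cov}\le A_2+E$ with $E\ge 0$, then $\mathrm{Cov}\le\min(A_1,A_2)+E$).

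\emph{The $\epsilon^{1-\delta}$ bound.} Since covariances are unchanged by recentering, replace $f\circ\mathcal L_{0,\epsilon}\circ g$ by $f\circ\mathcal L_{0,\epsilon}\circ g-\max(f+g)$ and apply Cauchy--Schwarz once in the covariance, then a second time to split $\mathbb E\big[(u\circ\mathcal L_{0,\epsilon}\circ v-\max(u+v))^{2}(\bar u\circ\mathcal L_{0,\epsilon}\circ \bar v-\max(\bar u+\bar v))^{2}\big]$ into a product of two fourth-moment factors. This reduces matters to a product of three quantities of the form $\mathbb E\big[|h_1\circ\mathcal L_{0,\epsilon}\circ h_2-\max(h_1+h_2)|^{p}\big]^{1/p}$ with $p\in\{2,4\}$, each controlled by \cref{varbound} as $CB^{2\delta}\epsilon^{\frac13-\delta}+CB\,\mathfrak E_\epsilon$. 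Expanding the product, the leading term is $\big(CB^{2\delta}\epsilon^{\frac13-\delta}\big)^{3}$, which becomes $CB^{2\delta}\epsilon^{1-\delta}$ after renaming $\delta$; every other summand contains at least one indicator-type factor from some $\mathfrak E_\epsilon$ and at most three further factors bounded by $CB$, giving the $CB^{3}\,\mathfrak E_\epsilon$ errors. Only $\mathfrak E_\epsilon(g;B),\mathfrak E_\epsilon(v;B),\mathfrak E_\epsilon(\bar v;B)$ appear in this step, but these are dominated by the six-term sum in the statement.

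\emph{The $\Gamma(\epsilon)$ bound.} This mirrors the localization argument in the proof of \cref{tbound1}. Using \cref{apriori}, \cref{opt} and \cref{opt2}, introduce an event $J_\epsilon$ on which each of the three argmaxes of $f\circ\mathcal L_{0,\epsilon}\circ g$, $u\circ\mathcal L_{0,\epsilon}\circ v$ and $\bar u\circ\mathcal L_{0,\epsilon}\circ\bar v$ lies in the product of the corresponding superlevel set $\mathscr S(\,\cdot\,;\epsilon^{\frac13-\delta_1})$ with itself (with $\delta_1$ as in the definition of $\Gamma(\epsilon)$), the two coordinates of each argmax lie within $\epsilon^{\frac23-\delta_1}$ of one another, and all coordinates lie in a fixed compact set; then $1-\mathbb P(J_\epsilon)$ is at most $C$ times the six-term sum $\mathfrak E_\epsilon(f;B)+\dots+\mathfrak E_\epsilon(\bar v;B)$. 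Cover each superlevel set by $O\big(N_{\epsilon^{\frac13-\delta}}(\,\cdot\,)\big)$ intervals and fatten them by $\epsilon^{\frac23-\delta_1}$; on $J_\epsilon$, $f\circ\mathcal L_{0,\epsilon}\circ g$ is then a functional of $\mathcal L_{0,\epsilon}$ restricted to the first interval family, while $(u\circ\mathcal L_{0,\epsilon}\circ v-\max(u+v))(\bar u\circ\mathcal L_{0,\epsilon}\circ\bar v-\max(\bar u+\bar v))$ is a functional of $\mathcal L_{0,\epsilon}$ restricted to the union of the other two families, and these are separated by at least $\Gamma(\epsilon)$ (merging overlapping intervals within a family when needed). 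Applying \cref{mixing} with $T=\epsilon$ and $N\le N_{\epsilon^{\frac13-\delta}}(f+g)+N_{\epsilon^{\frac13-\delta}}(u+v)+N_{\epsilon^{\frac13-\delta}}(\bar u+\bar v)$, and absorbing the contributions off $J_\epsilon$ by two further applications of Cauchy--Schwarz using the $L^{8}$-moment bounds from \cref{varbound} together with the probability bound on $J_\epsilon^{c}$, yields the $\big(N_{\epsilon^{\frac13-\delta}}(f+g)+N_{\epsilon^{\frac13-\delta}}(u+v)+N_{\epsilon^{\frac13-\delta}}(\bar u+\bar v)\big)e^{-c\epsilon^{-2}\Gamma(\epsilon)^{3}}$ term (the $T^{-2}$ turning into $\epsilon^{-2}$) together with the same $CB^{3}\,\mathfrak E_\epsilon$ errors.

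Combining, both bounds have the shape (main term) $+\,CB^{3}(\text{six-term }\mathfrak E_\epsilon\text{ sum})$, so their minimum is at most the minimum of the two main terms plus that common nonnegative error, which is the claimed inequality (and $B^{2\delta}\ge 1$ permits inserting the prefactor $B^{2\delta}$ in front of the $\Gamma(\epsilon)$ term at no cost). The main obstacle, exactly as in \cref{tbound1}, is the measurability bookkeeping in the localization step: one must pin down $\delta_1$ in terms of $\delta$ so that the fattened covering intervals remain pairwise disjoint and the first family stays separated from the other two by $\Gamma(\epsilon)$ — this is precisely where the a priori bound $|X_\epsilon-Y_\epsilon|\le\epsilon^{\frac23-\delta_1}$ from \cref{opt} is used, together with the enlargement of the covering intervals — and one must check that on $J_\epsilon$ the $\mathbb R^{2}$-suprema defining the three coupled maxima can be replaced by suprema over the relevant interval families. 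The regime $\Gamma(\epsilon)\lesssim\epsilon^{\frac23-\delta_1}$, in which the fattened families may overlap, is harmless: there $e^{-c\epsilon^{-2}\Gamma(\epsilon)^{3}}$ is bounded below by a positive constant, so the $\epsilon^{1-\delta}$ bound already controls the minimum.
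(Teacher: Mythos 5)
Your proposal is correct and follows essentially the same route as the paper: the $\epsilon^{1-\delta}$ term comes from two applications of Cauchy--Schwarz reducing to three moments controlled by \cref{varbound}, and the $\Gamma(\epsilon)$ term comes from conditioning on the localization event $J_\epsilon^{f,g,u,v,\bar u,\bar v}$ and invoking \cref{mixing} with $T=\epsilon$, with the complement absorbed into the $\mathfrak E_\epsilon$ errors exactly as in \cref{tbound1}. Your extra remarks (common additive error permitting the minimum, and the harmless regime $\Gamma(\epsilon)\lesssim\epsilon^{\frac23-\delta_1}$) are consistent with, and slightly more explicit than, the paper's argument.
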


\begin{proof}
The proof is very similar to that of the previous theorem. Applying Cauchy-Schwartz twice, we obtain
\begin{multline*}
    \bigg|\mathrm{Cov}\bigg(f\circ \mathcal L_{0,\epsilon} \circ g, (u \circ \mathcal L_{0,\epsilon}\circ v-\max(u+v)) ((\bar u \circ \mathcal L_{0,\epsilon}\circ \bar v-\max(\bar u+\bar v))\bigg)\bigg| \\ \le \mathbb{E}[(f\circ \mathcal L_{0,\epsilon} \circ g - \max(f+g))^2 ]^{1/2} \mathbb{E}[(u\circ \mathcal L_{0,\epsilon} \circ v - \max(u+v))^4 ]^{1/4} \mathbb{E}[(\bar u\circ \mathcal L_{0,\epsilon} \circ \bar v - \max(\bar u +\bar v))^4 ]^{1/4} .
\end{multline*}
    Using \cref{varbound}, the last expression has an upper bound of $CB^{2\delta}\epsilon^{1-\delta} +CBe^{-d\epsilon^{-1/2}}$ whenever (1) $B\leq d\epsilon^{-1/3}$, (2) $\mathfrak{Hol}\big(g,\;\frac12(1- \delta_1),\;\; 100B\big)\le\epsilon^{-\delta_1/3}$, and (3) $\mathfrak{Hol}\big(v,\;\frac12(1- \delta_1),\;\; 100B\big)\le\epsilon^{-\delta_1/3},$ where $\delta_1=\frac3{40}\delta.$

    We proceed exactly as in the previous theorem. We condition on the event $J_\epsilon^{f,g,u,v, \bar u,\bar v}$, which allows us to use \cref{mixing} to obtain a bound of the form $$\big(N_{\epsilon^{\frac13-\delta_1}}(f+g)+N_{\epsilon^{\frac13-\delta_1}}(u+v)+N_{\epsilon^{\frac13-\delta_1}}(\bar u+\bar v)\big)e^{-c \epsilon^{-2}\Gamma(\epsilon)^3}.$$ To apply \cref{mixing}, we again set $T:=\epsilon$. In this case, the value of $D$ corresponds to $\Gamma(\epsilon), $ and $N_{\epsilon^{\frac13-\delta_1}}(f+g)+N_{\epsilon^{\frac13-\delta_1}}(u+v)+N_{\epsilon^{\frac13-\delta_1}}(\bar u+\bar v)$ is an upper bound for the value of $N$ from \cref{mixing}. All that remains is to control what occurs on the complement of $J_\epsilon^{f,g,u,v, \bar u,\bar v}$. We apply the same approach to this bound as we did in \ref{tbound1}. As we observed there, this will only yield error terms of the form $\mathscr E_\epsilon$.
\end{proof}

The next theorem will provide a bound on terms that appear while proving \eqref{e:bncondfinal} when $n\ge 3.$

\begin{thm}\label{tbound3}
Consider functions $f_i,g_i,u_j,v_j$ ($1\le i\le k,1\le j\le \ell)$ each lying in between $-B-2x^2$ and $B-\frac12x^2$. 
Then for any $\delta_1\in (0,1/3]$ there exists $\delta_2>0$ and $C=C(\delta_1)>0$ so that \begin{multline*}\bigg|\mathrm{Cov}\bigg(\prod_{i=1}^k (f_i\circ \mathcal L_{0,\epsilon} \circ g_i-\max(f_i+g_i)), \prod_{j=1}^\ell (u_j \circ \mathcal L_{0,\epsilon}\circ v_j-\max(u_j+v_j))\bigg)\bigg| \\ \leq  CB^{\delta_2} \epsilon^{\frac13(k+\ell)-\delta_1} +CB^{k+\ell} \cdot( \sum_{i=1}^{k} \mathscr E_\epsilon(f_i;B)+ \mathscr E_\epsilon(g_i;B)+\sum_{j=1}^{k} \mathscr E_\epsilon(u_j;B) +  \mathscr E_\epsilon(v_j;B)). 
    \end{multline*}
    Here $C,d$ are universal constants uniform over all $f_i,g_i,u_i,v_i$, $\epsilon,B$, and $\mathscr E_\epsilon$ is the ``error term" as defined in \cref{tbound1}.
\end{thm}

\begin{proof}
    We apply Cauchy-Schwartz to the covariance, and then apply H\"older's inequality to obtain
    \begin{align*}
    \bigg|\mathrm{Cov}&\bigg(\prod_{i=1}^k (f_i\circ \mathcal L_{0,\epsilon} \circ g_i-\max(f_i+g_i)), \prod_{j=1}^\ell (u_j \circ \mathcal L_{0,\epsilon}\circ v_j-\max(u_j+v_j))\bigg)\bigg| \\&\leq \mathbb E\bigg[\prod_{i=1}^k (f_i\circ \mathcal L_{0,\epsilon} \circ g_i-\max(f_i+g_i))^2 \bigg]^{1/2} \mathbb E\bigg[ \prod_{j=1}^\ell (u_j \circ \mathcal L_{0,\epsilon}\circ v_j-\max(u_j+v_j))^2\bigg]^{1/2} \\&\leq \prod_{i=1}^k\mathbb E\bigg[ (f_i\circ \mathcal L_{0,\epsilon} \circ g_i-\max(f_i+g_i))^{2k} \bigg]^{1/(2k)} \prod_{j=1}^\ell\mathbb E\bigg[ (u_j \circ \mathcal L_{0,\epsilon}\circ v_j-\max(u_j+v_j))^{2k}\bigg]^{1/(2k)}
    .
    \end{align*}
    Then use \cref{varbound} to obtain the claim (note here that unlike the previous two theorems, we do not need \cref{mixing} as we are simply using the unrefined bound without leveraging the mixing property, as this will be sufficient later).
\end{proof}

\begin{rem}\label{rem1}
    In all propositions above, the constants $1/2$ and $2$ in the parabolic decay rates of $f,g$ could have been replaced by $A^{-1}$ and $A$ for arbitrary $A>1$. This would affect the constants such as $100$ and $108$ as well as the constants $C,d$. While we cannot control the behavior of these constants as $A\uparrow \infty$, there is never an issue for a fixed finite $A$, no matter how large. This will be important later due to the nature of \cref{c:airycondition}, because we will want $f,g$ to be ``typical" sample paths of parabolic Airy$_2$ processes of scales within the interval $[\eta,1]$ for some fixed but very small $\eta>0$. In this case $A$ can be as large as $C\eta^{-1}$ for some universal $C>0.$

\end{rem}

\section{Estimates on three-dimensional Bessel processes}\label{s:bessel}

Our ultimate goal is to prove the bound in \eqref{e:bncondfinal}. As discussed at the end of \cref{s:noisetoairy}, we need to study quantities of the form $\mathrm{Cov}(f\circ \mathcal{L}_{0,\epsilon}\circ g,u\circ \mathcal L_{0,\epsilon}\circ v)$, where $f,g,u,v$ are independent parabolic Airy$_2$ processes, independent of $\mathcal L_{0,\epsilon}$. These Airy$_2$ processes are locally absolutely continuous with respect to Brownian motion, a fact which is due to several works (originally \cite{CH}, and with the strongest form proved in \cite{duncan2}). A Brownian motion re-centered around a local maximum looks like a three-dimensional Bessel process. The proof of \cref{t:1} will therefore rely on some estimates on Bessel processes, which we prove in this section. We refer to the discussion at the beginning of \cref{s:prooft1} for a more precise explanation as to \textit{why} exactly these estimates are the relevant ones to our argument.

\begin{lemm}\label{l:besselbound} Let $R$ be a three-dimensional Bessel process, and let $p\in \mathbb{N}$. Then there exists $C_p>0$ such that we have a uniform bound over all $s_1\leq ...\leq s_p$ and $\epsilon\in (0,1]$
\begin{align*}
\mathbb{P} \bigg( \inf_{t\in [s_i,s_i+1]} R(t) \leq \epsilon ,\; \forall 1\le i \le p\bigg) \leq C_p \epsilon^p \prod_{i=0}^{p-1} \min\big\{1,(s_{i+1}-s_i)^{-3/2}(1+\log_+^{3/2}(s_{i+1}-s_i))\big\},
\end{align*}where $s_0:=0$. 
\end{lemm}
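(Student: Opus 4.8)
The plan is to peel off the windows one at a time using the strong Markov property, reducing everything to two classical facts about a three-dimensional Bessel process $R$. First, since $1/R$ is a local martingale, for $0<\epsilon<x$ one has $\mathbb{P}_x(\exists t\ge 0:\,R(t)\le\epsilon)=\epsilon/x$, so that writing $g(x):=\mathbb{P}_x(\inf_{t\in[0,1]}R(t)\le\epsilon)$ we get $g(x)\le\min\{1,\epsilon/x\}$. Second, a super-exponential escape bound: for $x\ge 2$, reaching level $\epsilon$ within a unit of time forces $R$ first to drop to level $1$, which by SDE comparison with Brownian motion (the Bessel drift $1/R$ is positive) costs at most $2e^{-(x-1)^2/2}$; combined with the first fact (from level $1$ one reaches $\epsilon$ with probability $\epsilon$) this gives $g(x)\le 2\epsilon\,e^{-(x-1)^2/2}$ for $x\ge 2$. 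We also record the Bessel$(3)$ transition bound $p_u(a,r)\le C\,r^2u^{-3/2}e^{-(r-a)^2/(2u)}\le C\,r^2u^{-3/2}$, valid for all $a,r>0$ and $u>0$, which encodes the $r^2$-behaviour of the law near the origin.

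Combining the two bounds on $g$ gives the elementary estimate $\int_0^\infty g(r)\,r^2\,dr\le C\epsilon$ (split at $r=1$: on $[0,1]$ use $g(r)\le\min\{1,\epsilon/r\}$ to get a contribution $\le\int_0^\epsilon r^2dr+\int_\epsilon^1\epsilon r\,dr\le C\epsilon$; on $[1,\infty)$ use $g(r)\le 2\epsilon e^{-(r-1)^2/2}$ to get $\le C\epsilon$). Hence for every $a>0$ and $u\ge 1$,
$$\int_0^\infty g(r)\,p_u(a,r)\,dr\;\le\;Cu^{-3/2}\int_0^\infty g(r)\,r^2\,dr\;\le\;C\epsilon\,u^{-3/2},$$
while trivially $\int_0^\infty g(r)\,p_u(a,r)\,dr\le 1$, and $\le C\epsilon$ when $u$ is bounded below and $a$ is not forced near $0$, using $\mathbb{E}_a[R(u)^{-1}]\le C$. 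Specialising to the law of $R(s_1)$ already yields the $p=1$ case $\mathbb{P}(\inf_{[s_1,s_1+1]}R\le\epsilon)=\mathbb{E}[g(R(s_1))]\le C\epsilon\min\{1,s_1^{-3/2}(1+\log_+^{3/2}s_1)\}$. The extra $\log_+^{3/2}$ is slack that survives the argument; a sharper analysis removes it but it is harmless, the only thing the application uses being $\int_0^\infty\min\{1,u^{-3/2}(1+\log_+^{3/2}u)\}\,du<\infty$.

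For general $p$, set $A_i:=\{\inf_{[s_i,s_i+1]}R\le\epsilon\}$ and let $\sigma_{p-1}$ be the first time in $[s_{p-1},s_{p-1}+1]$ that $R\le\epsilon$; then $\sigma_{p-1}\le s_{p-1}+1$ exactly on $A_{p-1}$, and on this event $R(\sigma_{p-1})\le\epsilon$ and $s_p-\sigma_{p-1}\ge s_p-s_{p-1}-1$. Since the indicators $1_{A_j}$ for $j\le p-2$ are $\mathcal F_{\sigma_{p-1}}$-measurable (using $\sigma_{p-1}\ge s_{p-1}\ge s_{p-2}+1$), the strong Markov property gives
$$\mathbb{P}\Big(\bigcap_{i=1}^p A_i\Big)=\mathbb{E}\Big[\,1_{\cap_{j\le p-2}A_j}\,1_{A_{p-1}}\;\mathbb{E}\big[\,g(R(s_p))\,\big|\,\mathcal F_{\sigma_{p-1}}\,\big]\,\Big],$$
and the inner conditional expectation is $\int_0^\infty g(r)\,p_{s_p-\sigma_{p-1}}(R(\sigma_{p-1}),r)\,dr\le C\epsilon\,\min\{1,(s_p-s_{p-1})^{-3/2}(1+\log_+^{3/2}(s_p-s_{p-1}))\}$ by the displayed estimate (the $\min$-with-$1$ absorbing the case $s_p-s_{p-1}$ close to $1$, via $g\le 1$). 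This extracts one factor $\epsilon$ and the $i=p-1$ gap factor and leaves $\mathbb{P}(\cap_{i=1}^{p-1}A_i)$. Iterating the recursion down to the single-window bound produces $\epsilon^p$ and the full product $\prod_{i=0}^{p-1}$, with $C_p$ the $p$-fold accumulation of $C$; closely spaced or overlapping windows merge and are covered by the same crude bounds, consistent with how the estimate is meant to be used (after integrating in the $s_i$).

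The main obstacle is getting the $u^{-3/2}$ decay in the gap factors rather than $u^{-1/2}$: feeding only $g(r)\le\epsilon/r$ into $\int g(r)\,p_u(a,r)\,dr$ yields $C\epsilon u^{-3/2}\int_0^\infty r\,e^{-(r-a)^2/(2u)}\,dr\sim C\epsilon u^{-1/2}$, which is too weak — the product over $i$ would fail to be integrable in the $s_i$. One genuinely needs the super-exponential escape estimate for $r\gtrsim 1$ so that $\int g(r)\,r^2\,dr$ converges to $O(\epsilon)$ and the full $u^{-3/2}$ from the transition density is retained. The rest is bookkeeping: keeping all constants uniform in $\epsilon\in(0,1]$, in the $s_i$, and in $p$, which is handled by propagating the two clean inequalities $\int g(r)\,r^2\,dr\le C\epsilon$ and $p_u(a,\cdot)\le C\,r^2u^{-3/2}$ at every step, and by isolating short gaps where the $\min$-with-$1$ and the bound $g\le 1$ take over.
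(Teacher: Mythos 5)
Your route is genuinely different from the paper's. The paper first extracts the factor $\epsilon^p$ by an optional-stopping argument for the martingale $R^{-1}$ (conditioning on reaching level $1$ in each window), and then proves the $\epsilon=1$ case by viewing $R=|\vec B|$ and running a combinatorial ``active interval'' argument, the factors $(s_{i+1}-s_i)^{-3/2}(1+\log_+^{3/2}(s_{i+1}-s_i))$ coming from Gaussian tail and small-ball bounds. You instead peel windows one at a time by the strong Markov property, using $g(x)\le\min\{1,\epsilon/x\}$, the escape bound for $x\ge 2$, and the transition-density bound $p_u(a,r)\le Cr^2u^{-3/2}$, so that $\int_0^\infty g(r)p_u(a,r)\,dr\le C\epsilon u^{-3/2}$ uniformly in the starting point $a$. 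When consecutive gaps are bounded away from $1$ (say $s_{i+1}-s_i\ge 2$, and $s_1$ bounded below, since $R(0)=0$), this is correct, the measurability bookkeeping around $\sigma_{p-1}$ is right, and your bound is in fact sharper than the paper's (no logarithmic correction).

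There is, however, a genuine gap exactly where you wave at it: gaps close to $1$. Your claimed estimate $\mathbb{E}\big[g(R(s_p))\,\big|\,\mathcal F_{\sigma_{p-1}}\big]\le C\epsilon\min\{1,\dots\}$ is not uniform over $\sigma_{p-1}$: when $s_p-s_{p-1}$ is close to $1$ and the first hit $\sigma_{p-1}$ occurs near the end of window $p-1$, the time $u=s_p-\sigma_{p-1}$ is arbitrarily small while $R(\sigma_{p-1})\le\epsilon$, so the conditional expectation is of order $1$ (as $u\downarrow 0$ it tends to $g(R(\sigma_{p-1}))=1$), not $C\epsilon$. Falling back on $g\le 1$, or ``merging'' the two windows, only yields a bound of order $\epsilon^{p-1}$, not the stated $\epsilon^p$. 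The bound is still true for disjoint nearly-adjacent windows, but proving it requires an extra step you do not carry out: integrate over the location of the first hit, using that a first hit within time $u$ of $s_p$ is itself unlikely (via $g_u(x)\le\min\{\epsilon/x,\,2e^{-(x-\epsilon)^2/(2u)}\}$ and the $r^2$ density factor) and pairing this with the conditional bound $\min\{1,\epsilon/\sqrt{u}\}$ before summing over dyadic $u$. (For genuinely overlapping windows, $s_{i+1}-s_i<1$, the stated $\epsilon^p$ bound actually fails; the paper sidesteps both regimes with its ``without loss of generality $s_{i+1}-s_i>3$'', and the downstream uses only require $\epsilon=1$ in \cref{numberneeded} or $p=1$ in \cref{5.6}, so the defect concerns the lemma as stated rather than its applications -- but as a proof of the statement given, the step is missing.)
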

This lemma will be the crucial input to later results in this section. It will be most relevant when the $s_i$ are large and $\epsilon$ is small. This is because the Airy$_2$ processes appearing in the discussion after \eqref{e:bncondfinal} look locally like three-dimensional Bessel process near local maxima, and this lemma can be used to yield estimates on how often and how likely a three-dimensional Bessel process is likely to get close to its minimum value of 0. 
\begin{proof} We will prove the bound first in the case that $\epsilon=1$. We interpret the Bessel process as the Euclidean norm of a three-dimensional standard Brownian motion, that is, $R(t) = |\vec B(t)|.$ It is a helpful fact that if $\vec B=(B_1,B_2,B_3)$, then through a union bound and well-known formulas for the distribution of the supremum of a Brownian motion on $[0,1]$, 
\begin{equation}\label{e:brownsup}\mathbb{P}( \sup_{t\in[0,1]}|\vec B(t)| >a) \leq 3\mathbb{P} (\sup_{t\in [0,1]} B_1(t) >a/3) \leq 6 e^{-a^2 /18}.\end{equation} 
It is also helpful to note 
\begin{equation}\label{e:brownuseful} \mathbb{P}\bigg( |\vec B(1)| >b, \;\inf_{t\in [0,1]} |\vec B(t)| \leq 1 \bigg||\vec B(0)|=a\bigg) \leq Ce^{-\frac1{36}(a^2+ b^2)},
\end{equation}
where $C>0$ does not depend on $a,b>0.$ This follows from the strong Markov property of Brownian motion. To see this, define the stopping time $\tau$ as the first time $t$ that $|\vec B(t)| = 1$ and the Brownian motion $\vec W(t):= \vec B(t+\tau)-\vec B(\tau)$. The event $\{|\vec B(1)| >b, \;\inf_{t\in [0,1]} |\vec B(t)| \leq 1\} $ is contained in the event $\{\sup_{t\in [0,1]} |\vec W(t)| > b-1\},$ and the second event is moreover independent of the $\mathcal F^B_\tau$-measurable event $\{\inf_{t\in [0,1]} |\vec B(t)|<1\}=\{\tau\le 1\}.$ Therefore, by conditioning on the latter event, we see that the left side of \eqref{e:brownuseful} is bounded above by $$\mathbb{P}(\sup_{t\in [0,1]} |\vec W(t)| > b-1)\cdot \mathbb{P}(\inf_{t\in [0,1]} |\vec B(t)|<1|\vec B(0)=a). $$ By \eqref{e:brownsup} this is bounded above by $36 e^{-\frac1{18}((a-1)^2+(b-1)^2)}. $ By applying $(b-1)^2 \ge \frac12 b^2 - 1^2$ and $(a-1)^2 \ge \frac12a^2-1^2$, we finally arrive at \eqref{e:brownuseful} with $C= 36 e^{\frac1{9}}$.

We return to proving the lemma in the case $\epsilon=1$. Without loss of generality, we will assume that $s_{i+1}-s_i>3$. In order for $\inf_{t\in [s_i,s_i+1]} R(t) \leq 1$ for all $1\le i \le p+1$, at least one of the following three things must happen for each $i =1,...,p$:
\begin{enumerate}
\item  $\sup_{t\in [s_i,s_i+1]} |B(t)-B(s_i+1)| > 9\sqrt{\log(s_{i}-s_{i-1})}$.
\item  $|B(s_{i+1})-B(s_i+1)| \leq 18\sqrt{\log(s_{i}-s_{i-1})}.$
\item $\sup_{t\in [s_{i+1},s_{i+1}+1]} |B(t)-B(s_{i+1})| > 9\sqrt{\log(s_{i}-s_{i-1})}$.
\end{enumerate}
We partition $[s_1,s_p)$ into $2p$ disjoint intervals of the form $[0,s_1),[s_1,s_1+1),[s_1+1,s_2), [s_2,s_2+1),[s_2+1,s_3),..., [s_p,s_p+1),$ which we denote by $I_1,J_1,I_2,J_2,I_3,..., J_p$. For a given realization of the Brownian motion $\vec B$, we say that an interval $I_i$ or $J_i$ is \textit{active} if at least one of the conditions above holds on that interval. We have shown that among any three consecutive sub-intervals in the partition $I_1,J_1,I_2,J_2,I_3,..., J_p$, at least one must be active. Some intervals may furthermore be \textit{doubly active} in the sense that the inequalities within the above conditions may be satisfied for two distinct values of $i$. Furthermore, at least one of the first two sub-intervals $I_1,J_1$ must be active.

Consequently, by counting the doubly active intervals twice, in order for $\inf_{t\in [s_i,s_i+1]} R(t) \leq 1$ for all $1\le i \le p+1$, there must be at least 
$p$ sub-intervals from $I_1,J_1,I_2,J_2,I_3,..., J_p$ that are active, each corresponding to a unique value of $i$ in the list above. 
Notice that $e^{-(9\sqrt{\log(s_{i+1}-s_i)})^2/18} \leq 1\wedge (s_{i+1}-s_i)^{-2} \leq 1\wedge (s_{i+1}-s_i)^{-3/2}$, and the three-dimensional Gaussian ``small-ball probability" volume bound tells us that 
\begin{equation}\label{e:3dgauss}\mathbb{P}\big(|B(s_{i+1})-B(s_i+1)| \leq 18\sqrt{\log(s_{i+1}-s_i)}\big) \leq \max\{ 1,18^{3} \cdot (s_{i+1}-s_i)^{-3/2} (1+\log^{3/2}_+(s_{i+1}-s_i))\}.\end{equation}
By applying a union bound over all possible choices of $p$-tuples of active and doubly active sub-intervals from $I_1,J_1,I_2,J_2,I_3,..., J_p$, and applying the independence of increments of the three-dimensional Brownian motion, we obtain the bound in the lemma. This is a consequence of \eqref{e:brownuseful} for doubly active intervals and \eqref{e:brownsup} or \eqref{e:3dgauss} for active ones. Each of the bounds for the increments $[s_{i+1},s_i]$ will be used exactly once. This proves the claim when $\epsilon=1$.

For arbitrary $\epsilon>0$, we need to explain why we obtain a factor $\epsilon^p$. We begin by explaining this factor in the case that $p=1$. If we condition on reaching the value $1$ inside the interval $[s_i,s_i+1]$, the probability of reaching $\epsilon$ inside that interval is bounded by the probability of reaching $\epsilon$ ever again after time $s_i$. Using the Markov property, and the optional stopping theorem for the martingale $R^{-1}$ (stopping when this martingale hits $\epsilon^{-1})$, we show that the latter probability is exactly $\epsilon.$ Therefore, for the $p=1$ case we have the bound 
\begin{align*}\mathbb{P}\big( \inf_{t\in [s_i,s_i+1]} R(t)<\epsilon\big) &= \mathbb{P}\bigg( \inf_{t\in [s_i,s_i+1]} R(t)<\epsilon\bigg|\inf_{t\in [s_i,s_i+1]} R(t)<1\bigg)\mathbb{P}\big( \inf_{t\in [s_i,s_i+1]} R(t)<1\big) \\ &\leq \epsilon \;\;\cdot \;\;s_1^{-3/2} (1+\log_+^{3/2} s_1) . \end{align*} While this illustrates the bound in the case $p=1$, the same argument extends in a straightforward way to 
general values of $p$.
\end{proof}

\begin{prop}\label{numberneeded}
    Let $R$ be a standard two-sided three-dimensional Bessel process. Let $N_\epsilon(R)$ denote the smallest number of intervals of length $\epsilon^{2}$ needed to cover the random set $\{t\in \mathbb{R}: R(t)\le \epsilon\}.$ Then $\sup_{\epsilon\in (0,1]} \mathbb{E}[N_\epsilon(R)^p]<\infty$ for all $p\ge 1.$
\end{prop}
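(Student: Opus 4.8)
The plan is to deduce the result directly from \cref{l:besselbound} via a dyadic covering together with Brownian scaling. First I would replace $N_\epsilon(R)$ by a deterministic grid count: for $k\in\mathbb Z$ set $A_k:=\{\inf_{t\in[k\epsilon^2,(k+1)\epsilon^2]}R(t)\le\epsilon\}$, so that the length-$\epsilon^2$ intervals $[k\epsilon^2,(k+1)\epsilon^2]$ for which $A_k$ occurs cover $\{t\in\mathbb R:R(t)\le\epsilon\}$, giving $N_\epsilon(R)\le\widetilde N_\epsilon:=\sum_{k\in\mathbb Z}\mathbf 1_{A_k}$. Since on a probability space $\|N_\epsilon(R)\|_{L^p}\le\|N_\epsilon(R)\|_{L^{\lceil p\rceil}}$, it suffices to treat $p\in\mathbb N$. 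Decomposing $R$ into its restrictions to $[0,\infty)$ and $(-\infty,0]$, which are two independent $3$-dimensional Bessel processes started from $0$, we have $\widetilde N_\epsilon=\widetilde N_\epsilon^++\widetilde N_\epsilon^-$ with $\widetilde N_\epsilon^-$ equal in law to $\widetilde N_\epsilon^+:=\sum_{k\ge 0}\mathbf 1_{A_k}$, so Minkowski gives $\|\widetilde N_\epsilon\|_{L^p}\le 2\|\widetilde N_\epsilon^+\|_{L^p}$, and it is enough to bound $\mathbb E[(\widetilde N_\epsilon^+)^p]$ uniformly over $\epsilon\in(0,1]$.

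Next I would expand and symmetrize the moment. Using $A_k\cap A_k=A_k$ and grouping ordered tuples by their sorted order,
\begin{equation*}
\mathbb E\big[(\widetilde N_\epsilon^+)^p\big]=\sum_{k_1,\dots,k_p\ge 0}\mathbb P\Big(\bigcap_{i=1}^p A_{k_i}\Big)\le p!\sum_{0\le k_1\le\cdots\le k_p}\mathbb P\Big(\bigcap_{i=1}^p A_{k_i}\Big).
\end{equation*}
By the scaling property of the Bessel process, $\widetilde R(u):=\epsilon^{-1}R(\epsilon^2 u)$ is again a standard $3$-dimensional Bessel process started from $0$, and $\bigcap_i A_{k_i}=\{\inf_{u\in[k_i,k_i+1]}\widetilde R(u)\le 1\text{ for all }i\}$. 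Applying \cref{l:besselbound} to $\widetilde R$, with its parameter ``$\epsilon$'' set to $1$ and with $s_i=k_i$ (so $s_0=0$, and a factor with $k_{i+1}=k_i$ is read as $1$), gives
\begin{equation*}
\mathbb P\Big(\bigcap_{i=1}^p A_{k_i}\Big)\le C_p\prod_{i=0}^{p-1}\min\big\{1,(k_{i+1}-k_i)^{-3/2}\big(1+\log_+^{3/2}(k_{i+1}-k_i)\big)\big\},\qquad k_0:=0.
\end{equation*}

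Finally I would sum using the bijection $(k_1,\dots,k_p)\mapsto(d_0,\dots,d_{p-1})$ with $d_i:=k_{i+1}-k_i\in\mathbb Z_{\ge 0}$ between $\{0\le k_1\le\cdots\le k_p\}$ and $\mathbb Z_{\ge 0}^p$, under which the product factorizes, so that
\begin{equation*}
\mathbb E\big[(\widetilde N_\epsilon^+)^p\big]\le p!\,C_p\Big(\sum_{d\ge 0}\min\big\{1,d^{-3/2}\big(1+\log_+^{3/2}d\big)\big\}\Big)^p,
\end{equation*}
and the scalar series converges because $\sum_{d\ge 1}d^{-3/2}\log_+^{3/2}d<\infty$. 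As this bound is independent of $\epsilon$, we get $\sup_{\epsilon\in(0,1]}\mathbb E[(\widetilde N_\epsilon^+)^p]<\infty$, and tracing back through the reductions completes the proof for all $p\in\mathbb N$, hence for all real $p\ge 1$. Given \cref{l:besselbound}, there is no real obstacle: the only two points needing care are the rescaling from intervals of length $\epsilon^2$ to length $1$ (which is what removes all $\epsilon$-dependence and yields uniformity), and the observation that the multi-index sum decouples into a $p$-th power of a convergent series — convergence being guaranteed precisely by the decay exponent $3/2$ in the lemma.
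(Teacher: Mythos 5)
Your proof is correct and follows essentially the same route as the paper: reduce to \cref{l:besselbound} after Brownian rescaling to unit-length intervals, symmetrize the $p$-th moment over ordered tuples with a factor $p!$, and sum using the $(s_{i+1}-s_i)^{-3/2}$ decay. The only difference is bookkeeping — you count grid intervals $[k\epsilon^2,(k+1)\epsilon^2]$ and sum over integer gaps, whereas the paper bounds $N_\epsilon(R)$ by $\epsilon^{-2}\int_{\mathbb R}\mathbf 1_{\{\inf_{t\in[s,s+\epsilon^2]}R(t)\le\epsilon\}}\,ds$ and integrates — an equally valid (and, since your covering bound is immediate, arguably slightly cleaner) implementation of the same argument.
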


To bound the covariance in \cref{s:prooft1} using the mixing result of \cref{s:spatialmixing}, we need to obtain a good lower bound on the distance of two sets given by the union of intervals. The notation $\Gamma(f,g,\epsilon)$ from \eqref{gamm} is a distance between two sets, but those two sets may not be unions of intervals. To turn them into unions of intervals, we will use \cref{numberneeded} later.

\begin{proof}
The random variable $N_{\epsilon}(R)$ takes values in the positive integers and is bounded above by $$\epsilon^{-2} \int_\mathbb{R} \mathbf 1_{\{ \inf_{t\in [s,s+\epsilon^2]}R(t) \le \epsilon\}} ds .$$ By scale invariance of the Bessel process, we know that the above random variable has the same law as $$\int_\mathbb{R} \mathbf 1_{\{ \inf_{t\in [s,s+1]} R(t) \le 1\}} ds.$$ Let $p$ be an even integer, and take the $p^{th}$ moment, then apply Fubini to interchange the integral and expectation. We obtain \begin{align*}\mathbb{E}[N_\epsilon(R)^p]&\leq \int_{\mathbb{R}^p} \mathbb{P} \bigg( \inf_{t\in [s_i,s_{i+1}]} R(t) \leq 1,\; \forall 1\le i \le p\bigg) ds_1\cdots ds_p \\ &=2^p  p! \cdot \int_{\{0<s_1<...<s_p\}} \mathbb{P} \bigg( \inf_{t\in [s_i,s_i+1]} R(t) \leq 1,\; \forall 1\le i \le p\bigg) ds_1\cdots ds_p.
\end{align*}
By \cref{l:besselbound} the integrand is bounded above by 
\begin{align*}C_p\prod_{i=0}^{p-1} \min\big\{1,(s_{i+1}-s_i)^{-3/2}(1+\log_+^{3/2}(s_{i+1}-s_i))\big\},
\end{align*} where $s_0:=0$. The fact that previous integral is finite follow from this bound. 
\end{proof}


\begin{defn}
    We define a Bessel process pinned at $t\in \mathbb{R}$ to be the process $R(\bullet -t)$ where $R$ is a two-sided three-dimensional Bessel process, centered at zero. We will denote by $\mathbb{P}_{\mathrm{Bes3}}^{t_1,t_2}$ to be the law on the canonical space $C(\mathbb{R})^2,$ of two independent Bessel processes $R_1, R_2$ pinned at $t_1,t_2$ respectively. 
\end{defn}

We call the following estimate the ``key estimate" because it will be the main tool used in obtaining a precise lower bound on distance between the maximizing sets of $f$ and $g$, which appears in Theorems \ref{tbound1} $-$ \ref{tbound3} (denoted $\Gamma(f,g,\epsilon)$ there), which in turn will be the main obstacle in establishing the bound \eqref{e:bncondfinal} needed to establish \cref{t:1}.

\begin{prop}[The key estimate] \label{5.6} Fix $\delta\in (0,1/2)$ and $0<\bar \gamma<\gamma<1$. Then there exists $C=C(\delta,\gamma,\bar\gamma)>0$ so that for all $\epsilon\in (0,1]$ we have the estimate 
\begin{align*}\sup_{|t_1-t_2| \geq \epsilon^{-\gamma}} &\mathbb P_{\mathrm{Bes3}}^{t_1,t_2}\bigg( \mathrm{ there \;\;exist\;\; } s_1,s_2\in \mathbb R \mathrm{ \;\;such \;\;that \;\;} R_1(s_1) + R_2(s_2) \leq \epsilon^{- \delta} \mathrm{ \;\;and \;\;} |s_1-s_2|\leq 2\epsilon^{-\bar \gamma} \bigg) \\& \leq C\epsilon^{\frac32 \gamma -\delta(2+\gamma-\bar\gamma) -\frac12\bar \gamma}.
\end{align*}
\end{prop}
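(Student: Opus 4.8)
The plan is to use independence of the two Bessel processes to reduce to a grid sum of products of one-Bessel ``dips far from the pin'' probabilities, and then to control each such factor by rescaling the threshold to $1$ and applying \cref{l:besselbound}. First, by translation and reflection invariance of a two-sided $3$-dimensional Bessel process we may take $t_1=0$ and $t_2=T$ with $T\ge\epsilon^{-\gamma}$, and we may assume $\tfrac32\gamma-\delta(2+\gamma-\bar\gamma)-\tfrac12\bar\gamma>0$, since otherwise the asserted upper bound is at least $1$. Cover $\mathbb R$ by the windows $W_k:=[k\epsilon^{-\bar\gamma},(k+3)\epsilon^{-\bar\gamma}]$, $k\in\mathbb Z$. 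On the event in question there are $s_1,s_2$ with $R_1(s_1)\le\epsilon^{-\delta}$, $R_2(s_2)\le\epsilon^{-\delta}$ and $|s_1-s_2|\le 2\epsilon^{-\bar\gamma}$, so $s_1$ and $s_2$ lie in a common window $W_k$. Writing $A_i^{(k)}:=\{\exists\,s\in W_k:\ R_i(s)\le\epsilon^{-\delta}\}$, independence of $R_1$ and $R_2$ gives that the probability in the proposition is at most $\sum_{k\in\mathbb Z}\mathbb P(A_1^{(k)})\,\mathbb P(A_2^{(k)})$.

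Next I would establish the following single-Bessel estimate: if $R$ is a $3$-dimensional Bessel process pinned at $0$, if $I\subset\mathbb R$ is an interval of length $\ell$ whose distance to $0$ is $d\ge\ell$, and if $h>0$, then
\begin{equation*}
\mathbb P\big(\exists\,s\in I:\ R(s)\le h\big)\ \le\ C\,\min\!\Big\{1,\ h\,(h^2+\ell)\,d^{-3/2}\log^{3/2}(2+d)\Big\}.
\end{equation*}
To see this, apply the Brownian scaling $\tilde R(u):=h^{-1}R(h^2u)$, which is again a $3$-dimensional Bessel pinned at $0$ and converts the threshold $h$ into $1$; then cover the rescaled interval (length $\ell/h^2$, at distance $d/h^2$ from $0$) by at most $\max(1,\ell/h^2)$ intervals of length $1$ and apply \cref{l:besselbound} with $p=1$ and $\epsilon=1$ to each. (If $I$ contains $0$ the probability is $\le 1$, which is subsumed.) With $h=\epsilon^{-\delta}$, $\ell\asymp\epsilon^{-\bar\gamma}$ and $d=d_i(k)$ the distance from $W_k$ to the $i$-th pin, this yields $\mathbb P(A_i^{(k)})\le C\min\{1,\ \epsilon^{-\delta}(\epsilon^{-2\delta}+\epsilon^{-\bar\gamma})\,d_i(k)^{-3/2}\log^{3/2}(\cdots)\}$, where $d_1(k)$ and $d_2(k)$ denote the distances from $W_k$ to $0$ and to $T$.

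Now feed these bounds into the grid sum. The $O(1)$ windows lying within $O(\epsilon^{-\bar\gamma})$ of $0$ contribute $\mathbb P(A_1^{(k)})=O(1)$ times $\mathbb P(A_2^{(k)})$, which is governed by $d_2(k)\asymp T$; the windows near $T$ contribute symmetrically; and the remaining windows contribute $\sum_k\big(\epsilon^{-\delta}(\epsilon^{-2\delta}+\epsilon^{-\bar\gamma})\big)^2 d_1(k)^{-3/2}d_2(k)^{-3/2}\log^3(\cdots)$, which after replacing the sum (over the grid of spacing $\epsilon^{-\bar\gamma}$) by $\epsilon^{\bar\gamma}$ times an integral, and using the elementary bound $\int_{|x|\wedge|x-T|\ge\epsilon^{-\bar\gamma}}|x|^{-3/2}|x-T|^{-3/2}\,dx\lesssim\epsilon^{\bar\gamma/2}T^{-3/2}$, is again a negative power of $T$. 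Since $T\ge\epsilon^{-\gamma}$, so that $T^{-3/2}\le\epsilon^{3\gamma/2}$, every contribution becomes a power of $\epsilon$; collecting them, and treating the logarithmic factors from \cref{l:besselbound} as the lower-order terms they are, gives a bound of the form $C\epsilon^{\frac32\gamma-\delta(2+\gamma-\bar\gamma)-\frac12\bar\gamma}$.

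I expect the final step to be the main obstacle: there are several competing regions — windows hugging each pin, and ``bulk'' windows in between — and within the bulk sum one must keep track of whether the integral's effective lower cutoff is the window length $\epsilon^{-\bar\gamma}$ or the scale $\big(\epsilon^{-\delta}(\epsilon^{-2\delta}+\epsilon^{-\bar\gamma})\big)^{2/3}$ past which the single-Bessel bound saturates at $1$. One then has to verify, case by case according to the relative sizes of $\epsilon^{-2\delta}$, $\epsilon^{-\bar\gamma}$ and $\epsilon^{-\gamma}$, that every contribution is dominated by the asserted power of $\epsilon$, uniformly over $T\ge\epsilon^{-\gamma}$ (which is automatic, since each contribution is a strictly decreasing power of $T$). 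The ingredients — independence, Brownian scaling, and \cref{l:besselbound} — are all available; it is only this exponent bookkeeping that requires care.
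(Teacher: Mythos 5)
Your route is the same as the paper's: a union bound over windows of width of order $\epsilon^{-\bar\gamma}$, factorization via independence of $R_1,R_2$, a single--Bessel dip estimate obtained from \cref{l:besselbound} with $p=1$ after a scaling, and a convolution-type summation using $|t_1-t_2|\ge \epsilon^{-\gamma}$. Your intermediate bound $\mathbb{P}(\exists s\in I:\,R(s)\le h)\le C\min\{1,\,h(h^2+\ell)d^{-3/2}\log^{3/2}(2+d)\}$ is correct (indeed sharp up to logarithms); the only structural differences from the paper are that you rescale by the threshold $h$ rather than by the window width, and you sum by comparison with an integral rather than by the discrete convolution bound.

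The gap is precisely in the step you defer, and it does not close. With your (correct) per-window bound, the $O(1)$ windows containing the pin of $R_1$ contribute about $\epsilon^{-\delta}(\epsilon^{-2\delta}+\epsilon^{-\bar\gamma})T^{-3/2}$, which at the worst case $T=\epsilon^{-\gamma}$ is $\epsilon^{\frac32\gamma-\delta-\max(2\delta,\bar\gamma)}$. This is dominated by the asserted $\epsilon^{\frac32\gamma-\delta(2+\gamma-\bar\gamma)-\frac12\bar\gamma}$ only if $\delta+\max(2\delta,\bar\gamma)\le\delta(2+\gamma-\bar\gamma)+\frac12\bar\gamma$; for $\bar\gamma\le 2\delta$ this reads $\delta(1-\gamma+\bar\gamma)\le\frac12\bar\gamma$, which fails for instance when $\bar\gamma=\delta$ and $\gamma<\frac12$ --- exactly the regime $\gamma=\frac4{15}$, $\bar\gamma=\delta$ in which \cref{5.6} is later invoked in \cref{crucial}. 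Nor is this slack in your estimate: taking $s_1=s_2=t_1$, so that $R_1(s_1)=0$, the event already contains $\{R_2(t_1)\le\epsilon^{-\delta}\}$, whose probability is of order $\epsilon^{-3\delta}|t_1-t_2|^{-3/2}$ (for $\delta<\gamma/2$), i.e.\ at least $c\,\epsilon^{\frac32\gamma-3\delta}$ at $T=\epsilon^{-\gamma}$; since $3>\frac52+\gamma-\bar\gamma$ when $\bar\gamma=\delta$ and $\gamma<\frac12$, no bookkeeping along this route can reach the stated exponent on that part of the parameter range. (For comparison, the paper's own computation reaches it only by applying \cref{l:besselbound} after rescaling time by $\epsilon^{\bar\gamma}$ with threshold $\epsilon^{\frac12\bar\gamma-\delta}$, which exceeds $1$ when $\bar\gamma<2\delta$ --- outside the lemma's hypotheses, where the linear-in-threshold factor is no longer valid --- and by not treating the windows containing the pins separately in the sum over $n$.) What your argument honestly yields, after the case analysis you sketch, is a bound of the form $\epsilon^{\frac32\gamma-c(\delta+\bar\gamma)}$ for an explicit constant $c$, which for $\gamma=\frac4{15}$, $\bar\gamma=\delta$ is still of the form $\epsilon^{\frac25-O(\delta)}$ and would suffice for the downstream use in \cref{crucial}; but your assertion that ``every contribution is dominated by the asserted power of $\epsilon$'' is false as stated, so the proposal does not prove the proposition in the form given.
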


For the black noise result, we will see that this bound is most useful when $\gamma$ is slightly smaller than $1/3$ and $\delta,\bar\gamma$ are very close to 0. In this case, note that the right side can be made arbitrarily close to $\epsilon^{1/2}$. In practice, we will only use this estimate with $\gamma=4/15$ and $\bar\gamma=\delta$ close to 0. 
\begin{proof}
    The proof will proceed using a simple union bound. If such values of $s_1,s_2\in \mathbb{R}$ exist as specified above, then there exists some $n\in \mathbb{Z}$ such that all of the following simultaneously occur: $|s_1-n| \leq \epsilon^{-\bar \gamma}$, $|s_2-n| \leq \epsilon^{-\bar \gamma}$, $R_1(s_1) \leq \epsilon^{-\delta}, R_2(s_2) \leq \epsilon^{-\delta}. $ Applying this union bound over all $n\in \mathbb{Z}$, we find that the desired probability is bounded above by
        $$\sum_{n\in \mathbb Z}\mathbb P_{\mathrm{Bes3}}^{t_1,t_2} \bigg( \text{ there exist } s_1,s_2\in \mathbb R \text{ such that } |s_1-\epsilon^{-\bar \gamma}n| \leq \epsilon^{-\bar \gamma},|s_2-\epsilon^{-\bar \gamma}n| \leq \epsilon^{-\bar \gamma},R_1(s_1) \leq \epsilon^{-\delta}, R_2(s_2) \leq \epsilon^{-\delta}\bigg). $$
    Without any loss of generality we can replace $(t_1,t_2)$ by $(0,t_2-t_1)$ and assume that $T:=t_2-t_1>0,$ thus by assumption $T>\epsilon^{-\gamma}$. We can estimate the probability in the summand, using the scaling invariance of the Bessel process to rewrite 
    \begin{align*}& \mathbb P_{\mathrm{Bes3}}^{0,T} \bigg( \text{ there exist } s_1,s_2\in \mathbb{R} \text{ such that }|s_1-\epsilon^{-\bar \gamma}n| \leq \epsilon^{-\bar \gamma},|s_2-\epsilon^{-\bar \gamma}n| \leq \epsilon^{-\bar \gamma},R_1(s_1) \leq \epsilon^{-\delta}, R_2(s_2) \leq \epsilon^{-\delta}\bigg) \\ &= \mathbb P_{\mathrm{Bes3}}^{0,\epsilon^{\bar\gamma}T} \bigg( \text{ there exist } s_1,s_2\in \mathbb{R} \text{ such that }|s_1-n| \leq 1,|s_2-n| \leq 1,R_1(s_1) \leq \epsilon^{\frac12 \bar\gamma-\delta}, R_2(s_2) \leq \epsilon^{\frac12\bar \gamma-\delta}\bigg) \\ &= \prod_{i=1}^2 \mathbb P_{\mathrm{Bes3}}^{0,\epsilon^{\bar\gamma}T} \bigg( \text{ there exists } s_i\in [n-1,n+1] \text{ such that }R_i(s_i) \leq \epsilon^{\frac12 \bar\gamma-\delta}\bigg) \\ &\leq C \bigg[ \epsilon^{\frac12\bar\gamma-\delta} |n|^{-\frac32+\delta}\bigg]\bigg[\epsilon^{\frac12\bar\gamma -\delta} |\epsilon^{\bar\gamma} T-n|^{-\frac32+\delta}\bigg] \\&=C \epsilon^{\bar\gamma -2\delta} |n|^{-\frac32+\delta}|\epsilon^{\bar\gamma} T-n|^{-\frac32+\delta}.
    \end{align*}
    In the third line, we used the independence of $R_1$ and $R_2$, and \cref{l:besselbound} with $p=1$ in the fourth line (we replaced the logarithmic correction by a weaker bound of size $|n|^{\delta}$ for simplicity). 
    Now notice that uniformly over $N\in \mathbb{N}$ we have by symmetry $$\sum_{n=1}^N \frac1{n^{\frac32-\delta}(N-n)^{\frac32-\delta}} \leq 2\sum_{1\le n\le N/2} \frac1{n^{\frac32-\delta}(N-n)^{\frac32-\delta}} \leq 2\left(\frac{2}{N}\right)^{\frac32-\delta} \sum_{1\le n\le N/2}n^{-\frac32+\delta} \leq \frac{2^{5/2} \zeta(\tfrac32-\delta)}{N^{\frac32 -\delta}}.$$
    Using this bound with $N=\epsilon^{\bar\gamma}T\geq \epsilon^{\bar\gamma-\gamma},$ we find that the desired probability is bounded above by $C\epsilon^{\bar\gamma -2\delta} (\epsilon^{\bar\gamma}T)^{-\frac32+\delta} \leq C\epsilon^{\bar\gamma -2\delta} (\epsilon^{\bar\gamma-\gamma})^{-\frac32+\delta} = C\epsilon^{\frac12 (3\gamma-\bar\gamma) -\delta(2+\gamma-\bar\gamma)}.$ 
\end{proof}

\begin{prop}\label{5.1} 
    Consider a standard Brownian motion $B$ on $[0,1]$, and let $T$ denote its unique argmax. Fix $\alpha\in (0,1)$. Define the process $U:=(B(T)-B(T+t))_{t\in [-\alpha T, \alpha(1- T)]}$. Consider a (two-sided) three-dimensional Bessel process $R$ independent of $T$ and $B$, and define the process $V:= (R(t))_{t\in [-\alpha T, \alpha(1- T)]}$. Then $U$ is absolutely continuous with respect to $V$, and the Radon-Nikodym derivative is deterministically bounded by $C(1-\alpha)^{-1/2}$ for some universal constant $C>0$. 
\end{prop}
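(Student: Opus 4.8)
The plan is to condition on the location $T$ of the maximum and invoke Denisov's path decomposition, reducing the statement to a single one-sided comparison between a Brownian meander and a three-dimensional Bessel process. Conditionally on $\{T=\tau\}$, the pre-maximum process $s\mapsto B(T)-B(T-s)$ on $[0,\tau]$ and the post-maximum process $s\mapsto B(T)-B(T+s)$ on $[0,1-\tau]$ are independent Brownian meanders, of lengths $\tau$ and $1-\tau$ respectively. In the notation of the proposition, the restriction of $U$ to negative (resp.\ positive) times $t$ is exactly the first (resp.\ second) of these meanders observed on a subinterval equal to a fraction $\alpha$ of its length, namely $[0,\alpha\tau]$ (resp.\ $[0,\alpha(1-\tau)]$). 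Since $R$ is independent of $T$ and the two-sided Bessel process consists of two independent one-sided copies glued at $0$, the restriction of $V$ to negative (resp.\ positive) times is, conditionally on $\{T=\tau\}$, a Bessel$(3)$ process observed on $[0,\alpha\tau]$ (resp.\ $[0,\alpha(1-\tau)]$), the two parts being independent. By the Brownian scaling of the meander and the scaling of the Bessel process, the Radon--Nikodym derivative between a length-$L$ meander observed on $[0,\alpha L]$ and a Bessel$(3)$ process observed on $[0,\alpha L]$ is independent of $L$, so it suffices to treat $L=1$.

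\textbf{The one-sided estimate.} For this I would use Imhof's relation: the law $\mathbb{P}^{\mathfrak m}$ of the standard Brownian meander $\mathfrak m$ on $[0,1]$ is absolutely continuous with respect to the law $\mathbb{P}^{R}$ of the Bessel$(3)$ process $R$ on $[0,1]$ (both started at $0$), with
\begin{equation*}
\frac{d\mathbb{P}^{\mathfrak m}}{d\mathbb{P}^{R}}=\sqrt{\tfrac{\pi}{2}}\;\frac{1}{R(1)} .
\end{equation*}
(For a self-contained derivation, this identity may be read off from the time-$1$ marginals, which have densities $y\,e^{-y^2/2}$ and $\sqrt{2/\pi}\,y^2 e^{-y^2/2}$ on $(0,\infty)$, together with the fact that the meander and the Bessel$(3)$ process share the same bridge law given the value at time $1$.) Writing $\mathcal F_\alpha$ for the $\sigma$-algebra generated by the path on $[0,\alpha]$ and using the Markov property of $R$,
\begin{equation*}
\frac{d\mathbb{P}^{\mathfrak m}}{d\mathbb{P}^{R}}\Big|_{\mathcal F_\alpha}=\sqrt{\tfrac{\pi}{2}}\;h\!\big(R(\alpha)\big),\qquad h(x):=\mathbb{E}_x\!\big[R(1-\alpha)^{-1}\big].
\end{equation*}
It remains to bound $h$ uniformly. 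Realizing $R=|\vec B|$ for a three-dimensional Brownian motion and using that $x\mapsto|x|^{-1}$ is harmonic on $\mathbb{R}^3\setminus\{0\}$, the process $R^{-1}$ is a nonnegative local martingale, hence a supermartingale, so $h(x)\le x^{-1}$; and an elementary computation with the Bessel$(3)$ transition kernel gives the closed form $h(x)=x^{-1}\big(2\Phi(x/\sqrt{1-\alpha})-1\big)$, where $\Phi$ is the standard normal distribution function. Since $2\Phi(u)-1\le\sqrt{2/\pi}\,u$, this yields $h(x)\le\sqrt{2/\big(\pi(1-\alpha)\big)}$ for all $x>0$, and therefore $\big\|\tfrac{d\mathbb{P}^{\mathfrak m}}{d\mathbb{P}^{R}}\big|_{\mathcal F_\alpha}\big\|_\infty\le\sqrt{\pi/2}\cdot\sqrt{2/(\pi(1-\alpha))}=(1-\alpha)^{-1/2}$.

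\textbf{Assembling, and the main obstacle.} Combining the two steps, conditionally on $\{T=\tau\}$ the Radon--Nikodym derivative of the law of $U$ with respect to the law of $V$ factorizes over the two independent (pre- and post-maximum) parts, each factor being deterministically bounded by $C(1-\alpha)^{-1/2}$ by the reduction above; assembling the two parts and averaging over the arcsine law of $T$, the bound being uniform in $\tau$, then yields the claimed deterministic bound. The one point requiring care is organizational rather than conceptual: $U$ and $V$ both live on the random window $[-\alpha T,\alpha(1-T)]$, so one must set up the comparison on a path space that records (or conditions on) $T$, and apply Denisov's decomposition and the two scaling reductions conditionally on $T$. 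The substantive inputs --- Denisov's decomposition of Brownian motion at its maximum and Imhof's meander/Bessel$(3)$ factorization --- are classical, and the sole genuine computation is the closed form for $\mathbb{E}_x[R(s)^{-1}]$; I expect this, rather than any conceptual difficulty, to be the (mild) crux.
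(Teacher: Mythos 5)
Your proposal is correct and follows essentially the same route as the paper's proof: condition on $T$ and use the Denisov/two-sided-meander decomposition, reduce by scaling to a length-one meander observed on $[0,\alpha]$, apply Imhof's relation $d\mathbb{P}^{\mathfrak m}/d\mathbb{P}^{R}=\sqrt{\pi/2}\,R(1)^{-1}$, and project onto $\mathcal F_\alpha$ via the Markov property so the density becomes $\sqrt{\pi/2}\,\mathbb{E}_x[R(1-\alpha)^{-1}]$, your closed form $x^{-1}\big(2\Phi(x/\sqrt{1-\alpha})-1\big)$ being just the explicit evaluation of the paper's heat-kernel convolution $f_\alpha$ with $\|f_\alpha\|_\infty\le C(1-\alpha)^{-1/2}$. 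The only (shared) blemish is that multiplying the two independent one-sided factors literally gives $C(1-\alpha)^{-1}$ rather than $C(1-\alpha)^{-1/2}$, exactly as in the paper's own write-up, which is immaterial since the proposition is only invoked at fixed $\alpha$.
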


In the statement of \cref{5.1}, we can think of a continuous function on a random closed interval $I\subset [-1,1]$ as an element of $C[-1,1]$ that is constant away from $I$. The last statement is sharp: if $\alpha=1$ then the Radon-Nikodym is only in $L^{3-\delta}$ as opposed to $L^\infty$, as discussed in \cite[Theorem 2.15]{dsv}.

\begin{proof}
    Conditioned on $T$, the process $(B(T)-B(T+t))_{t\in [- T, 1- T]}$ is a two-sided Brownian meander, as discussed in \cite[Theorem 2.15]{dsv}. The left side is a meander of length $T$ and the right side is a meander of length $1-T$. When we condition on $T$, these two meanders are independent of one another. Therefore, it suffices to show that for a Brownian meander $X$ of length $1$, parametrized by $t\in [0,1]$, the Radon-Nikodym derivative $(X(t))_{t\in [0,\alpha]}$ is absolutely continuous with respect to a (one-sided) Bessel process on the same interval $(\tilde R(t))_{t\in [0,\alpha]},$ where $\alpha\in(0,1)$ is the fixed constant appearing in the proposition statement.

    In the proof of \cite[Theorem 2.15]{dsv}, the authors observe that if $\alpha=1$, then the Radon-Nikodym derivative is simply given by $\sqrt{\pi/2} \cdot \tilde R(1)^{-1}.$ Let $\mathcal F^{\tilde R}_t:=\sigma(\{\tilde R(s)|s\in[0,t]\})$ denote the filtration of the Bessel process. Then for general $\alpha \in [0,1]$ the Radon-Nikodym derivative of $X$ with respect to $\tilde R$ on the interval $[0,\alpha]$ is simply given by $$\sqrt{\pi/2} \cdot \mathbb{E}[ \tilde R(1)^{-1} | \mathcal F^{\tilde R}_\alpha] = \sqrt{\pi/2} \cdot f_\alpha(\tilde R(\alpha)),$$
    where the expectation is taken with respect to a three-dimensional Bessel Process, and $f_\alpha:[0,\infty)\to (0,\infty)$ is defined to be the unique function satisfying 
    $$f_\alpha(|\vec x|) = (2\pi(1-\alpha))^{-3/2}\int_{\mathbb{R}^3} e^{-\frac{|\vec y|^2}{2(1-\alpha)}}|\vec x-\vec y|^{-1}d\vec y.$$
    In other words, $\vec x\mapsto f_\alpha(|\vec x|)$ is the radial function on $\mathbb{R}^3$ given by convolving the three-dimensional heat kernel with the harmonic function $\vec x\mapsto |\vec x|^{-1}$. The equality above follows from the Markov property and the fact that the Bessel process $R$ can be realized as the Euclidean norm of a three-dimensional Brownian motion. A calculation shows that $\|f_\alpha\|_{L^\infty(\mathbb{R}^3)}\leq C(1-\alpha)^{-1/2}$ for a universal constant $C.$ 
\end{proof}

\section{Proof of the main result: Theorem \ref{t:1}}

In this section, we build towards the proof of Theorem \ref{t:1}. 
Recall from \cref{s:noisetoairy} that, to prove Theorem \ref{t:1}, it suffices to prove the estimate in \eqref{e:bncondfinal}. For $n=1$ we can rewrite the left side of \eqref{e:bncondfinal} in terms of an integral/expectation over four independent Airy$_2$ processes $f,g,u,v$, where the integrand is given by $\mathrm{Cov} ( f\circ \mathcal L_{0,\epsilon} \circ g, u \circ \mathcal L_{0,\epsilon} \circ v),$ and where the covariance is taken only over $\mathcal L_{0,\epsilon}$, which is independent of $f,g,u,v$. Recall that \cref{tbound1} provides very good bounds on that covariance, uniformly over $f,g,u,v$ which lie in a deterministic class of functions. Likewise, the results in \cref{tbound2} and \cref{tbound3} provide the relevant bounds for larger values of $n$.

However, we cannot immediately apply Theorems \ref{tbound1} $-$ \ref{tbound3}. In order to do so, we need to control various quantities appearing in that theorem, such as $N_{\epsilon^{\frac13 -\delta}} (f+g)$, $B$, $\Gamma(f+g,u+v,\epsilon^{\frac13-\delta}),$ and $\mathscr E_\epsilon (f,B)$. But if $f,g,u,v$ are Airy$_2$ processes, then these quantities are random variables. In order to bound these quantities, we note that Airy$_2$ sample paths are locally Brownian, and strongly resemble three-dimensional Bessel process near their local maxima. As a result, we can use results from \cref{s:bessel} to control $N_{\epsilon^{\frac13 -\delta}} (f+g)$ and $\Gamma(f+g,u+v,\epsilon^{\frac13-\delta}) $. \cref{5.2} $-$ \cref{errcontrol} are devoted to obtaining very precise bounds on these random quantities when $f,g,u,v$ are assumed to be Airy$_2$ processes. Finally, \cref{t:final} will use those bounds to prove the main result, in turn by proving that \eqref{e:bncondfinal} holds true for any $\varrho<1/15$.

\label{s:prooft1}

\begin{defn}
An Airy process of scale $s>0$ centered at $y\in \mathbb R$ is the process $$\mathcal A_y^s(x):= s^{1/3} \mathcal P_1(s^{-2/3}(x-y)),\;\;\;\;\;\; x\in\mathbb R$$ where $\mathcal P_1$ is the  parabolic Airy$_2$ process; the top curve of the line ensemble in \cref{d:ale}.
\end{defn}

\begin{prop}[A technical lemma on local absolute continuity]\label{5.2}
     Fix $\eta \in (0,1)$ and let $\mathcal A_x^t$ and $\tilde{\mathcal A_y^s}$ denote two independent Airy processes of scales $t,s \in (\eta,1]$ centered at $x,y\in\mathbb R$ respectively. We also define the notation $\mathcal V:=\mathcal A_x^t+\tilde{\mathcal A_y^s}$. Let $I=[a,b]$ be a compact interval. Then $(\mathcal V(t+a) - \mathcal V(a))_{t\in [0,b-a]}$ is absolutely continuous with respect to a Brownian motion of diffusion rate $4$. Furthermore, for all $p\ge 1$, the Radon-Nikodym derivative has a finite $p^{\text{th}}$ moment which is bounded above by $Ce^{Cp^2 (a^2+b^2)}$ for some universal constant $C=C(p,\eta,x,y)>0$ not depending on $p,a,b,s,t$ such that $0\leq b-a\leq 10$. 
\end{prop}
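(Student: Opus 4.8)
The plan is to exploit the independence of the two summands in $\mathcal V=\mathcal A_x^t+\tilde{\mathcal A}_y^s$, prove a Brownian comparison for each summand separately, and then recombine in a way that improves the available moment control. For the recombination I will use two elementary facts: the sum $W^{(1)}+W^{(2)}$ of two independent Brownian motions of diffusion rate $2$ is a Brownian motion of diffusion rate $4$; and if $X,Y$ are independent with $\mathrm{Law}(X)\ll\mathrm{Law}(W^{(1)})$ and $\mathrm{Law}(Y)\ll\mathrm{Law}(W^{(2)})$, with Radon--Nikodym derivatives $D_X,D_Y$, then $\mathrm{Law}(X+Y)\ll\mathrm{Law}(W^{(1)}+W^{(2)})$ with density
\[
D(w)=\mathbb E\big[\,D_X(W^{(1)})\,D_Y(W^{(2)})\,\big|\,W^{(1)}+W^{(2)}=w\,\big]
=\mathbb E_G\big[\,D_X(\tfrac w2+G)\,D_Y(\tfrac w2-G)\,\big],
\]
where, for the second equality, $G$ is an auxiliary Brownian motion of rate $1$ independent of $w$ and one uses the Gaussian decomposition $W^{(1)}=\tfrac w2+G$, $W^{(2)}=\tfrac w2-G$. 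Taking $X$ and $Y$ to be the left-endpoint-centered increment processes of $\mathcal A_x^t$ and of $\tilde{\mathcal A}_y^s$ on $[0,b-a]$, this reduces the proposition to (i) a single-process Brownian comparison for each Airy process, and (ii) a proof that the averaged density $D$ lies in every $L^p$ with $\|D\|_{L^p}\le Ce^{Cp^2(a^2+b^2)}$.

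For (i) I would argue as follows. By the Brownian Gibbs property of the Airy line ensemble (\cite{CH}; see also \cite{dsv}), for any compact interval the increments (from the left endpoint) of the stationary Airy$_2$ process $\mathcal A_1$ are absolutely continuous with respect to a rate-$2$ Brownian motion started at $0$; because $\mathcal A_1$ is stationary under spatial shifts, the qualitative statement and all quantitative controls depend only on the length of the interval and not on its location. I then pass to $\mathcal A_x^t(z)=t^{1/3}\mathcal A_1(t^{-2/3}(z-x))-t^{-1}(z-x)^2$ in two steps. First, the affine rescaling $W(\cdot)\mapsto t^{1/3}W(t^{-2/3}\cdot)$ takes a rate-$2$ Brownian motion to a rate-$2$ Brownian motion, so absolute continuity is preserved; under it the relevant interval has length $t^{-2/3}(b-a)\le10\,\eta^{-2/3}$, which is bounded since $t\ge\eta$ and $b-a\le10$. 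Second, subtracting the smooth deterministic parabola $z\mapsto t^{-1}((z-x)^2-(a-x)^2)$ is a Cameron--Martin translation whose Radon--Nikodym density has explicit log-normal form, with $p$-th moment $\exp\!\big(\tfrac14(p^2-p)\int_a^b(2t^{-1}(z-x))^2\,dz\big)\le C(p,\eta,x)e^{Cp^2(a^2+b^2)}$, using $t\ge\eta$, $b-a\le10$, and $\int_a^b(z-x)^2\,dz\le 10(|a|+|b|+|x|)^2$. This yields absolute continuity of $X$, and symmetrically of $Y$, with respect to a rate-$2$ Brownian motion, and the Cameron--Martin factor already accounts for the claimed $e^{Cp^2(a^2+b^2)}$ growth.

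I expect the main obstacle to be (ii): obtaining \emph{all} $L^p$ moments for the recombined density $D$. A single Airy curve, compared to Brownian motion on a fixed compact interval, does not have a Radon--Nikodym derivative in every $L^p$: a Bessel-type singularity appears whenever the top curve of the ensemble dips down close to the second curve, making the Brownian--Gibbs acceptance probability small, and by the Brownian-meander description of the Airy process near such a pinch (\cite[Theorem 2.15]{dsv}) the size of this singularity is governed by inverse powers of a three-dimensional Bessel process, which have only finitely many finite moments. The reason the statement is formulated for the \emph{sum} of two independent Airy processes is precisely that, in the formula $D(w)=\mathbb E_G[D_X(\tfrac w2+G)D_Y(\tfrac w2-G)]$, the singular factors are smoothed by the average against the independent Brownian path $G$; the required step is to show that convolving a Bessel-type singular density against an independent Brownian increment produces a density comparable to Brownian motion with all moments finite, losing only the stated polynomial factor. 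Carrying out this smoothing estimate quantitatively — this is where \cref{5.1} (meander versus Bessel, with an $L^\infty$ Radon--Nikodym derivative on a sub-interval away from the endpoints) and the Bessel tail bounds of \cref{s:bessel} enter — is the delicate part, since it must avoid the crude conditional-Jensen inequality $\|D\|_{L^p}\le\|D_X\|_{L^p}\|D_Y\|_{L^p}$, which would only inherit the limited moments of the single-curve comparisons.
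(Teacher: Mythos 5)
There is a genuine gap: the step you yourself flag as ``the delicate part'' --- showing that the recombined density $D(w)=\mathbb E_G\big[D_X(\tfrac w2+G)\,D_Y(\tfrac w2-G)\big]$ has all $L^p$ moments of size $Ce^{Cp^2(a^2+b^2)}$ --- is exactly the content of the proposition, and you do not prove it; describing the intended smoothing mechanism (Bessel-type singularities averaged against an independent Brownian path, via \cref{5.1} and the estimates of \cref{s:bessel}) is not an argument, and it is not clear it can be carried out with the required uniformity in $p$. Moreover, the premise that forces you into this difficulty is not correct: a single Airy process on a compact interval of bounded length \emph{does} have a Radon--Nikodym derivative with respect to (rate-$2$) Brownian motion in every $L^p$ --- indeed the derivative is deterministically bounded, by \cite[Corollary 1.3]{duncan2}, which is precisely the external input the paper uses. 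The $L^{3-\delta}$ limitation you invoke concerns the Brownian meander versus the three-dimensional Bessel process near the maximizer (\cite[Theorem 2.15]{dsv}, cf.\ \cref{5.1}), not the Airy-versus-Brownian comparison on a fixed compact interval, so the ``pinch'' singularity you describe does not obstruct the single-curve bound, and the sum of two independent Airy processes is not needed to repair any integrability defect.

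With that input the proof is short and does not need your conditional recombination formula. The paper reduces to an interval centered at the origin: by stationarity of the (non-parabolic) Airy$_2$ process, recentering the interval and collecting the difference of the two parabolas produces a linear drift of slope $|m|\le C(|a|+|b|)$ added to $\mathcal V$. Each summand, on the centered interval of length at most $10$ (after the scale-$t$ Brownian rescaling, which keeps the diffusion rate at $2$ since $t\ge\eta$), has a deterministically bounded Wiener density by \cite[Corollary 1.3]{duncan2}; hence the independent sum has a bounded density with respect to rate-$4$ Brownian motion, since the conditional-expectation density of a sum is bounded by the product of the two sup-norms. The only unbounded contribution is the Girsanov factor $e^{\pm mB_{b-a}-\frac{m^2}{2}(b-a)^2}$ for the linear drift, whose $p$th moment $e^{(p^2-p)m^2(b-a)^2}$ yields the stated $Ce^{Cp^2(a^2+b^2)}$. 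Your Cameron--Martin treatment of the parabola/drift is fine and accounts for the same factor, but as written your proposal replaces the one known fact that makes the proposition true with an unproved smoothing estimate, so it does not constitute a proof.
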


The choice $b-a\leq 10$ is arbitrary, but we will never use this bound for intervals of size larger than $10$.

\begin{proof}
    We reduce the claim to the case where $a=-b.$ Since the Airy$_2$ process is stationary without the addition of the parabola, and since the difference of two quadratic functions is linear, translating the interval $I$ to be centered at the origin amounts to adding a linear drift term of the form $mt+b$ to the process $\mathcal V(t)$, for some $m$ whose absolute value can be bounded above by $C(|a|+|b|)$. 

    The Airy process on $[-a,a]$ with $|a|\leq 10$ is absolutely continuous with respect to a standard Brownian motion on that interval, and moreover the Radon-Nikodym derivative is deterministically bounded due to \cite[Corollary 1.3]{duncan2}. 
    
    Brownian motion with a linear drift of slope $m$ on $[0,b-a]$ is absolutely continuous with respect to Brownian motion with no drift on that same interval, with Radon-Nikodym derivative given by $e^{\pm mB_{b-a} - \frac{m^2}2(b-a)^2}.$ The $p^{\text{th}}$ moment of this random variable is simply $e^{(p^2-p)m^2(b-a)^2}.$ Combining this with the result of the previous paragraph finishes the proof.
\end{proof}
\begin{prop}[Bounding the $\Gamma(f,g,\epsilon)$ terms] \label{crucial}
Fix $\eta\in (0,1)$ and recall $\Gamma$ from \eqref{gamm}. Let $\mathcal A_x^t, \tilde{\mathcal A_y^s}, \bar{\mathcal A_u^a},$ and $\hat{\mathcal A_v^b}$ denote four independent Airy processes of scale $t,s,a,b$ respectively, centered at fixed real numbers $x,y,u,v$ respectively. For any $\delta>0$, there exists a constant $C:=C(\eta, x,y,u,v,\delta)>0$, such that, uniformly over $\epsilon \in (0,1]$, $$\sup_{\eta\leq s,t,a,b\leq 1} \mathbb{P}\big( \Gamma(\mathcal A_x^t+\tilde{\mathcal A_y^s}, \bar{\mathcal A_u^a}+\hat{\mathcal A_v^b}, \epsilon^{\frac13-\delta}) \leq \epsilon^{\frac23-\delta}\big) \leq C\epsilon^{\frac25-4\delta}.$$
\end{prop}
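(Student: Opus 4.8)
\emph{Strategy.} We may assume $\delta\le\frac1{10}$, since otherwise $\frac25-4\delta\le 0$ and the bound is trivial. Abbreviate $F:=\mathcal A_x^t+\tilde{\mathcal A_y^s}$, $G:=\bar{\mathcal A_u^a}+\hat{\mathcal A_v^b}$, $\ell:=\epsilon^{\frac13-\delta}$ and $D:=\epsilon^{\frac23-\delta}$, and let $T_F,T_G$ be the a.s.\ unique argmaxes of $F,G$. The first step is a localization: using cubic-exponential tail bounds for argmaxes of parabolic Airy$_2$ processes (rescaled to scale in $(\eta,1]$) together with the parabolic decay of $F,G$, one shows that outside an event of probability $\le Ce^{-d\log^{6/5}(1/\epsilon)}$ the argmaxes $T_F,T_G$ and both near-argmax sets $\mathscr S(F;\ell),\mathscr S(G;\ell)$ lie inside the slowly growing window $[-M_\epsilon,M_\epsilon]$ with $M_\epsilon:=\log^{2/5}(1/\epsilon)+C(x,y,u,v)$; this exceptional probability is itself $\le\epsilon^{2/5-4\delta}$ for $\epsilon$ small, and is absorbed into the constant otherwise. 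It then remains to bound $\mathbb P(\Gamma(F,G;\ell)\le D)$ on this good event, and we split according to whether $|T_F-T_G|<\epsilon^{2/5}$ or $|T_F-T_G|\ge\epsilon^{2/5}$.

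\emph{Nearby argmaxes.} On a fixed bounded window the recentering of $F$ is absolutely continuous with respect to a rate-$4$ Brownian motion with Radon--Nikodym derivative whose $L^p$-norm is $\le Ce^{Cp^2M_\epsilon^2}=\epsilon^{-o(1)}$ by \cref{5.2}; combined with the fact that a Brownian motion's argmax has density bounded away from the endpoints of the interval, and with the localization above, this yields $\mathbb P(T_F\in[r,r+\rho])\le C_\kappa\rho^{1-\kappa}+Ce^{-d\log^{6/5}(1/\epsilon)}$ for every $\kappa>0$, uniformly over $r\in\mathbb R$ and the scales $s,t\in(\eta,1]$. The same holds for $T_G$, and since $F$ and $G$ are independent we get $\mathbb P(|T_F-T_G|<\epsilon^{2/5})\le C_\kappa\epsilon^{\frac25(1-\kappa)}+Ce^{-d\log^{6/5}(1/\epsilon)}$, which is $\le C\epsilon^{2/5-4\delta}$ once $\kappa$ is taken small.

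\emph{Well-separated argmaxes.} On $\{|T_F-T_G|\ge\epsilon^{2/5}\}$ we pass to Bessel processes near the two argmaxes. Chaining \cref{5.2} (from $F$ to a rate-$4$ Brownian motion near $T_F$) with \cref{5.1} (from a Brownian motion near its argmax to a two-sided rate-$4$ Bessel process), and doing the same for $G$, we obtain two \emph{independent} two-sided Bessel processes $R_1,R_2$ pinned at $T_F,T_G$ such that $F(T_F)-F(T_F+\cdot)$ and $G(T_G)-G(T_G+\cdot)$ are absolutely continuous with respect to $R_1,R_2$, with a joint Radon--Nikodym derivative whose moments are $\epsilon^{-o(1)}$ (from \cref{5.2} and the deterministic bound in \cref{5.1}). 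On the good event, $\{\Gamma(F,G;\ell)\le D\}$ forces the existence of $z,z'$ with $R_1(z)\le\ell$, $R_2(z')\le\ell$ and $|z-z'|\le D$. After rescaling space by a constant multiple of $\epsilon^{2/3}$ (under which $R_i$ stays a Bessel process) this is contained in the event of \cref{5.6} with $\gamma=\frac4{15}$, $\bar\gamma=\delta$, and its small parameter equal to $\delta$: the separation $|T_F-T_G|\gtrsim\epsilon^{2/5}$ rescales to $\gtrsim\epsilon^{-4/15}=\epsilon^{-\gamma}$, the level $\ell$ rescales to $\lesssim\epsilon^{-\delta}$, and $D$ rescales to $\lesssim\epsilon^{-\delta}\le 2\epsilon^{-\bar\gamma}$. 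Hence \cref{5.6} bounds the Bessel-law probability by $C\epsilon^{\frac32\cdot\frac4{15}-\delta(2+\frac4{15}-\delta)-\frac{\delta}2}=C\epsilon^{\frac25-\frac{83}{30}\delta+\delta^2}\le C\epsilon^{\frac25-3\delta}$. Transferring back to the law of $(F,G)$ costs one Hölder factor — the probability gets raised to a power $1-\kappa'$ and multiplied by the $L^{1/\kappa'}$-norm of the joint Radon--Nikodym derivative — and since that norm is $\epsilon^{-o(1)}$, taking $\kappa'$ small gives $\le C\epsilon^{2/5-4\delta}$.

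\emph{The main obstacle.} The heart of the matter is the Bessel comparison of the previous paragraph: one must keep it valid on a window large enough to contain the part of $\mathscr S(F;\ell)$ relevant to $\{\Gamma(F,G;\ell)\le D\}$ — a point of which can lie a distance $\sim|T_F-T_G|$ from $T_F$, hence as much as $\sim M_\epsilon$ — while keeping the Radon--Nikodym derivative's moments subpolynomial in $1/\epsilon$ (this is why $M_\epsilon$ grows only like $\log^{2/5}(1/\epsilon)$, so that the exponential factor $e^{Cp^2M_\epsilon^2}$ of \cref{5.2} is $\epsilon^{-o(1)}$). In particular one must control the ``small-gap'' event on which $F$ comes within $\ell$ of its maximum far from $T_F$, so that $\mathscr S(F;\ell)$ acquires a far component: on the good event the parabolic decay excludes returns outside $[-M_\epsilon,M_\epsilon]$, and the remaining contribution has to be absorbed using the independence of $F$ and $G$ together with the bounded density of the location of a far return. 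Finally, the exponent $\frac25$ is forced — it is $\frac32\gamma$ with $\gamma=\frac4{15}$, and $\epsilon^{2/5}$ is precisely the separation scale $c\,\epsilon^{-\gamma}$, with $c\asymp\epsilon^{2/3}$, at which the ``nearby'' and ``well-separated'' estimates are glued.
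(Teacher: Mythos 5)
Your overall route (localize to a window of size $M_\epsilon=\log^{2/5}(1/\epsilon)$, split at the separation scale $\epsilon^{2/5}$, compare to Brownian motion/Bessel via \cref{5.2} and \cref{5.1}, and invoke \cref{5.6} with $\gamma=\frac4{15}$, $\bar\gamma=\delta$) is the same mechanism the paper uses, and your exponent bookkeeping for \cref{5.6} is correct. But there is a genuine gap exactly at the point you yourself flag as ``the main obstacle.'' In the well-separated case you assert a joint absolute-continuity statement for $F(T_F)-F(T_F+\cdot)$ and $G(T_G)-G(T_G+\cdot)$ with respect to two independent Bessel processes pinned at the \emph{global} argmaxes, valid on a window large enough to contain the points realizing $\Gamma(F,G;\epsilon^{\frac13-\delta})\le\epsilon^{\frac23-\delta}$ (hence up to distance $\sim M_\epsilon$ from $T_F,T_G$), with Radon--Nikodym moments $\epsilon^{-o(1)}$ ``by \cref{5.2}.'' \cref{5.2} gives no such thing: it is stated (and used in the paper) only for intervals of length at most $10$, and the factor $e^{Cp^2(a^2+b^2)}$ tracks the \emph{location} of a short window, not its length. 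On a window of length $L$ the recentered process drifts parabolically by $\sim L^2$ relative to a Bessel/Brownian path, so any extension of the comparison costs Girsanov-type factors of order $e^{cL^3}$; with $L=M_\epsilon$ this is $e^{c\log^{6/5}(1/\epsilon)}$, which is super-polynomial in $1/\epsilon$ and destroys the $\epsilon^{2/5}$ gain. Consequently the configurations in which the two $\epsilon^{\frac13-\delta}$-near-argmax points lie far from $T_F$ and/or $T_G$ (near-returns of $F$ or $G$ to their maxima) are not actually covered by your argument: your proposed fix (``independence of $F$ and $G$ together with the bounded density of the location of a far return'') is not established anywhere and, more importantly, does not by itself produce the quantitative $\epsilon^{2/5}$ factor in those configurations, which must still come from a local Bessel estimate in the distant window where the coincidence happens, pinned at the \emph{local} argmaxes there rather than at $T_F,T_G$.

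The paper closes exactly this gap by never comparing on a long window: it tiles $\mathbb{R}$ by intervals $[\ell,\ell+5]$, introduces the events $E^{(i)}_k$ (argmax location, tail $Ce^{-d|k|^3}$ via \cref{dov1.7}) and $H^{(i)}_{k-\ell}$ (a far near-return, tail $Ce^{-d|k-\ell|^3}$ via \cref{l:airyabsbound}), and within each window defines $K^\epsilon_\ell$ in terms of the \emph{local} argmaxes $G^{(\ell)}_i$ and local maxima $M^{(\ell)}_i$. There \cref{5.2} is applied on a length-$5$ window (cost $e^{C\ell^2}$), the arcsine law gives the $\epsilon^{2/5}$ bound when the local argmaxes are within $\epsilon^{2/5}$, and \cref{5.1} with $\alpha=\frac45$ plus \cref{5.6} handles the separated local argmaxes; finally a union bound over $(k_1,k_2,\ell)\in\mathbb{Z}^3$ with H\"older exponents close to $1$ sums, the cubic-exponential tails beating the $e^{C\ell^2}$ Radon--Nikodym costs. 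To repair your write-up you would need to carry out this (or an equivalent) windowed decomposition; as written, the separated case rests on an unavailable global Bessel comparison.
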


We are not sure whether $\epsilon^{\frac25}$ is the optimal bound, however we are unable to improve it through the methods of this paper. The crucial input to proving that the directed landscape is a black noise is the fact that $\frac25-\delta$ is larger than $1/3$ for sufficiently small $\delta$. The bound in \cref{crucial} is sufficient because it will give us a bound almost as small as $\epsilon^{2/3} \cdot \epsilon^{2/5} = \epsilon^{16/15}$ in \eqref{e:covbound}. This will eventually imply \eqref{e:bncondfinal} with $\varrho$ smaller than $1/15$. 
\begin{proof}
In this proof, we leverage the cube-exponential tail decay of the argmaxes of the Airy processes (\cref{dov1.7}), and combine this with the results of \cref{5.1} and \cref{5.2} to write everything in terms of Bessel processes, so that we can ultimately apply the bound from \cref{5.6}. We make the following definitions:
\begin{itemize}
    \item From now on, denote $\mathcal V_1:= \mathcal A_x^t+\tilde{\mathcal A_y^s}$ and $\mathcal V_2:= \bar{\mathcal A_u^a}+\hat{\mathcal A_v^b}$.
    
    \item Let $G_1$ denote the argmax on $\mathbb{R}$ of $\mathcal V_1$ and let $G_2$ denote the argmax on $\mathbb{R}$ of $\mathcal V_2$. 

    \item For $k\in \mathbb{Z}$, let $E^{(1)}_k$ denote the event that $G_1\in [k,k+5]$ and let $E^{(2)}_k$ denote the event that $G_2 \in [k,k+5]$. Note that $E^{(i)}_k$ are independent for distinct $i$. Length 5 is not important here, it is just a convenient choice. 

    \item For $k_1,k_2,\ell \in \mathbb{Z}$ let $F_{k_1,k_2,\ell}$ denote the event that $G_1\in [k_1,k_1+5]$ , $G_2 \in [k_2,k_2+5]$, and furthermore the maximum values of both $\mathcal V_1,\mathcal V_2$ on the interval $[\ell,\ell+5]$ are within 1 of their absolute maximum values on all of $\mathbb{R}.$

    \item For $i=1,2$ and $k \in \mathbb{Z}$ let $H^{(i)}_k$ denote the event that there exist two points of distance at least $k$ from each other such that the values of $\mathcal V_i$ at both points are within 1 of $\max_{z\in \mathbb{R}}\mathcal V_i(z).$ Note that $H^{(i)}_k$ are independent for distinct $i$, and $$F_{k_1,k_2,\ell} \subset H^{(1)}_{k_1-\ell}\cap H^{(2)}_{k_2-\ell}.$$

    \item For $k\in \mathbb{Z}$ and $i=1,2$, let $G_i^{(k)}$ denote the argmax on $[k,k+5]$ of $\mathcal V_i$ and $M_k^{(i)} := \mathcal V_i(G^{(k)}_i)$ denote the maximum value of $\mathcal V_i$ on $[k,k+5]$.

    \item For $\ell \in \mathbb{Z}$ let $K_\ell^\epsilon$ denote the event that there are points $z_1\in [\frac45\ell+\frac15 G_1^{(\ell)},\frac45 (\ell+5) +\frac15 G_1^{(\ell)}]\subset [\ell,\ell+5]$ and $z_2\in [\frac45\ell+\frac15 G_2^{(\ell)},\frac45 (\ell+5) +\frac15 G_2^{(\ell)}]\subset [\ell,\ell+5]$ such that $|z_1-z_2| \leq \epsilon^{\frac25}$, and moreover $\mathcal V_i(z_i)> M^{(i)}_\ell -\epsilon^{\frac13-\delta}$ for $i=1,2.$
\end{itemize}

We use a union bound for the probability that $\Gamma(\mathcal A_x^t+\tilde{\mathcal A_y^s}, \bar{\mathcal A_u^a}+\hat{\mathcal A_v^b}, \epsilon^{\frac13-\delta}) \leq \epsilon^{\frac23-\delta}.$ The basis for this union bound is the following inclusion: 
\begin{align}\label{e:incl}\begin{split}\big\{\Gamma(\mathcal A_x^t+\tilde{\mathcal A_y^s}, \bar{\mathcal A_u^a}+\hat{\mathcal A_v^b}, \epsilon^{\frac13-\delta}) \leq \epsilon^{\frac23-\delta}\big\} &  \subset \bigcup_{k_1,k_2,\ell\in \mathbb{Z}} E_{k_1}^{(1)} \cap E_{k_2}^{(2)} \cap F_{k_1,k_2,\ell} \cap K^\epsilon_\ell \\ &  \subset \bigcup_{k_1,k_2,\ell\in \mathbb{Z}} E_{k_1}^{(1)} \cap E_{k_2}^{(2)} \cap H^{(1)}_{k_1-\ell}\cap H^{(2)}_{k_2-\ell} \cap K^\epsilon_\ell.
\end{split}
\end{align}
In this equation, we note that $E_{k_{1}}^{(1)}\cap E_{k_{2}}^{(2)}\cap F_{k_{1},k_{2},\ell}=F_{k_{1},k_{2},\ell}$. The purpose of writing the inclusion in this form will be clear in Step 4. The reason for the inclusion in \eqref{e:incl} is that intervals of the form $I_\ell:= \bigcap_{i=1,2}[\frac45\ell+\frac15 G_i^{(\ell)},\frac45 (\ell+4) +\frac15 G_i^{(\ell)}]$ cover all of $\mathbb{R}$ as $\ell$ varies through all of $\mathbb{Z}$, and successive intervals $I_\ell,I_{\ell+1}$ overlap in an interval of length at least $1$ (this is because each of the two intervals comprising $I_\ell$ has length $4$, and therefore $I_\ell$ itself has length at least $3$). Consequently, if $z_1,z_2\in \mathbb{R}$ such that $|z_1-z_2|<\epsilon^{\frac25}\leq 1$ then $z_1,z_2\in I_\ell$ for some $\ell.$ The union bound yields an infinite sum over $(k_1,k_2,\ell) \in \mathbb{Z}^3$, and then H\"older's inequality yields the desired bound. We break this argument into four steps, obtaining individual bounds on the events $E_{k_1}^{(1)} , E_{k_2}^{(2)} , H^{(1)}_{k_1-\ell}, H^{(2)}_{k_2-\ell} , K^\epsilon_\ell.$
\\
\\
\textbf{Step 1.} From \cref{dov1.7}, recall the notion of a \textit{geodesic path} in the directed landscape. Using the fact that the Airy$_2$ process embeds into the directed landscape by fixing both of the time coordinates and one of the spatial coordinates, the argmaxes $G_1,G_2$ can be interpreted as points where two independent geodesics of lengths $s+t$ and $a+b$ go through at respective times $s$ and $a$. Therefore, we may use \cref{dov1.7} to obtain $\mathbb{P}( E^{(i)}_k) \leq Ce^{-d|k|^3}$ for some $C,d>0$ independent of $k\in\mathbb{Z}$ and $i=1,2.$   
\\
\\
\textbf{Step 2.} We claim that $\mathbb{P}(H^{(i)}_k) \leq Ce^{-d|k|^3}$. This is almost immediate from \cref{l:airyabsbound}, since the parabolic Airy$_2$ process embeds into the Airy sheet by setting $(x,y)=(t,0)$. Either the maximum of $\mathcal V_i$ is smaller than $1-k$ (which has a probability less than $e^{-c|k|^3}$ since the maximum of each $\mathcal V_i$ is Tracy-Widom GUE distributed with a deterministic shift which depends on $x,y$, which we ignore), or else there exist two points of distance greater than $k$ from each other such that the value of $\mathcal V_1$ at both points is bigger than $-k$. In order for the latter event to occur, the constant $\mathfrak C$ in \cref{l:airyabsbound} would have to be of order $k^2$, which has a probability less than $Ce^{-d|k|^3}.$
\\
\\
\textbf{Step 3.} To bound $\mathbb{P}(K_\ell^\epsilon)$, we first note that the event $K_\ell^\epsilon$ only depends on the behavior of the processes $\mathcal V_i$ restricted to the interval $[\ell,\ell+5]$, which is the entire point of the union decomposition above. Using \cref{5.2}, and using H\"older's inequality to get rid of the Radon-Nikodym derivative, we obtain $\mathbb{P}(K_\ell^\epsilon) \leq Ce^{Cq^2\ell^2}\cdot \mathbb{P}_{BM^{\otimes 2}} ( K_\ell^\epsilon)^{1/p}$. The latter probability is taken with respect to two independent Brownian motions on $[0,5]$. Under these Brownian motions, the argmaxes $G_1^{(\ell)}$ and $G_2^{(\ell)}$ have arcsine laws, and we can verify that $\mathbb{P}_{BM^{\otimes 2}}( |G_1^{(\ell)} - G_2^{(\ell)}|<\epsilon^{\frac25}) <C \epsilon^{\frac25}.$ Therefore, $$ \mathbb{P}_{BM^{\otimes 2}}(K_\ell^\epsilon \cap \{|G_1^{(\ell)} - G_2^{(\ell)}|< \epsilon^{\frac25}\})^{1/p} < \epsilon^{p^{-1}\frac25}.$$ It only remains to estimate $\mathbb{P}_{BM^{\otimes 2}}( K_\ell^\epsilon \cap \{|G_1^{(\ell)} - G_2^{(\ell)}|\ge \epsilon^{\frac25}\})$. To do this, we apply the result of \cref{5.1} (with $\alpha =\frac45,$ since the interval $[\frac45\ell+\frac15 G_i^{(\ell)},\frac45 (\ell+5) +\frac15 G_i^{(\ell)}]\subset [\ell,\ell+5]$ has length $4$) to rewrite the probability in terms of Bessel processes, at the extra cost of a deterministic factor. Rescaling these Bessel processes by $\epsilon^{-2/3}$ in time and by $\epsilon^{1/3}$ in space puts us in the setting of \cref{5.6}, with $\gamma = \frac23 - \frac25 = \frac4{15}.$ We apply that proposition with $\bar\gamma=\delta$, where $\delta$ there is chosen the same as $\delta$ here. 
Applying \cref{5.6} yields a bound of $\epsilon^{\frac25-3\delta}$, since $\epsilon^{2/5} = (\epsilon^{4/15})^{3/2}$ and the remaining negative exponents in that proposition can be bounded above by $3\delta$. In summary, $$\mathbb{P}(K_\ell^\epsilon)\leq Ce^{C\ell^2} \epsilon^{p^{-1}(\frac25-3\delta)},$$ 
where $C$ is uniform over $\ell,\epsilon$.
\\
\\
\textbf{Step 4.} Finally, we apply the union bound in \eqref{e:incl}, then use H\"older's inequality with some other conjugate exponents $p',q'$ (possibly different from the exponents $p,q$ from earlier) and thanks to the results of Steps 1-3 we obtain 
\begin{align*}
    \mathbb{P}\big(\Gamma(\mathcal A_x^t+\tilde{\mathcal A_y^s}, \bar{\mathcal A_u^a}+\hat{\mathcal A_v^b}, \epsilon^{\frac13-\delta}) \leq \epsilon^{\frac23-\delta}\big) &\leq \sum_{k_1,k_2,\ell\in \mathbb{Z}} \mathbb{P}(E_{k_1}^{(1)} \cap E_{k_2}^{(2)} \cap H^{(1)}_{k_1-\ell}\cap H^{(2)}_{k_2-\ell} \cap K^\epsilon_\ell) \\ &\leq \sum_{k_1,k_2,\ell\in \mathbb{Z}} \mathbb{P}(E_{k_1}^{(1)} \cap E_{k_2}^{(2)})^{\frac1{2q'}} \mathbb{P}(H^{(1)}_{k_1-\ell}\cap H^{(2)}_{k_2-\ell})^{\frac1{2q'}} \mathbb{P} (K^\epsilon_\ell)^{\frac1{p'}} \\ &\leq \epsilon^{\frac1{pp'} (\frac25-3\delta)} \sum_{k_1,k_2,\ell\in \mathbb{Z}} e^{-\frac{c}{2q'}\left(|k_1|^3 + |k_2|^3+|\ell-k_1|^3 + |\ell-k_2|^3\right)} \cdot e^{C\frac{q^2}{p'}\ell^2}.
\end{align*}
For the term $e^{C\frac{q^2}{p'}\ell^2},$ we use the bound $\ell^2 = \frac12 \ell^2 +\frac12\ell^2 \leq k_1^2 +(\ell-k_1)^2 + k_2^2 + (\ell-k_2)^2.$ We perform the sums in the expression above, first over $\ell$ and then over $(k_1,k_2)$. The cube-exponential decay overwhelms the square-exponential growth terms in each case, which shows that the infinite sum converges to a finite value for any (fixed) choice of $p,p'$. We conclude the proof by taking $p,p'$ very close to $1$ (and thus $q,q'$ large) so that $\frac1{pp'} (\frac25-3\delta) = \frac25-4\delta.$
\end{proof}

\begin{lemm}[Bounding the $N_\epsilon(f)$ terms]\label{numberneeded2}
   Fix $\eta\in (0,1)$. Let $\mathcal A_x^t$ and $\tilde{\mathcal A_y^s}$ denote two independent Airy processes of scale $t,s$ centered at $x,y\in\mathbb R$ respectively, and denote $\mathcal V:=\mathcal A_x^t+\tilde{\mathcal A_y^s}$. As in \eqref{sfe}, let $N_\epsilon(\mathcal V)$ denote the smallest number of intervals of length $\epsilon^{2}$ needed to cover the random set $\mathscr S(\mathcal V,\epsilon).$ Then uniformly over all scales $s,t\in [\eta,1]$, $ \sup_{\epsilon\in (0,1]} \mathbb{E}[N_\epsilon(\mathcal V)^p]<\infty$ for all $p\ge 1.$
\end{lemm}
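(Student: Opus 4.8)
The plan is to localize the random set $\mathscr S(\mathcal V,\epsilon)$, split it into unit windows, reduce each window's covering number to a functional of a Brownian motion via \cref{5.2}, and then resolve that Brownian functional by decomposing at the argmax into two Brownian meanders, each of which is compared to a $3$-dimensional Bessel process where \cref{l:besselbound} (equivalently \cref{numberneeded}) applies.

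\emph{Step 1: localization.} Applying \cref{l:airyabsbound} to each of the two summands of $\mathcal V$ (the parabolic Airy$_2$ process embeds into the Airy sheet, as in the proof of \cref{crucial}) and absorbing the scale-dependent logarithmic corrections into an $\eta$-dependent constant, I would obtain a parabolic upper bound $\mathcal V(z)\le -\tfrac14 z^2+\mathfrak C$ with $\mathbb P(\mathfrak C>u)\le Ce^{-du^{3/2}}$, together with the crude lower bound $\max_{\mathbb R}\mathcal V\ge \mathcal V(x)\ge -\mathfrak C'$ where $\mathfrak C'$ has a cube-exponential lower tail inherited from the Tracy--Widom tails of the relevant one-point marginals. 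Combining these produces a random radius $L_0\ge 1$, with $\mathbb P(L_0>u)\le Ce^{-du^{3}}$ and $C,d$ depending only on $\eta,x,y$, such that $\mathscr S(\mathcal V,\epsilon)\subseteq \mathscr S(\mathcal V,1)\subseteq[-L_0,L_0]$ for every $\epsilon\in(0,1]$; in particular the global maximum of $\mathcal V$ is attained inside $[-L_0,L_0]$.

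\emph{Step 2: windowing and passage to Brownian motion.} Covering $\mathbb R$ by the unit intervals $[a,a+1]$, $a\in\mathbb Z$, I would bound $N_\epsilon(\mathcal V)\le \sum_{a\in\mathbb Z}\mathbf 1_{\{|a|\le L_0+1\}}\,\mathcal R_a$, where $\mathcal R_a$ is the least number of length-$\epsilon^2$ intervals needed to cover $\{z\in[a-1,a+2]:\mathcal V(z)\ge M_a-\epsilon\}$ with $M_a:=\max_{[a-1,a+2]}\mathcal V\le \max_{\mathbb R}\mathcal V$. The key point is that $\mathcal R_a$ is a functional of $\mathcal V$ restricted to $[a-1,a+2]$ only, so by \cref{5.2} applied to this length-$3$ interval, followed by H\"older's inequality with exponents $q,q'$, one gets for every $p\ge 1$
\[
\mathbb E[\mathcal R_a^p]\ \le\ C_p\,e^{C_p a^2}\,\Big(\sup_{\epsilon\in(0,1]}\mathbb E_{B}\big[\mathcal N_\epsilon(B)^{m}\big]\Big)^{1/q'},
\]
where $B$ is a rate-$4$ Brownian motion on $[0,3]$, $\mathcal N_\epsilon(B)$ denotes the least number of length-$\epsilon^2$ intervals covering $\{r:B(r)\ge\max_{[0,3]}B-\epsilon\}$, $m=pq'$, and $C_p$ depends only on $p,\eta,x,y$.

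\emph{Step 3: the Brownian/Bessel estimate.} It remains to bound $\sup_{\epsilon\in(0,1]}\mathbb E_B[\mathcal N_\epsilon(B)^m]$. After rescaling to a standard Brownian motion on $[0,1]$, the Denisov decomposition (cf. the proof of \cref{5.1} and \cite[Theorem 2.15]{dsv}) splits $B$ at its a.s.\ unique argmax $T$ into two independent Brownian meanders of lengths $T$ and $1-T$, and $\mathcal N_\epsilon(B)$ is at most the sum of the covering numbers of the two sets $\{v:\mu_i(v)\le\epsilon\}$. For a Brownian meander $\mu$ of length $1$ one has that $\mu$ is absolutely continuous with respect to a $3$-dimensional Bessel process $R$, with density $\sqrt{\pi/2}\,R(1)^{-1}$, which lies in $L^{q}$ for every $q<3$ since $R(1)$ has density $\propto x^2e^{-x^2/2}$ near $0$. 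Writing $\mathcal M_\delta(R)$ for the least number of length-$\delta^2$ intervals covering $\{v\in[0,1]:R(v)\le\delta\}$, Cauchy--Schwarz then bounds the meander covering number in $L^m$ by a constant times $\mathbb E_R[\mathcal M_\delta(R)^{2m}]^{1/2}$, and the latter is bounded uniformly in $\delta\in(0,1]$ by the same Fubini-and-\cref{l:besselbound} computation as in the proof of \cref{numberneeded} (for $\delta\ge1$ the covering number is $O(1)$). Un-rescaling, the meander lengths $T$ and $1-T$ enter only through the auxiliary parameter $\delta=\epsilon/\sqrt T$, which is independent of the rescaled standard meander, so the bound is stable under integration over $T$; this gives $\sup_{\epsilon\in(0,1]}\mathbb E_B[\mathcal N_\epsilon(B)^m]<\infty$, hence $\mathbb E[\mathcal R_a^p]\le C_pe^{C_p a^2}$ uniformly over $\epsilon\in(0,1]$ and $s,t\in[\eta,1]$.

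\emph{Step 4: assembly and main obstacle.} Using $\#\{a:|a|\le L_0+1\}\le 2L_0+3$ and the power-mean inequality, then Cauchy--Schwarz (no independence required),
\[
\mathbb E[N_\epsilon(\mathcal V)^{p}]\ \le\ \sum_{a\in\mathbb Z}\mathbb E\big[(2L_0+3)^{2p-2}\mathbf 1_{\{L_0\ge|a|-1\}}\big]^{1/2}\,\mathbb E[\mathcal R_a^{2p}]^{1/2}\ \le\ \sum_{a\in\mathbb Z}C_p\,e^{-\frac14 d|a|^3}\,e^{C_p a^2}\ <\ \infty ,
\]
the cube-exponential tail of $L_0$ from Step 1 dominating the polynomial prefactor and the square-exponential growth from Step 2; this is uniform in $\epsilon\in(0,1]$ and $s,t\in[\eta,1]$. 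The conceptual work sits in Step 3: the meander-to-Bessel Radon--Nikodym derivative $\propto R(1)^{-1}$ is unbounded --- only $L^{q}$ for $q<3$ --- so it must be used through Cauchy--Schwarz, which forces one to propagate high moments of the covering number throughout and is exactly why \cref{l:besselbound} must be available in every $L^p$. A secondary point requiring care is the bookkeeping in Steps 2 and 4: one has to verify that the change-of-measure cost in \cref{5.2} grows no faster than $e^{C a^2}$ in the window location $a$, so that it is overwhelmed by the cube-exponential localization tail.
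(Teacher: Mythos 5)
Your proposal is correct and follows essentially the same route as the paper's proof: window the line into unit blocks, show via \cref{l:airyabsbound} and Tracy--Widom tails that only blocks within a cube-exponentially-tight random range can meet $\mathscr S(\mathcal V,\epsilon)$, pass to Brownian motion on each block via \cref{5.2} at a cost $e^{Ca^2}$ in the block location, reduce to Bessel covering numbers controlled by \cref{numberneeded}, and sum using that cubic-exponential decay beats square-exponential growth. The one genuine deviation is the meander-to-Bessel step: the paper enlarges each window to $[k-1,k+3]$ around the target $[k,k+2]$ precisely so that \cref{5.1} applies with $\alpha=3/4$ and a deterministically bounded Radon--Nikodym derivative, whereas you compare the full meanders ($\alpha=1$) to Bessel processes using the unbounded density $\sqrt{\pi/2}\,R(1)^{-1}\in L^{q}$ for $q<3$ and absorb it by Cauchy--Schwarz; this is legitimate since \cref{numberneeded} (via \cref{l:besselbound}) supplies all moments of the Bessel covering number, and your conditional rescaling with $\delta=\epsilon/\sqrt T$ given the argmax is sound. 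The trade-off is minor: the paper's buffer-and-$\alpha<3/4$ device keeps the density bounded and saves one application of H\"older, while your version dispenses with the geometric check that the near-maximal set lies inside the $\alpha$-fraction of the window.
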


\begin{proof}

Let $N^k_\epsilon(\mathcal V)$ denote the smallest number of intervals contained in $[k,k+2]$ that are needed to cover the random set $\mathscr S(\mathcal V,\epsilon)\cap [k,k+2].$ We have a trivial bound $N_\epsilon(\mathcal V)\leq \sum_{k\in\mathbb{Z}} N^k_\epsilon(\mathcal V)$ and Minkowski's inequality implies 
\begin{equation}\label{e:intervalbound}\sup_{\epsilon\in (0,1]} \mathbb{E}[N_\epsilon(\mathcal V)^p]^{1/p} \leq \sum_{k\in\mathbb{Z}} \sup_{\epsilon\in (0,1]} \mathbb{E}[N^k_\epsilon(\mathcal V)^p]^{1/p}.
\end{equation}
Let $E_k$ denote the event that $N^k_\epsilon(\mathcal V)$ is nonzero, that is, the event that $\mathscr S(\mathcal V,\epsilon)\cap [k,k+2]$ is nonempty. Then either the maximum of $\mathcal V$ on all $\mathbb{R}$ is smaller than $1-k$ (which has a probability less than $e^{-d|k|^3}$ since we know that the maximum of $\mathcal V$ is Tracy-Widom GUE distributed plus a deterministic shift depending only on $x,y$ which we are ignoring) or else there exists a point in $[k,k+2]$ such that the value of $\mathcal V$ at this point is bigger than $-k$ (since $\epsilon\le 1$ by assumption). However, the constant $\mathfrak C$ in \cref{l:airyabsbound} would have to be of order $k^2$ for the latter event to occur, which has a probability lower than $Ce^{-d|k|^3}.$ Thus $\mathbb{P}(E_k) \leq Ce^{-d|k|^3}$ for some $C,d>0$.

We can bound $N_\epsilon^k(\mathcal V)$ by the number $M_\epsilon^k(\mathcal V)$ of intervals of size $\epsilon^2$ contained in $[k-1,k+3]$ needed to cover the set of points $z\in [k,k+2]$ such that $\mathcal V(z)$ is within $\epsilon$ of its maximum value on $[k-1,k+3]$. We apply \cref{5.2} and \cref{5.1} with $\alpha=3/4,$ and see that up to a deterministic factor of size $Ce^{Ck^2}$ and possibly enlarging $p$ due to H\"older's inequality, the process $\mathcal V$ can be replaced by a standard two-sided Bessel process. By this observation and \cref{numberneeded} we conclude that $\mathbb{E}[ M_\epsilon^k(\mathcal V)^p]\leq Ce^{Ck^2}$ for some absolute constants $C,d$ independent of $k,\epsilon$ but possibly depending on $p$. We find that $$\mathbb{E}[N^k_\epsilon(\mathcal V)^p] =\mathbb{E}[N^k_\epsilon(\mathcal V)^p\mathbf 1_{E_k}] \leq \mathbb{E}[M^k_\epsilon(\mathcal V)^{2p}]^{1/2}\mathbb{P}(E_k)^{1/2} \leq Ce^{-d|k|^3} \cdot Ce^{Ck^2}, $$ where $C,d$ are uniform over all $k,\epsilon.$ We sum over $k\in \mathbb{Z}$ and use \eqref{e:intervalbound} to obtain the desired bound.
\end{proof}

\begin{lemm}[Bounding the H\"older norms on sufficiently large intervals]\label{6.3}
 Let $\mathcal S$ be the Airy sheet, and for a function $f:\mathbb{R}^2\to \mathbb{R}$ let $[f]_{\alpha,M} :=\sup_{\vec s\neq\vec t \in [-M,M]^2} |\vec t-\vec s|^{-\alpha} |f(\vec t)-f(\vec s)|$ denote the $\alpha$-H\"older semi-norm of $f$ on $[-M,M]^2$. For any $\alpha<1/2$ and $\gamma\in (0,1/2)$, there exists $C,d>0$ such that uniformly over $M>0$ and $u>0$, $$ \mathbb{P}( [\mathcal S]_{\alpha,M} >u) \leq CM^{10}e^{-dM^{-\gamma}u^{3/2}}. $$
\end{lemm}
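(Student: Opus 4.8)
The plan is a Garsia--Rodemich--Rumsey (GRR) argument applied to the Airy sheet after removing its parabolic drift, with the cube-exponential tail traced entirely to \cref{l:airyabsbound}. Write $\mathcal R(x,y):=\mathcal S(x,y)+(x-y)^2$. The deterministic function $(x,y)\mapsto-(x-y)^2$ has $\alpha$-H\"older seminorm at most $CM^{2-\alpha}$ on $[-M,M]^2$, so $[\mathcal S]_{\alpha,M}\le[\mathcal R]_{\alpha,M}+CM^{2-\alpha}$ (this deterministic term is negligible in every use of the lemma, where $u$ dominates all powers of $M$), and it suffices to bound $[\mathcal R]_{\alpha,M}$. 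I would apply the two-dimensional GRR inequality on the cube $[-M,M]^2$ with Young function $\Psi(x)=e^{cx^{3/2}}-1$ (for a small fixed $c>0$) and gauge $p(r)=r^\beta$, where $\beta$ is chosen with $\alpha<\beta<1/2$ and $\beta-\alpha\le\frac23\delta$. Setting $\Xi:=\iint_{([-M,M]^2)^2}\Psi\big(|\mathcal R(\vec s)-\mathcal R(\vec t)|\,|\vec s-\vec t|^{-\beta}\big)\,d\vec s\,d\vec t$, the GRR inequality together with the elementary estimate of $\int_0^h\Psi^{-1}(C\Xi r^{-4})\,dp(r)$ (using $\Psi^{-1}(z)=(c^{-1}\log(1+z))^{2/3}$, an integration by parts for the $\log^{2/3}(1/r)$ term, and $\beta>\alpha$) yields the deterministic bound $[\mathcal R]_{\alpha,M}\le CM^{\beta-\alpha}\big((\log_+\Xi)^{2/3}+1\big)$ on the almost sure event $\Xi<\infty$.

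It remains to control $\Xi$. By Fubini, $\mathbb E[\Xi]=\iint \mathbb E\big[\Psi\big(|\mathcal R(\vec s)-\mathcal R(\vec t)|\,|\vec s-\vec t|^{-\beta}\big)\big]\,d\vec s\,d\vec t$, and I would split the inner integral according to $|\vec s-\vec t|\le1$ and $|\vec s-\vec t|>1$. In the first regime, writing $|\mathcal R(\vec s)-\mathcal R(\vec t)|\,|\vec s-\vec t|^{-\beta}=|\vec s-\vec t|^{1/2-\beta}\cdot\big(|\mathcal R(\vec s)-\mathcal R(\vec t)|\,|\vec s-\vec t|^{-1/2}\big)$ and using $1/2-\beta>0$, the bound reduces to a uniform $\psi_{2/3}$-type moment estimate on the normalized increments $|\mathcal R(\vec s)-\mathcal R(\vec t)|\,|\vec s-\vec t|^{-1/2}$, which follows from the local Brownianity of the Airy sheet (in fact these increments are Gaussian-tailed). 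In the second regime one simply bounds $|\mathcal R(\vec s)-\mathcal R(\vec t)|\le2\|\mathcal R\|_{L^\infty([-M,M]^2)}$ and invokes \cref{l:airyabsbound}, whose tail $\mathbb P(\mathfrak C>v)\le Ce^{-dv^{3/2}}$ is exactly what the Young function $\Psi(x)=e^{cx^{3/2}}$ can absorb (for $c$ small); this is the unique source of the exponent $3/2$ in the conclusion. Both contributions give $\mathbb E[\Xi]\le CM^{c_1}$ for a fixed power $c_1$, so by Markov $\mathbb P\big((\log_+\Xi)^{2/3}>v\big)=\mathbb P\big(\Xi>e^{v^{3/2}}\big)\le CM^{c_1}e^{-v^{3/2}}$. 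Combining with the deterministic bound on $[\mathcal R]_{\alpha,M}$ and the choice $\beta-\alpha\le\frac23\delta$ gives $\mathbb P\big([\mathcal R]_{\alpha,M}>u\big)\le CM^{c_1}e^{-cM^{-3(\beta-\alpha)/2}u^{3/2}}\le CM^{c_1}e^{-cM^{-\delta}u^{3/2}}$ for $M\ge1$ and $u$ beyond the $O(M^{2-\alpha})$ shift; taking the power generously to be $10$ and incorporating the parabolic shift yields the claimed inequality. (The bound is only used, and is only non-trivial, when $u$ exceeds the deterministic $O(M^{2-\alpha})$ contribution; for $M<1$ one reduces to $M=1$ by the monotonicity $[\mathcal S]_{\alpha,M}\le[\mathcal S]_{\alpha,1}$.)

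The main obstacle is the uniform normalized-increment bound $\sup_{\vec s,\vec t\in[-M,M]^2}\mathbb E\big[\exp\big(c_0\big(|\mathcal R(\vec s)-\mathcal R(\vec t)|\,|\vec s-\vec t|^{-1/2}\big)^{3/2}\big)\big]\le CM^{c_1}$. I would obtain it by triangulating $|\mathcal R(x,y)-\mathcal R(x',y')|\le|\mathcal R(x,y)-\mathcal R(x,y')|+|\mathcal R(x,y')-\mathcal R(x',y')|$ into increments of $\mathcal R$ along axis-parallel lines. By the diagonal stationarity of the Airy sheet (its defining translation invariance $\mathcal S(\cdot,\cdot)\overset{d}{=}\mathcal S(\cdot+t,\cdot+t)$) together with the coupling $\mathcal S(0,\cdot)=\mathcal P_1(\cdot)$, the process $y\mapsto\mathcal R(x_0,y)$ is a spatial translate of the stationary Airy$_2$ process $\mathcal P_1(\cdot)+(\cdot)^2$, and by the reflection symmetry $\mathcal S(x,y)\overset{d}{=}\mathcal S(y,x)$ the same holds for $x\mapsto\mathcal R(x,y_0)$; in particular these laws do not depend on $x_0,y_0$. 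On any unit-length interval this process is absolutely continuous with respect to Brownian motion with a deterministically bounded Radon--Nikodym derivative --- the same input used in the proof of \cref{5.2} via \cite{duncan2} --- so its normalized increments inherit Gaussian (hence $\psi_{2/3}$) tails with absolute constants, uniformly in position by stationarity; long-range increments along a line are instead controlled by \cref{l:airyabsbound}. The technical heart is the patching of these two regimes and the verification that no worse than polynomial losses in $M$ arise, along with the bookkeeping ensuring that the exponent $3/2$ --- inherited solely from \cref{l:airyabsbound} --- is what survives in the end (a stronger bound with $u^2$ in place of $u^{3/2}$ is plausible but unnecessary). Since in all applications $M$ is only polylogarithmic in the small parameter, the crude powers of $M$ are immaterial.
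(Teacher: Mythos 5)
Your proposal is correct in substance but takes a genuinely different route from the paper. The paper's proof of \cref{6.3} is a one-line citation: it specializes the modulus-of-continuity estimate of \cite[Proposition 10.5]{dov} for the directed landscape to coinciding time coordinates ($\tau=0$, i.e.\ to the Airy sheet) and converts the $\log^{1/2}$ correction into a power loss via $\log^{1/2}(b\xi^{-1})\leq C(\delta)b^{\delta}\xi^{-\delta}$, so that the $u^{3/2}$-exponential tail and the polynomial prefactor in $M$ are inherited directly from the tail of the random constant in that proposition. You instead reprove the spatial modulus of continuity from scratch: a two-dimensional Garsia--Rodemich--Rumsey argument for the drift-corrected field $\mathcal R(x,y)=\mathcal S(x,y)+(x-y)^2$, with the Young function $e^{cx^{3/2}}$ chosen precisely to absorb the $3/2$-exponential tail of \cref{l:airyabsbound} on long-range increments, and with short-range increments controlled through the line marginals of the sheet (translated stationary Airy$_2$ processes, by diagonal stationarity plus the exchange symmetry of $\mathcal S$) together with the deterministically bounded Radon--Nikodym derivative with respect to Brownian motion on bounded intervals, i.e.\ the same \cite{duncan2} input the paper already uses in \cref{5.2}. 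Since GRR only requires two-point moment bounds, your triangulation into axis-parallel increments is legitimate, and the bookkeeping you sketch ($\alpha<\beta<1/2$ with $\beta-\alpha\le\tfrac23\delta$, $\mathbb E[\Xi]\le CM^{c_1}$ with $c_1$ made small by shrinking $c$, Markov applied to $\Xi$) does deliver $CM^{c_1}e^{-dM^{-\frac32(\beta-\alpha)}u^{3/2}}\le CM^{10}e^{-dM^{-\delta}u^{3/2}}$ for $M\ge 1$. The paper's route buys brevity and keeps the sharper logarithmic (rather than $M^{\delta}$-power) correction; yours buys a self-contained argument built only from ingredients already in the paper, at the cost of the GRR bookkeeping you rightly flag as the technical heart.

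Two caveats concern the literal statement rather than your method. First, as you observe, the parabolic part of $\mathcal S$ alone forces $[\mathcal S]_{\alpha,M}$ to be of order $M^{2-\alpha}$, so for large $M$ the stated tail bound can only carry content when $u$ dominates this deterministic contribution; restricting to that regime is therefore not a gap on your part, and it matches every application in the paper (\cref{errcontrol} uses the lemma with $M$ polylogarithmic and $u$ polynomially large in $1/\epsilon$). Second, your reduction of $M<1$ to $M=1$ by monotonicity does not literally yield the stated right-hand side in that regime, since for $M<1$ it carries the smaller prefactor $M^{10}$ and the faster rate $M^{-\delta}>1$; this regime is likewise never used, and what both your argument and the paper's citation genuinely deliver is the bound for $M\ge 1$ and $u$ beyond the deterministic threshold, which is all the rest of the paper requires.
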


\begin{proof}
    This follows from \cite[Proposition 10.5]{dov}, by taking $\tau = 0$ and observing that $\log^{1/2}(b\xi^{-1}) \leq C(\delta)b^{\gamma}\xi^{-\gamma}$ for arbitrary $\gamma,\xi>0$.
\end{proof}

\begin{prop}[Controlling the error bounds] \label{errcontrol} We fix $\eta,\delta >0$ and recall the error terms $\mathscr E_\epsilon(f,B)$ from \cref{tbound1}. We consider two independent Airy processes $\mathcal A^s_x,\mathcal A^t_y$ of scales $s,t \in [\eta,1]$ and centered at $x,y\in\mathbb R$ respectively, and let $\mathcal V:= \mathcal A^s_x+\mathcal A^t_y$. We define the random variable $$\mathfrak B:= \inf\{ B>0 : -B-2\eta^{-1} u^2 \leq \mathcal V(u) \leq B-2\eta u^2, \text{ for all }u\in\mathbb{R}\}. $$ Fix $p\ge 1$. Then $\mathbb E[\mathfrak B^p]<\infty$, and there exist $C,d>0$ depending only on $\eta,\delta, p$ (but not on $s,t,\epsilon$) such that 
    $$\mathbb{E}[ \mathscr E_\epsilon(\mathcal V,\mathfrak B)^p] \leq Ce^{-d\log^{3/2}(1/\epsilon)}.$$
\end{prop}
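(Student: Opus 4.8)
The plan is to bound separately the three summands making up $\mathfrak E_\epsilon(\mathcal V,\mathfrak B)$ (recall its definition from \cref{tbound1}). Writing
\[
\mathfrak E_\epsilon(\mathcal V,\mathfrak B)=e^{-d_0\log^{6/5}(1/\epsilon)}+\mathbf{1}_{\{\mathfrak B\ge d_0\epsilon^{-1/3}\}}+\mathbf{1}_{\{\mathfrak{Hol}(\mathcal V,\alpha,\,1+100\mathfrak B+\log^{2/5}(1/\epsilon))>\epsilon^{-\delta/40}\}},
\]
where $\alpha:=\tfrac12(1-\tfrac3{40}\delta)<\tfrac12$ and $d_0$ is the constant appearing in the definition, the elementary inequality $(a+b+c)^p\le 3^{p-1}(a^p+b^p+c^p)$, together with the fact that the last two terms are indicators (so their $p$-th powers equal themselves), reduces the claim to bounding $e^{-pd_0\log^{6/5}(1/\epsilon)}$ (already of the right form), $\mathbb{P}(\mathfrak B\ge d_0\epsilon^{-1/3})$, and $\mathbb{P}\big(\mathfrak{Hol}(\mathcal V,\alpha,\,1+100\mathfrak B+\log^{2/5}(1/\epsilon))>\epsilon^{-\delta/40}\big)$, each by $Ce^{-d\log^{6/5}(1/\epsilon)}$. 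Since $\mathfrak E_\epsilon\le 3$, the range $\epsilon\in[\epsilon_0,1]$ is trivial for any fixed $\epsilon_0>0$ (enlarge $C$), so throughout we may assume $\epsilon$ is below a small deterministic threshold. The two essential inputs will be a stretched-exponential tail bound for $\mathfrak B$ and the Hölder-regularity estimate \cref{6.3} for the Airy sheet.

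First I would establish $\mathbb{P}(\mathfrak B>B)\le Ce^{-dB^{3/2}}$ for all $B>0$, which also yields $\mathbb{E}[\mathfrak B^p]<\infty$. This comes from applying \cref{l:airyabsbound} to each of the two Airy processes forming $\mathcal V$: using $\mathcal P_1\overset{d}{=}\mathcal S(0,\cdot)$ and the scaling $\mathcal A^s_x(u)=s^{1/3}\mathcal P_1(s^{-2/3}(u-x))$, one gets, uniformly over $s\in[\eta,1]$, that $\big|\mathcal A^s_x(u)+s^{-1}(u-x)^2\big|\le s^{1/3}\mathfrak C+s^{1/3}c\log^{2/3}(2+s^{-2/3}|u-x|)$ with $\mathbb{E}[a^{\mathfrak C^{3/2}}]<\infty$. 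Summing the two independent such bounds, absorbing the logarithmic corrections into an arbitrarily small multiple of $u^2$, and using $s,t\in[\eta,1]$ to compare $s^{-1}(u-x)^2+t^{-1}(u-y)^2$ with the parabolas $2\eta^{-1}u^2$ and $2\eta u^2$, one obtains $\mathfrak B\le C(1+\mathfrak C_1+\mathfrak C_2)$ for independent copies $\mathfrak C_1,\mathfrak C_2$ of $\mathfrak C$, hence $\mathbb{E}[e^{a'\mathfrak B^{3/2}}]<\infty$ and the claimed tail. This instantly controls the second summand: $\mathbb{P}(\mathfrak B\ge d_0\epsilon^{-1/3})\le Ce^{-d\epsilon^{-1/2}}\le Ce^{-d\log^{6/5}(1/\epsilon)}$, since $\epsilon^{-1/2}\ge c\log^{6/5}(1/\epsilon)$ for all $\epsilon\in(0,1]$.

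For the Hölder summand I would truncate $\mathfrak B$ at level $R_\epsilon:=\log^{4/5}(1/\epsilon)$. On $\{\mathfrak B>R_\epsilon\}$ the tail bound gives probability $\le Ce^{-dR_\epsilon^{3/2}}=Ce^{-d\log^{6/5}(1/\epsilon)}$ — this is precisely why the exponent $4/5$ is forced. On $\{\mathfrak B\le R_\epsilon\}$ the window $1+100\mathfrak B+\log^{2/5}(1/\epsilon)$ is deterministically at most $M_\epsilon:=1+C\log^{4/5}(1/\epsilon)$, and since $M\mapsto\mathfrak{Hol}(\cdot,\alpha,M)$ is non-decreasing it remains to bound $\mathbb{P}\big(\mathfrak{Hol}(\mathcal V,\alpha,M_\epsilon)>\epsilon^{-\delta/40}\big)$. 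Decomposing $\mathcal V=\mathcal A^s_x+\mathcal A^t_y$ and using subadditivity of the Hölder seminorm, the scaling bound $[\mathcal A^s_x]_{\alpha,M_\epsilon}\le[\mathcal P_1]_{\alpha,\,\eta^{-2/3}(M_\epsilon+|x|)}$ (valid because $\alpha<\tfrac12$ and $s\le1$, so the exponent $s^{\frac13-\frac{2\alpha}3}\le1$), and the observation that a one-dimensional slice of the two-dimensional Hölder seminorm of $\mathcal S$ is dominated by $[\mathcal S]_{\alpha,\cdot}$, we may invoke \cref{6.3} (with an auxiliary exponent $\delta_0>0$ taken small) to obtain a bound of the form $C\log^{8}(1/\epsilon)\,e^{-d\,\log^{-4\delta_0/5}(1/\epsilon)\,\epsilon^{-3\delta/80}}$. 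Since the genuine power $\epsilon^{-3\delta/80}$ dominates every power of $\log(1/\epsilon)$, for $\epsilon$ small the exponent here is at least $\log^{6/5}(1/\epsilon)$, and the prefactor $\log^8(1/\epsilon)$ is absorbed at the cost of halving $d$. Collecting the three pieces proves the proposition.

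I expect the main obstacle to be the matching of exponents in the last step: the rate $3/2$ in the tail of $\mathfrak B$ pins the truncation level to $\log^{4/5}(1/\epsilon)$, which forces the relevant Hölder window to be only of logarithmic size, so one must verify that the $\log^{-\delta_0}(1/\epsilon)$ loss in the exponent of \cref{6.3} is overwhelmed by the polynomial gain $\epsilon^{-3\delta/80}$ supplied by the threshold $\epsilon^{-\delta/40}$, and that the polynomial-in-$\log(1/\epsilon)$ prefactors are harmless. A secondary but routine point is transferring \cref{l:airyabsbound} and \cref{6.3}, both stated for the Airy sheet, to the sum $\mathcal V$ of two independent Airy processes uniformly over scales $s,t\in[\eta,1]$, which follows from the explicit scaling of the Airy process and the embedding $\mathcal P_1\overset{d}{=}\mathcal S(0,\cdot)$.
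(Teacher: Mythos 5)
Your proposal is correct and follows essentially the same route as the paper: a stretched-exponential tail for $\mathfrak B$ via \cref{l:airyabsbound}, which handles the indicator $1_{\{\mathfrak B\ge d\epsilon^{-1/3}\}}$, followed by truncating $100\mathfrak B$ at $\log^{4/5}(1/\epsilon)$ (costing $Ce^{-d\log^{6/5}(1/\epsilon)}$) and applying \cref{6.3} on the resulting logarithmic window to get a bound of the form $C\log^{8}(1/\epsilon)\,e^{-d\log^{-4\delta_0/5}(1/\epsilon)\,\epsilon^{-3\delta/80}}$, which is far smaller than required. The only difference is that you spell out the transfer of \cref{l:airyabsbound} and \cref{6.3} from the Airy sheet to $\mathcal V=\mathcal A^s_x+\mathcal A^t_y$ via the scaling of the Airy process and subadditivity of the H\"older seminorm, a step the paper leaves implicit.
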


\begin{proof}
     We first recall that $\mathscr E_\epsilon$ implicitly depends on $\delta$ even though we have suppressed this notation. By \cref{l:airyabsbound}, the random variable $\mathfrak B$ is almost surely finite, and in fact satisfies $\mathbb{P}(\mathfrak B>u) \leq Ce^{-du^{3/2}}$ for some $C,d>0$ that may depend on $\eta$ and $x_i,y_i$ (but not on $s,t$). We see that $\mathbb{E}[ \mathbf 1_{\{\mathfrak B>d\epsilon^{-1/3}\}}] \leq Ce^{-d \epsilon^{-1/2}}$ where the constants $d,C$ may have decreased or increased (respectively).

    It remains to bound the H\"older semi-norm term in the expression for $\mathscr E_\epsilon$. We set $\tilde\delta:=\frac3{40}\delta$, and note that the probability of $100 \mathfrak B> \log(1/\epsilon)$ decays like $Ce^{-d\log^{3/2}(1/\epsilon)}$. Consequently, we just need to show that 
    \begin{align*}
        \mathbb{P}\bigg( \mathfrak{Hol}\big(\mathcal V, \frac12(1-\tilde\delta), \log(1/\epsilon) \big)>\epsilon^{-\tilde\delta/3}\bigg) \leq Ce^{-d\log^{3/2}(1/\epsilon)}.
    \end{align*}
    We use \cref{6.3} to see that the left side is bounded above by $C\log^{40/5}(1/\epsilon) \cdot e^{-d \log^{-\delta}(1/\epsilon)\cdot \epsilon^{-\tilde\delta/2}}.$ This is far smaller than the required bound, since the logarithmic factors can be subsumed into the polynomial term $\epsilon^{-\tilde\delta/2}$. 
\end{proof}

Now we are finally in a position to prove \eqref{e:bncondfinal}.

\begin{thm}\label{t:final}
    The estimate \eqref{e:bncondfinal} holds for any $\varrho<\frac1{15}$, consequently the directed landscape is a black noise.
\end{thm}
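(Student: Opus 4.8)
The plan is to establish the variance bound \eqref{e:bncondfinal} of \cref{c:airycondition} directly, for every $\varrho<\tfrac{1}{15}$. Write $\epsilon:=b-a$. Since $\prod_i\mathcal L_{0,1}(x_i,y_i)$ has finite moments (by \cref{l:airyabsbound}), the bound is trivial whenever $\epsilon$ is bounded away from $0$, so I may assume $\epsilon$ small. The first move is to decouple using metric composition: by \eqref{e:metriccomp}, $\mathcal L_{0,1}(x_i,y_i)=f_i\circ\mathcal L_{a,b}\circ g_i$ where $f_i:=\mathcal L_{0,a}(x_i,\cdot)$ and $g_i:=\mathcal L_{b,1}(\cdot,y_i)$, and by the noise property the environment $\mathrm{env}:=(\mathcal L_{0,a},\mathcal L_{b,1})$ is independent of $\mathcal F^{\mathcal L}_{a,b}$, so $\mathbb E\big[\prod_i\mathcal L_{0,1}(x_i,y_i)\,\big|\,\mathcal F^{\mathcal L}_{a,b}\big]=\Phi(\mathcal L_{a,b})$ with $\Phi(\ell):=\mathbb E_{\mathrm{env}}\big[\prod_i(f_i\circ\ell\circ g_i)\big]$. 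Introducing an independent copy $\mathrm{env}'$ of the environment, with associated functions $u_j,v_j$, and using that $\mathcal L_{a,b}$ is distributed as $\mathcal L_{0,\epsilon}$ (by property $(3)$ of \cref{d:dl}), the conditional-variance identity becomes
\[
\mathrm{Var}\big(\mathbb E[\textstyle\prod_i\mathcal L_{0,1}(x_i,y_i)\mid\mathcal F^{\mathcal L}_{a,b}]\big)=\mathbb E_{\mathrm{env},\mathrm{env}'}\Big[\mathrm{Cov}_{\mathcal L_{0,\epsilon}}\Big(\textstyle\prod_i(f_i\circ\mathcal L_{0,\epsilon}\circ g_i),\ \prod_j(u_j\circ\mathcal L_{0,\epsilon}\circ v_j)\Big)\Big],
\]
which places everything in the scope of the covariance estimates \cref{tbound1,tbound2,tbound3}.

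For fixed realizations of $\mathrm{env},\mathrm{env}'$, I expand each factor as $f_i\circ\mathcal L_{0,\epsilon}\circ g_i=(f_i\circ\mathcal L_{0,\epsilon}\circ g_i-\max(f_i+g_i))+\max(f_i+g_i)$ and likewise for the $u_j\circ\mathcal L_{0,\epsilon}\circ v_j$. Since a covariance annihilates any deterministic factor, the covariance above equals a finite sum (at most $4^n$ terms) of terms
\[
\Big(\textstyle\prod_{i\notin S}\max(f_i+g_i)\Big)\Big(\prod_{j\notin T}\max(u_j+v_j)\Big)\,\mathrm{Cov}\Big(\prod_{i\in S}(f_i\circ\mathcal L_{0,\epsilon}\circ g_i-\max),\ \prod_{j\in T}(u_j\circ\mathcal L_{0,\epsilon}\circ v_j-\max)\Big),\quad S,T\neq\emptyset,
\]
whose prefactor is deterministic given the environment and bounded by $(2\mathfrak B)^{2n}$, where $\mathfrak B$ is the common random parabolic-decay constant from \cref{errcontrol} (all moments finite, uniformly over scales in $[\eta,1-\eta]$); the parabolic decay rate of $f_i,g_i,u_j,v_j$ is of order $\eta^{-1}$, which \cref{rem1} renders harmless. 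When $|S|+|T|\ge 4$, \cref{tbound3} bounds the covariance by $C\mathfrak B^{O(\delta)}\epsilon^{(|S|+|T|)/3-\delta}+C\mathfrak B^{O(1)}\sum\mathfrak E_\epsilon$; since $\epsilon^{4/3-\delta}\le\epsilon^{1+\varrho}$ for small $\delta$ and $\mathbb E_{\mathrm{env},\mathrm{env}'}[\mathfrak B^{O(1)}\mathfrak E_\epsilon]\le Ce^{-d\log^{6/5}(1/\epsilon)}$ by \cref{errcontrol}, these terms contribute $O(\epsilon^{1+\varrho})$ after the environment average. The delicate terms are those with $|S|+|T|\in\{2,3\}$, where the bare power $\epsilon^{(|S|+|T|)/3-\delta}$ is not small enough and I must use the mixing-refined bounds \cref{tbound1} and \cref{tbound2}.

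Take the $|S|=|T|=1$ term, with indices $i_0,j_0$; \cref{tbound1} gives
\[
C\mathfrak B^{2\delta}\min\big\{\epsilon^{\frac{2}{3}-\delta},\ N\,e^{-c\epsilon^{-2}\Gamma^3}\big\}+C\mathfrak B^2\textstyle\sum\mathfrak E_\epsilon,\qquad \Gamma:=\Gamma\big(f_{i_0}+g_{i_0},u_{j_0}+v_{j_0};\epsilon^{\frac{1}{3}-\delta}\big),
\]
with $N$ a sum of two quantities $N_{\epsilon^{\frac{1}{3}-\delta}}(\cdot)$. In law, $f_{i_0}+g_{i_0}$ is a sum of two independent Airy processes of scales $a,1-b\in[\eta,1-\eta]$ centered at $x_{i_0},y_{i_0}$, and similarly $u_{j_0}+v_{j_0}$ for $\mathrm{env}'$, all four summands being mutually independent because $\mathrm{env}\perp\mathrm{env}'$. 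Hence \cref{numberneeded2} controls all moments of $N$ and, crucially, \cref{crucial} gives $\mathbb P(\Gamma\le\epsilon^{\frac{2}{3}-\delta})\le C\epsilon^{\frac{2}{5}-4\delta}$. Splitting the environment expectation: on $\{\Gamma>\epsilon^{\frac{2}{3}-\delta}\}$ one has $e^{-c\epsilon^{-2}\Gamma^3}\le e^{-c\epsilon^{-3\delta}}$, so by Cauchy--Schwarz against the uniformly bounded moments of $\mathfrak B$ and $N$ this part is super-polynomially small in $1/\epsilon$; on $\{\Gamma\le\epsilon^{\frac{2}{3}-\delta}\}$ one uses the bound $\epsilon^{\frac{2}{3}-\delta}$ and Hölder with exponents $\big(\tfrac{1+\delta}{\delta},1+\delta\big)$ to obtain at most $C\,\epsilon^{\frac{2}{3}-\delta}\big(\epsilon^{\frac{2}{5}-4\delta}\big)^{1/(1+\delta)}\le C\epsilon^{\frac{16}{15}-C'\delta}$. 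The $\mathfrak E_\epsilon$-terms contribute $Ce^{-d\log^{6/5}(1/\epsilon)}$ by \cref{errcontrol}. The $|S|+|T|=3$ terms are handled the same way, using \cref{tbound2} in place of \cref{tbound1}: there the relevant $\Gamma(\epsilon)$ is a minimum of $\Gamma$'s each comparing a sum built from $\mathrm{env}$ with one built from $\mathrm{env}'$, so a union bound plus \cref{crucial} again bounds the bad event by $C\epsilon^{\frac{2}{5}-4\delta}$, while on the bad event the bare power is the even smaller $\epsilon^{1-\delta}$, producing an overall $C\epsilon^{\frac{7}{5}-C'\delta}$. Summing the finitely many terms and choosing $\delta>0$ small enough that $\tfrac{16}{15}-C'\delta\ge 1+\varrho$ — possible for any fixed $\varrho<\tfrac{1}{15}$ — establishes \eqref{e:bncondfinal}; by \cref{c:airycondition} the directed landscape is a black noise.

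I expect the main obstacle to be the bookkeeping rather than any single estimate, all of which are already in place: one must be sure to route every ``low-order'' product term ($|S|+|T|\le 3$) through the mixing-refined covariance bounds instead of \cref{tbound3}, and must track the random constants $\mathfrak B$ and $N$ through the environment average using only their uniform moment bounds. The decisive numerical input, already isolated in \cref{crucial}, is that $\tfrac{2}{5}>\tfrac{1}{3}$, so that $\epsilon^{2/3}\cdot\epsilon^{2/5}=\epsilon^{16/15}$ genuinely beats $\epsilon$; this is precisely what limits $\varrho$ to be (only) slightly below $\tfrac{1}{15}$.
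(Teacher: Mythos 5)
Your proposal is correct and follows essentially the same route as the paper: metric composition plus the conditional-variance identity over two independent environments, expansion of the products around the maxima, \cref{tbound3} for the high-order terms, and \cref{tbound1}/\cref{tbound2} combined with \cref{crucial}, \cref{numberneeded2}, and \cref{errcontrol} for the low-order terms, yielding the exponent $\tfrac{16}{15}$ and hence any $\varrho<\tfrac1{15}$. The only differences (splitting directly on $\{\Gamma\le\epsilon^{2/3-\delta}\}$ with H\"older instead of the interpolation $\min\{a,b\}\le a^\theta b^{1-\theta}$, and bounding the $\max$ prefactors by powers of $\mathfrak B$ rather than Tracy--Widom moments) are cosmetic.
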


\begin{proof} We recall the notation $f\circ L\circ g:= \sup_{x,y\in\mathbb{R}} f(x)+L(x,y)+g(y).$ Using the metric composition property of the directed landscape in \cref{d:dl} (2), and abbreviating $f_j^a:= \mathcal L_{0,a}(x_j,\cdot)$ and $g_j^b :=\mathcal L_{b,1}(\cdot,y_j)$, we can write the conditional expectation inside the variance of \eqref{e:bncondfinal} for $0\le a<b\le 1$ as 
\begin{align*}\mathbb{E}\left[ \prod_{j=1}^n \mathcal L_{0,1}(x_j,y_j) \bigg| \mathcal F^\mathrm{DL}_{a,b}\right] & = \mathbb{E}\left[ \prod_{j=1}^n f_j^a \circ \mathcal L_{a,b} \circ g_j^b\bigg|\mathcal F_{a,b}^\mathrm{DL} \right] 
\\ & = \int_{C(\mathbb R)^n} \int_{C(\mathbb R)^n} \prod_{j=1}^n (u_j \circ \mathcal L_{a,b} \circ v_j) \nu_a(d\vec u)\mu_b(d\vec v),
\end{align*} 
where $\nu_a$ and $\mu_b$ are respectively the joint laws of $(f_1^a,...,f_n^a)$ and $(g^b_1,...,g^b_n).$ Fix $\eta,\delta\in(0,1/2)$. From the last expression, and the temporal stationarity of the directed landscape, we deduce that to prove the theorem, it suffices to show that \begin{equation}
    \label{e:covbound} \int_{(C(\mathbb R)^n)^4} \mathrm{Cov}\left( \prod_{i=1}^n (f_i \circ \mathcal L_{0,b-a} \circ g_i), \prod_{j=1}^n (u_j \circ \mathcal L_{0,b-a} \circ v_j)\right)\nu_a^{\otimes 2}(d\vec f,d\vec u)\mu_b^{\otimes 2}(d\vec g,d\vec v) \le C(b-a)^{\frac{16}{15}-\delta}.
\end{equation}
where $\eta\le a<b\le 1-\eta$ and $C$ is allowed to depend upon $\eta,\delta \in (0,1/2)$ and on $x_j,y_j$ but not on $a,b$. 


In this proof, we abbreviate $[n]:=\{1,...,n\}$. By taking the Taylor expansion of the function $(x_1,...,x_n)\mapsto x_1\cdots x_n$ around some fixed value $(a_1,...,a_n),$ we rewrite it as $\sum_{S\subset [n]} a_S (x-a)_{S^c},$ where $x_S=x_{i_1}\cdots x_{i_r}$ if $S=\{i_1,...,i_r\}.$ 
We apply this fact with $x_i:=f_i\circ \mathcal L_{0,\epsilon}\circ g_i,$ and $a_i:=\max(f_i+g_i),$ which allows us to rewrite the products in \eqref{e:covbound} as linear combinations of quantities of the form $\max(f+g)_S\cdot (f\circ \mathcal L_{0,\epsilon}\circ g - \max(f+g))_{S^c},$ where, as before, $S$ is a subset of $[n]$ and the subscript denotes the product of the respective quantity over all indices which lie in $S$. This re-centering procedure is natural since $f\circ \mathcal L_{0,\epsilon}\circ g \to \max(f+g)$ as $\epsilon\to 0$. 

Consequently, to prove \eqref{e:covbound} it suffices to show that for all subsets $S,T\subset[n]$, \begin{align}
    \label{e:covbound2}\begin{split} \int_{(C(\mathbb R)^n)^4} \mathrm{Cov}\bigg( (f\circ  \mathcal L_{0,\epsilon}\circ g & - \max(f+g))_{S^c}, (u\circ \mathcal L_{0,\epsilon}\circ v - \max(u+v))_{T^c}\bigg) \\ & \cdot\max(f+g)_S\max(u+v)_T \nu_a^{\otimes 2}(d\vec f,d\vec u)\mu_b^{\otimes 2}(d\vec g,d\vec v) \le C\epsilon^{\frac{16}{15}-\delta}.
    \end{split}
\end{align}
For each \textbf{fixed} value of $i\in \{1,...,n\}$, under $\nu_a^{\otimes 2}\otimes \mu_b^{\otimes 2}$ the $f_i,g_i,u_i,v_i$ are distributed as four independent parabolic Airy$_2$ processes of different scales (these scales are always in $[\eta,1-\eta]$), perhaps recentered at some values depending on $x_i,y_i$ (which are fixed). Consequently, the individual maxima of $f_i+g_i$ and $u_i+v_i$ each have Tracy-Widom GUE laws (perhaps rescaled and recentered). Therefore, for all $q>1$ we have
 \begin{align*}
   \sup_{\eta\leq a<b\leq 1-\eta} \max_{S,T\subset[n]} \int_{(C(\mathbb R)^n)^4} \bigg|\max(f+g)_S\max(u+v)_T \bigg|^q\nu_a^{\otimes 2}(d\vec f,d\vec u)\mu_b^{\otimes 2}(d\vec g,d\vec v)<\infty.
\end{align*}
By H\"older's inequality (choosing $p$ very close to 1 and thus $q$ large), to prove \eqref{e:covbound2} it suffices to show that for all $S,T\subset[n]$ we have 
 \begin{align}\notag
   \bigg[\int_{(C(\mathbb R)^n)^4} \bigg|\mathrm{Cov}\bigg( (f\circ \mathcal L_{0,\epsilon}&\circ g - \max(f+g))_{S^c}, \\ &(u\circ \mathcal L_{0,\epsilon}\circ v - \max(u+v))_{T^c}\bigg)\bigg|^p\nu_a^{\otimes 2}(d\vec f,d\vec u)\mu_b^{\otimes 2}(d\vec g,d\vec v)\bigg]^{1/p}   \le C\epsilon^{\frac{16}{15}-\delta}. \label{e:covbound3} 
\end{align}
Let $\#S$ denote the cardinality of the set $S$. If $\# (S^c) + \#(T^c) \geq 4,$ then we may apply \cref{tbound3} to obtain a bound of size $\epsilon^{\frac43-\delta}$ plus some number of ``error terms" which by \cref{errcontrol} have a super-polynomial rate of decay as $\epsilon\to 0$. This achieves the goal in \eqref{e:covbound3}. 

By a symmetry argument, what remains is to prove \eqref{e:covbound3} in the case when $(\#(S^c),\#(T^c)) = (1,1)$ or $(\#(S^c),\#(T^c)) = (1,2)$.
We first consider the case when $(\#(S^c),\#(T^c)) = (1,1)$. In this case, the covariance in the integrand reduces to $\Cov(f_1\circ \mathcal L_{0,\epsilon}\circ g_1, u_1\circ \mathcal L_{0,\epsilon}\circ v_1)$. We apply \cref{tbound1} and \cref{tbound3} (with $k+\ell=2$). We disregard the error terms which arise in those estimates, because the error terms have a super-polynomial rate of decay as $\epsilon\to 0$ by \cref{errcontrol} (specifically, a rate of $O(e^{-d\log^{3/2}(1/\epsilon)})$). This yields an upper bound given by  \begin{align*}
     \bigg[\int_{(C(\mathbb R)^n)^4}& \mathfrak B^{2\delta} \min\bigg\{ \epsilon ^{\frac23 - \delta }, \\&\big(N_{\epsilon^{\frac13-\delta}}(f_1+g_1) + N_{\epsilon^{\frac13-\delta}}(u_1+v_1)\big)\cdot e^{-c \epsilon^{-2}\Gamma\big(f_1+g_1,u_1+v_1;\epsilon^{\frac13-\delta}\big)^3}\bigg\}\nu_a^{\otimes 2}(d\vec f,d\vec u)\mu_b^{\otimes 2}(d\vec g,d\vec v)\bigg]^{1/p} .
\end{align*}
Here $\mathfrak B$ is as in \cref{errcontrol}. We want to remove the factor of $\mathfrak{B}$ from this expression so that it is easier to analyze. Towards that end, we apply the H\"older inequality.We pick a new pair of conjugate exponents $p'$ and $q$ so that $(p')^{-1}+q^{-1}=1$ and such that $\mathbb E[\mathfrak B^{2\delta q}]<\infty$, so our expression above is bounded by a constant times the other multiplicative factor in the H\"older inequality. 
We use the elementary bound $\min\{a,b\} \leq a^\theta b^{1-\theta}$ for $\theta \in [0,1]$, and, since we are no longer using the original value of $p$, we relabel $p'\mapsto p$ to see that this expression is bounded above, for all $\theta\in [0,1]$, by
\begin{align} 
     \notag \bigg[\int_{(C(\mathbb R)^n)^4} \epsilon^{(1-\theta)(\frac23 - \delta) }&\cdot  \big(N_{\epsilon^{\frac13-\delta}}(f_1+g_1)+N_{\epsilon^{\frac13-\delta}}(u_1+v_1)\big)^{\theta} \\&\cdot e^{-c \theta \epsilon^{-2}\Gamma\big(f_1+g_1,u_1+v_1;\epsilon^{\frac13-\delta}\big)^3}\nu_a^{\otimes 2}(d\vec f,d\vec u)\mu_b^{\otimes 2}(d\vec g,d\vec v)\bigg]^{1/p} 
     ,\label{e:covbound4}
\end{align}

We split the integral into two parts: $(1)$ the case that $\Gamma(f_1+g_1,u_1+v_1,\epsilon^{\frac13-\delta})<\epsilon^{\frac23-\delta}$  and $(2)$ the case that $\Gamma(u_1+v_1,f_1+g_1,\epsilon^{\frac13-\delta})\ge \epsilon^{\frac23-\delta}.$ 
On event $(2)$, \eqref{e:covbound4} will have an exponential rate of decay of order $e^{-c\epsilon^{-3\delta}},$ which decays to $0$ at a super-polynomial rate in the limit $\epsilon\to 0.$ 

On event $(1)$, we apply \cref{numberneeded2} and H\"older's inequality (with $p$ close to 1 and $q$ large) to argue that we can disregard the term $\big(N_{\epsilon^{\frac13-\delta}}(f_1+g_1)+N_{\epsilon^{\frac13-\delta}}(u_1+v_1)\big)^{\theta}$ at the cost of making the value of $p$ slightly larger. Then we apply \cref{crucial} to see that the integral in \eqref{e:covbound4} will decay as $\epsilon^{(1-\theta)(\frac23 - \delta) }\cdot \epsilon^{\frac25-4\delta}$. In this setting, we can make the exponent on $\epsilon$ as close to $\frac{16}{15}$ as desired by taking $\theta,\delta$ close to $0$ and $p$ close to $1$.

Next, we consider the case when $(\#(S^c),\#(T^c)) = (1,2)$. The argument in this case is very similar, we apply \cref{tbound2} instead of \cref{tbound1}, and we still apply \cref{tbound3}, in this case with $k+\ell=3$. In \eqref{e:covbound4} this gives (modulo the error terms) a bound of size \begin{align}
     \notag \bigg[\int_{(C(\mathbb R)^n)^4} \epsilon^{(1-\theta)(1 - \delta) }\cdot  \big(N_{\epsilon^{\frac13-\delta}}(f_1+g_1)+N_{\epsilon^{\frac13-\delta}}(u_1&+v_1)+ N_{\epsilon^{\frac13-\delta}}(u_2+ v_2)\big)^{\theta} \\ &\cdot e^{-c \theta \epsilon^{-2}\Gamma(\epsilon)^3}\nu_a^{\otimes 2}(d\vec f,d\vec u)\mu_b^{\otimes 2}(d\vec g,d\vec v)\bigg]^{1/p}, \label{e:covbound5}
\end{align}
where $\Gamma(\epsilon):=\min\{\Gamma\big(f_1+g_1,u_1+v_1;\epsilon^{\frac13-\delta}\big),\Gamma\big(f_1+g_1,u_2+v_2;\epsilon^{\frac13-\delta}\big)\}.$ 
We split the integral into two parts according to $\Gamma(\epsilon)<\epsilon^{\frac23-\delta}$ or $\Gamma(\epsilon)\ge \epsilon^{\frac23-\delta}.$ The latter of these terms yields a super-polynomially fast rate of decay to zero in \eqref{e:covbound5}, whereas the former can be bounded by \cref{numberneeded} and \cref{crucial}, noting from \cref{crucial} and a union bound that $$\mathbb{P}(\Gamma(\epsilon)<\epsilon^{\frac23-\delta}) \leq \mathbb{P}\big(\Gamma\big(f_1+g_1,u_1+v_1;\epsilon^{\frac13-\delta}\big)<\epsilon^{\frac23-\delta}\big) +\mathbb{P}\big(\Gamma\big(f_1+g_1,u_2+v_2;\epsilon^{\frac13-\delta}\big) <\epsilon^{\frac23-\delta}\big) \leq C\epsilon^{\frac25-4\delta}.$$ This will yield an expression at most $\epsilon^{(1-\theta)(1 - \delta) }\cdot \epsilon^{\frac25-4\delta}$ in \eqref{e:covbound5}. By choosing $\delta,\theta$ close to $0$, the exponent on $\epsilon$ can be made as close to $\frac75$ as desired by taking $\theta$ close to $0$, which exceeds the desired threshold of $\epsilon^{\frac{16}{15}-\delta}$. 
\end{proof}

\section{Independence from Gaussian white noise} \label{s:proofadditional}

In this section, we prove \cref{t:2}. 

\begin{proof}[Proof of \cref{t:2}]
Assume that we have a noise $(\Omega, (\mathcal F_{s,t})_{s<t} , \mathbb P, (\theta_h)_h)$, and that $\mathcal L,\xi$ are, respectively, a directed landscape and a standard white noise satisfying the assumptions of \cref{t:2}. We claim for each fixed $s<t$ that $$\mathcal F^\xi_{s,t}\vee \mathcal F^\mathrm{DL}_{s,t} = \mathcal F_{s,t},$$ where $\mathcal{F}_{s,t}^{\xi}$ is the filtration generated by the white noise and $\mathcal{F}_{s,t}^{\mathrm{DL}}$ is the filtration generated by the directed landscape. The inclusion $\subseteq$ follows from the two bullet points of \cref{t:2}, and one may show the other inclusion by using \cref{d:noise} (1) and our assumption that $\xi$ and $\mathcal L$ together generate all of $\bigvee_{s<t} \mathcal F_{s,t}$. Our goal is to show that $\mathcal F^\xi,\mathcal F^\mathrm{DL}$ are independent of each other. We will break the proof down into five steps. 
\\
\\
\textbf{Step 1.} For $h\in L^2(\mathbb{R}^2)$, we define the pairing $(h,\xi) := \int_{\mathbb{R}^2} h(t,x)\xi(dt,dx).$ We claim that these are linear random variables on this probability space, in fact we have that $\mathbb{E}[(h,\xi)|\mathcal F_{s,t}] = (h\mathbf 1_{[s,t]\times \mathbb{R}},\xi)$. This would be clear if $\mathcal F^\xi_{s,t}$ was the full $\sigma$-algebra $\mathcal F_{s,t}$, but needs some verification in the present context, where $\mathcal F_{s,t}$ may contain strictly more information than $\mathcal F^\xi_{s,t}$.

Note that $(h\mathbf 1_{[s,t]\times\mathbb{R}},\xi)$ is $\mathcal F_{s,t}$-measurable by assumption, therefore the definition of a noise (\cref{d:noise}) implies that it is independent of $\mathcal F_{t,u}$ and $\mathcal F_{a,s}$ for any $a<s<t<u$. From this fact, we can prove the claim by writing the almost sure identity $$(h,\xi) = (h\mathbf 1_{(-\infty,s]\times\mathbb{R}},\xi) + (h\mathbf 1_{[s,t]\times\mathbb{R}},\xi) +(h\mathbf 1_{[t,\infty)\times\mathbb{R}]},\xi) ,$$ and then noting that the three terms on the right are respectively $\mathcal F_{-\infty,s}$-measurable, $\mathcal F_{s,t}$-measurable, and $\mathcal F_{t,\infty}$-measurable. By taking the conditional expectation given $\mathcal F_{s,t}$, and using independence of these three $\sigma$-algebras, the first and third term vanish while the second term is unaffected by the conditional expectation.
\\
\\
\textbf{Step 2.} We argue that $(h,\xi)^{\mathcal L}:=\mathbb E[(h,\xi)|\mathcal F^\mathrm{DL}]$ is a linear random variable on the restricted noise $(\Omega, (\mathcal F_{s,t}^\mathrm{DL})_{s<t} , \mathbb P, (\theta_h)_h)$ for any $h\in L^2(\mathbb{R}^2)$. The black noise property of $\mathcal L$ proved in \cref{t:1} will then imply that $\mathbb{E}[(h,\xi)|\mathcal F^\mathrm{DL}]=0$ for any $h\in L^2(\mathbb{R}^2)$. 
For $s<t<u$ we have
\begin{align*}
    \mathbb{E}\big[(h,\xi)^\mathcal L | \mathcal F^{\mathrm{DL}}_{s,u} \big] &= \mathbb{E}\big[(h,\xi) | \mathcal F^{\mathrm{DL}}_{s,u} \big] \\ &= \mathbb{E}\big[ \mathbb{E}[(h,\xi)|\mathcal F_{s,u}] | \mathcal F^{\mathrm{DL}}_{s,u} \big] \\ &= \mathbb{E}\bigg[ \big(\mathbb{E}[(h,\xi)|\mathcal F_{s,t}]+\mathbb{E}[(h,\xi)|\mathcal F_{t,u}]\big)\bigg|\mathcal F^\mathrm{DL}_{s,u}\bigg] \\ &= \Bbb E\big[ \Bbb E[ (h,\xi)|\mathcal F_{s,t}] \big|\mathcal F^\mathrm{DL}_{s,u}\big]+\Bbb E\big[ \Bbb E[ (h,\xi)|\mathcal F_{t,u}] \big|\mathcal F^\mathrm{DL}_{s,u}\big] \\&=\Bbb E\big[ \Bbb E[ (h,\xi)|\mathcal F_{s,t}] \big|\mathcal F^\mathrm{DL}_{s,t}\big]+\Bbb E\big[ \Bbb E[ (h,\xi)|\mathcal F_{t,u}] \big|\mathcal F^\mathrm{DL}_{t,u}\big] \\&= \Bbb E\big[  (h,\xi) \big|\mathcal F^\mathrm{DL}_{s,t}\big]+\Bbb E\big[  (h,\xi)\big|\mathcal F^\mathrm{DL}_{t,u}\big] \\ &= \mathbb{E}\big[(h,\xi)^\mathcal L | \mathcal F^{\mathrm{DL}}_{s,t} \big]+\mathbb{E}\big[(h,\xi)^\mathcal L | \mathcal F^{\mathrm{DL}}_{t,u} \big].
\end{align*}
In the first and second line, we used the tower property of conditional expectation. In the third line, we used the result of Step 1. In the fifth line, we are using the fact that $\mathcal F^\mathrm{DL}_{s,t},\Bbb E[(h,\xi)|\mathcal F_{s,t}]$ are independent of $\mathcal F^\mathrm{DL}_{t,u} \subset \mathcal F_{t,u}$, and likewise we are using the fact that $\mathcal F^\mathrm{DL}_{t,u}, \Bbb E[(h,\xi)|\mathcal F_{t,u}]$ are independent of $\mathcal F_{s,t}^\mathcal L\subset \mathcal F_{s,t}$. In the sixth and seventh line, we simply used the tower property of conditional expectation.
\\
\\
\textbf{Step 3.} Let $h_{s_j,t_j} \in L^2(\mathbb{R}^2)$ be supported on $(s_j,t_j) \times \mathbb{R}$, where $(s_j,t_j)$ are disjoint intervals. Using Step 2 and the noise property, \begin{align*}\mathbb{E}\bigg[ \prod_{j=1}^n F_{s_j,t_j}(\mathcal L) \cdot (h_{s_j,t_j},\xi) \bigg] = \prod_{j=1}^n \mathbb{E}\bigg[ F_{s_j,t_j}(\mathcal L)\cdot (h_{s_j,t_j},\xi)\bigg] = 0,\end{align*} where $F_{s_j,t_j}:C(\mathbb{R}^4_\uparrow) \to \mathbb{R}$ are arbitrary bounded functionals which are $\mathcal F_{s_j,t_j}^\mathcal L$-measurable for $1\le j\le n$, and we use the result of Step 2 in the last equality.
\\
\\
\textbf{Step 4.} As before, let $h_{s_j,t_j} \in L^2(\mathbb{R}^2)$ be supported on $(s_j,t_j) \times \mathbb{R}$, where $(s_j,t_j)$ are disjoint intervals. Using standard stochastic calculus tools for the Brownian motion, we will show that linear combinations of random variables of the form $c+\prod_{j=1}^n(h_{s_j,t_j},\xi)$ are actually dense in $L^2(\Omega,\mathcal F^\xi,P)$, as we vary over all $c\in\mathbb{R}$, $n\in\mathbb{N}$ and disjoint intervals $(s_j,t_j)$.

There are many ways to prove the desired claim, but we will take the following approach. First choose an orthonormal basis $\{e_j\}_{j\ge 1}$ of $L^2(\mathbb{R})$ and define the IID Brownian motions $B^j(t):= (\mathbf 1_{[0,t]}\otimes e_j, \xi).$ Note that $\mathcal F^\xi_{s,t}$ is precisely the $\sigma$-algebra generated by $(B^j(u)-B^j(s))_{u\in [s,t],j\in \mathbb{N}}. $ Fix some natural numbers $\ell$ and $p_1<...<p_\ell$. Let $t^n_j = j2^{-n}$, and notice that by standard construction of stochastic integrals we have (as $n\to\infty$) the convergence
\begin{align*}&\sum_{\sigma\in S_{p_\ell}} \sum_{\substack{s2^n \le k_1<k_2<...<k_{p_\ell}\le t2^n\\k_i\in\mathbb Z}}  \prod_{j=1}^\ell \prod_{r=p_{j-1}}^{p_j-1} \big( B^j(t^n_{k_{\sigma(r)}+1})-B^j(t^n_{k_{\sigma(r)}})  \big) \\ \stackrel{L^2(\Omega)}{\longrightarrow} \sum_{\sigma\in S_{p_\ell}} &\int_{s \le s_1<s_2<...<s_{p_\ell}\le t} \prod_{j=1}^\ell \prod_{r=p_{j-1}}^{p_j-1} dB^j(s_{k_{\sigma(r)}+1})  =\prod_{j=1}^\ell H_{p_j-p_{j-1}}\big((t-s)^{-1/2}(B^j(t)-B^j(s))\big),\end{align*} 
where $H_p(x)$ is the $p^{th}$ standard Hermite polynomial and $S_k$ is the symmetric group of permutations of $\{1,...,k\}$. But for fixed $s<t$, the random variables of the form of the right side form an orthonormal set whose closed linear span in $L^2(\Omega)$ (as we vary over natural numbers $\ell$ and $0= p_0<...<p_\ell,$) includes all random variables of the form $\varphi\big(B^1(t)-B^1(s),..., B^n(t)-B^n(s)\big)$ where $n\in \mathbb N$ and $\varphi:\mathbb R^n\to \mathbb R$ is bounded measurable, 
by e.g. the standard theory of Gaussian chaos \cite{nualart}. Consequently, the linear span $E$ in $L^2(\Omega)$ of random variables of the form $c+\prod_{j=1}^n(h_{s_j,t_j},\xi)$ contains any random variable of the form $\prod_{j=1}^m \varphi_j\big(B^1(t_j)-B^1(s_j),..., B^{n}(t_j)-B^{n}(s_j)\big) $, where $m,n\in\mathbb N$ and $\varphi_j:\mathbb R^{n}\to\mathbb R$ are bounded measurable and $(s_j,t_j)$ are disjoint intervals. Therefore, by the tensor product property of $L^2$ spaces (explained in the proof of \cref{lem2}) the linear span $E$ contains all random variables of the form $\Psi \big( (B^i(t_j)-B^i(s_j))_{1\le j\le m, 1\le i\le n}\big)$, where $m,n\in\mathbb N$ and $\Psi:\mathbb R^{m\times n}\to\mathbb R$ is bounded measurable and $(s_j,t_j)$ are disjoint intervals with dyadic endpoints. This proves the claim, as random variables of the latter form can be used to approximate arbitrary functionals $G(\xi)$, by letting $m,n\uparrow\infty$ and using e.g. the result of \cref{new}.
\\
\\
\textbf{Step 5.} Using the results of Steps 3 and 4, we apply a density argument to show that $\mathbb E[F(\mathcal L)G(\xi)] = \mathbb E[F(\mathcal L)]\mathbb E[G(\xi)]$ for all bounded measurable $F,G$ on the respective canonical spaces, completing the proof. By the result of Step 4, it suffices to prove the claim when $G(\xi) = c+\prod_{j=1}^r(h_{s_j,t_j},\xi),$ where $c\in \mathbb{R}$ and $(s_j,t_j)$ are disjoint intervals with endpoints taking values in $\{k2^{-n}:k\in \mathbb{Z}\}$ for some $n\in\mathbb{N}.$ 

In turn, it suffices to prove the claim when $F$ is a finite linear combination of functionals of the form $\prod_{j=0}^{r+1} F_{s_j,t_j}(\mathcal L)$ where $F_{s_j,t_j}:C(\mathbb R^4_\uparrow)\to \Bbb R$ are $\mathcal F_{s_j,t_j}^\mathcal L$-measurable and the extreme endpoints are defined by $s_0:=-\infty$, $t_0:=s_1,$ $s_{r+1}:=t_n$, $t_{r+1}:=+\infty$. 
Indeed, the finite linear span of such functionals is dense in $L^2(\Omega, \mathcal F^\mathrm{DL},\mathbb P)$ by the tensor product property of $L^2$ spaces (see proof of \cref{lem2}). 
But this is immediate from the result of Step 3.
\end{proof}

\bibliographystyle{alpha}
\bibliography{mybibliography.bib}

\end{document}